 \newtheorem{theorem}{Theorem}[section]
 \newtheorem{corollary}[theorem]{Corollary}
 \newtheorem{lemma}[theorem]{Lemma}
 \newtheorem{proposition}[theorem]{Proposition}
 \theoremstyle{definition}
 \newtheorem{definition}[theorem]{Definition}
 \theoremstyle{remark}
 \newtheorem{remark}[theorem]{Remark}
 \numberwithin{equation}{section}
\newcommand{\C}{\mathbb{C}}
\newcommand{\N}{\mathbb{N}}
\newcommand{\R}{\mathbb{R}}
\newcommand{\T}{\mathbb{T}}
\newcommand{\Z}{\mathbb{Z}}
\newcommand{\cA}{\mathcal{A}}
\newcommand{\cB}{\mathcal{B}}
\newcommand{\cC}{\mathcal{C}}
\newcommand{\cE}{\mathcal{E}}
\newcommand{\cF}{\mathcal{F}}
\newcommand{\cG}{\mathcal{G}}
\newcommand{\cH}{\mathcal{H}}
\newcommand{\cI}{\mathcal{I}}
\newcommand{\cJ}{\mathcal{J}}
\newcommand{\cK}{\mathcal{K}}
\newcommand{\cL}{\mathcal{L}}
\newcommand{\cM}{\mathcal{M}}
\newcommand{\cN}{\mathcal{N}}
\newcommand{\fA}{\mathfrak{A}}
\newcommand{\fC}{\mathfrak{C}}
\newcommand{\fH}{\mathsf{H}}
\newcommand{\fL}{\mathfrak{L}}
\newcommand{\fM}{\mathfrak{M}}
\newcommand{\fN}{\mathsf{N}}
\newcommand{\fW}{\mathsf{W}}
\newcommand{\bA}{\mathbf{A}}
\newcommand{\bB}{\mathbf{B}}
\newcommand{\bC}{\mathbf{C}}
\newcommand{\bD}{\mathbf{D}}
\newcommand{\bE}{\mathbf{E}}
\newcommand{\bF}{\mathbf{F}}
\newcommand{\bG}{\mathbf{G}}
\newcommand{\bI}{\mathbf{I}}
\newcommand{\bJ}{\mathbf{J}}
\newcommand{\bP}{\mathbf{P}}
\newcommand{\bQ}{\mathbf{Q}}
\newcommand{\bX}{\mathbf{X}}
\newcommand{\bU}{\mathbf{U}}
\newcommand{\bV}{\mathbf{V}}
\newcommand{\bW}{\mathbf{W}}
\begin{document}
%
%
%
%
%
%
\title[Finite Section Method for Convolution Type Operators]
{Finite Section Method for a Banach Algebra\\
of Convolution Type Operators on \boldmath{$L^p(\R)$}\\
with Symbols Generated by \boldmath{$PC$} and \boldmath{$SO$}}

\author[Karlovich]{Alexei Yu. Karlovich}
\address{
Departamento de Matem\'atica\\
Faculdade de Ci\^encias e Tecnologia\\
Universidade Nova de Lisboa\\
Quinta da Torre\\
2829--516 Caparica\\
Portugal}
\email{oyk@fct.unl.pt}

\thanks{This work is partially supported by the grant FCT/FEDER/POCTI/MAT/59972/2004.}

\author[Mascarenhas]{Helena Mascarenhas}
\address{
Departamento de Mathem\'atica\\
Instituto Superior T\'ecnico\\
Av. Rovisco Pais\\
1049-001 Lisboa \\
Portugal}
\email{hmasc@math.ist.utl.pt}

\author[Santos]{Pedro A. Santos}
\address{
Departamento de Mathem\'atica\\
Instituto Superior T\'ecnico\\
Av. Rovisco Pais\\
1049-001 Lisboa \\
Portugal}
\email{pasantos@math.ist.utl.pt}
\subjclass{Primary 65R20; Secondary 45E10, 47B35, 47L80, 65J10}

\keywords{%
Finite section method,
slowly oscillating function,
Fourier multiplier,
algebraization,
essentialization,
homogenization,
local principle,
two projections theorem}


\begin{abstract}
We prove the applicability of the finite section method to an arbitrary
operator in the Banach algebra generated by the operators of multiplication
by piecewise continuous functions and the convolution operators with symbols
in the algebra generated by piecewise continuous and slowly oscillating
Fourier multipliers on $L^p(\R)$, $1<p<\infty$.
\end{abstract}

\maketitle
\section{Introduction}
Given $1<p<\infty$, let $\cB:=\cB(L^p(\R))$ denote the Banach
algebra of all bounded linear operators on the Lebesgue space
$L^p(\R)$. Let $[PC,SO]$ be the smallest $C^*$-subalgebra of
$L^\infty(\R)$ containing all piecewise continuous ($PC$) and
slowly oscillating ($SO$) functions, and let $[PC_p,SO_p]$ stand
for its Fourier multiplier analogue, which is a Banach subalgebra
of $\cM_p$, the Banach algebra of all Fourier multipliers on
$L^p(\R)$. The Fredholm theory for the smallest Banach subalgebra
of $\cB(L^p(\R))$ that contains all the convolution type operators
$aF^{-1}bF$ where $F$ is the Fourier transform given by
\begin{equation}\label{eq:Fourier}
(F\varphi)(x):=\frac{1}{\sqrt{2\pi}}\int_\R e^{ixy}\varphi(y)\,dy
\quad(x\in\R)
\end{equation}
and $a\in[PC,SO]$, $b\in[PC_p,SO_p]$ is constructed in \cite{BBK04-OT,BBK04-MN}
(in those papers, even matrix functions $a$ and $b$ are considered).

Let $\R_+:=(0,\infty)$ and $\R_-:=(-\infty,0)$. For $\tau\in\R_+$, consider
the operators
\[
(P_\tau \varphi)(t)
:=
\left\{
\begin{array}{llc}
\varphi(t) & \mbox{if} & |t| < \tau, \\
0    & \mbox{if} & |t| > \tau,
\end{array}
\right.
\quad
Q_\tau := I - P_\tau
\]
acting on $L^p(\R)$ with norm $1$. Clearly, $P_\tau\to I$ and
$Q_\tau\to 0$ strongly as $\tau\to\infty$. One says that
\textit{the finite section method applies} to an operator
$A\in\cB(L^p(\R))$ if there exists a positive constant $\tau_0$,
such that for any $\tau>\tau_0$ and any $f\in L^p (\R)$ there
exists a unique solution $\varphi_\tau$ of the equation
\begin{equation}\label{eq:FSM}
A_\tau\varphi_\tau:=(P_\tau A P_\tau + Q_\tau) \varphi_\tau= f
\quad (\tau\in\R_+)
\end{equation}
and $\varphi_\tau$ converges in the norm of $L^p(\R)$ to a solution of the
equation $A\varphi = f$ as $\tau\to\infty$.

We refer to the monographs by Gohberg and Feldman \cite{GF71},
Pr\"ossdorf and Silbermann \cite{PS91}, Hagen, Roch, and
Silbermann \cite{HRS95,HRS01} for a general theory of projection
methods as well as for more specific issues of the finite section
method for convolution type operators and algebras generated by
them.

Roch, Silbermann, and one of the authors \cite{RSS97} studied the
applicability of the finite section method to an operator in the
smallest $C^*$-subalgebra of $\cB(L^2(\R))$ generated by all $aI$ with
$a\in PC$, $W^0(b)$ with $b\in PC=PC_2$, and the so-called flip
operator. For general $p\ne 2$, they proved in \cite{RSS09} the
applicability of the finite section method for an arbitrary
operator in the Banach algebra generated by the operators of
multiplication by piecewise continuous functions $(PC)$ and by the
convolution operators with piecewise continuous Fourier
multipliers $(PC_p)$.

The aim of this paper is to take one more step forward. We prove
the applicability of the finite section for an arbitrary operator $A$
in the Banach subalgebra of $\cB(L^p(\R))$, $1<p<\infty$,
generated by all operators $aI$ of multiplication by functions
$a\in PC$ and by all Fourier convolution operators
\[
W^0(b):=F^{-1}bF
\]
with $b\in [PC_p,SO_p]$.

Our approach to analyze the applicability of the finite sections
method will follow a general scheme to treat approximation
problems. This scheme goes back to Silbermann \cite{Silbermann81}
(see also \cite[Section~1.6]{HRS95}). It can be summarized as
follows. Let $\cA$ be a set of (generalized) sequences that
contains all sequences of the form
\begin{equation}\label{eq:sequence-FSM}
(A_\tau)=(P_\tau AP_\tau+Q_\tau)\quad(\tau\in\R_+).
\end{equation}

\begin{enumerate}
\item
{\bf Algebraization}: Find a unital Banach algebra $\cE$
containing $\cA$ and a closed ideal $\cG$ of $\cE$ such that the
original problem becomes equivalent to an invertibility problem in
the quotient algebra $\cE/\cG$.

\item
{\bf Essentialization}: Find a unital inverse closed subalgebra $\cF$ of $\cE$
that contains $\cA$ and a closed ideal $\cJ$ of $\cF$ that
contains $\cG$, such that $\cJ$ can be lifted. The latter means
that one has full control about the difference between the
invertibility of a coset of a sequence $(A_\tau) \in \cF$ in the
algebra $\cF/\cG$ and the invertibility of the coset of the same
sequence in $\cF/\cJ$. This control is usually guaranteed by a
lifting theorem.

\item
{\bf Localization}: Find a unital subalgebra $\cL$ of $\cF$ such
that
\begin{enumerate}
\item $\cA,\cJ\subset\cL$;
\item
$\cL/\cJ$ is inverse closed in $\cF/\cJ$;
\item
the quotient algebra $\cL/\cJ$ has a large center.
\end{enumerate}
Use a local principle to translate the invertibility problem in
the algebra $\cL/\cJ$ to a family of simpler invertibility
problems in the local algebras.

\item
{\bf Identification}: Find conditions for the invertibility of the
cosets of sequences in $\cA$ in the local algebras.
\end{enumerate}

The paper is organized as follows.  Section~\ref{section:PC-SO}
contains all necessary properties of piecewise continuous and
slowly oscillating Fourier multipliers. In
Section~\ref{section:SIO}, we formulate several auxiliary results
on singular integral operators with piecewise constant
coefficients, which will be used in
Sections~\ref{section:identification-3} and \ref{section:main}. In
Section~\ref{section:algebraization}, the algebra $\cE$ and its
ideal $\cG$ are defined and Kozak's theorem is formulated.

Let $\cA$ denote the smallest closed subalgebra of $\cE$ that
contains the constant sequences $(aI)$ with $a\in PC$ and
$(W^0(b))$ with $b\in[PC_p,SO_p]$ and the sequence $(P_\tau)$. In
Section~\ref{section:essentialization}, we perform the
essentialization step: we introduce the algebra $\cF\subset\cE$,
its ideal $\cJ$, and the homomorphisms $\fW_i$,
$i\in\{-1,0,1\}$. It is shown that $\cA\subset\cF$. The main
result of Section~\ref{section:essentialization}
(Theorem~\ref{th:lifting-our}) says that for a sequence
$\bA=(A_\tau)\in\cF$ the coset $\bA+\cG$ is invertible in
$\cF/\cG$ if and only if the operators $\fW_i(\bA)$,
$i\in\{-1,0,1\}$, are invertible and the coset $\bA+\cJ$ is
invertible in $\cF/\cJ$. The algebra $\cF/\cJ$ is still too large for
effective studying.

In Section~\ref{section:localization}, we introduce the algebra
$\cL$ of sequences of local type such that
$\cA\subset\cL\subset\cF$ and the algebra $\cL/\cJ$ has a large
center. So the latter algebra can be studied with the aid of the
Allan-Douglas local principle. According to it, the invertibility
of $\bA+\cJ\in\cL/\cJ$ is equivalent to the invertibility of the
local representatives $\Phi_{\xi,\eta}^\cJ(\bA)$ in the local
algebras $\cL_{\xi,\eta}^\cJ$, where
\[
(\xi,\eta)\in
\big(\R\times M_\infty(SO)\big)\cup
\big(\{\infty\}\times\R\big)\cup
\big(\{\infty\}\times M_\infty(SO)\big)
\]
and $M_\infty(SO)$ is the fiber of the maximal ideal space of $SO$
over the point $\infty$ (see Section~\ref{subsection:fibration}).

In Section~\ref{section:homogenization}, we introduce the
homomorphisms $\fH_\eta$ for $\eta\in\R$ and show that if a
sequence $\bA\in\cA$ is stable, then all operators
$\fH_\eta(\bA)$ for $\eta\in\R$ are invertible.

In
Sections~\ref{section:identification-1}--\ref{section:identification-3},
we study the invertibility in the local algebras
$\cL_{\xi,\eta}^\cJ$. It turns out that these algebras are too
large for a complete description, so we will restrict ourselves to
studying the invertibility in their subalgebras
$\cA_{\xi,\eta}^\cJ:=\Phi_{\xi,\eta}^\cJ(\cA)$.

Let $\bA\in\cA$. In Section~\ref{section:identification-1}, we
prove that the invertibility of the operator $\fW_0(\bA)$ is
sufficient for the invertibility of $\Phi_{\xi,\eta}^\cJ(\bA)$ in
the local algebra $\cL_{\xi,\eta}^\cJ$ for $(\xi,\eta)\in\R\times
M_\infty(SO)$. Further, in Section~\ref{section:identification-2},
we obtain that the invertibility of the operator $\fH_\eta(\bA)$
is sufficient for the invertibility of
$\Phi_{\infty,\eta}^\cJ(\bA)$ in the local algebra
$\cL_{\infty,\eta}^\cJ$ with $\eta\in\R$.

Section~\ref{section:identification-3} is devoted to
$\cA_{\infty,\eta}^\cJ$ and $\cL_{\infty,\eta}^\cJ$ with $\eta\in
M_\infty(SO)$. We show that the invertibility problem in these
algebras can be reduced to the invertibility problem in the pairs
of simpler algebras $\cA_\eta^\pm$ and $\cL_\eta^\pm$. It turns
out that the algebras $\cA_\eta^\pm$ are generated by two
idempotents and the identities of the algebras $\cL_\eta^\pm$.
Applying the two-idempotents theorem
(Theorem~\ref{th:two-idempotents}), we get necessary and
sufficient conditions for the invertibility of an element of
$\cA_\eta^\pm$ in the algebra $\cL_\eta^\pm$. These results lead
to a criterion of the invertibility of an element of
$\cA_{\infty,\eta}^\cJ$ in the algebra $\cL_{\infty,\eta}^\cJ$.

In Section~\ref{section:main} we gather the results obtained in
Sections~\ref{section:localization}--\ref{section:identification-3}
and prove our main result: a criterion for the stability of a
sequence $\bA=(A_\tau)\in\cA$. Further, it is specified for the
sequences of the finite section method $(P_\tau AP_\tau+Q_\tau)$.
Finally we illustrate our results for the paired convolution
operators
\[
A=W^0(a)\chi_-I+W^0(b)\chi_+I
\]
with $a,b\in [PC_p,SO_p]$, where $\chi_-$ and $\chi_+$ are the
characteristic functions of the half-axes $(-\infty,0)$ and
$(0,+\infty)$.
\section{Piecewise continuous and slowly oscillating Fourier multipliers}
\label{section:PC-SO}
\subsection{Function algebras}
Let $1\le p\le\infty$ and $L^p(\R)$ denote the usual Lebesgue
space on $\R=(-\infty,+\infty)$ with the standard norm denoted by
$\|\cdot\|_p$, let $C(\overline{\R})$ and $C(\dot{\R})$ denote the
spaces of continuous functions on
$\overline{\R}=[-\infty,+\infty]$ and $\dot{\R}=\R\cup\{\infty\}$,
respectively;
\[
C_0(\R):=\big\{f\in C(\dot{\R})\ : \ f(\infty)=0\big\},
\quad
C_b(\R):=C(\R)\cap L^\infty(\R);
\]
and $PC$ stand for the set of all functions $f:\dot{\R}\to\C$
which possess a finite left-hand limit $f(x^-)$ and a finite
right-hand limit $f(x^+)$ at every point $x\in\dot{\R}$.

Let $V_1(\R)$ be the set of functions $a:\overline{\R}\to\C$ with the finite
total variation
\[
V_1(a):=\sup\left\{\sum_{i=1}^n|a(x_i)-a(x_{i-1})| :\
-\infty\le x_0< x_1<\dots<x_n\le+\infty, n\in\N\right\}.
\]
where the supremum is taken over all finite decompositions of the
real line $\R$. It is well known that $V_1(\R)$ is a Banach
algebra under the norm
\[
\|a\|_{V_1(\R)}:=\|a\|_\infty+V_1(a).
\]

For a continuous function $f:\R\to\C$ and a set $I\subset\R$, let
\[
\operatorname{osc}(f,I):=\sup_{t,s\in I}|f(t)-f(s)|.
\]
Following Power \cite{Power80} we denote by $SO$ the set of all
\textit{slowly oscillating functions},
\[
SO:=\left\{f\in C_b(\R)\ :\ \lim_{x\to+\infty}
\operatorname{osc}(f,[-2x,-x]\cup[x,2x])=0\right\}.
\]
Clearly, $SO$ is a $C^*$-subalgebra of $L^\infty(\R)$ and $C(\dot{\R})\subset SO$.

Let $C_b^1(\R)$ consist of all functions $a\in C_b(\R)$ with $a'\in C_b(\R)$.
The following lemma is, in fact, proved in \cite[pp.~154--155]{BBK04-OT}.
\begin{lemma}\label{le:tilde-SO1}
If a function $a$ is even and lies in
\[
\widetilde{SO}^1:=\left\{f\in C_b^1(\R):\lim_{x\to\infty}xf'(x)=0\right\},
\]
then $a\in SO$.
\end{lemma}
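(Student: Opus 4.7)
The plan is to unwind the definitions and reduce everything, via evenness, to the right half-line, where the mean value theorem together with the hypothesis $xf'(x)\to0$ does the work.

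First I would note that, since $a$ is even, $a(-t)=a(t)$ for all $t$, so for any $x>0$ the set $a([-2x,-x]\cup[x,2x])$ coincides with $a([x,2x])$. Consequently, for any $t,s\in[-2x,-x]\cup[x,2x]$, there are points $t',s'\in[x,2x]$ (namely $t'=|t|$ and $s'=|s|$) with $a(t)=a(t')$ and $a(s)=a(s')$. Hence
\[
\operatorname{osc}(a,[-2x,-x]\cup[x,2x])=\operatorname{osc}(a,[x,2x]).
\]

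Next, since $a\in C_b^1(\R)$, for any $t,s\in[x,2x]$ the mean value theorem gives some $\xi$ between $t$ and $s$ with $|a(t)-a(s)|=|a'(\xi)|\,|t-s|\le x\,|a'(\xi)|$. Because $\xi\in[x,2x]$, we have $x\le\xi$, so $x|a'(\xi)|\le\xi|a'(\xi)|\le\sup_{y\ge x}y|a'(y)|$. Taking the supremum over $t,s\in[x,2x]$ yields
\[
\operatorname{osc}(a,[x,2x])\le\sup_{y\ge x}y|a'(y)|.
\]
By the defining property of $\widetilde{SO}^1$, the right-hand side tends to $0$ as $x\to+\infty$, which gives $a\in SO$.

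There is essentially no obstacle here; the only thing one has to be a little careful about is the use of evenness to collapse the two-interval oscillation to a single-interval oscillation on $[x,2x]$, after which the estimate $|t-s|\le x\le\xi$ converts the factor $|t-s|\,|a'(\xi)|$ into the quantity $\xi|a'(\xi)|$ controlled by the hypothesis. Note that we never actually need the full strength of $a\in C_b^1(\R)$ beyond $C^1$ on $\R$ with $a'$ bounded on $[x,\infty)$ for large $x$, and the conclusion $a\in SO$ also uses $a\in C_b(\R)\subset L^\infty(\R)$, which is immediate from $C_b^1\subset C_b$.
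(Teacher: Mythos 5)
Your argument is correct. The paper itself does not supply a proof of this lemma; it simply cites pages 154--155 of \cite{BBK04-OT}. Your proof is the natural and presumably the intended one: evenness of $a$ collapses $\operatorname{osc}(a,[-2x,-x]\cup[x,2x])$ to $\operatorname{osc}(a,[x,2x])$ (this is exactly the point where evenness matters --- without it, one could not control the gap between the two far-apart intervals, which is why $\widetilde{SO}^1\not\subset SO$ in general); then the mean value theorem together with $|t-s|\le x\le\xi$ for $t,s,\xi\in[x,2x]$ bounds the single-interval oscillation by $\sup_{y\ge x}y|a'(y)|$, which tends to $0$ by the defining condition. Two small remarks: (i) it is worth observing that since $a$ is even, $a'$ is odd and $xa'(x)$ is even, so the hypothesis $\lim_{x\to\infty}xa'(x)=0$ is insensitive to whether ``$x\to\infty$'' is read as $x\to+\infty$ or as convergence in $\dot{\R}$; (ii) you correctly note that $a\in C_b(\R)$, needed in the definition of $SO$, follows from $C_b^1(\R)\subset C_b(\R)$. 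No gaps.
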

However, $\widetilde{SO}^1$ is not contained in $SO$ because its elements can
have different slowly oscillating behavior at $-\infty$ and $+\infty$.
\begin{lemma}[{\cite[Corollary~2.4]{BBK04-OT}}]
\label{le:SO1}
Every function $a\in SO$ can be uniformly approximated by
functions in the non-closed algebra
\begin{equation}\label{eq:def-SO1}
SO^1:=\widetilde{SO}^1\cap SO.
\end{equation}
\end{lemma}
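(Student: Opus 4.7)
The plan is to build the approximant by a variable-width smoothing whose window grows with $|x|$, matched to the scale on which $a$ is nearly constant at infinity. A fixed-width mollification $a*\phi_\delta$ produces a derivative of size $O(1/\delta)$, so $xa'(x)$ is unbounded and the result fails to lie in $\widetilde{SO}^1$; some dilation of the window with $|x|$ is unavoidable.

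A clean implementation uses the logarithmic substitution. Suppose first that $a\in SO$ is supported in $[T,+\infty)$ (the left half-line is symmetric). Set $g(t):=a(e^t)$; the condition $\operatorname{osc}(a,[x,2x])\to 0$ becomes $\operatorname{osc}(g,[t,t+\ln 2])\to 0$ as $t\to+\infty$, so $g$ has vanishing modulus of continuity at $+\infty$ on any fixed scale. Taking a standard $C_c^\infty$ mollifier $\phi_L$ of width $L$ and integral one, the convolution $h:=g*\phi_L$ satisfies $\|h-g\|_\infty<\epsilon/2$ on the tail (by uniform continuity of $g$ at infinity, once $L$ is small enough and $T$ is large enough), and writing $h'(t)=\int(g(t-s)-g(t))\phi_L'(s)\,ds$, together with $\int\phi_L'=0$, one estimates $|h'(t)|\le \|\phi'\|_1 L^{-1}\operatorname{osc}(g,[t-L,t+L])$, which tends to $0$. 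Transporting back by $A(x):=h(\ln|x|)$ yields a $C^\infty$ function on $\{|x|>T\}$ with $xA'(x)=h'(\ln|x|)\to 0$, approximating $a$ uniformly on its tail.

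To finish the construction, pick a smooth cutoff $\chi$ with $\chi\equiv 1$ on $[-T,T]$ and support in $[-2T,2T]$, where $T$ is chosen so that the defining SO estimate gives the required tail control. Mollify $\chi a$ by a tiny-window convolution to obtain $b_1\in C_c^\infty\subset \widetilde{SO}^1\cap SO$ with $\|\chi a-b_1\|_\infty<\epsilon/2$, and apply the logarithmic-scale construction above to $(1-\chi)a$ on each half-line to produce $b_2\in C^\infty(\R)$ supported in $\{|x|>T\}$, close to $(1-\chi)a$ in the sup norm, and with $xb_2'(x)\to 0$. Then $b_1+b_2\in C_b^1(\R)$ with $x(b_1+b_2)'(x)\to 0$, hence in $\widetilde{SO}^1$; its membership in $SO$ is automatic because averaging a slowly oscillating function over an interval on which it is nearly constant cannot create new oscillation. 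The principal obstacle is the tension between the sup-norm approximation, which demands a narrow mollifier where $a$ varies quickly, and the requirement $xb'(x)\to 0$, which demands a window widening at infinity; the logarithmic change of variables is precisely what reconciles the two, since it converts the multiplicative slow oscillation into an additive uniform continuity for which a single fixed window controls both approximation error and derivative decay.
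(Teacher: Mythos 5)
The paper does not prove this lemma; it is quoted from \cite[Corollary~2.4]{BBK04-OT}, so there is no in-paper argument to compare against. On its own terms your sketch is sound, and the core device --- a logarithmic change of variables converting the multiplicative slow-oscillation scale into an additive uniform-continuity scale, followed by an ordinary fixed-width mollification --- is the natural and, as far as I know, the standard one for this type of density result. Your derivative estimate $|h'(t)|\le\|\phi'\|_1\,L^{-1}\operatorname{osc}(g,[t-L,t+L])$ is correct (using $\int\phi_L'=0$), and the uniform continuity of $g$ on all of $\R$ indeed follows from $\operatorname{osc}(g,[t,t+c])\to 0$ as $t\to\infty$ together with continuity on compacta; this is what lets a single fixed window make $\|h-g\|_\infty$ small while also forcing $h'\to 0$.

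The one place that deserves sharpening is the closing assertion that membership of $b_1+b_2$ in $SO$ is ``automatic.'' Since $SO$ is closed in the uniform norm, the bound $\|b_1+b_2-a\|_\infty$ small is not by itself enough. What the construction actually provides, and what should be invoked, is the stronger fact that $(b_1+b_2)(x)-a(x)\to 0$ as $|x|\to\infty$: indeed $|h(t)-g(t)|\le\operatorname{osc}(g,[t-L,t+L])\to 0$ as $t\to\infty$, and the cutoff contributions vanish near $\infty$. Then $\operatorname{osc}(b_1+b_2,E_x)\le\operatorname{osc}(a,E_x)+2\sup_{E_x}|b_1+b_2-a|\to 0$ with $E_x=[-2x,-x]\cup[x,2x]$, which is precisely the $SO$ condition --- note this uses the two-sided set $E_x$, so it is important that the two half-line constructions are assembled and checked jointly rather than separately. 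Alternatively one can bound $\operatorname{osc}(b_2,E_x)$ directly by the oscillation of $a$ over a slightly dilated copy of $E_x$. With this filled in, together with the routine remark that $h'$ is bounded and so $b_1+b_2\in C^1_b(\R)$, the proof is complete.
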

From now on, we consider $1<p<\infty$. Let $\cB:=\cB(L^p(\R))$ be
the Banach algebra of all bounded linear operators on $L^p(\R)$
and $\cK:=\cK(L^p(\R))$ be the closed two-sided ideal of all
compact operators on $L^p(\R)$. The Cauchy singular integral
operator $S_\R$ is defined on $L^p(\R)$ by
\[
(S_\R f)(x):=\frac{1}{\pi i}\int_\R \frac{f(t)}{t-x}\,dt
\quad(x\in\R)
\]
where the integral is understood in the sense of principal value.

Let $F:L^2(\R)\to L^2(\R)$ denote the Fourier transform given by
\eqref{eq:Fourier}, and let $F^{-1}$ be its inverse.
We define the operator $W^0(a)$ on $L^2(\R)\cap L^p(\R)$ by
\begin{equation}\label{eq:def-operator-W0}
(W^0(a)\varphi)(x):=(F^{-1}aF\varphi)(x),
\quad
\varphi\in L^2(\R)\cap L^p(\R).
\end{equation}
A function $a\in L^\infty(\R)$ is called a \textit{Fourier
multiplier on} $L^p(\R)$ if the operator $W^0(a)$ given by
\eqref{eq:def-operator-W0} can be extended to a bounded linear
operator on $L^p(\R)$, which will again be denoted by $W^0(a)$. By
$\chi_+$ (resp. $\chi_-$) denote the characteristic function of
the semi-axis $\R_+:=(0,+\infty)$ (resp. $\R_-:=(-\infty,0)$).
Then
\[
W^0(\chi_-)=P_\R:=(I+ S_\R)/2,
\quad
W^0(\chi_+)=Q_\R:=(I-S_\R)/2
\]
are two complementary projections on $L^p(\R)$
(see e.g. \cite[Section~2]{Duduchava79} or \cite[Section~2.5]{BKS02}).

The set $\cM_p$ of all Fourier multipliers
on $L^p(\R)$ is defined as
\[
\cM_p:=\big\{a\in L^\infty(\R)\ :\ W^0(a)\in\cB(L^p(\R))\big\}.
\]
It is well known that $\cM_p$ is a Banach algebra with the norm
\[
\|a\|_{\cM_p}:=\|W^0(a)\|_{\cB(L^p(\R))},
\]
and
\begin{equation}\label{eq:multipliers-embedding}
\cM_p\subset\cM_2=L^\infty(\R)
\quad\text{for all}\quad p\in(1,\infty).
\end{equation}
According to Stechkin's inequality (see e.g. \cite[Theorem~17.1]{BKS02}),
$\cM_p$ contains all functions $a\in PC$ of finite total variation and
\[
\|a\|_{\cM_p}\le \|S_\R\|_{\cB}\big(\|a\|_\infty+V_1(a)\big).
\]
Let $C_p(\dot{\R})$ and $C_p(\overline{\R})$ stand for the closure in
$\cM_p$ of the set of all functions with finite total variation in
$C(\dot{\R})$ and $C(\overline{\R})$, respectively. Denote by
$PC_p$ the closure in $\cM_p$ of the set of all piecewise constant
functions on $\R$ which have at most finite sets of jumps.
\begin{lemma}[{\cite[Lemma~1.1]{SimMin86}}]
\label{le:Simonenko}
If $1<p<\infty$, then the algebra $C_p(\overline{\R})$ is generated
by the functions $f(x)=1$ and $g(x)=\tanh x$.
\end{lemma}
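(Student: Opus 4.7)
The plan is to denote by $\cA$ the closed subalgebra of $\cM_p$ generated by the constant function $1$ and $g(x):=\tanh x$, and to prove $\cA=C_p(\overline{\R})$ by double inclusion. Since $\tanh\in C(\overline{\R})$ and $V_1(\tanh)=2<\infty$, Stechkin's inequality places $\tanh$ in $C_p(\overline{\R})$; hence $\cA\subset C_p(\overline{\R})$.

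For the reverse inclusion it suffices to approximate every $a\in V_1(\R)\cap C(\overline{\R})$ in the $\cM_p$-norm by polynomials in $\tanh$. Using that $\tanh\colon\overline{\R}\to[-1,1]$ is a homeomorphism, write $a=b\circ\tanh$ with $b$ continuous and of bounded variation on $[-1,1]$ (with $V_1(b)=V_1(a)$). I would proceed in two stages. First, mollify $b$ to $b_\eps\in C^\infty([-1,1])$ by convolution with a smooth mollifier (after constant extension outside $[-1,1]$), so that $\|b_\eps-b\|_\infty\to 0$ and $V_1(b_\eps)\le V_1(b)$. Second, for each $\eps$, apply the Weierstrass theorem to approximate $b_\eps'$ uniformly on $[-1,1]$ by polynomials $q_n$, and set $p_n(t):=b_\eps(-1)+\int_{-1}^t q_n(s)\,ds$, so that $p_n\to b_\eps$ and $p_n'\to b_\eps'$ uniformly on $[-1,1]$. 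Via the change of variable $t=\tanh x$,
\[
V_1(p_n\circ\tanh-b_\eps\circ\tanh)=\int_{-1}^1|p_n'(t)-b_\eps'(t)|\,dt\to 0,
\]
so combined with uniform convergence, Stechkin's inequality yields $p_n\circ\tanh\to b_\eps\circ\tanh$ in $\cM_p$.

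The key obstacle is the first stage: showing $b_\eps\circ\tanh\to a$ in $\cM_p$. A direct appeal to Stechkin fails because $V_1(b_\eps-b)$ need not tend to zero; a continuous function of bounded variation need not be absolutely continuous, and mollification cannot approximate the singular part in BV-norm (think of a Cantor-staircase composed with $\tanh$). My plan to circumvent this is a Riesz--Thorin interpolation argument. Since $\|b_\eps\circ\tanh-a\|_{\cM_2}=\|b_\eps-b\|_\infty\to 0$ and, by Stechkin, $\|b_\eps\circ\tanh-a\|_{\cM_q}$ is uniformly bounded for every $q\in(1,\infty)$, choosing $q$ so that $p$ lies strictly between $2$ and $q$ and interpolating the operator norm of $W^0(b_\eps\circ\tanh-a)$ between $L^2$ and $L^q$ gives $\|b_\eps\circ\tanh-a\|_{\cM_p}\to 0$. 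Chaining the two stages then produces polynomials in $\tanh$ that converge to $a$ in the $\cM_p$-norm, completing the proof.
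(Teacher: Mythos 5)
The paper cites this lemma from \cite{SimMin86} without reproducing a proof, so there is no argument in the paper to compare against; your proof is therefore judged on its own terms, and I find it correct and complete. The inclusion $\cA\subset C_p(\overline{\R})$ is immediate from Stechkin, and your reduction of the converse to approximating $a\in V_1(\R)\cap C(\overline{\R})$ by polynomials in $\tanh$ is the right one, since $C_p(\overline{\R})$ is by definition the $\cM_p$-closure of that set. Your change-of-variable identity $V_1(h\circ\tanh)=\int_{-1}^1|h'(t)|\,dt$ for $C^1$ functions $h$ on $[-1,1]$ is correct, which makes the Weierstrass stage (approximating $b_\eps'$ and integrating back) work cleanly via Stechkin. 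Most importantly, you have correctly identified the genuine difficulty: mollification of a merely continuous BV function $b$ does not converge to $b$ in total variation (the Cantor example is exactly right, since $db_\eps$ and $db$ are then mutually singular, giving $V_1(b_\eps-b)\to 2V_1(b)$), so Stechkin alone cannot close the argument. Your Riesz--Thorin interpolation between $\cM_2=L^\infty$ (where mollification does converge) and a fixed $\cM_q$ with $p$ strictly between $2$ and $q$ (where Stechkin gives a uniform bound via $V_1(b_\eps)\le V_1(b)$) is precisely the standard device used to handle this, and the exponent $\theta\in(0,1)$ makes the $L^2$ factor drive the product to zero. The only cosmetic remark is that you should dispose of the trivial case $p=2$ separately before choosing $q$, but this is obvious. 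The proof stands.
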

In view of \eqref{eq:def-SO1}, every function $a\in SO^1$ has the
properties $a\in C_b(\R)$ and $Da\in C_b(\R)$ where
$(Da)(x)=xa'(x)$. Therefore, by Mikhlin's theorem (see e.g.
\cite[Theorem~5.2.7(a)]{Grafakos08}), every function $a\in SO^1$
belongs to all spaces $\cM_p$ for $p\in(1,\infty)$. Hence, by
analogy with $C_p(\dot{\R})$ and taking into account
Lemma~\ref{le:SO1}, one can define the set $SO_p$ of slowly
oscillating Fourier multipliers as the closure in $\cM_p$ of the
set $SO^1$. Clearly, $SO_p$ is a Banach subalgebra of $\cM_p$. The
set $SO_p$ was introduced in \cite{BBK04-OT,BBK04-MN}.
From
\eqref{eq:multipliers-embedding} it follows that
\begin{equation}\label{eq:SO1-embedded-in-SO}
SO^1\subset SO_p\subset SO_2=SO
\quad\text{for all}\quad 1<p<\infty.
\end{equation}

For $a\in L^\infty(\R)$ by $\overline{a}$ denote the function
$\overline{a}(t):=\overline{a(t)}$, where the bar denotes the
complex conjugation.
\begin{lemma}\label{le:SOq}
Let $1<p<\infty$ and $1/p+1/q=1$. If $a\in SO_p$, then $\overline{a}\in SO_q$.
\end{lemma}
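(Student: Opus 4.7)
\medskip\noindent
\textbf{Proof plan.} The argument should reduce everything to two ingredients: that the class $SO^1$ is preserved under complex conjugation, and that passing to the complex conjugate of a Fourier multiplier symbol corresponds to taking the Banach-space adjoint of the convolution operator. First I would observe that the defining conditions of $\widetilde{SO}^1$ and $SO$ (membership in $C_b^1(\R)$, the vanishing of $xa'(x)$ at infinity, and the $\operatorname{osc}$-condition on $[-2x,-x]\cup[x,2x]$) are all invariant under replacing a function by its pointwise complex conjugate, so that $\overline{SO^1}=SO^1$.

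Next I would establish, as the main analytic step, the isometric identity
\[
\|\overline{a}\|_{\cM_q}=\|a\|_{\cM_p}\qquad(1/p+1/q=1)
\]
for every $a\in\cM_p$. The quickest route is to compute the Banach-space adjoint of $W^0(a):L^p(\R)\to L^p(\R)$ using the antilinear pairing $\langle\varphi,\psi\rangle=\int_\R\varphi\,\overline{\psi}\,dx$. For $\varphi,\psi\in L^2(\R)\cap L^p(\R)$ one uses the fact that $F^*=F^{-1}$ on $L^2(\R)$ together with $M_a^*=M_{\overline{a}}$ to obtain
\[
\langle W^0(a)\varphi,\psi\rangle
=\langle F^{-1}aF\varphi,\psi\rangle
=\langle \varphi,F^{-1}\overline{a}F\psi\rangle
=\langle\varphi,W^0(\overline{a})\psi\rangle.
\]
Since $L^2(\R)\cap L^p(\R)$ is dense in $L^p(\R)$ and $L^2(\R)\cap L^q(\R)$ is dense in $L^q(\R)$, this identity determines $W^0(a)^*$ as $W^0(\overline{a})$ on $L^q(\R)$, and the norms coincide by the standard $\|T\|=\|T^*\|$.

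Finally, I would combine these two facts. Let $a\in SO_p$ and pick a sequence $(a_n)\subset SO^1$ with $\|a_n-a\|_{\cM_p}\to 0$; this exists by the definition of $SO_p$ as the closure of $SO^1$ in $\cM_p$. By the first observation $(\overline{a_n})\subset SO^1$, and by the second
\[
\|\overline{a_n}-\overline{a}\|_{\cM_q}
=\|\overline{a_n-a}\|_{\cM_q}
=\|a_n-a\|_{\cM_p}\longrightarrow 0.
\]
Hence $\overline{a}$ belongs to the closure of $SO^1$ in $\cM_q$, which is precisely $SO_q$. The only part that is not a formal manipulation is the adjoint computation, but it is a routine consequence of the Fourier inversion formula and does not present a real obstacle; the rest is bookkeeping with the definitions.
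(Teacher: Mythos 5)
Your proof is correct and follows essentially the same route as the paper: approximate $a$ by a sequence in $SO^1$, observe that $SO^1$ is invariant under complex conjugation, and use the isometry $\|\overline{b}\|_{\cM_q}=\|b\|_{\cM_p}$ to pass the convergence from $\cM_p$ to $\cM_q$. The only difference is cosmetic: the paper quotes this duality/isometry fact from \cite[Section~9.3(b)]{BS06}, whereas you derive it directly from the adjoint identity $(W^0(a))^*=W^0(\overline{a})$ with respect to the sesquilinear pairing, which is a perfectly adequate substitute.
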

\begin{proof}
Let $a$ be the limit of a sequence $(a_n)_{n=1}^\infty\subset
SO^1$ in the norm of $\cM_p$. Obviously, $\overline{a_n}\in
SO^1\subset SO_q$. Since $a-a_n\in\cM_p$, from
\cite[Section~9.3(b)]{BS06} we see that
$\overline{a}-\overline{a_n}\in\cM_q$ and
$\|\overline{a}-\overline{a_n}\|_{\cM_q}=\|a-a_n\|_{\cM_p}\to 0$
as $n\to\infty$. This means that $\overline{a}\in SO_q$.
\end{proof}
We denote by $[PC,SO]$ the smallest $C^*$-subalgebra of $L^\infty(\R)$ that
contains $PC$ and $SO$. Similarly, for every $p\in(1,\infty)$, we introduce
the Banach subalgebra $[PC_p,SO_p]$ of $\cM_p$ generated by $PC_p$ and $SO_p$.
Obviously,
\[
[PC_p,SO_p]\subset[PC_2,SO_2]=[PC,SO]
\quad\text{for all}\quad 1<p<\infty.
\]
\subsection{Compactness of Hankel type operators and commutators}
Put
\[
\C_+:=\{z\in\C:\operatorname{Im}z>0\},
\quad
\C_-:=\{z\in\C:\operatorname{Im}z<0\},
\]
and denote by $H^\infty(\C_\pm)$ the set of all bounded and analytic functions
in $\C_\pm$. Fatou's theorem says that functions in $H^\infty(\C_\pm)$ have
non-tangential limits on $\R$ almost everywhere, and we denote by $H_\pm^\infty$
the set of all $a\in L^\infty(\R)$ that are non-tangential limits of functions
in $H^\infty(\C_\pm)$. It is well known that $H_\pm^\infty$ are closed
subalgebras of $L^\infty(\R)$. Sarason discovered (see e.g. \cite[p.~107]{BS06})
that the smallest closed subalgebra of $L^\infty(\R)$ that contains both
$C(\dot{\R})$ and $H_\pm^\infty$ coincides with the sum of $C(\dot{\R})$ and
$H_\pm^\infty$.
\begin{lemma}\label{le:compactness-Hankel}
Let $1<p<\infty$. If $b\in SO_p$, then $\chi_+W^0(b)\chi_-I$ and
$\chi_-W^0(b)\chi_+I$ are compact on $L^p(\R)$.
\end{lemma}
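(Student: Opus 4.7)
The plan is to reduce to a smooth dense subclass of symbols by the very construction of $SO_p$, then obtain compactness on that subclass through a kernel/approximation argument, with the companion statement following by symmetry or duality.

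\textbf{Step 1: Density reduction.} By the definition of $SO_p$ as the $\cM_p$-closure of $SO^1$, any $b \in SO_p$ is an $\cM_p$-limit $b_n \to b$ with $b_n \in SO^1$. Since
\[
\bigl\|\chi_\pm W^0(b-b_n)\chi_\mp I\bigr\|_{\cB} \le \|W^0(b-b_n)\|_{\cB} = \|b-b_n\|_{\cM_p} \to 0
\]
and $\cK(L^p(\R))$ is closed in $\cB(L^p(\R))$, it suffices to prove the lemma for $b \in SO^1$.

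\textbf{Step 2: Compactness for $b \in SO^1$.} For such $b$ one has $b, b' \in C_b(\R)$ and $xb'(x) \to 0$ as $|x| \to \infty$. I would further approximate $b$ in the $\cM_p$-norm by symbols $b^{(N)}$ whose Hankel-type cut convolutions are manifestly compact. The natural target class is $C_p(\overline{\R})$, whose functions possess limits at $\pm\infty$ and whose truncations $\chi_\pm W^0(\cdot)\chi_\mp I$ on $L^p(\R)$ are already known to be compact (smooth compactly-supported perturbations produce Hilbert--Schmidt integral kernels supported in $\R_+\times\R_-$, hence off the diagonal of $F^{-1}b$; this extends to $C_p(\overline{\R})$ by Stechkin's inequality and the closedness of $\cK$). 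Convergence $b^{(N)}\to b$ in $\cM_p$ then transfers compactness to $\chi_+ W^0(b)\chi_- I$.

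\textbf{Main obstacle.} The crux is the construction of $b^{(N)}\in C_p(\overline{\R})$ with $\|b-b^{(N)}\|_{\cM_p}\to 0$. A naive cutoff $b\varphi_N$ will not do: an $SO^1$ function need not have limits at $\pm\infty$, and the total variation of $b\varphi_N-b$ need not tend to zero, so Stechkin's inequality yields nothing. One must instead replace $b$ near infinity by a slowly varying local average; the defining condition $\lim_{x\to\infty} xb'(x)=0$ can then be exploited via a dyadic telescoping on the intervals $[2^k,2^{k+1}]$ to show that the correction has $V_1$-norm tending to zero, at which point Stechkin's inequality produces the desired $\cM_p$-convergence.

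\textbf{Second operator.} The compactness of $\chi_- W^0(b)\chi_+ I$ follows by the symmetric version of the above argument. Alternatively, one may appeal to duality: the Banach adjoint acts on $L^q(\R)$, with $1/p+1/q=1$, as (up to the canonical identification) $\chi_+ W^0(\overline{b})\chi_- I$, and Lemma~\ref{le:SOq} gives $\overline{b}\in SO_q$, so the already established case applied on $L^q$ yields the compactness and transfers back to $L^p$.
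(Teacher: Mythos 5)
Your Step 1 (density reduction to $b\in SO^1$ via closedness of $\cK$) is exactly the paper's reduction, and your duality observation for the second operator, using Lemma~\ref{le:SOq}, is a valid alternative to simply invoking symmetry. However, Step 2 has a genuine gap that cannot be fixed along the lines you sketch: you propose to approximate $b\in SO^1$ in the $\cM_p$-norm by symbols $b^{(N)}\in C_p(\overline{\R})$, but this is impossible precisely in the interesting case. Every $b^{(N)}\in C_p(\overline{\R})\subset C(\overline{\R})$ has limits at $\pm\infty$, and since $\|\cdot\|_{\cM_p}\ge\|\cdot\|_\infty$, convergence $b^{(N)}\to b$ in $\cM_p$ would force $b$ into the uniformly closed algebra $C(\overline{\R})$; but a typical $b\in SO^1$ has no limits at $\pm\infty$ (this is the whole point of $SO$). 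Your local-averaging idea does not escape this: a local average of a function without limits at infinity still has no limits at infinity, and making the $V_1$-norm of the correction small does not help, because the $\|\cdot\|_\infty$ term in Stechkin's bound stays bounded away from zero. So the target class $C_p(\overline{\R})$ is simply unreachable.

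The paper's route avoids any $\cM_p$-approximation by symbols with limits. For $b_n\in SO^1$, one uses the inclusion $SO\subset\bigl(C(\dot{\R})+H_+^\infty\bigr)\cap\bigl(C(\dot{\R})+H_-^\infty\bigr)$ together with Hartman's theorem to conclude that $\chi_\pm W^0(b_n)\chi_\mp I$ are compact on $L^2(\R)$; they are also bounded on every $L^p(\R)$, so by the Krasnosel'skii interpolation theorem they are compact on $L^p(\R)$ for all $p\in(1,\infty)$. This bypasses the need for $C_p(\overline{\R})$ entirely. If you want to salvage a proof along your lines you would need a different mechanism that establishes compactness directly from the $SO^1$ structure (e.g., via Hartman on $L^2$ plus interpolation, as in the paper, or by exhibiting the kernel of $\chi_+W^0(b_n)\chi_-I$ as a limit of degenerate kernels in a norm that works on $L^p$), rather than approximation by symbols continuous at infinity.
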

\begin{proof}
From \cite[Sections~9.11, 9.35, and 2.80]{BS06} it follows that
\begin{equation}\label{eq:SO-embedded-in-QC}
SO\subset (C(\dot{\R})+H_+^\infty)\cap(C(\dot{\R})+H_-^\infty).
\end{equation}
By Hartman's theorem (see e.g. \cite[Theorem~2.18]{BKS02} and also
\cite[Section~2.54]{BS06}),
if $b\in C(\dot{\R})+H_-^\infty$ (resp. $b\in C(\dot{\R})+H_+^\infty$),
then the operator $\chi_+W^0(b)\chi_-I$ (resp. $\chi_-W^0(b)\chi_+I$)
is compact on $L^2(\R)$.

Let $b\in SO_p$ be the limit in the norm of $\cM_p$ of a sequence $b_n \in SO^1$.
From the above results and
\eqref{eq:SO1-embedded-in-SO}--\eqref{eq:SO-embedded-in-QC} it follows that
$\chi_\pm W^0(b_n)\chi_\mp I$ are compact on $L^2(\R)$. Moreover, these
operators are bounded on every $L^p(\R)$ with $p\in(1,\infty)$.
By the Krasnosel'skii interpolation theorem \cite{Krasnoselskii60}
(see also \cite[Theorem~3.10]{KZPS76}),
the operators $\chi_\pm W^0(b_n)\chi_\mp I$ are compact on $L^p(\R)$
for every $p\in(1,\infty)$. From
\[
\begin{split}
\|\chi_\pm W^0(b)\chi_\mp I-\chi_\pm W^0(b_n)\chi_\mp I\|_{\cB(L^p(\R))}
&\le
\|W^0(b-b_n)\|_{\cB(L^p(\R))}
\\
&
=\|b-b_n\|_{\cM_p}=o(1)
\end{split}
\]
as $n\to\infty$ it follows that the operators $\chi_\pm W^0(b)\chi_\mp I$ are
compact on $L^p(\R)$.
\end{proof}
\begin{lemma}[{\cite[Lemma~7.4]{Duduchava79}}]
\label{le:Duduchava}
Let $1<p<\infty$. If $a\in C(\overline{\R})$ and $b\in C_p(\overline{\R})$,
then the operator $aW^0(b)-W^0(b)aI$ is compact on the space $L^p(\R)$.
\end{lemma}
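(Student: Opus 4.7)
The plan is to exploit the algebraic structure of $C_p(\overline{\R})$: by Lemma~\ref{le:Simonenko} this algebra is generated in $\cM_p$ by the constant function and the single function $\tanh x$. So after fixing $a\in C(\overline{\R})$ I would reduce the claim to compactness of the commutator for this one generator and then propagate the property to the whole of $C_p(\overline{\R})$ by algebraic and closure arguments.

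Concretely, for each $a\in C(\overline{\R})$ I would introduce
\[
\cI_a:=\bigl\{b\in\cM_p:aW^0(b)-W^0(b)aI\in\cK(L^p(\R))\bigr\}
\]
and verify that $\cI_a$ is a closed subalgebra of $\cM_p$ containing $1$. Closure under norm limits in $\cM_p$ follows because $\cK(L^p(\R))$ is a closed ideal in $\cB$ and $\|W^0(b)-W^0(b_n)\|_{\cB}=\|b-b_n\|_{\cM_p}$. The subalgebra property follows from the Leibniz identity
\[
[aI,W^0(b_1b_2)]=[aI,W^0(b_1)]\,W^0(b_2)+W^0(b_1)\,[aI,W^0(b_2)],
\]
together with the multiplicativity $W^0(b_1b_2)=W^0(b_1)W^0(b_2)$ for Fourier multipliers. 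Lemma~\ref{le:Simonenko} then reduces the problem to proving $\tanh\in\cI_a$.

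For the generator, I would split $\tanh(x)=\operatorname{sgn}(x)-h(x)$, where $h:=\operatorname{sgn}-\tanh$ decays exponentially at $\pm\infty$ and belongs to $V_1(\R)\cap L^1(\R)$. From the identities $W^0(\chi_\pm)=(I\mp S_\R)/2$ recalled before the statement, one has $W^0(\operatorname{sgn})=-S_\R$, and the compactness of $[aI,S_\R]$ for $a\in C(\overline{\R})$ is a classical fact in the $L^p$ theory of singular integral operators with continuous coefficients. The remainder $W^0(h)$ is (up to a normalizing constant) convolution with the integrable kernel $k:=F^{-1}h$, so $[aI,W^0(h)]$ is the integral operator with kernel $k(x-y)\bigl(a(x)-a(y)\bigr)$; its compactness on $L^p(\R)$ follows from the uniform continuity of $a$ on $\overline{\R}$ and the integrability of $k$, via a Fr\'echet--Kolmogorov argument.

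The main obstacle is the singular integral piece: one cannot control $[aI,S_\R]$ by a pointwise kernel estimate, so I would appeal to the classical compactness theorem for $[aI,S_\R]$ with $a\in C(\overline{\R})$, proved by approximating $a$ in sup norm by smooth functions for which the commutator has an absolutely integrable kernel, and then passing to the limit in operator norm using the $L^p$-boundedness of $S_\R$. Once this input is accepted, the convolution piece and the algebraic propagation through $\cI_a$ are essentially routine.
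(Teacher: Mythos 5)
The paper merely cites this lemma from Duduchava's book and gives no proof of its own, so there is no internal argument to compare against; I will judge your proposal on its own terms. The reduction step is sound: $\cI_a:=\{b\in\cM_p:\;aW^0(b)-W^0(b)aI\in\cK\}$ is indeed a closed unital subalgebra of $\cM_p$ (closedness because $\cK$ is closed and $\|W^0(b)-W^0(b_n)\|_\cB=\|b-b_n\|_{\cM_p}$; closure under products from the Leibniz identity), so Lemma~\ref{le:Simonenko} reduces the claim to $\tanh\in\cI_a$.

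The treatment of the generator, however, contains two linked errors. First, $[aI,S_\R]$ is \emph{not} compact for general $a\in C(\overline{\R})$; the classical compactness result is for $a\in C(\dot{\R})$, i.e.\ under the extra hypothesis $a(+\infty)=a(-\infty)$. To see the failure, take $a=\tanh$ and $\varphi_n:=n^{-1/p}\chi_{[n,2n]}$: these are unit vectors converging weakly to zero, yet for $x\in[-2n,-n]$ and large $n$ the commutator kernel $(\tanh x-\tanh t)/(t-x)$ is $\asymp -2/(t-x)\asymp -1/n$ uniformly in $t\in[n,2n]$, so $\big|([\tanh I,S_\R]\varphi_n)(x)\big|\ge c\,n^{-1/p}$ on an interval of length $n$, giving $\|[\tanh I,S_\R]\varphi_n\|_p\ge c>0$. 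Since $L^p(\R)$ is reflexive and compact operators send weakly null sequences to norm null ones, $[\tanh I,S_\R]$ is not compact. Second, $k:=F^{-1}h$ with $h=\operatorname{sgn}-\tanh$ is \emph{not} integrable. You correctly note $h\in L^1(\R)\cap V_1(\R)$ with exponential decay at $\pm\infty$, but $h$ jumps by $2$ at the origin ($h(0^\pm)=\pm1$), and a jump of the symbol forces $k(x)$ to decay only like $1/|x|$; so $k\notin L^1(\R)$ and the Fr\'echet--Kolmogorov argument for $[aI,W^0(h)]$ collapses.

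These two gaps are not accidents: the non-compact parts of $[aI,S_\R]$ and $[aI,W^0(h)]$ exactly cancel in the difference $[aI,W^0(\tanh)]$, so the decomposition $\tanh=\operatorname{sgn}-h$ can never be estimated piece by piece. A working argument must treat $W^0(\tanh)$ as a whole. The convolution kernel of $W^0(\tanh)$ is, up to normalization, the principal value of $1/\sinh\!\bigl(\pi(x-t)/2\bigr)$, which has a simple pole on the diagonal but decays exponentially off it. The commutator kernel $\bigl(a(x)-a(t)\bigr)m(x-t)$ with such $m$ is bounded near the diagonal (uniform continuity of $a$) and exponentially small away from it; one then approximates in operator norm by truncating in $|x-t|$ and in $|x|$, controlling the tails by the exponential decay of $m$ and by the fact that, for $a\in C(\overline{\R})$, $a(x)-a(t)$ is small whenever $x$ and $t$ are both large and of the same sign. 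That is the missing idea.
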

\begin{lemma}[{\cite[Theorem~4.2]{BBK04-OT}}]
\label{le:compactness-commutator}
Let $1<p<\infty$. If $a\in [PC,SO]$, $b\in SO_p$ or $a\in SO$, $b\in [PC_p,SO_p]$,
then the operator $aW^0(b)-W^0(b)aI$ is compact on the space $L^p(\R)$.
\end{lemma}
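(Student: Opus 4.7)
The strategy is to reduce, via bilinearity and density, to showing compactness of $[aI,W^0(b)]$ when $a$ and $b$ each range over generating sets of the relevant algebras, and then to handle each generator pair using Lemmas~\ref{le:compactness-Hankel} and~\ref{le:Duduchava}.

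For fixed $b\in\cM_p$, the map $a\mapsto aW^0(b)-W^0(b)aI$ is bounded linear from $L^\infty(\R)$ to $\cB(L^p(\R))$, and for fixed $a\in L^\infty(\R)$ the map $b\mapsto aW^0(b)-W^0(b)aI$ is bounded linear from $\cM_p$ to $\cB(L^p(\R))$. Combined with the Leibniz identity
\[
[a_1a_2I,W^0(b)]=a_1I\cdot[a_2I,W^0(b)]+[a_1I,W^0(b)]\cdot a_2I
\]
and its $b$-side analogue obtained from $W^0(b_1b_2)=W^0(b_1)W^0(b_2)$, and with the fact that $\cK(L^p(\R))$ is closed in $\cB(L^p(\R))$, this reduces the problem to verifying compactness when $a$ lies in a uniform-norm generating set of $[PC,SO]$ (or $SO$) and $b$ lies in an $\cM_p$-norm generating set of $SO_p$ (or $[PC_p,SO_p]$).

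In case (i) with $a\in PC$, every element of $PC$ is a uniform limit of piecewise constant functions with finitely many jumps, since one-sided limits exist at every point of $\dot{\R}$ (including $\infty$), forcing the set of jumps of size exceeding $\varepsilon$ to be finite for each $\varepsilon>0$. Such step functions are finite linear combinations, modulo constants, of indicators $\chi_{(t,\infty)}$; and since the translation $U_t=W^0(e_t)$ with $e_t(x)=e^{ixt}$ is itself a Fourier multiplier and hence commutes with $W^0(b)$, the identity $\chi_{(t,\infty)}I=U_t\chi_+IU_{-t}$ reduces the problem to $a=\chi_+$. Inserting $I=\chi_+I+\chi_-I$ on either side gives
\[
[\chi_+I,W^0(b)]=\chi_+W^0(b)\chi_-I-\chi_-W^0(b)\chi_+I,
\]
which is compact by Lemma~\ref{le:compactness-Hankel}. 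In case (i) with $a\in SO$, Lemma~\ref{le:SO1} allows a uniform approximation by elements of $SO^1\subset SO_p$, so it suffices to treat $a\in SO^1$; in case (ii), the generator reduction either produces $b\in SO_p$ (handled by case (i) since $SO\subset[PC,SO]$) or $b\in PC_p$, and the latter reduces via the $PC$-argument above to $W^0(b)=(I\mp S_\R)/2$, i.e.\ to the classical commutator $[S_\R,aI]$ with $a\in SO$.

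The principal analytical obstacle is the slowly-oscillating case itself: unlike $PC$, elements of $SO$ have no jumps to isolate, so Lemma~\ref{le:compactness-Hankel} does not apply directly, and unlike $C(\overline{\R})$, they are not in general uniform limits of finite-variation continuous functions, so Lemma~\ref{le:Duduchava} does not apply directly either. To bridge the gap I would exploit Sarason's inclusion $SO\subset(C(\dot{\R})+H_+^\infty)\cap(C(\dot{\R})+H_-^\infty)$ already used in the proof of Lemma~\ref{le:compactness-Hankel}: split $a\in SO^1$ as $a=a_c+a_{\pm}$ with $a_c\in C(\dot{\R})$ and $a_{\pm}\in H_{\pm}^\infty$, cut by $\chi_\pm I$ on each side of the commutator, and treat the $H_\pm^\infty$ contributions by Hartman-type Hankel compactness on $L^2$ combined with the Krasnoselskii interpolation argument of Lemma~\ref{le:compactness-Hankel}, while the $C(\dot{\R})$ contribution is handled by first approximating $b\in SO_p$ by elements of $SO^1\subset C_b^1(\R)\cap\cM_p$ and invoking Lemma~\ref{le:Duduchava}.
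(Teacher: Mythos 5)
The paper does not prove this lemma: it is cited directly as \cite[Theorem~4.2]{BBK04-OT}, so there is no in-paper argument to compare against. Evaluating your proposal on its own merits, the bilinearity/Leibniz/density reduction and the treatment of $a\in PC$ via translation to $\chi_+$ and Lemma~\ref{le:compactness-Hankel} are sound. The problem is precisely the ``principal analytical obstacle'' you flag, and the bridge you propose for it does not hold.

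First, the $C(\dot\R)$ contribution cannot be finished by Lemma~\ref{le:Duduchava}: that lemma needs $b\in C_p(\overline\R)$, but $SO^1\not\subset C(\overline\R)$ (slowly oscillating functions in general have no limits at $\pm\infty$; e.g.\ $\sin\sqrt{\log(1+x^2)}\in SO^1$ has none). So the approximants of $b$ you produce never land in the hypothesis of Lemma~\ref{le:Duduchava}, and the commutator $[a_cI,W^0(b)]$ with $a_c\in C(\dot\R)$ and $b\in SO_p$ remains unproved. Second, the $H_\pm^\infty$ pieces of $a$ do not give a usable Hankel structure against a general $W^0(b)$. Cutting by $\chi_\pm I$ on the physical side produces the Hankels $\chi_\pm W^0(b)\chi_\mp I$, but their compactness already follows from $b\in SO_p$ alone (that is Lemma~\ref{le:compactness-Hankel}) and uses nothing about $a_+\in H_+^\infty$; the residual diagonal pieces are simply $\chi_\pm[aI,W^0(b)]\chi_\pm=\chi_\pm\bigl(aW^0(b)-W^0(b)aI\bigr)\chi_\pm$, i.e.\ the full commutator compressed to a half-line, so no reduction has been achieved. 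Hartman's theorem exploits the Hardy structure of the \emph{symbol} of a Hankel operator; to see the Hardy structure of $a$ one must compress by $P_\R,Q_\R$ on the \emph{Fourier} side, not by $\chi_\pm I$. If you do that, the cross terms $P_\R a_+Q_\R W^0(b)-W^0(b)P_\R a_+Q_\R$ and $Q_\R a_+P_\R W^0(b)-W^0(b)Q_\R a_+P_\R$ can indeed be handled (one Hankel vanishes, the other is compact by the Sarason inclusion), but the diagonal pieces $P_\R[a_+I,W^0(b)]P_\R$ and $Q_\R[a_+I,W^0(b)]Q_\R$ reduce to commutators of Wiener--Hopf operators with multiplication by an $SO_p$-symbol on a half-line, which is an equally hard problem; your plan gives no argument for them. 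In short, the two ingredients you name (Lemma~\ref{le:Duduchava} and Hartman-type Hankel compactness) simply do not reach the genuinely new case $a\in SO$, $b\in SO_p$, and additional technology (as in \cite{BBK04-OT}, e.g.\ explicit kernel/limit-operator estimates exploiting $\lim_{x\to\infty}xa'(x)=0$, or a logarithmic change of variables) is unavoidable.
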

\subsection{Maximal ideal spaces of some commutative Banach algebras}
\label{subsection:fibration}
Let $M(SO)$ denote the maximal ideal space of $SO$.
Identifying points $t\in\dot{\R}$ with the evaluation functionals
at $t$, one can identify the fiber of $M(SO)$ over $t\in\dot{\R}$
by
\[
M_t(SO)=\big\{\eta\in M(SO)\ :\ \eta|_{C(\dot{\R})}=t\big\}.
\]
If $t\in\R$, then the fiber $M_t(SO)$ consists of the only evaluation functional
at $t$, and thus
\[
M(SO)=\bigcup_{t\in\dot{\R}}M_t(SO)=\R\cup M_\infty(SO).
\]
According to \cite[Proposition~3.1]{BBK04-OT}, the fiber $M_\infty(SO)$ has
the form
\[
M_\infty(SO)=(\operatorname{clos}_{SO^*}\R)\setminus\R
\]
where $\operatorname{clos}_{SO^*}\R$ is the weak-star closure of
$\R$ in $SO^*$, the dual space of $SO$. Thus, for every functional
$\eta\in M_\infty(SO)$ there exists a net $t_\omega\in\R$ that
tends to $\infty$ in the usual topology of $\R$ and such that
$\eta$ is the limit of $t_\omega$ in the Gelfand topology, that
is, $\eta(a)=\lim\limits_\omega a(t_\omega)$ for every $a\in SO$.
The next lemma shows, in particular, that for a fixed $a\in SO$,
the net $t_\omega$ can be replaced by a sequence
$\tau_n\to+\infty$.
\begin{lemma}[{\cite[Corollary~3.3]{BBK04-OT}}]\label{le:SO-partial-limits}
If $(a_k)_{k=1}^\infty$ is a countable subset of $SO$ and $\eta$
is an element of $M_\infty(SO)$, then there exists a sequence
$(\tau_n)_{n=1}^\infty\subset\R_+$ such that $\tau_n>1$,
$\tau_n\to+\infty$ as $n\to\infty$, and for every
$x\in\R\setminus\{0\}$,
\[
\eta(a_k)=\lim_{n\to\infty}a_k(\tau_nx) \quad (k\in\N).
\]
\end{lemma}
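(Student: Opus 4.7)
The plan is to build $(\tau_n)$ by a diagonal selection from the weak-star density of $\R$ in $M(SO)$, using an auxiliary $C(\dot{\R})$-function to force escape to infinity, and then to upgrade convergence at $x=1$ to convergence at every $x\in\R\setminus\{0\}$ via a dilation-invariance property of slowly oscillating functions.

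First, I would adjoin to the given family the function $c_0(t):=1/(1+|t|)$, which belongs to $C(\dot{\R})\subset SO$ and satisfies $\eta(c_0)=c_0(\infty)=0$. Since $\eta$ lies in the weak-star closure of $\R$ in $SO^*$ (as recalled just before the lemma), for each $n\in\N$ one may choose $t_n\in\R$ with $|c_0(t_n)|<1/n$ and $|a_k(t_n)-\eta(a_k)|<1/n$ for all $k\in\{1,\ldots,n\}$. The first inequality forces $|t_n|\to+\infty$, and the second yields $a_k(t_n)\to\eta(a_k)$ for every fixed $k\in\N$. The definition of $SO$ through $\operatorname{osc}(f,[-2x,-x]\cup[x,2x])$ directly gives $a(-s)-a(s)\to 0$ as $|s|\to\infty$ for every $a\in SO$, so replacing $t_n$ by $\tau_n:=|t_n|$ preserves the convergence $a_k(\tau_n)\to\eta(a_k)$; after discarding finitely many initial terms we obtain $\tau_n>1$ and $\tau_n\to+\infty$.

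The heart of the proof is the following dilation-invariance claim:
\[
a(\lambda t)-a(t)\to 0 \quad\text{as}\;\;|t|\to\infty
\]
for every $a\in SO$ and every $\lambda\in\R\setminus\{0\}$. For $\lambda\in[1,2]$ and $t>0$ large this is immediate, since $t$ and $\lambda t$ both lie in $[t,2t]\subset[-2t,-t]\cup[t,2t]$, on which the oscillation of $a$ tends to zero. A dyadic telescoping $\lambda=2^k\mu$ with $k\in\Z$ and $\mu\in[1,2)$ extends this to all $\lambda>0$, the case $t<0$ is analogous, and the symmetry $a(-s)-a(s)\to 0$ reduces $\lambda<0$ to $\lambda>0$. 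Specializing to $t=\tau_n$ and $\lambda=x$ gives $a_k(\tau_n x)-a_k(\tau_n)\to 0$, whence $a_k(\tau_n x)\to\eta(a_k)$ for every $x\in\R\setminus\{0\}$ and every $k\in\N$.

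The main obstacle is this last step: the family of dilations $\lambda\in\R\setminus\{0\}$ is uncountable and therefore not accessible by a further diagonal extraction. The resolution is to make dilation invariance automatic by harvesting it from a single uniform property---slow oscillation---that each $a_k$ already satisfies, so that no condition on $(\tau_n)$ beyond $\tau_n\to+\infty$ is needed for the conclusion.
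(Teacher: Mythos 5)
Your proof is correct. The paper does not contain a proof of this lemma; it is cited as Corollary~3.3 of the reference [BBK04-OT], so there is no in-text argument to compare against. Your two-stage strategy is exactly the natural one: a diagonal choice along the weak-star net (with the auxiliary $C(\dot{\R})$ function $c_0(t)=1/(1+|t|)$ used both to certify $\eta(c_0)=0$ and to force $|t_n|\to\infty$) produces $\tau_n=|t_n|\to+\infty$ with $a_k(\tau_n)\to\eta(a_k)$ for every $k$, and then the dilation-invariance estimate $a(\lambda t)-a(t)\to 0$ as $|t|\to\infty$, obtained by dyadic telescoping $\lambda=2^k\mu$ together with the sign-symmetry $a(-s)-a(s)\to 0$ coming straight from the oscillation condition on $[-2x,-x]\cup[x,2x]$, upgrades the conclusion from $x=1$ to all $x\in\R\setminus\{0\}$ with no further extraction. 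You have also correctly identified and resolved the only genuine pitfall, namely that the uncountability of the dilation parameters forbids a second diagonalization, and that this is circumvented because dilation-invariance of $SO$ holds uniformly in the sequence once $\tau_n\to+\infty$ is known.
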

In what follows we write
\[
a(\eta):=\eta(a)
\]
for every $a\in SO$ and every $\eta\in M(SO)$.

A unital Banach algebra $A$ is called \textit{inverse closed} in a unital
Banach algebra $B\supset A$ with the same unit if for any $a\in A$ invertible
in $B$ its inverse $a^{-1}$ belongs to $A$.

From \cite[Section~3]{BBK04-OT} it follows that the Banach algebras $SO_p$
and $[PC_p,SO_p]$ are inverse closed in the $C^*$-algebras $SO$ and $[PC,SO]$,
respectively, and their maximal ideal spaces coincide as sets:
\[
M(SO_p)=M(SO),
\quad
M([PC_p,SO_p])=M([PC,SO]).
\]
It is well known that $M_\infty(PC)=\{\pm\infty\}$. According to
\cite[Section~1]{Power80}, the fiber $M_\infty([PC,SO])$ is homeomorphic to
the product
\[
M_\infty(PC)\times M_\infty(SO)=\{\pm\infty\}\times M_\infty(SO)
\]
and the homeomorphism is given by the restriction map
$\beta\mapsto(\beta|_{PC},\beta|_{SO})$. Thus, every $\beta\in
M_\infty([PC,SO])$ can be viewed as a functional of the form
either $(+\infty,\eta)$ or $(-\infty,\eta)$ with $\eta\in
M_\infty(SO)$. Fix $\eta\in M_\infty(SO)$. Then there exists a
homomorphism
\begin{equation}\label{eq:functional-alpha-PC-SO}
\alpha_\eta:[PC,SO]\to PC|_{\{\pm\infty\}}, \quad
(\alpha_\eta a)(\pm\infty)=a_\eta(\pm\infty):=(\pm\infty,\eta)a.
\end{equation}
As the values $a_\eta(\pm\infty)$ are uniquely defined, for every
$\eta\in M_\infty(SO)$ we get the homomorphism
\begin{equation}\label{eq:functional-gamma-PC-SO}
\gamma_\eta:[PC,SO]\to PC, \quad
\gamma_\eta a=a_\eta(-\infty)\chi_-+a_\eta(+\infty)\chi_+,
\end{equation}
where $a_\eta(\pm\infty)$ are defined by
\eqref{eq:functional-alpha-PC-SO}.

As $M_\infty(PC_p)=M_\infty(PC)=\{\pm\infty\}$ for every
$p\in(1,\infty)$, we infer that the restriction of the
homomorphism $\alpha_\eta$ to $[PC_p,SO_p]$ sends this algebra to
$PC_p|_{M_\infty(PC)}=PC|_{M_\infty(PC)}$ according to
\eqref{eq:functional-alpha-PC-SO}. Therefore, for all $\eta\in
M_\infty(SO)$, the homomorphisms $\gamma_\eta$ given by
\eqref{eq:functional-gamma-PC-SO} map $[PC_p,SO_p]$ into $PC_p$.

To have an idea how $\gamma_\eta$ acts, we give the following
example.
\begin{proof}[Example.]
Let $k\in\N$ and $c_k,d_k\in\C$. Put
\[
f_k(t):=c_k\chi_-(t)+d_k\chi_+(t), \quad
g_k(t):=\frac{t^2}{t^2+1}\exp\left(i\sqrt{\log(t^{2k}+1)}\right) \quad(t\in\R).
\]
The functions $f_k$ are piecewise constant with the only jumps at
the origin and infinity. So $f_k\in PC_p$ for every
$p\in(1,\infty)$. It is obvious that the functions $g_k$ belong to
$\widetilde{SO}^1\setminus C(\overline{\R})$ because $|g_k(t)|<1$
for all $t\in\R$,
\[
\lim_{t\to\infty}tg_k'(t)=
\lim_{t\to\infty}
\left(\frac{2}{t^2+1}+\frac{kt^{2k}}{t^{2k}+1}\cdot\frac{i}{\sqrt{\log(t^{2k}+1)}}\right)g_k(t)
=0
\]
and $g_k$ do not have limits as $t\to\pm\infty$.
Since $g_k$ are even, from Lemmas~\ref{le:tilde-SO1}--\ref{le:SO1}
it follows that $g_k\in SO^1\subset SO_p$ for all $p\in(1,\infty)$.
By Lemma~\ref{le:SO-partial-limits},
for every functional $\eta\in M_\infty(SO)$ there exists a sequence $\tau_n>1$,
$\tau_n\to+\infty$ such that
\[
\eta(g_k)=\lim_{n\to\infty}g_k(\tau_n)
\quad\mbox{for all}\quad k\in\N.
\]
Notice that $\eta(g_k)\in\T:=\{z\in\C:|z|=1\}$.

For every $m\in\N$, the function
\[
a=\sum_{k=1}^m f_kg_k
\]
belongs to the algebra $[PC_p,SO_p]$ and
\[
\begin{split}
(\alpha_\eta a)(-\infty) &=
\sum_{k=1}^m (\alpha_\eta f_k)(-\infty)\cdot(\alpha_\eta g_k)(-\infty)=
\sum_{k=1}^m c_k\eta(g_k),
\\
(\alpha_\eta a)(+\infty) &=
\sum_{k=1}^m(\alpha_\eta f_k)(+\infty)\cdot(\alpha_\eta g_k)(+\infty)=
\sum_{k=1}^m d_k\eta(g_k).
\end{split}
\]
Thus $\gamma_\eta a$ is the piecewise constant function
\[
\gamma_\eta a=\sum_{k=1}^m (c_k\chi_-+d_k\chi_+)\eta(g_k)\in PC_p.
\qedhere
\]
\end{proof}
Let $\cC^\pi$ denote the smallest closed subalgebra of the Calkin
algebra $\cB/\cK$ that contains all the cosets $aI+\cK$ with $a\in
C(\dot{\R})$ and $W^0(b)+\cK$ with $b\in SO_p$.
\begin{lemma}\label{le:maximal-ideal-Cpi}
The algebra $\cC^\pi$ is commutative. The maximal ideal space $M(\cC^\pi)$
of $\cC^\pi$ is homeomorphic to the set
\begin{equation}\label{eq:def-Omega}
\Omega:=
\big(\R\times M_\infty(SO)\big)
\cup
\big(\{\infty\}\times\R\big)
\cup
\big(\{\infty\}\times M_\infty(SO)\big).
\end{equation}
\end{lemma}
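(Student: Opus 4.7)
Commutativity is immediate: within each generating family the operators commute strictly (multiplications among themselves, Fourier multipliers among themselves), and every cross commutator $[aI,W^0(b)]$ with $a\in C(\dot{\R})\subset SO\subset[PC,SO]$ and $b\in SO_p$ is compact by Lemma~\ref{le:compactness-commutator}. To identify $M(\cC^\pi)$, I restrict each $\phi\in M(\cC^\pi)$ to the two generating subalgebras. These are isometrically isomorphic to $C(\dot{\R})$ and $SO_p$ respectively, since $aI$ or $W^0(b)$ can be compact on $L^p(\R)$ only when $a=0$ or $b=0$; by the inverse-closedness results recalled in Section~\ref{subsection:fibration}, their Gelfand spectra are $\dot{\R}$ and $M(SO_p)=M(SO)=\R\cup M_\infty(SO)$. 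Restriction therefore produces a continuous injection
\[
\Psi:M(\cC^\pi)\to\dot{\R}\times M(SO),\qquad \phi\mapsto(\xi,\eta),
\]
determined on generators by $\phi(aI+\cK)=a(\xi)$ and $\phi(W^0(b)+\cK)=b(\eta)$.

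The core claim is then $\Psi(M(\cC^\pi))=\Omega$. For $\Psi(M(\cC^\pi))\subset\Omega$, I rule out $(\xi,\eta)\in\R\times\R$ by contradiction: pick $a\in C_c^\infty(\R)\subset C(\dot{\R})$ with $a(\xi)=1$ (so $a(\infty)=0$) and an even $b\in C_c^\infty(\R)$ with $b(\eta)=1$; by Lemmas~\ref{le:tilde-SO1}--\ref{le:SO1} one has $b\in SO^1\subset SO_p$. Since $F^{-1}b$ is Schwartz, the operator $aW^0(b)$ has integral kernel $a(x)(F^{-1}b)(x-y)\in L^2(\R^2)$, so it is Hilbert--Schmidt on $L^2(\R)$ and, by Krasnosel'skii interpolation as in the proof of Lemma~\ref{le:compactness-Hankel}, compact on every $L^p(\R)$. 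A character with these restrictions would force $\phi(aW^0(b)+\cK)=a(\xi)b(\eta)=1$, contradicting $aW^0(b)\in\cK$. For $\Psi(M(\cC^\pi))\supset\Omega$, I produce a character at each $(\xi,\eta)\in\Omega$ as a weak-$^*$ cluster point of functionals $A\mapsto\langle Af_n,g_n\rangle$ associated to an asymptotic joint eigenvector $f_n$ and a suitable dual sequence $g_n\in L^q(\R)$. When $(\xi,\eta)\in\R\times M_\infty(SO)$, Lemma~\ref{le:SO-partial-limits} furnishes $\tau_n\to+\infty$ realizing $\eta$, and the rescaled bumps $f_n(t)=\tau_n^{1/p}\varphi(\tau_n(t-\xi))$, with a fixed $\varphi\in C_c^\infty(\R)$ of unit $L^p$-norm, give $\|(aI-a(\xi)I)f_n\|_p\to 0$ by continuity of $a$ at $\xi$, together with (after a change of variables)
\[
\|W^0(b)f_n-b(\eta)f_n\|_p=\|W^0(b(\tau_n\cdot)-b(\eta))\varphi\|_p\to 0.
\]
For $(\xi,\eta)\in\{\infty\}\times\R$ I use translated wave packets $\varphi(t-n)e^{i\eta t}$, and for $(\xi,\eta)\in\{\infty\}\times M_\infty(SO)$ I combine dilation with translation. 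In every case $f_n\to 0$ weakly, so the resulting functional annihilates $\cK$, and multiplicativity on generators extends by continuity to $\cC^\pi$.

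Both $M(\cC^\pi)$ with the Gelfand topology and $\Omega\subset\dot{\R}\times M(SO)$ are compact Hausdorff, and $\Psi$ is continuous, so the bijection is automatically a homeomorphism. The main technical obstacle I anticipate is the $L^p$ asymptotic eigenvalue relation $\|W^0(b(\tau_n\cdot)-b(\eta))\varphi\|_p\to 0$ for general $b\in SO_p$ and $p\ne 2$: the natural route is to reduce to $b\in SO^1$ via Lemma~\ref{le:SO1}, then exploit Mikhlin's theorem to obtain uniform $\cM_p$-control of the rescaled symbols $b(\tau_n\cdot)-b(\eta)$, and combine this with the pointwise convergence $b(\tau_n\cdot)\to b(\eta)$ off the origin guaranteed by Lemma~\ref{le:SO-partial-limits} to pass to the limit by a density and dominated-convergence argument.
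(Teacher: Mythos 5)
Your approach differs from the paper's in that the paper disposes of the identification of $M(\cC^\pi)$ by a one-line reference to \cite[Lemma 5.1]{BBK04-OT} and \cite[Proposition 14.1]{RS90}, whereas you supply a self-contained argument via the restriction map $\Psi$ and asymptotic joint eigenvectors. The commutativity claim, the isometric identifications of the two generating subalgebras inside $\cB/\cK$, the injectivity of $\Psi$, and the exclusion of $\R\times\R$ via the compactness of $aW^0(b)$ for compactly supported $a$ and Schwartz $b$ are all sound, and your handling of the main technical point (passing from $b\in SO^1$ to $b\in SO_p$ via uniform Mikhlin bounds and density) is the right route.

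There is, however, a genuine gap in the surjectivity step for $(\xi,\eta)\in\{\infty\}\times\R$. Your test functions $f_n(t)=\varphi(t-n)e^{i\eta t}$ with a \emph{fixed} profile $\varphi$ cannot work: since $W^0(b)$ commutes with translations, $W^0(b)f_n-b(\eta)f_n=V_n\big(W^0(b)f_0-b(\eta)f_0\big)$, so $\|W^0(b)f_n-b(\eta)f_n\|_p$ is \emph{constant} in $n$ and does not tend to zero for nonconstant $b$. The Fourier transform $\hat f_n(\lambda)=\hat\varphi(\lambda-\eta)e^{-in(\lambda-\eta)}$ is merely modulated, not concentrated; to pick out the value $b(\eta)$ one needs the frequency support to shrink to $\{\eta\}$. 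The fix is to spread the bump spatially while translating it to $\infty$, e.g.\ $f_n(t)=n^{-1/p}\varphi(t/n)\,e^{i\eta t}$ with $\varphi$ supported in $(1,2)$, so that $\operatorname{supp}f_n\subset(n,2n)\to\infty$ (giving $\|(aI-a(\infty))f_n\|_p\to0$) while $\hat f_n$ concentrates at $\eta$ (giving $\|W^0(b)f_n-b(\eta)f_n\|_p\to0$ by the same uniform-multiplier/interpolation argument you use at $(\xi,\eta)\in\R\times M_\infty(SO)$). A similar care is needed at $\{\infty\}\times M_\infty(SO)$: ``dilation with translation'' is the right idea, but what is really needed is a bump centred near the net $\tau_n$ on the Fourier side (e.g.\ via modulation $e^{i\tau_nt}$) with width controlled so that the slow-oscillation estimate $\operatorname{osc}(b,[\tau_n/2,2\tau_n])\to0$ applies, together with spatial drift to $\infty$; you should state the construction explicitly rather than leaving it to an informal phrase.
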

\begin{proof}
Lemma~\ref{le:compactness-commutator} implies that the algebra $\cC^\pi$
is commutative. Then in the same way as in \cite[Lemma~5.1]{BBK04-OT} (see also
\cite[Proposition~14.1]{RS90}) one can prove that its maximal ideal space
$M(\cC^\pi)$ is homeomorphic to $\Omega$.
\end{proof}
\section{Singular integral operators on connected subsets of \boldmath{$\R$}}
\label{section:SIO}
\subsection{Circular arcs}
In this subsection we will follow \cite[Section~9.1]{GK92} and \cite[Section~7.4]{BK97}.
Given two points $z_1,z_2\in\C$ and a number $s\in(1,\infty)$ one can define
the circular arc $\fA_s(z_1,z_2)$ between $z_1$ and $z_2$ by
\[
\fA_s(z_1,z_2):=\left\{ z\in\C\setminus\{0,1\}:\arg\frac{z-z_1}{z-z_2}\in
\frac{2\pi}{s}+2\pi\Z \right\}\cup\{z_1,z_2\}.
\]
If $z_1=z_2=:z$, then $\fA_s(z_1,z_2)$ is simply $\{z\}$.
The set $\fA_2(z_1,z_2)$ is the segment $[z_1,z_2]$,
if $s>2$ (resp. $1<s<2$), then $\fA_s(z_1,z_2)$ is the
circular arc at the points of which the segment $[z_1,z_2]$ is seen
at the angle $2\pi/s$ (resp. $2\pi-2\pi/s$) and which lies on the
right (resp. left) of the straight line passing first $z_1$ and
then $z_2$. The arc $\fA_s(z_1,z_2)$ can be analytically represented by
\[
z=z_1[1-f_s(\mu)]+z_2f_s(\mu)\quad(\mu\in[0,1],\quad z\in\fA_s(z_1,z_2)),
\]
where $f_s:[0,1]\to\C$ is defined by
\[
f_s(\mu):=\left\{
\begin{array}{lll}
\mu &\mbox{if}& s=2,\\[2mm]
\displaystyle
\frac{\sin(\pi\mu-2\pi\mu/s)}{\sin(\pi-2\pi/s)}e^{i(\pi-2\pi/s)(\mu-1)}&
\mbox{if}& s\in(1,2)\cup(2,\infty).
\end{array}\right.
\]
\subsection{The Gohberg-Krupnik theorem}
Let $-\infty\le\alpha<\beta\le+\infty$ and $(\alpha,\beta)\ne\R$.
For $\varphi\in L^1(\alpha,\beta)$, consider the Cauchy singular
integral operator $S_{(\alpha,\beta)}$ given by
\[
(S_{(\alpha,\beta)}\varphi)(t):= \frac{1}{\pi
i}\int_\alpha^\beta\frac{\varphi(\tau)}{\tau-t}\,d\tau,
\]
where the integral is understood in the principal value sense. Put
\[
P_{(\alpha,\beta)}:=(I+S_{(\alpha,\beta)})/2,\quad
Q_{(\alpha,\beta)}:=(I-S_{(\alpha,\beta)})/2.
\]
It is well known that these operators are bounded on
$L^p(\alpha,\beta)$ for $1<p<\infty$ (see e.g. \cite[Chap.~1,
Theorem~3.1]{GK92}). By $PC[\alpha,\beta]$ denote the collection of
all elements of $PC$ with a finite number of jumps restricted to $[\alpha,\beta]$.

For $a\in PC[\alpha,\beta]$, $p\in(1,\infty)$, and $\mu\in[0,1]$, put
\[
a_p(t,\mu):=\left\{
\begin{array}{lll}
[1-f_p(\mu)]+a(\alpha^+)f_p(\mu) &\mbox{if}& t=\alpha,
\\[2mm]
a(t^-)[1-f_p(\mu)]+a(t^+)f_p(\mu) &\mbox{if}& t\in(\alpha,\beta),
\\[2mm]
a(\beta^-)[1-f_p(\mu)]+f_p(\mu) &\mbox{if}&t=\beta.
\end{array}
\right.
\]
The range of this function is a closed continuous curve obtained
from the range of the function $a$ by adding the arcs
$\fA_p(a(t_k^-),a(t_k^+))$ for all jumps $t_k$ of $a$ and the arcs
$\fA_p(a(\beta^-),1)$ and $\fA_p(1,a(\alpha^+))$. It can be
oriented in the natural manner: on the intervals of continuity of
$a$, the motion along this curve agrees with the increment of $t$,
while the supplementary arcs $\fA_p(\cdot,\cdot)$ are oriented
from the point on the first position to the point on the second
position in the definition of the arc $\fA_p(\cdot,\cdot)$. If
$a_p(t,\mu)\ne 0$ for all $(t,\mu)\in[\alpha,\beta]\times[0,1]$,
then by $\operatorname{wind}a_p$ we denote the winding number of
this curve about the origin.

The Gohberg-Krupnik one-sided invertibility criteria for
singular integral operators with piecewise continuous coefficients
over the segment $[\alpha,\beta]$ read as follows
(see \cite[Chap.~9, Theorem~4.1]{GK92} and also
\cite[Chap.~IV, Theorems~5.1 and 6.1]{MP86}).
\begin{theorem}[Gohberg-Krupnik]
\label{th:GK} Let $-\infty\le\alpha<\beta\le+\infty$ and
$(\alpha,\beta)\ne\R$. Suppose $1<p<\infty$ and $c,d\in
PC[\alpha,\beta]$. The operator
$A=P_{(\alpha,\beta)}cI+Q_{(\alpha,\beta)}dI$ is at least
one-sided invertible on $L^p(\alpha,\beta)$ if and only if the
following conditions are satisfied for all $\mu\in[0,1]$:
\begin{equation}\label{eq:GK-condition}
\left\{\begin{array}{ll}
c(\alpha^+)f_p(\mu)+d(\alpha^+)[1-f_p(\mu)]\ne 0,
\\[2mm]
c(t^+)d(t^-)f_p(\mu)+c(t^-)d(t^+)[1-f_p(\mu)]\ne 0 &
(t\in(\alpha,\beta)),
\\[2mm]
c(\beta^-)[1-f_p(\mu)]+d(\beta^-)f_p(\mu)\ne 0.
\end{array}\right.
\end{equation}
If these conditions are fulfilled, then the operator $A$ is invertible,
invertible only from the left, invertible only from the right depending on
whether the number $\operatorname{wind}_p(c/d)$ is equal to zero, positive,
or negative, respectively.
\end{theorem}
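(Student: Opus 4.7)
The plan is to follow the classical two-step Gohberg-Krupnik program: first prove a Fredholm criterion by localization, then convert Fredholmness plus an index computation into the one-sided invertibility statement.

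\textbf{Step 1 (Algebraization and local principle).} I would embed $A$ in the smallest closed subalgebra $\fA$ of $\cB(L^p(\alpha,\beta))$ containing $S_{(\alpha,\beta)}$ and all multiplication operators $cI$ with $c\in PC[\alpha,\beta]$, and pass to the Calkin algebra $\fA/\cK$. Using $aS_{(\alpha,\beta)}-S_{(\alpha,\beta)}aI\in\cK$ for $a\in C[\alpha,\beta]$ (the analogue of Lemma~\ref{le:Duduchava} on the segment), the cosets $cI+\cK$ with $c\in C[\alpha,\beta]$ form a central subalgebra of $\fA/\cK$ whose maximal ideal space is $[\alpha,\beta]$. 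Applying the Allan–Douglas local principle, the Fredholmness of $A$ reduces to the invertibility of the local cosets $A^t$ in local algebras $\fA^t/\cK$, one for each $t\in[\alpha,\beta]$.

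\textbf{Step 2 (Two-projections theorem).} At an interior point $t\in(\alpha,\beta)$ the local algebra is generated by two idempotents, namely the local cosets of $P_{(\alpha,\beta)}$ and of the characteristic function $\chi_{(t,\beta)}I$. Invoke the Gohberg–Krupnik two-idempotents theorem (the appropriate $L^p$-version, the same tool the paper later uses as its Theorem~\ref{th:two-idempotents}): the spectrum of the element $p+\lambda q$ generated by these two idempotents is precisely the circular arc $\fA_p(0,1)$, whose parametrisation via $f_p(\mu)$ reproduces the convex combinations appearing in $a_p(t,\mu)$. Writing $A$ locally as a linear combination of the two idempotents gives that $A^t$ is invertible iff the middle line of \eqref{eq:GK-condition} holds. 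The endpoint cases $t=\alpha$ and $t=\beta$ reduce to the same two-idempotents model after adjoining the identity (or extending by zero / by one across the boundary), which is exactly what inserts the artificial values $1$ at the endpoints in the first and third lines of \eqref{eq:GK-condition} and produces the connecting arcs $\fA_p(a(\beta^-),1)$ and $\fA_p(1,a(\alpha^+))$. Collecting all local conditions gives the Fredholm criterion: $A$ is Fredholm iff $a_p(t,\mu)\ne 0$ on $[\alpha,\beta]\times[0,1]$.

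\textbf{Step 3 (Index and one-sided invertibility).} Once $a_p$ is non-vanishing, it parametrises a closed oriented curve in $\C\setminus\{0\}$ and I would compute
\[
\operatorname{ind}A=-\operatorname{wind}_p(c/d).
\]
The clean way is by stability under homotopy: deform the PC symbols $c,d$ through non-vanishing PC symbols to a pair for which $A$ is (say) a product of a known Toeplitz-type operator and an invertible operator, where $\operatorname{ind}=-\operatorname{wind}$ is the classical Gohberg formula for continuous symbols; the supplementary arcs $\fA_p$ account for the winding contributed by each jump. Finally, to upgrade Fredholmness to one-sided invertibility, I would exhibit a canonical Wiener–Hopf type factorization of the non-vanishing symbol. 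Such a factorization realises $A$ modulo a compact operator as a product where one factor is invertible and the other has either trivial kernel (when $\operatorname{wind}_p(c/d)\le 0$) or trivial cokernel (when $\operatorname{wind}_p(c/d)\ge 0$), yielding one-sided invertibility with the sign convention in the statement; the $\operatorname{wind}=0$ case gives two-sided invertibility.

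\textbf{Main obstacle.} The real work sits in Step~2: identifying the exact $L^p$-spectrum of the pair of idempotents so that the arc $\fA_p$ with its $f_p(\mu)$ parametrisation appears, and verifying that this spectrum is independent of the particular point $t$. This requires the $L^p$-version of the two-idempotents theorem with a non-trivial scalar spectrum (not just the self-adjoint $p=2$ version), together with sharp control of the norm of $S_{(\alpha,\beta)}$ used to ensure the two local idempotents are in sufficiently general position. Step~3 is routine once Step~2 and the factorization machinery on the interval are in place.
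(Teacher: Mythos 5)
The paper does not prove this theorem --- it is imported verbatim from the literature, citing Gohberg--Krupnik [GK92, Chap.~9, Thm.~4.1] and Mikhlin--Pr\"ossdorf [MP86, Chap.~IV, Thms.~5.1 and 6.1]. So there is no proof in the paper to compare against, and your sketch must be judged on its own terms.

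Your plan --- localize in the Calkin algebra, use the two-idempotents theorem at each local point, then convert Fredholmness plus an index count into one-sided invertibility --- is a legitimate modern route, closest in spirit to B\"ottcher--Karlovich [BK97], and differs from the original Gohberg--Krupnik proof, which rested on the symbol calculus and explicit $p$-factorization without the two-idempotents machinery. You correctly locate the hard work in identifying the spectrum of the canonical element $X$ in the local Calkin algebras as the circular arc rather than the lens $\fL_p$ (the latter is what appears in the paper's \emph{stability} analysis, where no compacts are quotiented out); this distinction is essential and your sketch gets it right.

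Step~3 has two genuine defects. First, the sign assignments are swapped: with $\operatorname{ind} A = -\operatorname{wind}_p(c/d)$, left invertibility of $A$ (trivial kernel of the nontrivial factor) must occur when $\operatorname{wind}_p(c/d)\ge 0$, consistent with the statement's ``invertible only from the left'' when the winding number is positive, and right invertibility (trivial cokernel) must occur when $\operatorname{wind}_p(c/d)\le 0$ --- the opposite of what you wrote. Second, and more seriously, a factorization ``modulo a compact operator'' cannot yield one-sided invertibility of $A$ itself: if $A = BC+K$ with $B$ invertible, $C$ injective and $K$ compact, nothing forces $\ker A = 0$. What is actually required is an exact $p$-factorization of the symbol; the existence of such a factorization under the Fredholm conditions of \eqref{eq:GK-condition} is the substantive Simonenko / Gohberg--Krupnik factorization theorem, and it is precisely this ingredient that guarantees one of the two defect numbers of $A$ vanishes. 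This cannot be recovered from the index computation or a homotopy argument alone and deserves to be isolated as the key additional input rather than folded into ``routine'' Step~3.
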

\subsection{Spectra of the operators \boldmath{$P_{(\alpha,\beta)}$} and
\boldmath{$Q_{(\alpha,\beta)}$}}
For $p\in (1,\infty)$, put $q:=p/(p-1)$ and define the
lentiform domain $\fL_p$ by
\begin{equation}\label{eq:lens}
\fL_p:=\big\{z\in \fA_s(0,1)\ :\  \min\{p,q\}\le s\le\max\{p,q\}\big\}.
\end{equation}
By $\cB(L^p(\alpha,\beta))$ denote the Banach algebra of all bounded linear
operators on $L^p(\alpha,\beta)$.
The spectrum of an element $a$ in a unital Banach algebra $B$ will
be denoted by $\operatorname{sp}_B(a)$.
\begin{theorem}\label{th:spectrum-PQ}
Let $1<p<\infty$ and $\fL_p$ be the lentiform domain given by
\eqref{eq:lens}. If $-\infty\le\alpha<\beta\le+\infty$ and $(\alpha,\beta)\ne\R$,
then
\[
\operatorname{sp}_{\cB(L^p(\alpha,\beta))}(P_{(\alpha,\beta)})
=
\operatorname{sp}_{\cB(L^p(\alpha,\beta))}(Q_{(\alpha,\beta)})
=
\fL_p.
\]
\end{theorem}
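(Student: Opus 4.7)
The plan is to reduce the computation of $\operatorname{sp}(P_{(\alpha,\beta)})$ to the Gohberg-Krupnik criterion (Theorem~\ref{th:GK}) applied with constant coefficients. Since $I=P_{(\alpha,\beta)}+Q_{(\alpha,\beta)}$, for every $\lambda\in\C$ one has
\[
P_{(\alpha,\beta)}-\lambda I=P_{(\alpha,\beta)}(1-\lambda)I+Q_{(\alpha,\beta)}(-\lambda)I,
\]
which is of the form covered by Theorem~\ref{th:GK} with $c\equiv 1-\lambda$ and $d\equiv -\lambda$. Hence $\lambda\in\operatorname{sp}(P_{(\alpha,\beta)})$ iff either condition \eqref{eq:GK-condition} fails, or \eqref{eq:GK-condition} holds but $\operatorname{wind}_p(c/d)\ne 0$.

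Because $c,d$ are constants there are no interior jumps, and elementary algebra reduces \eqref{eq:GK-condition} to
\[
f_p(\mu)\ne\lambda\quad\text{and}\quad 1-f_p(\mu)\ne\lambda\qquad(\mu\in[0,1]).
\]
The parametric formula preceding Theorem~\ref{th:GK} identifies $\{f_p(\mu):\mu\in[0,1]\}$ with the arc $\fA_p(0,1)$, and the identity $\fA_s(z_1,z_2)=\fA_{s/(s-1)}(z_2,z_1)$ (immediate from the defining argument condition) gives $1-\fA_p(0,1)=\fA_p(1,0)=\fA_q(0,1)$. So \eqref{eq:GK-condition} holds precisely when $\lambda$ lies off the two boundary arcs of the lens $\fL_p$.

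Assuming \eqref{eq:GK-condition}, the winding number $\operatorname{wind}_p(c/d)$ is that of the closed curve $a_p$ with $a\equiv(\lambda-1)/\lambda$, which consists only of the two endpoint arcs. After multiplication by $\lambda$ this curve is traced by $\lambda-f_p(\mu)$ at $\alpha$ (running from $\lambda$ to $\lambda-1$) and by $\lambda-1+f_p(\mu)$ at $\beta$ (running from $\lambda-1$ back to $\lambda$), so it has the same winding around $0$ as the boundary of $\fL_p$ has around $\lambda$. A homotopy argument (or evaluation at one convenient $\lambda$ in each component of $\C\setminus\partial\fL_p$, say a large $|\lambda|$ for the unbounded component and $\lambda=1/2$ for the interior) then shows the winding equals $0$ for $\lambda$ outside $\fL_p$ and is nonzero for $\lambda$ in the interior. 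Combined with the analysis of \eqref{eq:GK-condition} this yields $\operatorname{sp}_{\cB(L^p(\alpha,\beta))}(P_{(\alpha,\beta)})=\fL_p$.

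Finally, $Q_{(\alpha,\beta)}=I-P_{(\alpha,\beta)}$ gives $\operatorname{sp}(Q_{(\alpha,\beta)})=1-\fL_p$, and the identity $\fA_s(z_1,z_2)=\fA_{s/(s-1)}(z_2,z_1)$ together with the fact that $s\mapsto s/(s-1)$ swaps $p\leftrightarrow q$ and preserves $[\min\{p,q\},\max\{p,q\}]$ shows $1-\fL_p=\fL_p$. The main obstacle I anticipate is the winding-number bookkeeping in step three: keeping the orientations of the circular arcs at $\alpha$ and $\beta$ consistent with the convention fixed after the definition of $a_p(t,\mu)$ is what makes the closed curve an honest Jordan curve whose inside can be identified with the interior of $\fL_p$.
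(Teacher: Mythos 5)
Your proposal is correct and follows essentially the same route as the paper: apply the Gohberg--Krupnik criterion (Theorem~\ref{th:GK}) to $P_{(\alpha,\beta)}-\lambda I$ with constant $c=1-\lambda$, $d=-\lambda$, identify the two arcs where \eqref{eq:GK-condition} fails with $\partial\fL_p$, and then read the winding condition off the lens between $\lambda-1$ and $\lambda$. Your shortcut for $Q_{(\alpha,\beta)}$ via $\operatorname{sp}(Q)=1-\operatorname{sp}(P)$ and the symmetry $1-\fL_p=\fL_p$ is slightly slicker than the paper's ``analogous'' but does not change the substance of the argument.
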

\begin{proof}
Let $\lambda\in\C$. Then for the operator
\[
A_\lambda:=P_{(\alpha,\beta)}-\lambda I=P_{(\alpha,\beta)}(1-\lambda)I+Q_{(\alpha,\beta)}(-\lambda)I
\]
conditions \eqref{eq:GK-condition} with $c:=1-\lambda$ and
$d:=-\lambda$ have the form, for $\mu\in[0,1]$,
\[
c(\alpha^+)f_p(\mu)+d(\alpha^+)[1-f_p(\mu)] =
(1-\lambda)f_p(\mu)-\lambda[1-f_p(\mu)] =
f_p(\mu)-\lambda\ne 0;
\]
\[
\begin{split}
& c(t^+)d(t^-)f_p(\mu)+c(t^-)d(t^+)[1-f_p(\mu)]
\\
&=
(1-\lambda)(-\lambda)f_p(\mu)+(1-\lambda)(-\lambda)[1-f_p(\mu)]
=\lambda(\lambda-1)\ne 0
\end{split}
\]
whenever $t\in(\alpha,\beta)$; and
\[
c(\beta^-)[1-f_p(\mu)]+d(\beta^-)f_p(\mu)=
(1-\lambda)[1-f_p(\mu)]+(-\lambda)f_p(\mu)=
1-\lambda-f_p(\mu)\ne 0.
\]
These conditions are equivalent to
\begin{equation}\label{eq:spectrum-PQ-1}
\lambda\notin\fA_p(0,1),
\quad
1-\lambda\notin\fA_p(0,1).
\end{equation}
By Theorem~\ref{th:GK}, the operator $A_\lambda$ is one-sided invertible
if and only if \eqref{eq:spectrum-PQ-1} is fulfilled. Further, if
\eqref{eq:spectrum-PQ-1} holds, then
\[
\left(\frac{c}{d}\right)_{\!p}(t,\mu)=\left\{\begin{array}{lll}
\displaystyle \frac{\lambda-1}{\lambda}f_p(\mu)+[1-f_p(\mu)]
&\mbox{if}& t=\alpha,
\\[3mm]
\displaystyle
\frac{\lambda-1}{\lambda} &\mbox{if}& t\in(\alpha,\beta),
\\[3mm]
\displaystyle
\frac{\lambda-1}{\lambda}[1-f_p(\mu)]+f_p(\mu) &\mbox{if}& t=\beta.
\end{array}\right.
\]
So, the range of $(c/d)_p$ coincides with the closed curve
$\fA_p\left(1,\frac{\lambda-1}{\lambda}\right)\cup\fA_p\left(\frac{\lambda-1}{\lambda},1\right)$.
It is easy to see that
\[
\operatorname{wind}
\fA_p\left(1,\frac{\lambda-1}{\lambda}\right)\cup
\fA_p\left(\frac{\lambda-1}{\lambda},1\right)=0
\]
if and only if the origin lies outside the convex lentiform domain $\fM_p(\lambda)$
bounded by the arcs
$\fA_p\left(1,\frac{\lambda-1}{\lambda}\right)=\fA_q\left(\frac{\lambda-1}{\lambda},1\right)$,
where $1/p+1/q=1$, and $\fA_p\left(\frac{\lambda-1}{\lambda},1\right)$.

Let $I_p:=[\min\{p,q\},\max\{p,q\}]$. By Theorem~\ref{th:GK}, the
operator $A_\lambda$ is invertible on $L^p(\alpha,\beta)$ if and
only if \eqref{eq:spectrum-PQ-1} is fulfilled and
$0\notin\fM_p(\lambda)$. Then the spectrum of $P_{(\alpha,\beta)}$
is equal to $\sigma_1\cup\sigma_2\cup\sigma_2$, where
\[
\begin{split}
\sigma_1 &:=
\big\{\lambda\in\C:\lambda\in\fA_p(0,1)\big\}=\fA_p(0,1),
\\
\sigma_2 &:=
\big\{\lambda\in\C:1-\lambda\in\fA_p(0,1)\big\}=\fA_p(0,1),
\end{split}
\]
and
\[
\begin{split}
\sigma_3
&:=
\big\{\lambda\in\C\setminus(\sigma_1\cup\sigma_2)\ :\ 0\in\fM_p(\lambda)\big\}
\\
&=
\left\{\lambda\in\C\setminus\fA_p(0,1)\ :0\in\bigcup_{s\in I_p}
\fA_s\left(\frac{\lambda-1}{\lambda},1\right)\right\}
\\
&=
\left\{\lambda\in\C\setminus\fA_p(0,1)\ :\
\frac{\lambda-1}{\lambda}[1-f_s(\mu)]+f_s(\mu)=0
\mbox{ for some }s\in I_p\right\}
\\
&=
\big\{\lambda\in\C\setminus\fA_p(0,1)\ :\
1-\lambda\in\fA_s(0,1)
\mbox{ for some }s\in I_p\big\}
\\
&=
\big\{\lambda\in\C\setminus\fA_p(0,1)\ :\
1-\lambda\in\fL_p(0,1)\big\}
\\
&=
\big\{\lambda\in\C\setminus\fA_p(0,1)\ :\
\lambda\in\fL_p(0,1)\big\}
\\
&=
\fL_p\setminus\fA_p(0,1).
\end{split}
\]
Thus
\[
\operatorname{sp}_{\cB(L^p(\alpha,\beta))}(P_{(\alpha,\beta)})=
\sigma_1\cup\sigma_2\cup\sigma_3=\fA_p(0,1)\cup\fA_p(0,1)\cup(\fL_p\setminus\fA_p(0,1))=\fL_p.
\]
The proof of the equality $\operatorname{sp}_{\cB(L^p(\alpha,\beta))}(Q_{(\alpha,\beta)})=\fL_p$
is analogous.
\end{proof}
Let $\chi_E$ denote the characteristic function of a set $E\subset\R$.
\begin{corollary}\label{co:spectra-SIO}
Let $1<p<\infty$ and $\fL_p$ be the lentiform domain given by
\eqref{eq:lens}. Then
\begin{enumerate}
\item[{\rm (a)}]
$\operatorname{sp}_{\cB}\left(
\chi_{(-1,0)}P_\R\chi_{(-1,0)}I+
\chi_{(-\infty,-1)}Q_\R\chi_{(-\infty,-1)}I
\right)=\fL_p;$

\medskip
\item[{\rm (b)}]
$\operatorname{sp}_{\cB}\left(
\chi_{(1,\infty)}P_\R\chi_{(1,\infty)}I+
\chi_{(0,1)}Q_\R\chi_{(0,1)}I
\right)=\fL_p;$

\medskip
\item[{\rm (c)}]
$\operatorname{sp}_{\cB}\left(
\chi_+P_\R\chi_+I+
\chi_-Q_\R\chi_-I
\right)=\fL_p.$
\end{enumerate}
\end{corollary}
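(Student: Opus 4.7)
The plan is to reduce each of the three operators to a direct sum of operators of the form $P_{(\alpha,\beta)}$ or $Q_{(\alpha,\beta)}$ (plus possibly a zero summand) on proper subintervals of $\R$, and then invoke Theorem~\ref{th:spectrum-PQ}.

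The key elementary observation driving the reduction is that, for $(\alpha,\beta)\subsetneq\R$ and $f\in L^p(\R)$ supported in $(\alpha,\beta)$,
\[
(\chi_{(\alpha,\beta)}S_\R\chi_{(\alpha,\beta)}f)(x)=\frac{1}{\pi i}\int_\alpha^\beta\frac{f(t)}{t-x}\,dt=(S_{(\alpha,\beta)}f)(x)
\]
for $x\in(\alpha,\beta)$, while the left-hand side vanishes off $(\alpha,\beta)$. Under the isometric identification of $L^p(\alpha,\beta)$ with the closed subspace of $L^p(\R)$ of functions vanishing outside $(\alpha,\beta)$, this gives $\chi_{(\alpha,\beta)}P_\R\chi_{(\alpha,\beta)}I=P_{(\alpha,\beta)}$ on $L^p(\alpha,\beta)$, and similarly for $Q$.

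For part (a), the supports $(-1,0)$ and $(-\infty,-1)$ of the two outer characteristic factors are disjoint, and both vanish on $(0,\infty)$. With respect to the Banach direct-sum decomposition
\[
L^p(\R)=L^p(-\infty,-1)\oplus L^p(-1,0)\oplus L^p(0,\infty),
\]
the operator is therefore block-diagonal with blocks $Q_{(-\infty,-1)}$, $P_{(-1,0)}$, and $0$, respectively. Theorem~\ref{th:spectrum-PQ} gives that each of the first two blocks has spectrum $\fL_p$, and since the definition of $\fA_s(0,1)$ explicitly includes the endpoints, $0\in\fL_p$. Hence the total spectrum equals $\fL_p\cup\fL_p\cup\{0\}=\fL_p$. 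Parts (b) and (c) are handled identically: for (b), decompose $L^p(\R)=L^p(-\infty,0)\oplus L^p(0,1)\oplus L^p(1,\infty)$ with blocks $0$, $Q_{(0,1)}$, $P_{(1,\infty)}$; for (c), decompose $L^p(\R)=L^p(\R_-)\oplus L^p(\R_+)$ with blocks $Q_{(-\infty,0)}$ and $P_{(0,\infty)}$, so no zero summand is needed.

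No serious obstacle is anticipated. The only point requiring care is to verify that every interval $(\alpha,\beta)$ appearing in the decompositions is a proper subset of $\R$, since that is the hypothesis of Theorem~\ref{th:spectrum-PQ}; this holds throughout.
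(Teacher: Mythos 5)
Your proposal is correct and follows essentially the same route as the paper: block-diagonalize each operator along the direct-sum decomposition of $L^p(\R)$ into the $L^p$-spaces over the relevant subintervals, identify the blocks with $P_{(\alpha,\beta)}$, $Q_{(\alpha,\beta)}$, or $0$, and invoke Theorem~\ref{th:spectrum-PQ}. The paper writes out part (a) in this exact form (with the same three blocks $Q_{(-\infty,-1)}$, $P_{(-1,0)}$, $0$) and declares (b), (c) analogous, so your explicit decompositions for those and the remark that $0\in\fL_p$ are the only additions.
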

\begin{proof}
Let us prove equality (a). The operator
\[
\begin{split}
&
\chi_{(-1,0)}P_\R\chi_{(-1,0)}I+\chi_{(-\infty,-1)}Q_\R\chi_{(-\infty,-1)}I-\lambda I
\\
&=
\chi_{(-\infty,-1)}(Q_\R-\lambda I)\chi_{(-\infty,-1)}I+
\chi_{(-1,0)}(P_\R-\lambda I)\chi_{(-1,0)}I-
\lambda\chi_+I
\end{split}
\]
is represented in the direct sum
\[
L^p(-\infty,-1)\stackrel{\cdot}{+}L^p(-1,0)\stackrel{\cdot}{+}L^p(\R_+)=L^p(\R)
\]
by the matrix
\[
\left[\begin{array}{ccc}
Q_{(-\infty,-1)}-\lambda I & 0 & 0\\
0 & P_{(-1,0)}-\lambda I & 0\\
0 & 0 & -\lambda I
\end{array}\right].
\]
Hence, by Theorem~\ref{th:spectrum-PQ},
\[
\begin{split}
&
\operatorname{sp}_{\cB}\left(
\chi_{(-1,0)}P_\R\chi_{(-1,0)}I+
\chi_{(-\infty,-1)}Q_\R\chi_{(-\infty,-1)}I
\right)
\\
&=
\operatorname{sp}_{\cB(L^p(-\infty,-1))}(Q_{(-\infty,-1)})
\cup
\operatorname{sp}_{\cB(L^p(-1,0))}(P_{(-1,0)})
\cup
\operatorname{sp}_{\cB(L^p(\R_+))}(0)
\\
&=\fL_p\cup\fL_p\cup\{0\}=\fL_p.
\end{split}
\]
The proof of equalities (b)--(c) is
analogous.
\end{proof}
\subsection{Singular integral operator with piecewise
constant coefficients}\label{subsec:SIO-piecewise-constant}
Let $z_1,z_2,z_3$ be an ordered triple of points in the complex plane.
It is clear that
\[
\fA_p(z_1,z_2)\cup\fA_p(z_2,z_3)\cup\fA_p(z_3,z_1)=:\fC_p(z_1,z_2,z_3)
\]
is a closed curve in the complex plane (which degenerates to the point $z$
if $z=z_1=z_2=z_3)$. Each arc $\fA_p(z_1,z_2)$, $\fA_p(z_2,z_3)$, and
$\fA_p(z_3,z_1)$ can be naturally oriented starting from the point on the
first position and terminating at the point on the second position in
the definition of the arc $\fA_p(\cdot,\cdot)$. This orientation induces
the orientation of the curve $\fC_p(z_1,z_2,z_3)$. If $0\notin\fC_p(z_1,z_2,z_3)$,
then by $\operatorname{wind}\fC_p(z_1,z_2,z_3)$ we denote the winding number
of the curve $\fC_p(z_1,z_2,z_3)$ about the origin. By definition,
$\operatorname{wind}\fC_p(z,z,z)=0$.
\begin{lemma}\label{le:SIO-piecewise-constant}
Let $1<p<\infty$. Suppose $a_-,a_+,b_-,b_+\in\C$. The singular integral
operator
\[
P_{(-1,1)}(a_-\chi_{(-1,0)}+b_-\chi_{(0,1)})I+
Q_{(-1,1)}(a_+\chi_{(-1,0)}+b_+\chi_{(0,1)})I
\]
is invertible on the space $L^p(-1,1)$ if and only if
\begin{equation}\label{eq:SIO-piecewise-constant-1}
0\notin\{a_-,a_+,b_-,b_+\},
\quad
\operatorname{wind}\fC_p\left(1,\frac{a_-}{a_+},\frac{b_-}{b_+}\right)=0.
\end{equation}
\end{lemma}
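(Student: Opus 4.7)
The plan is to apply the Gohberg--Krupnik criterion (Theorem~\ref{th:GK}) on $(\alpha,\beta)=(-1,1)$ with coefficients
\[
c:=a_-\chi_{(-1,0)}+b_-\chi_{(0,1)},\qquad d:=a_+\chi_{(-1,0)}+b_+\chi_{(0,1)}.
\]
Both $c$ and $d$ are piecewise constant and their only interior jump occurs at $t=0$, so conditions \eqref{eq:GK-condition} collapse to three non-vanishing requirements for all $\mu\in[0,1]$. Substituting the one-sided limits $c(-1^+)=a_-$, $c(0^-)=a_-$, $c(0^+)=b_-$, $c(1^-)=b_-$ and analogously for $d$, these become
\begin{align*}
a_+[1-f_p(\mu)]+a_-\,f_p(\mu) &\ne 0,\\
a_-b_+[1-f_p(\mu)]+a_+b_-\,f_p(\mu) &\ne 0,\\
b_-[1-f_p(\mu)]+b_+\,f_p(\mu) &\ne 0.
\end{align*}
Evaluating at $\mu=0$ and $\mu=1$ immediately forces $a_\pm,b_\pm\ne 0$, which gives the necessity of the first part of \eqref{eq:SIO-piecewise-constant-1}.

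Assuming now that $a_\pm,b_\pm\ne 0$, divide the three displayed expressions by the nonzero scalars $a_+$, $a_+b_+$, and $b_+$, respectively. Using the parametrization $z=z_1[1-f_p(\mu)]+z_2 f_p(\mu)$ of the arcs $\fA_p(z_1,z_2)$, the three quantities parametrize
\[
\fA_p\!\left(1,\tfrac{a_-}{a_+}\right),\qquad \fA_p\!\left(\tfrac{a_-}{a_+},\tfrac{b_-}{b_+}\right),\qquad \fA_p\!\left(\tfrac{b_-}{b_+},1\right),
\]
whose union is precisely the closed curve $\fC_p\!\left(1,\,a_-/a_+,\,b_-/b_+\right)$. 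Thus, under $a_\pm,b_\pm\ne 0$, the full system \eqref{eq:GK-condition} is equivalent to requiring that this closed curve avoids the origin.

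It remains to identify the winding number of $(c/d)_p$. On the intervals $(-1,0)$ and $(0,1)$ the quotient $c/d$ is constant equal to $a_-/a_+$ and $b_-/b_+$, so the parametrization from the definition of $a_p(t,\mu)$ returns $\fA_p(1,a_-/a_+)$ at $t=-1$, $\fA_p(a_-/a_+,b_-/b_+)$ at $t=0$, and $\fA_p(b_-/b_+,1)$ at $t=1$; the intervals contribute only the constant values that connect these arcs. Hence the range of $(c/d)_p$ is $\fC_p(1,a_-/a_+,b_-/b_+)$ with its natural orientation, and $\operatorname{wind}_p(c/d)=\operatorname{wind}\fC_p(1,a_-/a_+,b_-/b_+)$. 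By Theorem~\ref{th:GK}, the operator is invertible iff \eqref{eq:GK-condition} holds and this winding number vanishes, which is precisely \eqref{eq:SIO-piecewise-constant-1}. The only delicate point is the bookkeeping showing that rescaling by the nonzero factors $a_+$, $a_+b_+$, $b_+$ respects the arc structure; this is immediate from the above explicit parametrization, so no serious obstacle arises.
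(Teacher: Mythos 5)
Your proof is correct and takes exactly the approach the paper indicates (applying the Gohberg--Krupnik criterion, Theorem~\ref{th:GK}, in the same spirit as the proof of Theorem~\ref{th:spectrum-PQ}); the paper omits the details, and you have filled them in accurately: the substitution of the one-sided limits, the identification of the three non-vanishing conditions with the three arcs of $\fC_p(1,a_-/a_+,b_-/b_+)$ after dividing by $a_+$, $a_+b_+$, $b_+$, and the identification $\operatorname{wind}_p(c/d)=\operatorname{wind}\fC_p(1,a_-/a_+,b_-/b_+)$ via the definition of $a_p(t,\mu)$ are all in order.
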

This lemma follows from Theorem~\ref{th:GK}. Its proof is similar
to the proof of Theorem~\ref{th:spectrum-PQ} and it is omitted
here.
\section{Algebraization}\label{section:algebraization}
\subsection{Stability of the finite section method}
Let $\cE$ be the set formed by all the sequences $(A_\tau)$
(depending on a parameter $\tau\in\R_+$) of operators
$A_\tau\in\cB$ such that
\[
\sup\limits_{\tau\in\R_+}\|A_\tau\|_\cB < \infty.
\]
One says that the sequence $(A_\tau)\in\cE$ is \textit{stable} if
there exists a positive constant $\tau_0$ such that
the operator $A_\tau\in\cB$ is invertible for any $\tau>\tau_0$
and
\[
\sup_{\tau>\tau_0} \|A_\tau^{-1}\|_\cB < \infty.
\]
The following result is well known (see e.g. \cite[Proposition~1.1]{HRS95}
and also \cite[Proposition~7.3]{BS06}).
\begin{theorem}[Polski]
\label{th:Polski}
The finite section method applies to an operator $A\in\cB$
if and only if $A$ is invertible and the sequence $(P_\tau A P_\tau+ Q_\tau)$
is stable.
\end{theorem}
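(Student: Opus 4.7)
The plan is a two-direction proof built on the Banach--Steinhaus theorem and the strong convergences $P_\tau\to I$, $Q_\tau\to 0$.

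For the sufficiency direction, I would start from the assumption that $A$ is invertible and $(A_\tau)=(P_\tau AP_\tau+Q_\tau)$ is stable. By stability there is $\tau_0$ such that every $A_\tau$ with $\tau>\tau_0$ is invertible with $\sup_{\tau>\tau_0}\|A_\tau^{-1}\|_{\cB}=:C<\infty$, so for each $f\in L^p(\R)$ the equation $A_\tau\varphi_\tau=f$ has a unique solution $\varphi_\tau=A_\tau^{-1}f$. Setting $\varphi:=A^{-1}f$, the identity $A_\tau(\varphi_\tau-\varphi)=f-A_\tau\varphi=(A-A_\tau)\varphi$ gives
\[
\|\varphi_\tau-\varphi\|_p\le C\,\|(A-A_\tau)\varphi\|_p.
\]
The convergence $(A-A_\tau)\varphi\to 0$ is then the only real work: since $P_\tau\to I$ strongly, $P_\tau\varphi\to\varphi$ and hence $AP_\tau\varphi\to A\varphi$ by continuity of $A$; applying $P_\tau$ and using $\|P_\tau\|=1$ together with $P_\tau A\varphi\to A\varphi$ yields $P_\tau AP_\tau\varphi\to A\varphi$, while $Q_\tau\varphi\to 0$ is immediate. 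Thus $\varphi_\tau\to\varphi=A^{-1}f$ and the FSM applies.

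For the necessity direction, I would assume the FSM applies. The definition already furnishes, for each $\tau>\tau_0$, a unique solution of $A_\tau\varphi_\tau=f$ for every $f$, so each $A_\tau$ is a bijection of $L^p(\R)$ and therefore boundedly invertible. The stability bound then comes from Banach--Steinhaus: for any fixed $f$, the sequence $\{A_\tau^{-1}f\}_{\tau>\tau_0}$ converges in norm (to a solution of $A\varphi=f$), hence is bounded, and uniform boundedness gives $\sup_{\tau>\tau_0}\|A_\tau^{-1}\|_{\cB}<\infty$.

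It remains to show that $A$ itself is invertible. Surjectivity is built into the FSM: the limit $\varphi=\lim_\tau\varphi_\tau$ satisfies $A\varphi=f$ for the given $f$. For injectivity, suppose $A\varphi=0$. Then
\[
A_\tau\varphi=P_\tau AP_\tau\varphi+Q_\tau\varphi=-P_\tau AQ_\tau\varphi+Q_\tau\varphi,
\]
and since $Q_\tau\varphi\to 0$ we get $A_\tau\varphi\to 0$; applying the just-proved uniform bound on $\|A_\tau^{-1}\|_{\cB}$ to $\varphi=A_\tau^{-1}(A_\tau\varphi)$ forces $\varphi=0$. This closes both directions. No step here is technically deep; the only point requiring care is keeping straight the interplay between strong operator convergence of $P_\tau,Q_\tau$ and norm convergence of the approximate solutions, which is exactly what makes Banach--Steinhaus the decisive ingredient.
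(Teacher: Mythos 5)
The paper does not give a proof of Polski's theorem --- it only cites \cite[Proposition~1.1]{HRS95} and \cite[Proposition~7.3]{BS06} --- so I am judging your argument on its own. The sufficiency direction and the injectivity-of-$A$ step in the necessity direction are correct and are the standard argument.

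There is, however, a genuine (if small) gap in how you extract the stability bound. You argue that for fixed $f$ the family $\{A_\tau^{-1}f\}_{\tau>\tau_0}$ ``converges in norm, hence is bounded,'' and then invoke Banach--Steinhaus. But $\tau$ is a \emph{continuous} parameter: the convergence $\varphi_\tau\to\varphi$ as $\tau\to\infty$ only yields a bound on some tail $(T(f),\infty)$ with $T(f)$ depending on $f$. Nothing in the hypotheses prevents $\|A_\tau^{-1}f\|$ from blowing up as $\tau\to\tau_0^+$, so pointwise boundedness on the whole index set $(\tau_0,\infty)$ --- which is what Banach--Steinhaus requires --- is not established, and the threshold cannot be chosen uniformly in $f$ in advance. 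The correct way to close this is by contradiction: if $\sup_{\tau>T}\|A_\tau^{-1}\|_{\cB}=\infty$ for every $T>\tau_0$, choose $\tau_n\to\infty$ with $\|A_{\tau_n}^{-1}\|_{\cB}\to\infty$; then the uniform boundedness principle applied to the \emph{sequence} $(A_{\tau_n}^{-1})$ produces an $f$ with $\sup_n\|A_{\tau_n}^{-1}f\|_p=\infty$, contradicting $A_{\tau_n}^{-1}f=\varphi_{\tau_n}\to\varphi$. Since the definition of stability permits enlarging $\tau_0$, this yields the needed bound. With that repair, your proof is complete and matches the well-known argument.
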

\subsection{Algebra \boldmath{$\cE$} and its ideal \boldmath{$\cG$}}
\begin{lemma}\label{le:algebra-E}
The set $\cE$ with the operations
\[
(A_\tau) + (B_\tau):=(A_\tau + B_\tau), \quad
(A_\tau)(B_\tau):=(A_\tau B_\tau), \quad \lambda(A_\tau):=(\lambda
A_\tau) \quad (\lambda\in\C),
\]
the identity element $(I)$, and the norm
\[
\|(A_\tau)\|_\cE:=\sup_{\tau\in\R_+}\|A_\tau\|_\cB
\]
forms a unital Banach algebra.
\end{lemma}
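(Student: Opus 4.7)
The proof is essentially the standard verification that $\ell^\infty(\R_+,\cB)$ is a Banach algebra, so the plan is to check each of the algebra axioms in order, reserving the bulk of the work for completeness.

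First I would verify that the algebraic operations are well defined on $\cE$. Given $(A_\tau),(B_\tau)\in\cE$ with $M:=\sup_\tau\|A_\tau\|_\cB$ and $N:=\sup_\tau\|B_\tau\|_\cB$, one has $\sup_\tau\|A_\tau+B_\tau\|_\cB\le M+N$, $\sup_\tau\|\lambda A_\tau\|_\cB=|\lambda|M$, and $\sup_\tau\|A_\tau B_\tau\|_\cB\le MN$, so $\cE$ is closed under the three operations. Associativity, distributivity, and the axioms of a complex algebra are inherited pointwise from $\cB$. The sequence $(I)$ is clearly a two-sided identity and satisfies $\|(I)\|_\cE=1$. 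The norm axioms (positivity, homogeneity, triangle inequality) as well as submultiplicativity $\|(A_\tau)(B_\tau)\|_\cE\le\|(A_\tau)\|_\cE\|(B_\tau)\|_\cE$ follow from the corresponding properties of $\|\cdot\|_\cB$ by taking suprema in $\tau$.

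The main step is completeness. Let $\bigl(\bA^{(n)}\bigr)_{n=1}^\infty$ be a Cauchy sequence in $\cE$, where $\bA^{(n)}=(A_\tau^{(n)})_{\tau\in\R_+}$. For every fixed $\tau\in\R_+$, the estimate
\[
\|A_\tau^{(n)}-A_\tau^{(m)}\|_\cB\le\|\bA^{(n)}-\bA^{(m)}\|_\cE
\]
shows that $(A_\tau^{(n)})_{n=1}^\infty$ is Cauchy in the Banach space $\cB=\cB(L^p(\R))$, and hence converges in operator norm to some $A_\tau\in\cB$. Since $(\bA^{(n)})$ is Cauchy, it is bounded in $\cE$, say by $C$, and the lower-semicontinuity of the operator norm under pointwise passage to the limit in $n$ yields $\|A_\tau\|_\cB\le C$ uniformly in $\tau$. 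Therefore $\bA:=(A_\tau)_\tau\in\cE$.

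It remains to show $\bA^{(n)}\to\bA$ in $\cE$. Given $\eps>0$, choose $n_0$ so that $\|\bA^{(n)}-\bA^{(m)}\|_\cE<\eps$ whenever $n,m\ge n_0$. Then for every $\tau\in\R_+$ and every $n\ge n_0$, letting $m\to\infty$ gives $\|A_\tau^{(n)}-A_\tau\|_\cB\le\eps$, and taking the supremum over $\tau$ yields $\|\bA^{(n)}-\bA\|_\cE\le\eps$. I anticipate no real obstacle: the only point requiring care is precisely this interchange of the supremum over $\tau$ with the limit in $n$, which is handled by first fixing $\tau$, passing to the limit, and only then taking the supremum, exactly as in the classical proof that $\ell^\infty$-type spaces of Banach-space-valued sequences are complete.
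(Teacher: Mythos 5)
Your proof is correct and is exactly the standard verification one would expect; the paper itself gives no proof but merely cites \cite[Proposition~1.13]{HRS01}, which proceeds along the same lines. One minor remark: since $A_\tau^{(n)}\to A_\tau$ in operator norm for each fixed $\tau$, the bound $\|A_\tau\|_\cB\le C$ follows from ordinary continuity of the norm rather than lower semicontinuity, but this does not affect the argument.
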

This statement is proved as \cite[Proposition~1.13]{HRS01}.

Note that the constant sequences $(A)$ are included in $\cE$ for
any $A\in\cB$.
\begin{definition}\label{def:alg-A}
By $\cA$ denote the smallest Banach subalgebra of
$\cE$ that contains all constant sequences $(aI)$ with $a\in PC$
and $(W^0(b))$ with $b\in[PC_p,SO_p]$ and the sequence $(P_\tau)$.
\end{definition}
We will be interested in conditions for the stability of any
sequence in $\cA$. Clearly, $\cA$ contains the sequences \eqref{eq:sequence-FSM}.

Let $\cG$ be the set of all sequences $(A_\tau)\in\cE$ satisfying
\[
\lim_{\tau\to\infty}\|A_\tau\|_\cB = 0.
\]
\begin{lemma}\label{le:ideal-G-of-E}
The set $\cG$ is a closed two-sided ideal of the algebra $\cE$.
\end{lemma}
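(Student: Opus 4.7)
The proof is a routine verification that $\cG$ is closed under the algebra operations, absorbs $\cE$ on both sides, and is closed in the norm of $\cE$. The plan is to check the three items in turn.

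First, I would establish that $\cG$ is a (closed) linear subspace. If $(A_\tau),(B_\tau)\in\cG$ and $\lambda\in\C$, the triangle inequality
$\|A_\tau+B_\tau\|_\cB\le\|A_\tau\|_\cB+\|B_\tau\|_\cB$ and homogeneity
$\|\lambda A_\tau\|_\cB=|\lambda|\|A_\tau\|_\cB$, combined with the hypothesis $\|A_\tau\|_\cB\to 0$ and $\|B_\tau\|_\cB\to 0$, give the required convergence of the norm to zero.

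Next, for the two-sided ideal property, take $(A_\tau)\in\cG$ and $(B_\tau)\in\cE$. Submultiplicativity of the operator norm yields
\[
\|A_\tau B_\tau\|_\cB\le\|A_\tau\|_\cB\,\|B_\tau\|_\cB\le\|A_\tau\|_\cB\,\|(B_\tau)\|_\cE,
\]
and by definition of $\cE$ the second factor is finite and independent of $\tau$, so the product tends to $0$ as $\tau\to\infty$. The same estimate, applied to $\|B_\tau A_\tau\|_\cB$, handles left absorption.

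Finally, to show $\cG$ is closed in $\cE$, I would use a standard $\eps/2$ argument. Suppose a sequence $((A_\tau^{(n)}))_{n=1}^\infty\subset\cG$ converges in $\cE$ to some $(A_\tau)$. Given $\eps>0$, fix $n$ so large that $\sup_{\tau\in\R_+}\|A_\tau-A_\tau^{(n)}\|_\cB<\eps/2$; then choose $\tau_0$ such that $\|A_\tau^{(n)}\|_\cB<\eps/2$ for all $\tau>\tau_0$. The triangle inequality gives $\|A_\tau\|_\cB<\eps$ for $\tau>\tau_0$, hence $(A_\tau)\in\cG$.

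There is no real obstacle here; the statement is a sequence-algebra analogue of the fact that the null sequences form a closed ideal of the bounded sequences. The only thing to keep straight is the order of quantifiers in the closedness step (choose $n$ before $\tau_0$, since $\tau_0$ is allowed to depend on $n$).
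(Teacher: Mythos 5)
Your proof is correct and is precisely the routine verification that the paper delegates to the cited reference (Proposition 1.14 of Hagen--Roch--Silbermann, \textit{$C^*$-Algebras and Numerical Analysis}), so there is no divergence in approach. The three steps (linearity via the triangle inequality, absorption via submultiplicativity together with the uniform bound built into the definition of $\cE$, and closedness via the $\eps/2$ argument with $n$ chosen before $\tau_0$) are exactly what that standard argument does.
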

The proof is similar to the proof of
\cite[Proposition~1.14]{HRS01}.
\begin{theorem}[Kozak]
\label{th:Kozak} Let $(A_\tau)\in\cE$. The sequence $(A_\tau)$ is
stable if and only if the coset $(A_\tau)+\cG$ is invertible in
the quotient algebra $\cE/\cG$.
\end{theorem}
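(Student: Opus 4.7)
The plan is to prove both directions by an elementary argument involving uniformly bounded sequences and a Neumann series perturbation, which is the standard way Kozak's theorem is established.

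First I would treat the forward implication. Assume $(A_\tau)$ is stable: there is $\tau_0>0$ so that $A_\tau$ is invertible for $\tau>\tau_0$ and $M:=\sup_{\tau>\tau_0}\|A_\tau^{-1}\|_\cB<\infty$. Define a sequence $(B_\tau)$ by $B_\tau:=A_\tau^{-1}$ for $\tau>\tau_0$ and $B_\tau:=I$ for $\tau\le\tau_0$. Then $\sup_{\tau\in\R_+}\|B_\tau\|_\cB\le\max\{M,1\}$, so $(B_\tau)\in\cE$. For $\tau>\tau_0$ we have $A_\tau B_\tau-I=0$ and $B_\tau A_\tau-I=0$, so both $(A_\tau B_\tau)-(I)$ and $(B_\tau A_\tau)-(I)$ are sequences that vanish identically for large $\tau$, hence lie in $\cG$. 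This shows $(B_\tau)+\cG$ is a two-sided inverse of $(A_\tau)+\cG$ in $\cE/\cG$.

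Next, the reverse implication. Assume $(A_\tau)+\cG$ is invertible in $\cE/\cG$, so there exist $(B_\tau)\in\cE$ and $(G_\tau),(H_\tau)\in\cG$ with
\[
A_\tau B_\tau=I+G_\tau,\qquad B_\tau A_\tau=I+H_\tau\quad(\tau\in\R_+).
\]
Since $\|G_\tau\|_\cB\to 0$ and $\|H_\tau\|_\cB\to 0$ as $\tau\to\infty$, we can pick $\tau_0>0$ with $\|G_\tau\|_\cB\le 1/2$ and $\|H_\tau\|_\cB\le 1/2$ for all $\tau>\tau_0$. By the Neumann series, $I+G_\tau$ and $I+H_\tau$ are invertible in $\cB$ for $\tau>\tau_0$, with $\|(I+G_\tau)^{-1}\|_\cB\le 2$ and $\|(I+H_\tau)^{-1}\|_\cB\le 2$. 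Then $B_\tau(I+G_\tau)^{-1}$ is a right inverse of $A_\tau$ and $(I+H_\tau)^{-1}B_\tau$ is a left inverse, hence $A_\tau$ is invertible and
\[
A_\tau^{-1}=(I+H_\tau)^{-1}B_\tau,
\qquad
\|A_\tau^{-1}\|_\cB\le 2\,\|(B_\tau)\|_\cE
\quad(\tau>\tau_0).
\]
This yields stability of $(A_\tau)$.

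No step is really an obstacle here: the argument is bookkeeping plus a Neumann series. The only mild care needed is in the forward direction, to redefine $B_\tau$ on the (possibly small) set $\{\tau\le\tau_0\}$ so that the resulting sequence is uniformly bounded and hence belongs to $\cE$; and in the reverse direction, to note that one uses the elements of $\cG$ produced by the invertibility of the coset to obtain a genuine inverse of $A_\tau$ for all sufficiently large $\tau$, with the uniform bound coming from the factor $2$ in the Neumann estimate and the boundedness of $(B_\tau)$ in $\cE$.
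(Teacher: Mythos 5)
Your proof is correct and is exactly the standard Neumann-series argument for Kozak's theorem; the paper itself gives no proof of this statement but cites the literature (e.g.\ \cite[Proposition~7.3]{BS06}, \cite[Proposition~1.2]{HRS95}, \cite[Theorem~1.15]{HRS01}), and your argument is essentially the one found there.
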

This result is well known (see e.g. \cite[Proposition~7.3]{BS06},
\cite[Proposition~1.2]{HRS95}, \cite[Theorem~1.15]{HRS01}).
\section{Essentialization}\label{section:essentialization}
\subsection{Algebra \boldmath{$\cF$} and its ideal \boldmath{$\cG$}}
A sequence of operators on a Banach space is said to converge
\textit{*-strongly} if it converges strongly and the sequence of
the adjoint operators converges strongly on the dual space.

Let $\tau\in\R_+$. By $V_\tau$ denote the additive shift operator given
by
\[
(V_\tau f)(x):=f(x-\tau)\quad (x\in\R).
\]
It is clear that $V_\tau$ is bounded and invertible on $L^p(\R)$ and
$V_\tau^{-1}=V_{-\tau}$. Moreover, $\|V_\tau\|_\cB=1$ for all $\tau\in\R$.

Let $\cF$ denote the set of all sequences $\bA:=(A_\tau)\in\cE$ such
that the sequences $(A_\tau)$, $(V_{-\tau}A_\tau V_\tau)$, and
$(V_\tau A_\tau V_{-\tau})$ are *-strongly convergent as $\tau\to
+\infty$.
\begin{lemma}\label{le:alg-F}
\begin{enumerate}
\item[{\rm (a)}]
The set $\cF$ is a closed unital subalgebra of the algebra $\cE$.

\item[{\rm (b)}]
Let $i\in\{-1,0,1\}$. The mappings $\fW_i:\cF\to\cB$ given by
\[
\begin{split}
\fW_{-1}(\bA) &:=\operatornamewithlimits{s-\lim}_{\tau\to+\infty}V_\tau A_\tau V_{-\tau},
\\
\fW_0(\bA) &:=\operatornamewithlimits{s-lim}_{\tau\to+\infty}A_\tau,
\\
\fW_1(\bA) &:=\operatornamewithlimits{s-\lim}_{\tau\to+\infty}V_{-\tau} A_\tau V_\tau
\end{split}
\]
for $\bA=(A_\tau)\in\cF$ are bounded unital homomorphisms with the norms
\[
\|\fW_{-1}\|=\|\fW_0\|=\|\fW_1\|=1.
\]

\item[{\rm (c)}]
The set $\cG$ is a closed two-sided ideal of the algebra $\cF$.

\item[{\rm (d)}]
The ideal $\cG$ lies in the kernel of each homomorphism $\fW_i$ for $i\in\{-1,0,1\}$.

\item[{\rm (e)}]
The algebra $\cF$ is inverse closed in the algebra $\cE$ and the algebra
$\cF/\cG$ is inverse closed in the algebra $\cE/\cG$.
\end{enumerate}
\end{lemma}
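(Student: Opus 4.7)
For parts (a) and (b), the plan is to unwind the definition of $\cF$. Closure under addition and scalar multiplication is immediate; for products I would use the algebraic identity
\[
V_{\mp\tau}(A_\tau B_\tau)V_{\pm\tau}=(V_{\mp\tau}A_\tau V_{\pm\tau})(V_{\mp\tau}B_\tau V_{\pm\tau})
\]
together with the standard fact that *-strong limits of uniformly bounded sequences pass to products; the same identity also supplies the homomorphism property of the $\fW_i$. Norm closedness of $\cF$ in $\cE$ follows by an $\varepsilon/3$ argument: if $\bA^{(n)}\to\bA$ in $\cE$-norm, the uniform estimate $\sup_\tau\|V_{\pm\tau}(A_\tau^{(n)}-A_\tau)V_{\mp\tau}\|_\cB\to 0$ transfers Cauchy-ness of the three conjugated sequences from $\bA^{(n)}$ to $\bA$, and completeness of $L^p(\R)$ and of its dual delivers the *-strong limits. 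For (b), well-definedness is built into the definition of $\cF$, unitality is a direct check, and the bound $\|\fW_i\|\le 1$ uses $\|V_{\pm\tau}\|_\cB=1$ together with the lower semicontinuity of the operator norm under strong convergence (a consequence of Banach--Steinhaus); the value $1$ is attained on the constant sequence $(I)$.

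Parts (c) and (d) reduce to a single observation: since $\|V_{\pm\tau}A_\tau V_{\mp\tau}\|_\cB\le\|A_\tau\|_\cB$, every $\bA\in\cG$ automatically lies in $\cF$ and all three associated sequences tend to zero in norm, whence $\fW_i(\bA)=0$. Combined with Lemma~\ref{le:ideal-G-of-E}, this makes $\cG$ a closed two-sided ideal of $\cF$ contained in $\ker\fW_{-1}\cap\ker\fW_0\cap\ker\fW_1$.

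The substance lies in part (e). The key technical fact I would prove first is: if $(T_\tau)\subset\cB$ is *-strongly convergent with limit $T$ and $\sup_\tau\|T_\tau^{-1}\|_\cB\le M<\infty$, then $T$ is invertible and $T_\tau^{-1}\to T^{-1}$ *-strongly. Invertibility follows because $\|Tx\|_p=\lim_\tau\|T_\tau x\|_p\ge M^{-1}\|x\|_p$, and analogously for $T^*$, so $T$ is bounded below on both sides; convergence of the inverses then comes from the identity
\[
T_\tau^{-1}x-T^{-1}x=T_\tau^{-1}(T-T_\tau)T^{-1}x,
\]
in which $T^{-1}x$ is fixed while $\|T_\tau^{-1}\|_\cB\le M$, together with its adjoint counterpart. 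Applied to $\bA=(A_\tau)\in\cF$ invertible in $\cE$---equivalently, with $(A_\tau^{-1})$ uniformly bounded---and to the conjugates via $V_{-\tau}A_\tau^{-1}V_\tau=(V_{-\tau}A_\tau V_\tau)^{-1}$ and its mirror image, this shows $(A_\tau^{-1})\in\cF$, giving inverse closedness of $\cF$ in $\cE$. For the quotient statement, Kozak's theorem (Theorem~\ref{th:Kozak}) identifies invertibility of $\bA+\cG$ in $\cE/\cG$ with stability of $\bA$, producing $\tau_0$ and $M$ with $\|A_\tau^{-1}\|_\cB\le M$ for $\tau>\tau_0$; setting $B_\tau:=A_\tau^{-1}$ for $\tau>\tau_0$ and $B_\tau:=I$ otherwise yields a sequence $\bB\in\cF$ (by the same convergence argument, since only the tail behaviour enters the definition of $\cF$) with $\bA\bB-(I),\bB\bA-(I)\in\cG$.

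The main obstacle is precisely this invertibility-preservation fact for *-strong limits in part (e); once it is in hand, the rest of (e) is bookkeeping, and (a)--(d) reduce to the algebraic identity for $V_{\mp\tau}\cdot V_{\pm\tau}$ applied to products together with the norm bound on $V_{\pm\tau}$.
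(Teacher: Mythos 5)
Your proof is correct and follows the standard line of argument that the paper itself defers to (it merely cites \cite[Proposition~4.1]{RSS09} and gives no details). The decomposition of the work is the right one: parts (a)--(d) reduce to the conjugation identity $V_{\mp\tau}(A_\tau B_\tau)V_{\pm\tau}=(V_{\mp\tau}A_\tau V_{\pm\tau})(V_{\mp\tau}B_\tau V_{\pm\tau})$, the isometry $\|V_{\pm\tau}\|_\cB=1$, the $\varepsilon/3$ argument for closedness, and lower semicontinuity of the operator norm under strong convergence; the genuine content is in part (e), and you have correctly isolated the key lemma (a *-strongly convergent family with uniformly bounded inverses has an invertible limit, with $T_\tau^{-1}\to T^{-1}$ *-strongly via $T_\tau^{-1}-T^{-1}=T_\tau^{-1}(T-T_\tau)T^{-1}$). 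Your argument that $T$ is invertible---$T$ bounded below and $T^*$ bounded below on the reflexive space $L^q(\R)$---is valid. The passage from invertibility of $\bA+\cG$ in $\cE/\cG$ to a tail inverse $\bB\in\cF$ via Kozak's theorem is also correct, since membership in $\cF$ depends only on behavior as $\tau\to+\infty$. One small inaccuracy: lower semicontinuity of the operator norm under strong convergence is elementary ($\|Tx\|=\lim\|T_\tau x\|\le\liminf\|T_\tau\|\,\|x\|$) and does not rest on Banach--Steinhaus; that theorem is only needed if one wanted to deduce boundedness of a pointwise limit, which here is already guaranteed by $\sup_\tau\|A_\tau\|_\cB<\infty$. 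This is a matter of attribution, not a gap.
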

The proof follows the proof of \cite[Proposition~4.1]{RSS09}.
Notice that the algebra $\cF$ in the present paper is larger than the algebra
considered in  \cite{RSS09} and also denoted there by $\cF$ (see also
Remark~\ref{rem:algebras-F}).
\subsection{The algebra \boldmath{$\cA$} is contained in the algebra \boldmath{$\cF$}}
\begin{proposition}\label{pr:A-in-F}
\begin{enumerate}
\item[{\rm (a)}]
If $\bP=(P_\tau)$, then $\bP\in\cF$ and
\begin{equation}\label{eq:A-in-F-1}
\fW_{-1}(\bP)=\chi_+I, \quad
\fW_0(\bP)=I,\quad
\fW_1(\bP)=\chi_-I.
\end{equation}

\item[{\rm (b)}]
If $\bA=(aI)$ with $a\in PC$, then $\bA\in\cF$ and
\begin{equation}\label{eq:A-in-F-2}
\fW_{-1}(\bA)=a(-\infty)I, \quad
\fW_0(\bA)=aI,\quad
\fW_1(\bA)=a(+\infty)I.
\end{equation}

\item[{\rm (c)}]
If $\bB=(W^0(b))$ with $b\in [PC_p,SO_p]$, then $\bB\in\cF$ and
\begin{equation}\label{eq:A-in-F-3}
\fW_{-1}(\bB)=W^0(b), \quad
\fW_0(\bB)=W^0(b),\quad
\fW_1(\bB)=W^0(b).
\end{equation}
\end{enumerate}
\end{proposition}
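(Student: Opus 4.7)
\medskip
\noindent\textbf{Proof plan.} Throughout, I will use the explicit formula $P_\tau = M_{\chi_{(-\tau,\tau)}}$ (multiplication by the characteristic function) together with the identity $V_\tau M_\varphi V_{-\tau} = M_{\varphi(\cdot -\tau)}$ that holds for any $\varphi\in L^\infty(\R)$. The verification of $\bA \in \cF$ in each of the three cases reduces to checking \textit{*-strong} convergence of three concrete sequences; the values of $\fW_i$ can then be read off from the limits.

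\medskip
\noindent\textit{Part {\rm(a)}.} The limit $\fW_0(\bP) = I$ is immediate from the remark, already used in the introduction, that $P_\tau \to I$ strongly on $L^p(\R)$ as $\tau\to+\infty$; the same argument on $L^q(\R)$ (since the adjoint $P_\tau^*$ is again multiplication by $\chi_{(-\tau,\tau)}$) yields *-strong convergence. For the shifted sequences I will compute directly:
\[
V_\tau P_\tau V_{-\tau} = M_{\chi_{(-\tau,\tau)}(\,\cdot\,-\tau)} = M_{\chi_{(0,2\tau)}},
\qquad
V_{-\tau} P_\tau V_\tau = M_{\chi_{(-2\tau,0)}}.
\]
As $\tau\to+\infty$, an application of the dominated convergence theorem on $L^p(\R)$ (and on $L^q(\R)$ for the adjoints) gives *-strong convergence to $\chi_+ I$ and $\chi_- I$, respectively, which are the values of $\fW_{-1}(\bP)$ and $\fW_1(\bP)$ asserted in \eqref{eq:A-in-F-1}.

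\medskip
\noindent\textit{Part {\rm(b)}.} Since $\bA = (aI)$ is a constant sequence, $\fW_0(\bA) = aI$ is trivial. Using the formula above,
\[
V_\tau (aI) V_{-\tau} = M_{a(\,\cdot\,-\tau)},
\qquad
V_{-\tau} (aI) V_\tau = M_{a(\,\cdot\,+\tau)}.
\]
For $a\in PC$ the one-sided limits $a(-\infty) := \lim_{t\to-\infty} a(t)$ and $a(+\infty) := \lim_{t\to+\infty} a(t)$ exist, so for every $x\in\R$ one has $a(x-\tau)\to a(-\infty)$ and $a(x+\tau)\to a(+\infty)$ as $\tau\to+\infty$. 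Since $\|a\|_\infty$ is a uniform bound, dominated convergence on $L^p(\R)$ and on $L^q(\R)$ yields the *-strong limits $a(-\infty) I$ and $a(+\infty) I$, proving \eqref{eq:A-in-F-2}.

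\medskip
\noindent\textit{Part {\rm(c)}.} The key point is translation invariance of Fourier convolution operators: on the Fourier side $FV_\tau F^{-1}$ is multiplication by the phase $e^{i\tau y}$, which commutes with multiplication by $b$, so
\[
V_\tau W^0(b) V_{-\tau} = F^{-1} M_{e^{i\tau y}} \,b\, M_{e^{-i\tau y}} F = W^0(b).
\]
Thus $(W^0(b))$, $(V_\tau W^0(b) V_{-\tau})$, $(V_{-\tau} W^0(b) V_\tau)$ are all the same constant sequence, trivially *-strongly convergent with all three limits equal to $W^0(b)$, establishing \eqref{eq:A-in-F-3}.

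\medskip
\noindent\textit{Where the work actually sits.} Parts (a) and (c) are essentially bookkeeping; the only place that requires a small, but genuine, argument is the passage from pointwise convergence $a(\cdot\mp\tau)\to a(\mp\infty)$ to *-strong convergence in part (b), and there the $L^\infty$-bound on $a$ together with dominated convergence, applied on both $L^p(\R)$ and $L^q(\R)$, closes the argument cleanly.
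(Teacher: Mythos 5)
Your proposal is correct and follows essentially the same route as the paper: the paper declares the computations for parts (a) and (b) "straightforward," cites translation invariance of $W^0(b)$ for part (c), and records the adjoint formulas $P_\tau^*=P_\tau$, $(aI)^*=\overline{a}I$, $(W^0(b))^*=W^0(\overline{b})$ to handle the $*$-strong convergence on $L^q(\R)$ — exactly the bookkeeping you carry out explicitly. Your write-up fills in the "straightforward" steps (the shifts of $\chi_{(-\tau,\tau)}$ and of $a$, and dominated convergence) without deviating in substance.
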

\begin{proof}
The proof of equalities \eqref{eq:A-in-F-1} and
\eqref{eq:A-in-F-2} is straightforward, equalities in
\eqref{eq:A-in-F-3} are trivial because the convolution operator
$W^0(b)$ is translation-invariant, that is,
$V_{\pm\tau}W^0(b)V_{\mp\tau}=W^0(b)$ for every $b\in\cM_p$, in
particular, for every $b\in[PC_p,SO_p]$. To finish the proof, it
remains to note that
\[
P_\tau^*=P_\tau\in\cB(L^q(\R)),\quad
[aI]^*=\overline{a}I\in\cB(L^q(\R)),\quad
[W^0(b)]^*=W^0(\overline{b})\in\cB(L^q(\R)),
\]
where $1/p+1/q=1$. Therefore the existence of the strong limits
for the adjoint operators in the definition of the algebra $\cF$
can be obtained in the same way as the existence of the strong
limits in \eqref{eq:A-in-F-1}--\eqref{eq:A-in-F-3}.
\end{proof}
\begin{corollary}\label{co:A-in-F}
The algebra $\cA$ is a closed unital subalgebra of the algebra $\cF$.
\end{corollary}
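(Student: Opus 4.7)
The proof is essentially immediate from what has already been established, so the plan reduces to assembling the pieces.

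First I would recall that by Definition~\ref{def:alg-A} the algebra $\cA$ is, by construction, the smallest closed subalgebra of $\cE$ containing the three families of sequences $(aI)$ with $a\in PC$, $(W^0(b))$ with $b\in[PC_p,SO_p]$, and the single sequence $(P_\tau)$. In particular $\cA$ is automatically closed in $\cE$, so the only nontrivial things to verify are that $\cA$ is contained in $\cF$ and that it contains the identity of $\cE$.

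For the containment, I would invoke Lemma~\ref{le:alg-F}(a), which tells us that $\cF$ is itself a closed unital subalgebra of $\cE$, together with Proposition~\ref{pr:A-in-F}, which shows that every generator of $\cA$ actually lies in $\cF$. Since $\cA$ is the smallest closed subalgebra of $\cE$ containing these generators, and $\cF$ is one such closed subalgebra, we obtain $\cA\subset\cF$. The inclusion is strict in general but we do not need that here.

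For unitality, I would just observe that the constant function $1$ lies in $PC$, so the identity sequence $(I)=(1\cdot I)$ is one of the generators of $\cA$. Thus $\cA$ is unital with the same unit as $\cF$ and $\cE$. Finally, since $\cA$ is closed in $\cE$ and $\cF$ carries the subspace topology from $\cE$, it is automatically closed in $\cF$ as well.

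There is no real obstacle: the genuine work was done in Proposition~\ref{pr:A-in-F}, where one had to exhibit the $\ast$-strong limits of $A_\tau$, $V_{-\tau}A_\tau V_\tau$, and $V_\tau A_\tau V_{-\tau}$ for each type of generator (in particular using the translation invariance $V_{\pm\tau}W^0(b)V_{\mp\tau}=W^0(b)$ for Fourier convolutions and the duality identities for adjoints). Once that is in place, the corollary is a purely formal consequence of the minimality clause in the definition of $\cA$.
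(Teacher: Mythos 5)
Your proof is correct and follows the route the paper intends: the corollary is stated without proof precisely because it is an immediate consequence of Proposition~\ref{pr:A-in-F} (the generators of $\cA$ lie in $\cF$) together with Lemma~\ref{le:alg-F}(a) ($\cF$ is a closed unital subalgebra of $\cE$), combined with the minimality clause in Definition~\ref{def:alg-A}. Your observations that $1\in PC$ gives unitality and that closedness in $\cE$ implies closedness in $\cF$ are exactly the small bookkeeping points needed to complete the argument.
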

\subsection{Ideals \boldmath{$\cJ_{-1}$},
\boldmath{$\cJ_0$}, \boldmath{$\cJ_1$}, and \boldmath{$\cJ$} of the algebra
\boldmath{$\cF$}}
Recall that $\cK$ denotes the ideal of the compact operators on $L^p(\R)$.
Put
\[
\begin{split}
\cJ_{-1} &:=\big\{(V_{-\tau}KV_\tau)+(G_\tau)\ :\ K\in\cK,\ (G_\tau)\in\cG\big\},
\\
\cJ_0 &:=\big\{(K)+(G_\tau)\ :\ K\in\cK,\ (G_\tau)\in\cG\big\},
\\
\cJ_1 &:=\big\{(V_\tau KV_{-\tau})+(G_\tau)\ :\ K\in\cK,\ (G_\tau)\in\cG\big\},
\end{split}
\]
and
\[
\cJ:=\big\{ (V_\tau K_1V_{-\tau})+(K_0)+(V_{-\tau} K_{-1}
V_\tau)+(G_\tau)\ :\ K_{-1},K_0,K_1\in\cK,\ (G_\tau)\in\cG \big\}.
\]
\begin{lemma}\label{le:ideals-J}
The sets $\cJ_i$ with $i\in\{-1,0,1\}$ and $\cJ$ are closed two-sided ideals
of the algebra $\cF$.
\end{lemma}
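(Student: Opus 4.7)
The plan is to verify four properties for each of the sets $\cJ_i$ and $\cJ$: (a) inclusion in $\cF$; (b) closure under addition and scalar multiplication; (c) closure under left and right multiplication by arbitrary elements of $\cF$; (d) closure in the norm of $\cE$. Property (b) is immediate from the additivity of $\cK$ and $\cG$, so I would focus on (a), (c), and (d).

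For (a), I would first observe that $V_{\pm\tau}\to 0$ weakly on $L^p(\R)$ as $\tau\to+\infty$ (check on compactly supported functions and use density). Hence for any $K\in\cK$, $KV_{\mp\tau}$ tends to $0$ in norm, and since $\|V_{\pm\tau}\|_\cB=1$ the sequences $V_{\pm\tau}KV_{\mp\tau}$ converge strongly to $0$; applying the same to $K^*$ (which is compact on $L^q(\R)$) gives *-strong convergence. A direct computation then shows the generator sequences $(V_\tau KV_{-\tau})$, $(K)$, and $(V_{-\tau}KV_\tau)$ lie in $\cF$ with
\[
\fW_1((V_\tau KV_{-\tau}))=K,\quad \fW_0((K))=K,\quad \fW_{-1}((V_{-\tau}KV_\tau))=K,
\]
and the remaining homomorphisms vanishing on these generators; on $\cG$ all three $\fW_i$ vanish by Lemma~\ref{le:alg-F}(d). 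Adding elements of $\cG\subset\cF$ preserves membership in $\cF$.

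The heart of the argument is (c). The analytic input is the standard principle: if $T_\tau\to T$ strongly in $\cB$ and $K\in\cK$, then $\|T_\tau K-TK\|_\cB\to 0$, while the companion statement $\|KT_\tau-KT\|_\cB\to 0$ requires in addition $T_\tau^*\to T^*$ strongly on $L^q(\R)$. Applied to $T_\tau=V_{-\tau}A_\tau V_\tau$ and $T=\fW_1(\bA)$ for any $\bA=(A_\tau)\in\cF$, this yields
\[
\bigl\|[V_{-\tau}A_\tau V_\tau-\fW_1(\bA)]K\bigr\|_\cB\to 0
\quad\text{and}\quad
\bigl\|K[V_{-\tau}A_\tau V_\tau-\fW_1(\bA)]\bigr\|_\cB\to 0,
\]
the second estimate being where the *-strong (not merely strong) convergence in the definition of $\cF$ is essential. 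From the identity $A_\tau(V_\tau KV_{-\tau})=V_\tau[V_{-\tau}A_\tau V_\tau]KV_{-\tau}$ and the first estimate, $(A_\tau V_\tau KV_{-\tau})-(V_\tau \fW_1(\bA)KV_{-\tau})\in\cG$, and the latter belongs to $\cJ_1$ since $\fW_1(\bA)K\in\cK$; right multiplication $(V_\tau KV_{-\tau})A_\tau=V_\tau K[V_{-\tau}A_\tau V_\tau]V_{-\tau}$ is handled by the second estimate. The analogous computation with $\fW_{-1}$ and $V_\tau,V_{-\tau}$ interchanged treats $\cJ_{-1}$, while $\cJ_0$ uses $\fW_0$ directly without any conjugation. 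Since $\cJ=\cJ_{-1}+\cJ_0+\cJ_1$ (absorbing $\cG$-parts into any one summand), it inherits the two-sided ideal property.

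For (d), I would use that each $\fW_i$ is a bounded homomorphism of norm $1$ (Lemma~\ref{le:alg-F}(b)). Let $(X^{(n)})\subset\cJ$ converge in $\cE$ to some $X$, and write $X^{(n)}=(V_\tau K_1^{(n)}V_{-\tau})+(K_0^{(n)})+(V_{-\tau}K_{-1}^{(n)}V_\tau)+(G_\tau^{(n)})$. Then $K_i^{(n)}=\fW_i(X^{(n)})$ converges in operator norm to $K_i:=\fW_i(X)\in\cK$ (closedness of $\cK$ in $\cB$). Subtracting $(V_\tau K_1 V_{-\tau})+(K_0)+(V_{-\tau}K_{-1}V_\tau)$ from $X^{(n)}$ leaves a Cauchy sequence whose non-$\cG$ part tends to $0$ in $\cE$-norm, so $(G_\tau^{(n)})$ is $\cE$-norm convergent; closedness of $\cG$ (Lemma~\ref{le:alg-F}(c)) places its limit in $\cG$, whence $X\in\cJ$. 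The same scheme with only the relevant $\fW_i$ handles each individual $\cJ_i$. The sole genuine technicality in the whole proof is the adjoint-side norm estimate in (c), which is exactly the reason $\cF$ was defined with *-strong convergence in all three places.
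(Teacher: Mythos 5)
Your proposal is correct and follows essentially the same route as the paper: identify the compact part of a sequence in $\cJ_i$ via the homomorphism $\fW_i$, use the conjugation identity $A_\tau(V_\tau KV_{-\tau})=V_\tau[V_{-\tau}A_\tau V_\tau]KV_{-\tau}$ together with ``strongly convergent times compact converges in norm'' for the ideal property, and prove closedness by applying the bounded homomorphisms $\fW_i$ and invoking closedness of $\cK$ and $\cG$. The paper spells out the details for $\cJ_{-1}$ (via $\fW_{-1}$) and declares the rest analogous; you spell out $\cJ_1$ (via $\fW_1$) and give more explicit detail on $\cJ$, which amounts to the same thing. Your emphasis on the *-strong hypothesis being exactly what saves the right-multiplication estimate is a nice observation that the paper leaves implicit in its ``analogously.''

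One small slip to fix: you write that since $V_{\pm\tau}\to 0$ weakly and $K$ is compact, ``$KV_{\mp\tau}$ tends to $0$ in norm.'' That is false --- $V_{\mp\tau}$ is an isometric isomorphism on $L^p(\R)$, so $\|KV_{\mp\tau}\|_\cB=\|K\|_\cB$ for all $\tau$. What is true (and what you actually use, as your own ``converge strongly to $0$'' conclusion and your correctly stated standard principle in part (c) make clear) is that $KV_{\mp\tau}\to 0$ in the strong operator topology: for each fixed $x$, $V_{\mp\tau}x\rightharpoonup 0$ and compactness of $K$ upgrades this to $\|KV_{\mp\tau}x\|_p\to 0$. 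The word ``norm'' there should read ``strongly''; with that correction the argument goes through.
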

\begin{proof}
First we note that $(V_{\pm\tau})$ converges weakly to zero as
$\tau\to+\infty$. Let us show that $\cJ_{-1}\subset\cF$. Let
$\bJ=(J_\tau)=(V_{-\tau}KV_\tau)+(G_\tau)$ for some $K\in\cK$ and
$(G_\tau)\in\cG$. Since $K$ is compact and $V_{\pm\tau}^2=V_{\pm
2\tau}$, we obtain that $(J_\tau)$ is *-strongly convergent to
zero, $(V_{-\tau}J_\tau V_\tau)$ is *-strongly convergent to zero,
and $(V_\tau J_\tau V_{-\tau})$ is *-strongly convergent to $K$.
Thus $\cJ_{-1}\subset\cF$.

Let $\bA=(A_\tau)\in\cF$. Then
\[
\begin{split}
\bA\bJ &= (A_\tau V_{-\tau} KV_\tau)+(A_\tau G_\tau)
\\
&=(V_{-\tau} V_\tau A_\tau V_{-\tau}KV_\tau)+(A_\tau G_\tau)
\\
&= (V_{-\tau}\fW_{-1}(\bA)KV_\tau)+
(V_{-\tau}[V_\tau A_\tau V_{-\tau}K-\fW_{-1}(\bA)K]V_\tau+A_\tau G_\tau)
\\
&=:(V_{-\tau}\fW_{-1}(\bA)KV_\tau)+(G_\tau').
\end{split}
\]
Since the sequence $(V_\tau A_\tau V_{-\tau}-\fW_{-1}(\bA))$
converges strongly to zero and $K\in\cK$, the sequence  $(V_\tau
A_\tau V_{-\tau}K-\fW_{-1}(\bA)K)$ converges uniformly to zero.
Thus
\[
(G_\tau'):=(V_{-\tau}[V_\tau A_\tau
V_{-\tau}K-\fW_{-1}(\bA)K]V_{-\tau}+A_\tau G_\tau)\in\cG
\]
and $\bA\bJ\in\cJ_{-1}$. Analogously one can show that
$\bJ\bA\in\cJ_{-1}$. This means that $\cJ_{-1}$ is a two-sided
ideal of $\cF$.

Let us show that $\cJ_{-1}$ is closed. Suppose $(\bJ_k)_{k\in\N}$
is a Cauchy sequence in $\cJ_{-1}$. Since $\cF$ is closed in $\cE$
by Lemma~\ref{le:alg-F}(a), the sequence $(\bJ_k)_{k\in\N}$ is
convergent to some $\bJ=(J_\tau)\in\cF$. We claim that
$\bJ\in\cJ_{-1}$. Dy definition of $\cJ_{-1}$, there exist
sequences $(K^{(j)})_{j\in\N}\subset\cK$ and
$(G_\tau^{(j)})_{j\in\N}\subset\cG$ such that
\[
\bJ_j=(J_\tau^{(j)})=(V_{-\tau}K^{(j)}V_\tau)+(G_\tau^{(j)}).
\]
From Lemma~\ref{le:alg-F}(b) it follows that for all
$j,k\in\N$,
\[
\|K^{(j)}-K^{(k)}\|_\cB
=
\|\fW_{-1}(\bJ_j)-\fW_{-1}(\bJ_k)\|_\cB
\le
\|\bJ_j-\bJ_k\|_\cE.
\]
Then $(K^{(j)})_{j\in\N}$ is a Cauchy sequence in $\cB$. Let
$K\in\cK$ be its limit. Put
\[
\bG:=(G_\tau):=(J_\tau)-(V_{-\tau}KV_\tau).
\]
Taking into account that $\|V_{\pm\tau}\|_\cB=1$, we obtain
\[
\begin{split}
\|(G_\tau)-(G_\tau^{(j)})\|_\cE
&\le
\|(J_\tau)-(J_\tau^{(j)})\|_\cE+
\|(V_{-\tau}KV_\tau)-(V_{-\tau}K^{(j)}V_\tau)\|_\cE
\\
&\le
\|\bJ-\bJ_j\|_\cE+\|K-K^{(j)}\|_\cB.
\end{split}
\]
Hence $\bG$ is the limit of $(\bG_j)_{j\in\N}$ with
$\bG_j:=(G_\tau^{(j)})\in\cG$ in the norm of $\cE$. By
Lemma~\ref{le:alg-F}(c), $\bG\in\cG$. Thus $\bJ\in\cJ_{-1}$ and
$\cJ_{-1}$ is closed.

Analogously it can be shown that $\cJ_0$, $\cJ_{-1}$, and $\cJ$ are closed
two-sided ideals of the algebra $\cF$.
\end{proof}
\subsection{Lifting}
The main result of this section is the following lifting theorem adapted for
our purposes from \cite[Theorem 1.8]{HRS95}.
\begin{theorem}\label{th:lifting-our}
Let $\bA=(A_\tau)\in\cF$. The coset $\bA+\cG$ is invertible in the
quotient algebra $\cF/\cG$ if and only if the operators
$\fW_{-1}(\bA)$, $\fW_0(\bA)$, and $\fW_1(\bA)$ are invertible in
$\cB$ and the coset $\bA+\cJ$ is invertible in the quotient
algebra $\cF/\cJ$.
\end{theorem}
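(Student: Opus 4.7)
The plan is to establish the equivalence in two halves. For the forward implication, I note that $\cG\subset\cJ$, so the natural projection $\cF/\cG\to\cF/\cJ$ sends invertible cosets to invertible cosets; moreover, by Lemma~\ref{le:alg-F}(d) each $\fW_i$ vanishes on $\cG$, so it factors through $\cF/\cG$ as a unital Banach algebra homomorphism $\widetilde{\fW}_i\colon\cF/\cG\to\cB$, and unital homomorphisms preserve invertibility. This handles the ``only if'' part.

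For the converse, the strategy is to upgrade a regularizer modulo $\cJ$ into a regularizer modulo $\cG$ using three partial lifting maps. I define $\phi_{-1},\phi_0,\phi_1\colon\cK\to\cF$ by
\[
\phi_{-1}(K):=(V_{-\tau}KV_\tau),\qquad \phi_0(K):=(K),\qquad \phi_1(K):=(V_\tau KV_{-\tau}),
\]
so that $\phi_i(\cK)\subset\cJ_i$ and every element of $\cJ$ is of the form $\phi_{-1}(K_{-1})+\phi_0(K_0)+\phi_1(K_1)$ modulo $\cG$, by the very definition of $\cJ$. The key computation is
\[
\fW_i(\phi_j(K))=\delta_{ij}\,K\qquad (i,j\in\{-1,0,1\},\ K\in\cK),
\]
which rests on the fact that $V_{\pm\tau}\to 0$ weakly as $\tau\to+\infty$; combined with the compactness of $K$ (so that $K$ sends weakly convergent nets to norm convergent ones), this yields $KV_{\pm\tau}\to 0$ strongly and hence the vanishing of all off-diagonal strong limits. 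A consequence is that the decomposition of $\cJ/\cG$ is direct, identifying $\cJ/\cG\cong\cK\oplus\cK\oplus\cK$ through $(\widetilde{\fW}_{-1},\widetilde{\fW}_0,\widetilde{\fW}_1)$, so the coefficients $K_i\in\cK$ of a representative are uniquely determined.

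The decisive ingredient is a multiplicative compatibility between the $\phi_i$ and arbitrary $\bA=(A_\tau)\in\cF$: for each $K\in\cK$ and $i\in\{-1,0,1\}$ I claim
\[
\bA\,\phi_i(K)\equiv\phi_i\bigl(\fW_i(\bA)\,K\bigr)\pmod{\cG},\qquad \phi_i(K)\,\bA\equiv\phi_i\bigl(K\,\fW_i(\bA)\bigr)\pmod{\cG}.
\]
The proof, for example with $i=-1$, writes $A_\tau V_{-\tau}KV_\tau=V_{-\tau}(V_\tau A_\tau V_{-\tau})KV_\tau$; since $V_\tau A_\tau V_{-\tau}\to\fW_{-1}(\bA)$ strongly and the set $K(\text{unit ball})$ is precompact, the convergence $V_\tau A_\tau V_{-\tau}K\to\fW_{-1}(\bA)K$ is uniform, i.e.\ in operator norm, and conjugating by $V_{\pm\tau}$ (which has norm $1$) preserves this. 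Granting this, suppose $\bA+\cJ$ has a right regularizer $\bB\in\cF$ and each $\fW_i(\bA)$ is invertible in $\cB$. Write $\bA\bB-I\equiv\sum_i\phi_i(K_i)\pmod\cG$; applying $\widetilde{\fW}_i$ gives $\fW_i(\bA)\fW_i(\bB)=I+K_i$, so each $K_i\in\cK$. Correct the regularizer to
\[
\widetilde{\bB}:=\bB-\sum_{i\in\{-1,0,1\}}\phi_i\bigl(\fW_i(\bA)^{-1}K_i\bigr);
\]
the compatibility identities then give $\bA\widetilde{\bB}\equiv I\pmod\cG$. A symmetric correction on the left produces a left inverse modulo $\cG$, and the two must coincide by uniqueness of Banach-algebra inverses, establishing invertibility of $\bA+\cG$ in $\cF/\cG$.

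The main obstacle will be the rigorous verification of the multiplicative compatibility identities, since both the strong limit defining $\fW_i$ and the strong limit of the adjoints (needed so that $\widetilde{\bB}\in\cF$) must be controlled simultaneously; this is precisely where the *-strong convergence encoded in the definition of $\cF$ is essential. The remainder — the direct-sum decomposition of $\cJ/\cG$, the construction of one-sided corrections, and their reconciliation — is routine and follows the template of \cite[Theorem~1.8]{HRS95}.
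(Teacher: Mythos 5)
Your proposal is correct and follows essentially the same route as the paper: both proofs derive the $\delta_{ij}$-type identities from compactness of $K$ together with weak convergence of $V_{\pm\tau}$ to zero, and both correct a one-sided regularizer modulo $\cJ$ by subtracting a lifted term of the form $\phi_i(\fW_i(\bA)^{-1}\fW_i(\bJ_i))$ so that each $\cJ_i$-component drops into $\cG$. The paper phrases the key correction step slightly differently — it invokes the ideal property of $\cJ_i$ to get $\bJ_i-\bJ_i'\bA\in\cJ_i$ and then uses that $\fW_i$ has kernel $\cG$ on $\cJ_i$ — whereas you isolate this as an explicit multiplicative compatibility $\bA\phi_i(K)\equiv\phi_i(\fW_i(\bA)K)\pmod\cG$; the two formulations are equivalent, and your version makes the mechanism a bit more transparent while the paper's is more compact.
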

\begin{proof}
The proof is analogous to the proof of the abstract lifting theorem
\cite[Theorem~1.8]{HRS95}, although we consider the invertibility of
$\fW_i(\bA)$ in $\cB$, while there it is considered in the images of the homomorphisms
$\fW_i$.

\medskip
\textit{Necessity.}
If $\bA+\cG$ is invertible in $\cF/\cG$, then there exist sequences
$\bB\in\cF$ and $\bG_1,\bG_2\in\cG$ such that
\begin{equation}\label{eq:lift-1}
\bA\bB=\bI+\bG_1,
\quad
\bB\bA=\bI+\bG_2.
\end{equation}
Let $i\in\{-1,0,1\}$. From Lemma~\ref{le:alg-F}(b),(d) we know
that $\fW_i:\cF\to\cB$ is a bounded unital homomorphism and
$\fW_i(\bG_1)=\fW_i(\bG_2)=0$. Applying this homomorphism to the
equalities in \eqref{eq:lift-1}, we conclude that the operator
$\fW_i(\bA)$ is invertible in $\cB$ and its inverse is
$\fW_i(\bB)$. From the definition of the ideal $\cJ$ it follows
that $\bG_1,\bG_2\in\cJ$. Thus the equalities \eqref{eq:lift-1}
imply also that $\bA+\cJ$ is invertible in $\cF/\cJ$. The
necessity part is proved.

\medskip
\textit{Sufficiency.}
Assume that the operators $\fW_{-1}(\bA)$, $\fW_0(\bA)$, $\fW_1(\bA)$
are invertible in $\cB$ and the coset $\bA+\cJ$ is invertible
in the quotient algebra $\cF/\cJ$. Then there exist sequences $\bB\in\cF$
and $\bJ\in\cJ$ such that $\bB\bA=\bI+\bJ$. From the definition of $\cJ$
we conclude that there exist elements $\bJ_i\in\cJ_i$ such that
$\bJ=\bJ_{-1}+\bJ_0+\bJ_1$.
Taking into account that, by Lemma~\ref{le:alg-F}(d),
\begin{eqnarray}
&&
\fW_{-1}[(V_{-\tau}KV_\tau)+(G_\tau)]=K,
\nonumber
\\
&&
\fW_0[(K)+(G_\tau)]=K,
\label{eq:lift-2}
\\
&&
\fW_1[(V_\tau KV_{-\tau})+(G_\tau)]=K
\nonumber
\end{eqnarray}
for every $K\in\cK$ and every $(G_\tau)\in\cG$, we have that $\fW_i$ maps
the ideal $\bJ_i$ onto the the ideal $\cK$ for $i\in\{-1,0,1\}$.
Since $\fW_i(\bJ_i)\in\cK$, we obviously have $\fW_i(\bJ_i)\fW_i(\bA)^{-1}\in\cK$.
Consequently
for every $i\in\{-1,0,1\}$, there exists $\bJ_i'\in\cJ_i$ such that
\begin{equation}\label{eq:lift-3}
\fW_i(\bJ_i')=\fW_i(\bJ_i)\fW_i(\bA)^{-1}.
\end{equation}
Put $\bB':=\bB-\bJ_{-1}'-\bJ_0'-\bJ_1'$. Then $\bB'+\cJ=\bB+\cJ$ and
\begin{eqnarray}
\bB'\bA
&=&
\bI+\bJ-\bJ_{-1}'\bA-\bJ_0'\bA-\bJ_1'\bA
\nonumber
\\
&=&
\bI+(\bJ_{-1}-\bJ_{-1}'\bA)+(\bJ_0-\bJ_0'\bA)+(\bJ_1-\bJ_1'\bA).
\label{eq:lift-4}
\end{eqnarray}
From \eqref{eq:lift-3} it follows that
$\fW_i(\bJ_i)=\fW_i(\bJ_i')\fW_i(\bA)=\fW_i(\bJ_i'\bA)$.
Hence
\[
\fW_i(\bJ_i-\bJ_i'\bA)=0.
\]
From this observation and \eqref{eq:lift-2}
we conclude that $\bJ_i-\bJ_i'\bA\in\cG$ for $i\in\{-1,0,1\}$.
Thus \eqref{eq:lift-4} can be written as
\[
\bB'\bA=\bI+\bG
\]
with $\bG=(\bJ_{-1}-\bJ_{-1}'\bA)+(\bJ_0-\bJ_0'\bA)+(\bJ_1-\bJ_1'\bA)\in\cG$.
This means that $\bA+\cG$ is
left-invertible in $\cF/\cG$. Analogously it can be shown that $\bA+\cG$
is right-invertible in $\cF/\cG$.
\end{proof}
\begin{remark}
A detailed discussion of various versions of abstract lifting
theorems and their applications in the setting of Banach algebras
and $C^*$-algebras is contained in \cite[Chap.~6]{RSS08}. In
particular, the above theorem follows from the Banach algebra
inverse closed lifting theorem (see \cite[Theorem~6.2.8]{RSS08}).
\end{remark}
\section{Localization}\label{section:localization}
\subsection{Allan-Douglas local principle}
\label{section:Allan-Douglas} Recall that the \textit{center} of
an algebra $A$ consists of all elements $a\in A$ such that $ab=ba$
for all $b\in A$. Let $A$ be a Banach algebra with identity. By a \textit{central
subalgebra} $C$ of $A$ one means a subalgebra of the center of
$A$.
\begin{theorem}[{\cite[Theorem~1.35(a)]{BS06} or \cite[Theorem~1.5]{HRS95}}]
\label{th:AllanDouglas} Let $A$ be a Banach algebra with identity
$e$ and let $C$ be closed central subalgebra of $A$ containing
$e$. Let $M(C)$ be the maximal ideal space of $C$, and for
$\omega\in M(C)$, let $I_\omega$ refer to the smallest closed
two-sided ideal of $A$ containing the ideal $\omega$. Then an
element $a\in A$ is invertible in $A$ if and only if $a+I_\omega$
is invertible in the quotient algebra $A/I_\omega$ for all
$\omega\in M(C)$.
\end{theorem}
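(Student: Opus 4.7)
The necessity direction is immediate: if $a$ is invertible in $A$ with inverse $a^{-1}$, then for each $\omega \in M(C)$ the coset $a^{-1}+I_\omega$ is a two-sided inverse of $a+I_\omega$ in $A/I_\omega$, simply by applying the quotient homomorphism to $aa^{-1}=a^{-1}a=e$. So the whole work is in the sufficiency direction, and my plan is the classical Allan--Douglas argument.

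First I would exploit that $C$ is central in $A$ to give a concrete description of $I_\omega$. Since every $c\in C$ commutes with every $x\in A$, the two-sided ideal generated by $\omega$ is simply $\omega\cdot A$, and $I_\omega$ is its closure. In particular any $j\in I_\omega$ can be approximated, to within any prescribed $\varepsilon>0$, by a finite sum $\sum_{k=1}^N c_k x_k$ with $c_k\in\omega$, $x_k\in A$. Fix $\omega_0\in M(C)$. By hypothesis there exist $b_{\omega_0},d_{\omega_0}\in A$ with $ab_{\omega_0}-e,\,d_{\omega_0}a-e\in I_{\omega_0}$; approximate both differences by such finite sums $\sum c_k x_k$ resp.\ $\sum c_k' y_k$.

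Next I would show that invertibility modulo $I_\omega$ is an open, locally uniform property in $\omega\in M(C)$. The Gelfand transforms $\hat c_k$ vanish at $\omega_0$ and are continuous on the compact Hausdorff space $M(C)$, so there is a neighbourhood $U_{\omega_0}$ of $\omega_0$ on which they are uniformly small. A Neumann series argument (perturbing by an element of small norm in $A/I_\omega$) then produces $b_\omega,d_\omega\in A$, for every $\omega\in U_{\omega_0}$, that invert $a$ modulo $I_\omega$ with norms bounded uniformly over $U_{\omega_0}$. By compactness of $M(C)$, finitely many such neighbourhoods $U_1,\dots,U_n$ cover $M(C)$, with associated local pseudo-inverses $b_1,\dots,b_n$ and $d_1,\dots,d_n$.

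The final step is the gluing. Using that $\widehat C$ separates the points of $M(C)$ together with a Shilov-type construction, one produces $e_1,\dots,e_n\in C$ whose Gelfand transforms form a sufficiently accurate partition of unity subordinate to the cover $\{U_j\}$. Setting $b:=\sum_j e_j b_j$ and $d:=\sum_j d_j e_j$, a direct computation using $c_k\in\omega$ and centrality shows that $ab-e$ and $da-e$ map to elements of small norm in every $A/I_\omega$; by the local uniformity obtained in the previous paragraph and a final Neumann series argument, this forces $a$ itself to be invertible in $A$. I expect the main obstacle to be this last gluing step: in a general Banach algebra there is no honest partition of unity in $C$, so one must work carefully with finite approximations and a quantitative control over the norms $\|b_\omega\|,\|d_\omega\|$ obtained in step two. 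This is exactly where the Allan--Douglas argument is delicate, whereas in the $C^*$-context it is essentially automatic.
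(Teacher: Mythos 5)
The paper does not prove this theorem; it is quoted verbatim as the Allan--Douglas local principle from \cite[Theorem~1.35(a)]{BS06} and \cite[Theorem~1.5]{HRS95}, so you should be judged against the standard proof in those sources rather than against anything in this paper. Your necessity direction is correct. The sufficiency direction as you sketch it, however, has a genuine gap that cannot be patched by ``working carefully with finite approximations,'' and the gap sits deeper than the absence of a partition of unity that you flag. Whatever gluing you use --- an additive $b=\sum_j f_j b_j$ or the multiplicative $d=\prod_j(e-b_ja)=e-ba$ --- the estimates you can actually prove yield only that $\|e-ba+I_\omega\|_{A/I_\omega}$ is small for \emph{every} $\omega\in M(C)$, i.e.\ that $ba+I_\omega$ is invertible for all $\omega$. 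To upgrade this to ``$ba$ is invertible in $A$'' is precisely the statement of the theorem; so the last Neumann-series step is circular. The reason one cannot instead conclude $\|e-ba\|_A<1$ directly is that, in a general Banach algebra, $\sup_\omega\|x+I_\omega\|_{A/I_\omega}$ does \emph{not} control $\|x\|_A$. Concretely, take $C=A=C^1[0,1]$ with the norm $\|f\|=\|f\|_\infty+\|f'\|_\infty$, $\omega_0$ evaluation at $0$, and $c(t)=t\in\omega_0$: for any $f\in C$ with $\widehat{f}(\omega_0)=f(0)=1$ one has $\|fc\|\ge|(fc)'(0)|=|f(0)|=1$, so the products $f_jc_k$ in your gluing cannot be made small in norm even though their Gelfand transforms vanish at $\omega_j$. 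This is exactly where a $C^*$-argument and a Banach-algebra argument part ways, and your sketch does not bridge it.

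The proof in the cited references is a short \emph{global} argument rather than a local--compactness one, and you should learn it. Assume $a$ is not invertible; say it has no left inverse. Then $\overline{Aa}$ is a proper closed left ideal, hence contained in a maximal left ideal $L$ of $A$, which is closed. The quotient $A/L$ is a simple Banach left $A$-module. By centrality of $C$, the map $c\mapsto\big(x+L\mapsto cx+L\big)$ is a well-defined unital homomorphism from $C$ into the algebra of $A$-module endomorphisms of $A/L$; by Schur's lemma and Gelfand--Mazur that endomorphism algebra is $\C$, so this homomorphism is a character of $C$ whose kernel is $\omega:=L\cap C\in M(C)$. Since $\omega\subset L$ and $L$ is a closed left ideal, $I_\omega=\overline{A\omega}\subset L$; also $Aa\subset\overline{Aa}\subset L$. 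Hence $e\notin Aa+I_\omega$, which means $a+I_\omega$ is not left-invertible in $A/I_\omega$, contradicting the hypothesis. The right-sided case is symmetric. No covers, no compactness, no partition of unity, and no reliance on the Gelfand transform being close to isometric.
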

\subsection{Sequences of local type}
We say that a sequence $(A_\tau)\in\cF$ is of \textit{local type}
if
\[
(A_\tau)(fI)-(fI)(A_\tau)\in\cJ, \quad
(A_\tau)(W^0(g))-(W^0(g))(A_\tau)\in\cJ
\]
for all $f\in C(\dot{\R})$ and all $g\in SO_p$. Let $\cL$ denote
the set of all sequences of local type.
\begin{lemma}\label{le:alg-L}
\begin{enumerate}
\item[{\rm (a)}]
The set $\cL$ is a closed unital subalgebra of the algebra $\cF$.
\item[{\rm (b)}]
The set $\cJ$ is a closed two-sided ideal of the algebra $\cL$.
\item[{\rm (c)}]
The algebra $\cL/\cJ$ is inverse closed in the algebra $\cF/\cJ$.
\item[{\rm (d)}]
The algebra $\cL$ is inverse closed in the algebra $\cF$.
\end{enumerate}
\end{lemma}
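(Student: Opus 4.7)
The plan is to treat the four parts sequentially, deducing each from basic manipulations with commutators modulo $\cJ$, which is already known to be a closed two-sided ideal of $\cF$.

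For part (a), I would first note that $\cL$ contains the identity $(I)$ and is closed under sums by linearity of the commutator. For closure under products, given $\bA_1,\bA_2\in\cL$ and $f\in C(\dot\R)$, I would use the identity
\[
\bA_1\bA_2(fI)-(fI)\bA_1\bA_2=\bA_1\bigl[\bA_2(fI)-(fI)\bA_2\bigr]+\bigl[\bA_1(fI)-(fI)\bA_1\bigr]\bA_2,
\]
so that the right-hand side belongs to $\cJ$ because $\cJ$ is a two-sided ideal of $\cF$; the analogous identity works for $W^0(g)$, $g\in SO_p$. Topological closedness is immediate: if $\bA_n\to\bA$ in the norm of $\cF$, then $\bA_n(fI)-(fI)\bA_n\to \bA(fI)-(fI)\bA$, and as $\cJ$ is closed in $\cF$ (Lemma~\ref{le:ideals-J}), the limit also lies in $\cJ$.

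For part (b), I would verify that $\cJ\subset\cL$: if $\bJ\in\cJ$, then both $\bJ(fI)$ and $(fI)\bJ$ lie in $\cJ$ since $\cJ$ is a two-sided ideal of $\cF$, hence so does their difference, and similarly for the $W^0(g)$ commutator. Since $\cJ$ is closed and two-sided in $\cF\supset\cL$, it is automatically closed and two-sided in $\cL$.

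Parts (c) and (d) are the substantive ones; they rest on the following key observation. Suppose $\bA\in\cL$ and $\bB\in\cF$ satisfies $\bA\bB-\bI,\,\bB\bA-\bI\in\cJ$. For any $f\in C(\dot\R)$, since $\bA(fI)-(fI)\bA=:\bJ_f\in\cJ$, I would compute modulo $\cJ$:
\[
\bB(fI)\equiv\bB(fI)\bA\bB=\bB\bigl[\bA(fI)-\bJ_f\bigr]\bB\equiv\bB\bA(fI)\bB\equiv(fI)\bB\pmod{\cJ},
\]
using that $\cJ$ is a two-sided ideal to absorb the $\bB\bJ_f\bB$ term. The same calculation with $W^0(g)$, $g\in SO_p$, yields $\bB(W^0(g))-(W^0(g))\bB\in\cJ$, so $\bB\in\cL$. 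This establishes (c). For (d), if $\bA\in\cL$ is invertible in $\cF$ with inverse $\bB\in\cF$, the same argument (now with genuine equalities in place of the $\cJ$-congruences coming from $\bA\bB=\bB\bA=\bI$) gives $\bB\in\cL$; alternatively one can deduce (d) from (c) by noting that the true inverse $\bB$ differs from any representative in $\cL/\cJ$ of the inverse coset by an element of $\cJ\subset\cL$.

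The only mildly delicate point is the commutator manipulation in (c)/(d); everything else is a routine check. I expect no serious obstacle beyond ensuring that the ideal $\cJ$ absorbs the error terms from both sides, which is guaranteed by Lemma~\ref{le:ideals-J}.
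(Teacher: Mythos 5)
Your proof is correct and follows essentially the same approach as the paper: the commutator identity for products in part (a), the observation $\cJ\subset\cL$ for part (b), and the computation $\bB\bC-\bC\bB\equiv\bB[\bC\bA-\bA\bC]\bB \pmod\cJ$ (with $\bC\in\{(fI),(W^0(g))\}$) for parts (c) and (d). Your single ``key observation'' phrasing unifies (c) and (d) slightly more cleanly than the paper, but the underlying algebra is identical.
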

\begin{proof}
(a) By definition, $\cL\subset\cF$. It is clear that the sequence
$(I)$ is the identity of $\cL$. Hence $\cL$ is a unital subalgebra
of $\cF$. Let us show that $\cL$ is closed. Suppose
$(\bA_j)_{j\in\N}$ with $\bA_j=(A_\tau^{(j)})\in\cL$ is a Cauchy
sequence in $\cL$. Let $\bA=(A_\tau)\in\cF$ be its limit. We show
that $\bA\in\cL$. By definition of $\cL$,
\begin{equation}\label{eq:alg-L-1}
\bA_j(fI)-(fI)\bA_j\in\cJ,\quad
\bA_j(W^0(g))-(W^0(g))\bA_j\in\cJ
\end{equation}
for all $j\in\N$, $f\in C(\dot{\R})$, and $g\in SO_p$. Passing to
the limit in \eqref{eq:alg-L-1} as $j\to\infty$ and taking into
account that $\cJ$ is closed in $\cF$, we conclude that
$\bA(fI)-(fI)\bA\in\cJ$ and $\bA(W^0(g))-(W^0(g))\bA\in\cJ$ for
all $f\in C(\dot{\R})$ and $g\in SO_p$. Thus $\bA\in\cL$. Part (a)
is proved.

\medskip
(b) Since $\cJ\subset\cL\subset\cF$ and $\cJ$ is a closed
two-sided ideal of the algebra $\cF$, we also observe that $\cJ$
is a closed two-sided ideal of $\cL$. Part (b) is proved.

\medskip
(c) Let $\bA=(A_\tau)\in\cL$ and $\bA+\cJ$ be invertible in
$\cF/\cJ$. Then there exist sequences $\bB=(B_\tau)\in\cF$ and
$\bJ_1=(J_\tau^{(1)})$, $\bJ_2=(J_\tau^{(2)})$ belonging to $\cJ$
and such that
\[
I-A_\tau B_\tau=J_\tau^{(1)},\quad I-B_\tau A_\tau=J_\tau^{(2)}.
\]
Let $C$ be one of the operators $fI$ with $f\in C(\dot{\R})$ or
$W^0(g)$ with $g\in SO_p$. Then
\[
C
=
CA_\tau B_\tau+CJ_\tau^{(1)}
=
A_\tau CB_\tau+[CA_\tau-A_\tau C]B_\tau+CJ_\tau^{(1)}
\]
and
\[
\begin{split}
B_\tau C-CB_\tau &=
B_\tau A_\tau CB_\tau+B_\tau[CA_\tau-A_\tau C]B_\tau+B_\tau CJ_\tau^{(1)}-CB_\tau
\\
&=[CB_\tau-J_\tau^{(2)}CB_\tau]+B_\tau[CA_\tau-A_\tau C]B_\tau+B_\tau CJ_\tau^{(1)}-CB_\tau
\\
&=B_\tau[CA_\tau-A_\tau C]B_\tau+[B_\tau CJ_\tau^{(1)}-J_\tau^{(2)}CB_\tau].
\end{split}
\]
If we put $\bC=(C)$, then $\bB\bC-\bC\bB=\bB[\bC\bA-\bA\bC]\bB+\bJ$,
where $\bC\bA-\bA\bC\in\cJ$ and $\bJ:=\bB\bC\bJ_1-\bJ_2\bC\bB\in\cJ$.
Thus $\bB\bC-\bC\bB\in\cJ$ and $\bB+\cJ\in\cL/\cJ$. Part (c) is
proved.

\medskip
(d) Part (d) follows from part (c).
\end{proof}
\begin{remark}
The algebra $\cL$ is larger than the algebra of sequences of local
type considered in \cite{RSS09} and denoted there by $\cF_0$ (see
also Remark~\ref{rem:algebras-F}).
\end{remark}
\subsection{Sequences in \boldmath{$\cA$} are of local type}
\begin{theorem}\label{th:A-local-type}
The algebra $\cA$ is a closed unital subalgebra of $\cL$.
\end{theorem}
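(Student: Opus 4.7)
The strategy is to show that each of the generating sequences of $\cA$ lies in $\cL$. By Corollary~\ref{co:A-in-F} we already know $\cA$ is a closed unital subalgebra of $\cF$, and Lemma~\ref{le:alg-L}(a) tells us that $\cL$ is a closed subalgebra of $\cF$. Hence once the three types of generators $(aI)$ with $a\in PC$, $(W^0(b))$ with $b\in[PC_p,SO_p]$, and $\bP=(P_\tau)$ are shown to be of local type, the whole algebra $\cA$ will be contained in $\cL$ and the statement follows.

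For the constant sequences the verification is essentially a direct appeal to the compactness results collected in Section~\ref{section:PC-SO}. Indeed, commutators of $(aI)$ with $(fI)$ and of $(W^0(b))$ with $(W^0(g))$ are zero since $L^\infty(\R)$ is commutative and convolutions with Fourier multipliers commute with each other. The remaining cross-commutators $aW^0(g)-W^0(g)aI$ (with $a\in PC\subset[PC,SO]$, $g\in SO_p$) and $fW^0(b)-W^0(b)fI$ (with $f\in C(\dot{\R})\subset SO$, $b\in[PC_p,SO_p]$) are compact by Lemma~\ref{le:compactness-commutator}. Since constant sequences of compact operators belong to $\cJ_0\subset\cJ$, these generators lie in $\cL$.

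The main obstacle is the sequence $\bP=(P_\tau)$, whose entries depend on $\tau$. Commutation with $(fI)$ is trivial because $P_\tau=\chi_{(-\tau,\tau)}I$ is itself a multiplication operator. For the commutator with $(W^0(g))$, $g\in SO_p$, I would rewrite
\[
P_\tau = I - \chi_{(\tau,\infty)}I - \chi_{(-\infty,-\tau)}I = I - V_\tau \chi_+ V_{-\tau} - V_{-\tau}\chi_- V_\tau,
\]
using the identities $V_\tau\chi_+ V_{-\tau}=\chi_{(\tau,\infty)}I$ and $V_{-\tau}\chi_- V_\tau=\chi_{(-\infty,-\tau)}I$. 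Combining this with the translation invariance $W^0(g)V_{\pm\tau}=V_{\pm\tau}W^0(g)$ and the projection decomposition
\[
\chi_\pm W^0(g)-W^0(g)\chi_\pm I = \chi_\pm W^0(g)\chi_\mp I-\chi_\mp W^0(g)\chi_\pm I,
\]
the Hankel type operators on the right are compact by Lemma~\ref{le:compactness-Hankel}, so the expression equals some $K_g^\pm\in\cK$. Consequently
\[
W^0(g)P_\tau-P_\tau W^0(g) = V_\tau K_g^+ V_{-\tau}+V_{-\tau}K_g^- V_\tau,
\]
which lies in $\cJ_1+\cJ_{-1}\subset\cJ$. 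Therefore $\bP\in\cL$, completing the verification of the generators and hence the proof that $\cA\subset\cL$.
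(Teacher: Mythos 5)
Your proof is correct, and the key step---showing that the commutator $(P_\tau)(W^0(g))-(W^0(g))(P_\tau)$ lies in $\cJ$---takes a genuinely cleaner route than the paper. The paper first rewrites the commutator as $P_\tau W^0(g) Q_\tau - Q_\tau W^0(g) P_\tau$, then inserts $\chi_++\chi_-$ on both sides of $W^0(g)$ to get four terms, sends two of them to $\cG$ using the compactness of the Hankel operators together with $Q_\tau\to 0$ *-strongly, and then performs a separate conjugation computation on each of the remaining two terms to show they belong to $\cJ_1$ and $\cJ_{-1}$. You instead observe the algebraic identity $P_\tau = I - V_\tau\chi_+V_{-\tau} - V_{-\tau}\chi_-V_\tau$, which together with the translation invariance $V_{\pm\tau}W^0(g)=W^0(g)V_{\pm\tau}$ immediately collapses the commutator to $V_\tau K_g^+V_{-\tau}+V_{-\tau}K_g^-V_\tau$ with $K_g^\pm=\chi_\pm W^0(g)-W^0(g)\chi_\pm I$, and these are compact by Lemma~\ref{le:compactness-Hankel}. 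This lands the sequence directly in $\cJ_1+\cJ_{-1}\subset\cJ$ with no auxiliary $\cG$-terms and no *-strong convergence argument needed, making the proof shorter and structurally more transparent. The treatment of the constant generators $(aI)$ and $(W^0(b))$ via Lemma~\ref{le:compactness-commutator} is the same in both. One small note: your remark that $\cL$ being closed in $\cF$ suffices tacitly uses that $\cF$ is closed in $\cE$ (Lemma~\ref{le:alg-F}(a)), so that $\cL$ is closed in $\cE$ and hence contains the smallest closed subalgebra of $\cE$ generated by the three types of generators, which is $\cA$; this is fine but worth saying explicitly.
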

\begin{proof}
The idea of the proof is borrowed from
\cite[Proposition~4.11]{RSS09}.

From Corollary~\ref{co:A-in-F} we know that $\cA\subset\cF$.
Suppose $f\in C(\dot{\R})$ and $g\in SO_p$. It is sufficient to
prove that $(aI)$ with $a\in PC$, $(W^0(b))$ with
$b\in[PC_p,SO_p]$, and $(P_\tau)$ commute with $(fI)$ and
$(W^0(g))$ modulo the ideal $\cJ$.

Obviously $aI$ commutes with $fI$. By
Lemma~\ref{le:compactness-commutator}, $aW^0(g)-W^0(g)aI$ is
compact. Therefore $(aI)(fI)-(fI)(aI)\in\cJ$ and
$(aI)(W^0(g))-(W^0(g))(aI)\in\cJ$. Hence $(aI)\in\cL$.

It is clear that $W^0(b)$ commutes with $W^0(g)$. Applying
Lemma~\ref{le:compactness-commutator} once again, we see that
$fW^0(b)-W^0(b)fI$ is compact. Then
$(W^0(b))(fI)-(fI)(W^0(b))\in\cJ$ and
$(W^0(b))(W^0(g))-(W^0(g))(W^0(b))\in\cJ$. Thus $(W^0(b))\in\cL$.

Since $P_\tau$ commutes with $fI$, it is trivial that
$(P_\tau)(fI)-(fI)(P_\tau)\in\cJ$. Consider
\begin{eqnarray}
\hspace{-0.5cm}
(P_\tau W^0(g)-W^0(g)P_\tau) &=&
(P_\tau W^0(g) Q_\tau- Q_\tau W^0(g)P_\tau)
\nonumber
\\
\hspace{-0.5cm}
&=& (P_\tau\chi_+ W^0(g)\chi_+ Q_\tau- Q_\tau \chi_+W^0(g)\chi_+ P_\tau)
\nonumber
\\
\hspace{-0.5cm}
&&+ (P_\tau\chi_+ W^0(g)\chi_- Q_\tau- Q_\tau \chi_+W^0(g)\chi_- P_\tau)
\nonumber
\\
\hspace{-0.5cm}
&&+ (P_\tau\chi_- W^0(g)\chi_+ Q_\tau- Q_\tau \chi_-W^0(g)\chi_+ P_\tau)
\nonumber
\\
\hspace{-0.5cm}
&&+ (P_\tau\chi_- W^0(g)\chi_- Q_\tau- Q_\tau \chi_-W^0(g)\chi_- P_\tau).
\label{eq:A-local-type-1}
\end{eqnarray}
By Lemma~\ref{le:compactness-Hankel}, the operators
$\chi_+W^0(g)\chi_-I$ and $\chi_-W^0(g)\chi_+I$ are compact. Since
$(Q_\tau)$ converges *-strongly to zero, we conclude that the
sequences of the second and the third line on the right-hand side
of \eqref{eq:A-local-type-1} belong to $\cG$ and thus to $\cJ$. It
remains to show that
\begin{eqnarray}
&&(P_\tau\chi_+ W^0(g)\chi_+ Q_\tau- Q_\tau \chi_+W^0(g)\chi_+ P_\tau) \in\cJ,
\label{eq:A-local-type-2}
\\
&&(P_\tau\chi_- W^0(g)\chi_- Q_\tau- Q_\tau \chi_-W^0(g)\chi_- P_\tau) \in\cJ.
\label{eq:A-local-type-3}
\end{eqnarray}
It is easy to see that
\[
\begin{array}{ll}
V_{-\tau}P_\tau\chi_+I=\chi_{(-\tau,0)}V_{-\tau}, &
\chi_+Q_\tau V_\tau=V_\tau\chi_+I,\\[2mm]
V_{-\tau}Q_\tau\chi_+I=\chi_+ V_{-\tau}, &
\chi_+P_\tau V_\tau=V_\tau\chi_{(-\tau,0)}I,
\end{array}
\]
where $\chi_{(-\tau,0)}$ denotes the characteristic function of
the interval $(-\tau,0)$, and
\[
V_\tau V_{-\tau}=I,
\quad
V_{-\tau}W^0(g)V_\tau=W^0(g).
\]
Then
\begin{eqnarray*}
&&
(P_\tau\chi_+ W^0(g)\chi_+ Q_\tau- Q_\tau \chi_+W^0(g)\chi_+ P_\tau)
\\
&&=
(V_\tau V_{-\tau}[P_\tau\chi_+ W^0(g)\chi_+ Q_\tau- Q_\tau \chi_+W^0(g)\chi_+ P_\tau]V_\tau V_{-\tau})
\\
&&=
(V_\tau[\chi_{(-\tau,0)}V_{-\tau}W^0(g)V_\tau\chi_+I-\chi_+V_{-\tau}W^0(g)V_\tau\chi_{(-\tau,0)}I]V_{-\tau})
\end{eqnarray*}
\begin{eqnarray}
&&= (V_\tau[\chi_-W^0(g)\chi_+I-\chi_+W^0(g)\chi_-I]V_{-\tau})
\nonumber
\\
&&\quad+ (V_\tau[\chi_{(-\tau,0)}I-\chi_-I]\chi_-W^0(g)\chi_+V_{-\tau})
\nonumber
\\
&&\quad-
(V_\tau\chi_+W^0(g)\chi_-[\chi_{(-\tau,0)}I-\chi_-I]V_{-\tau}).
\label{eq:A-local-type-4}
\end{eqnarray}
The operators $\chi_-W^0(g)\chi_+I$ and $\chi_+W^0(g)\chi_-I$ are
compact by Lemma~\ref{le:compactness-Hankel}. It is easy to see
that $\chi_{(-\tau,0)}I$ is *-strongly convergent to $\chi_-I$.
Thus the sequence on the right hand side of
\eqref{eq:A-local-type-4} has the form $(V_\tau
K_1V_{-\tau})+(G_\tau^{(1)})$, where $K_1\in\cK$ and
$(G_\tau^{(1)})\in\cG$. This proves \eqref{eq:A-local-type-2}.
Analogously, it can be shown that
\[
(P_\tau\chi_- W^0(g)\chi_- Q_\tau- Q_\tau \chi_-W^0(g)\chi_-
P_\tau) =(V_{-\tau} K_{-1}V_\tau)+(G_\tau^{(2)}),
\]
where $K_{-1}\in\cK$ and $(G_\tau^{(2)})\in\cG$. This proves
\eqref{eq:A-local-type-3} and finishes the proof of the theorem.
\end{proof}
\subsection{Central subalgebra of the algebra \boldmath{$\cL/\cJ$} and its
maximal ideal space}
Let $\cC$ be the smallest closed subalgebra of $\cL$ that contains
all sequences $(fI)$ with $f\in C(\dot{\R})$ and $(W^0(g))$ with
$g\in SO_p$. From the proof of Theorem~\ref{th:A-local-type} it
follows that $\cC$ is not trivial. Put
\[
\cC^\cJ:=(\cC+\cJ)/\cJ,
\quad
\cL^\cJ:=\cL/\cJ.
\]
\begin{lemma}
The set $\cC^\cJ$ is a closed central subalgebra of the algebra $\cL^\cJ$.
\end{lemma}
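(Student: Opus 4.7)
The plan is to verify the three properties asserted by the lemma: $\cC^\cJ$ is a subalgebra of $\cL^\cJ$, it is contained in the center of $\cL^\cJ$, and it is closed. The nontrivial content is the centrality; the subalgebra property is immediate, and closedness is a standard quotient-of-Banach-algebra argument.

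For the subalgebra property, $\cC$ is a closed subalgebra of $\cL$ by construction and $\cJ$ is a two-sided ideal of $\cL$ by Lemma~\ref{le:alg-L}(b), so $\cC+\cJ$ is a subalgebra of $\cL$ containing $\cJ$, and therefore $\cC^\cJ=(\cC+\cJ)/\cJ$ is a subalgebra of $\cL^\cJ=\cL/\cJ$. For the centrality, let $\pi\colon\cL\to\cL^\cJ$ denote the canonical projection and let $Z$ denote the center of $\cL^\cJ$, which is a closed subalgebra. By the very definition of a sequence of local type, for every $\bA\in\cL$, every $f\in C(\dot{\R})$, and every $g\in SO_p$ one has $\bA(fI)-(fI)\bA\in\cJ$ and $\bA(W^0(g))-(W^0(g))\bA\in\cJ$; equivalently, $\pi((fI))$ and $\pi((W^0(g)))$ lie in $Z$. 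Consequently, the set $\pi^{-1}(Z)\cap\cL$ is a closed subalgebra of $\cL$ containing all generators of $\cC$, and the minimality in the definition of $\cC$ forces $\cC\subset\pi^{-1}(Z)$. Passing to images, $\cC^\cJ=\pi(\cC)\subset Z$.

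For closedness, the main obstacle, one has to show that $\cC+\cJ$ is closed in $\cL$; equivalently, that the continuous algebra homomorphism $\pi|_\cC\colon\cC\to\cL^\cJ$ has closed range. Its kernel is the closed ideal $\cC\cap\cJ$ of $\cC$, so $\cC/(\cC\cap\cJ)$ is a Banach algebra and the induced continuous algebra monomorphism $\cC/(\cC\cap\cJ)\to\cL^\cJ$ has image $\cC^\cJ$. By a standard open-mapping argument, the image is closed if and only if this monomorphism is bounded below, and this is where the main technical work lies. Should a direct verification prove awkward, one may instead replace $\cC^\cJ$ by its closure $\overline{\cC^\cJ}$ in $\cL^\cJ$, which by the centrality established above remains a closed central subalgebra and serves equally well as input for the Allan--Douglas local principle applied in the subsequent subsection.
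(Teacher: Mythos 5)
Your handling of the ``subalgebra'' and ``central'' assertions is correct and is essentially what the paper's terse proof (``follows immediately from the definition of $\cL$, $\cL^\cJ$, $\cC^\cJ$'') has in mind: the local-type condition defining $\cL$ says precisely that every element of $\cL$ commutes with each $(fI)$ and $(W^0(g))$ modulo $\cJ$, so the images of these generators lie in the center of $\cL^\cJ$, and since the center is a closed subalgebra, the minimality of $\cC$ gives $\cC^\cJ\subset Z(\cL^\cJ)$. That part of your argument is clean.

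Where you deviate is on closedness, and your scepticism is well placed: the sum of a closed subalgebra and a closed ideal need not be closed, and since $\cJ\not\subset\cC$ (all elements of $\cC$ are constant sequences, while $\cJ$ contains, e.g., $(\tau^{-1}P_\tau)\in\cG$), the question is genuine. The paper simply asserts closedness without comment, so your proposal is in fact more careful than the paper's proof on this point. Your fallback of replacing $\cC^\cJ$ by its closure $\overline{\cC^\cJ}$ is sound and suffices for everything downstream: the Allan--Douglas principle needs a closed central subalgebra, and the next theorem's identification $M(\cC^\cJ)\cong\Omega$ goes through verbatim with $\overline{\cC^\cJ}$, because the map $\Phi(\bA+\cJ)=\fW_0(\bA)+\cK$ and the contractive ``constant-sequence'' homomorphism $\Psi(A+\cK)=(A)+\cJ$ are mutually inverse on the generators and hence, by continuity, give an isometric isomorphism $\overline{\cC^\cJ}\cong\cC^\pi$. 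That same argument shows that $\cC^\cJ$ is literally closed if and only if $\cC_\cB+\cK$ is closed in $\cB$, where $\cC_\cB$ is the closed subalgebra of $\cB$ generated by $fI$, $f\in C(\dot{\R})$, and $W^0(g)$, $g\in SO_p$; so if you want to prove the lemma as stated rather than work around it, that is the remaining step --- and it is not ``immediate.''
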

\begin{proof}
This fact follows immediately from the definition of the algebras $\cL$,
$\cL^\cJ$, and $\cC^\cJ$.
\end{proof}
\begin{theorem}
The maximal ideal space $M(\cC^\cJ)$ of the commutative Banach algebra
$\cC^\cJ$ is homeomorphic to the set $\Omega$ given by \eqref{eq:def-Omega}.
\end{theorem}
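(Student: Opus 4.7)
The plan is to show that the commutative Banach algebra $\cC^{\cJ}$ is, as a Banach algebra, essentially the Calkin-level algebra $\cC^{\pi}$ of Lemma~\ref{le:maximal-ideal-Cpi}, so that the homeomorphism $M(\cC^{\cJ})\cong\Omega$ is inherited directly from that lemma. The first key observation is that $\cC$ consists entirely of constant sequences. Indeed, the generators $(fI)$ and $(W^0(g))$ are constant, products and sums preserve constancy, and the subspace of constant sequences is closed in $\cE$ because $(A)\mapsto A$ is an isometry onto $\cB$. This identifies $\cC$ isometrically with the closed subalgebra $\widetilde{\cC}\subset\cB$ generated by $\{fI:f\in C(\dot{\R})\}\cup\{W^0(g):g\in SO_p\}$.

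Second, I would identify $\cC\cap\cJ=\{(K):K\in\widetilde{\cC}\cap\cK\}$. If $(A)\in\cC\cap\cJ$ is written as
\[
(A)=(V_\tau K_1V_{-\tau})+(K_0)+(V_{-\tau}K_{-1}V_\tau)+(G_\tau)
\]
with $K_{\pm 1},K_0\in\cK$ and $(G_\tau)\in\cG$, then applying the homomorphism $\fW_0$ of Lemma~\ref{le:alg-F}(b) yields $A=K_0\in\cK$; here one uses that $V_{\pm\tau}\to 0$ weakly and that compact operators send weakly convergent sequences to norm convergent ones, so the shifted compact terms converge strongly to zero. The reverse inclusion is immediate. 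Consequently the map $\Phi\colon\cC^{\cJ}\to\cC^{\pi}$ sending $(A)+\cJ$ to $A+\cK$ is a well-defined injective continuous algebra homomorphism whose image is a dense subalgebra of $\cC^{\pi}$ containing all the defining generators.

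It remains to show that $\Phi$ induces a homeomorphism on maximal ideal spaces. Because $\cC^{\cJ}$ and $\cC^{\pi}$ are generated by the same indexed families, each character on either algebra is determined by a pair $(\xi,\eta)\in\dot{\R}\times M(SO)$ consisting of its values on those generators; the very same arguments used in the proof of Lemma~\ref{le:maximal-ideal-Cpi} (following \cite[Lemma~5.1]{BBK04-OT} and \cite[Proposition~14.1]{RS90}) show that such a pair gives a character if and only if $(\xi,\eta)\in\Omega$. The main obstacle is this last step -- verifying that the admissible set of characters coincides with $\Omega$ and that the resulting map is a homeomorphism in the Gelfand topology -- but since the relations used to cut down $\dot{\R}\times M(SO)$ to $\Omega$ (compactness of appropriate products of multiplication and convolution operators, via Lemmas~\ref{le:compactness-Hankel} and \ref{le:compactness-commutator}) are constant-sequence relations already lying inside $\cJ$, the entire argument transfers from $\cC^{\pi}$ to $\cC^{\cJ}$ without change.
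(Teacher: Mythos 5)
You follow the paper's central idea — transport $\cC^\cJ$ to the Calkin-level algebra $\cC^\pi$ via $\fW_0$ and pull back Lemma~\ref{le:maximal-ideal-Cpi} — but you do not establish that $\Phi\colon\cC^\cJ\to\cC^\pi$ is \emph{onto}, and this is exactly the step your proposal needs. You prove injectivity and dense image, and then remark that ``the entire argument transfers from $\cC^\pi$ to $\cC^\cJ$ without change.'' That is not sufficient: an injective continuous homomorphism with dense image only yields a continuous injection $M(\cC^\pi)\hookrightarrow M(\cC^\cJ)$, and the content of the theorem is that there are \emph{no extra} characters on $\cC^\cJ$. For that you must either show $\Phi$ is surjective (whence, by bijectivity and the inverse mapping theorem, it is a topological isomorphism and the Gelfand spectra agree), or you must actually re-run the classification of characters of $\cC^\cJ$, which you do not do; the phrase ``the relations already lie inside $\cJ$'' only shows that $M(\cC^\cJ)$ is cut down \emph{at least} to $\Omega$, not that it is cut down \emph{exactly} to $\Omega$.

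The surjectivity is easy to supply with the facts you already have. For $A$ in the dense non-closed algebra $\widetilde{\cC}_0$ generated by $fI$ ($f\in C(\dot\R)$) and $W^0(g)$ ($g\in SO_p$), and any $K\in\cK$, the constant sequence $(K)$ lies in $\cJ_0\subset\cJ$, so
\[
\|(A)+\cJ\|_{\cL/\cJ}\;\le\;\inf_{K\in\cK}\|(A+K)\|_\cE\;=\;\inf_{K\in\cK}\|A+K\|_\cB\;=\;\|A+\cK\|_{\cB/\cK}.
\]
Conversely $\|\Phi\|\le 1$ gives $\|A+\cK\|\le\|(A)+\cJ\|$. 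Hence $\Phi$ is an isometry on a dense subalgebra, therefore an isometry on all of $\cC^\cJ$; its range is closed (as the isometric image of a complete space) and contains the dense generators of $\cC^\pi$, so equals $\cC^\pi$. (Equivalently, the map $A+\cK\mapsto(A)+\cJ$ extends by continuity to a two-sided inverse of $\Phi$.) With surjectivity in hand, $\Phi$ is a Banach algebra isomorphism, $M(\cC^\cJ)\cong M(\cC^\pi)\cong\Omega$, and your proposal closes up into the paper's proof. Your preliminary observations (that $\cC$ consists of constant sequences, and that $\cC\cap\cJ=\{(K):K\in\widetilde{\cC}\cap\cK\}$ via $\fW_0$ and the weak vanishing of $V_{\pm\tau}$) are correct and useful but are not, by themselves, a substitute for surjectivity.
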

\begin{proof}
Since $(V_{\pm\tau})$ converge weakly to zero as
$\tau\to+\infty$, it is easy to see that $\fW_0(\bJ)\in\cK$ for
every $\bJ\in\cJ$. Hence
\[
\Phi:\cF/\cJ\to\cB/\cK,\quad\bA+\cJ\mapsto\fW_0(\bA)+\cK
\]
is a well defined homomorphism. Clearly $\Phi(\cJ)=\cK$ and for
$f\in C(\dot{\R})$ and $g\in SO_p$,
\[
\Phi((fI)+\cJ)=fI+\cK, \quad \Phi((W^0(g))+\cJ)=W^0(g)+\cK.
\]
Hence the mapping $\Phi|_{\cC^\cJ}:\cC^\cJ\to\cC^\pi$ is
injective and onto, where the algebra $\cC^\pi$ is defined just before
Lemma~\ref{le:maximal-ideal-Cpi}. Therefore $\cC^\cJ$ and
$\cC^\pi$ are isomorphic. Then the maximal ideal spaces of
$\cC^\cJ$ and $\cC^\pi$ are homeomorphic. Thus, from
Lemma~\ref{le:maximal-ideal-Cpi} it follows that $M(\cC^\cJ)$ is
homeomorphic to the set $\Omega$ given by \eqref{eq:def-Omega}.
\end{proof}
\subsection{Localization via the Allan-Douglas local principle}
\label{subsec:localization-our}
With every point $(\xi,\eta)\in\Omega$ we associate the closed two-sided
ideal $\cN_{\xi,\eta}^\cJ$ of the algebra $\cC^\cJ$ generated by all the
cosets
\[
(f I)+\cJ \quad (f\in C(\dot{\R}),\ f(\xi)=0),
\quad\quad
(W^0(g))+\cJ \quad (g\in SO_p,\ g(\eta)=0).
\]
Let $\cI_{\xi,\eta}^\cJ$ be the smallest closed two-sided ideal of $\cL^\cJ$
that contains the maximal ideal $\cN_{\xi,\eta}^\cJ$ and let
\[
\Phi_{\xi,\eta}^\cJ:\cL^\cJ\to\cL^\cJ/\cI_{\xi,\eta}^\cJ=:\cL_{\xi,\eta}^\cJ
\]
be the canonical homomorphism of $\cL^\cJ$ onto the quotient
algebra $\cL_{\xi,\eta}^\cJ$. Now we are in a position to apply
the Allan-Douglas local principle. Summarizing the results
obtained so far we arrive at the following.
\begin{theorem}\label{th:localization}
A sequence $\bA=(A_\tau)\in\cL$ is stable if and only if the
operators $\fW_{-1}(\bA)$, $\fW_0(\bA)$, and $\fW_1(\bA)$ are
invertible in $\cB$ and the coset $\bA+\cI_{\xi,\eta}^\cJ$ is
invertible in the local algebra $\cL_{\xi,\eta}^\cJ$ for every
$(\xi,\eta)\in\Omega$, where $\Omega$ is given by
\eqref{eq:def-Omega}.
\end{theorem}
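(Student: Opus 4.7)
The plan is to chain together, in order, the four major tools already established: Kozak's theorem, the lifting theorem, inverse-closedness of $\cL/\cJ$ in $\cF/\cJ$, and the Allan--Douglas local principle. No new analytic estimate is needed; the whole content of the statement is that these four general results compose correctly for the specific algebras constructed so far.

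First, since $\bA \in \cL \subset \cF \subset \cE$, Kozak's theorem (Theorem~\ref{th:Kozak}) reduces the stability of $\bA$ to the invertibility of $\bA+\cG$ in $\cE/\cG$. By Lemma~\ref{le:alg-F}(e), the algebra $\cF/\cG$ is inverse closed in $\cE/\cG$, so this is equivalent to invertibility of $\bA+\cG$ in $\cF/\cG$. Next, the lifting theorem (Theorem~\ref{th:lifting-our}) splits the latter into two conditions: the invertibility of the three operators $\fW_{-1}(\bA), \fW_0(\bA), \fW_1(\bA)$ in $\cB$, and the invertibility of $\bA+\cJ$ in $\cF/\cJ$. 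Lemma~\ref{le:alg-L}(c) then lets us transfer this second invertibility from $\cF/\cJ$ to $\cL^\cJ = \cL/\cJ$, which is where the local principle will be applied.

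Now I would apply the Allan--Douglas local principle (Theorem~\ref{th:AllanDouglas}) to the unital Banach algebra $\cL^\cJ$ with its closed central subalgebra $\cC^\cJ$. The principle says that $\bA+\cJ$ is invertible in $\cL^\cJ$ if and only if it is invertible modulo $I_\omega$ for every $\omega \in M(\cC^\cJ)$, where $I_\omega$ is the smallest closed two-sided ideal of $\cL^\cJ$ containing the maximal ideal $\omega$ of $\cC^\cJ$. The preceding theorem identifies $M(\cC^\cJ)$ with $\Omega$, and by construction $\cN_{\xi,\eta}^\cJ$ is precisely the maximal ideal of $\cC^\cJ$ corresponding to $(\xi,\eta)\in\Omega$ (it is generated by the cosets of the central generators that vanish at $(\xi,\eta)$). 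Hence $\cI_{\xi,\eta}^\cJ$ is the Allan--Douglas ideal $I_\omega$, and invertibility modulo $I_\omega$ translates to invertibility of $\bA+\cI_{\xi,\eta}^\cJ$ in the local algebra $\cL_{\xi,\eta}^\cJ = \cL^\cJ/\cI_{\xi,\eta}^\cJ$.

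Assembling the three equivalences yields exactly the claimed criterion. The only point requiring slight care --- and essentially the sole non-routine step --- is the identification of $\cN_{\xi,\eta}^\cJ$ with the maximal ideal of $\cC^\cJ$ at $(\xi,\eta)$; this relies on the explicit homeomorphism $M(\cC^\cJ) \cong \Omega$ coming from the isomorphism $\Phi|_{\cC^\cJ}:\cC^\cJ \to \cC^\pi$ in the previous proof, together with Lemma~\ref{le:maximal-ideal-Cpi}. Once that identification is invoked, the proof is a formal concatenation.
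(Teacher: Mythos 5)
Your proof is correct and follows exactly the same chain as the paper's own argument: Kozak's theorem, inverse-closedness of $\cF/\cG$ in $\cE/\cG$ (Lemma~\ref{le:alg-F}(e)), the lifting theorem (Theorem~\ref{th:lifting-our}), inverse-closedness of $\cL/\cJ$ in $\cF/\cJ$ (Lemma~\ref{le:alg-L}(c)), and finally the Allan--Douglas local principle applied to $\cL^\cJ$ with central subalgebra $\cC^\cJ$ and $M(\cC^\cJ)\cong\Omega$. Your extra remark about identifying $\cN_{\xi,\eta}^\cJ$ with the maximal ideal at $(\xi,\eta)$ is a sensible clarification of a step the paper leaves implicit.
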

\begin{proof}
From Lemma~\ref{le:alg-F}(a) and the definition of $\cL$ we know
that $\cL\subset\cF\subset\cE$. In view of Theorem~\ref{th:Kozak},
$\bA$ is stable if and only if $\bA+\cG$ is invertible in
$\cE/\cG$. By Lemma~\ref{le:alg-F}(e), the latter is
equivalent to the invertibility of $\bA+\cG$ in $\cF/\cG$. By the
lifting theorem (Theorem~\ref{th:lifting-our}), this is equivalent
to the invertibility of the operators $\fW_{-1}(\bA)$,
$\fW_0(\bA)$, and $\fW_1(\bA)$ in the algebra $\cB$ and the
invertibility of the coset $\bA+\cJ$ in the quotient algebra
$\cF/\cJ$. From Lemma~\ref{le:alg-L}(c) we conclude that $\bA+\cJ$
is invertible in $\cF/\cJ$ if and only if it is invertible in
$\cL/\cJ$. The latter assertion is equivalent to the invertibility
of the cosets $\bA+\cI_{\xi,\eta}^\cJ$ in the local algebras
$\cL_{\xi,\eta}^\cJ$ for all $(\xi,\eta)\in\Omega$ by the
Allan-Douglas local principle (Theorem~\ref{th:AllanDouglas}).
\end{proof}
Our next aim is to study the invertibility of the elements
\[
\Phi_{\xi,\eta}^\cJ(\bA):=\bA+\cI_{\xi,\eta}^\cJ \quad (\bA\in\cL)
\]
in the local algebras $\cL_{\xi,\eta}^\cJ$. It turns out that the
algebras $\cL_{\xi,\eta}^\cJ$ are too large in order to obtain via
Theorem~\ref{th:localization} effectively verifiable stability
conditions for an arbitrary $\bA\in\cL$. So we restrict ourselves
to the case of sequences belonging to $\cA$ (recall that
$\cA\subset\cL$ by Theorem~\ref{th:A-local-type}). We denote by
$\cA^\cJ$ the smallest closed subalgebra of $\cL^\cJ$ that
contains the cosets $\bA+\cJ$ for all $\bA=(A_\tau)\in\cA$. The
canonical homomorphism $\Phi_{\xi,\eta}^\cJ$ sends $\cA^\cJ$ onto
\[
\cA_{\xi,\eta}^\cJ:=\Phi_{\xi,\eta}^\cJ(\cA^\cJ)\subset\cL_{\xi,\eta}^\cJ.
\]
In
Sections~\ref{section:identification-1}--\ref{section:identification-3}
we obtain sufficient conditions for the invertibility of the
elements $\Phi_{\xi,\eta}^\cJ(\bA)$ in the local algebras
$\cL_{\xi,\eta}^\cJ$ for $\bA\in\cA$ and $(\xi,\eta)\in\Omega$.
\section{Homogenization}\label{section:homogenization}
\subsection{Algebra of sequences generated by homogeneous operators and
\boldmath{$(P_\tau)$}}
\label{subsection:homo}
Given any positive real number $\tau$, define the operator
\[
Z_\tau:L^p(\R)\to L^p(\R), \quad (Z_\tau f)(x)=\tau^{-1/p}f(x/\tau).
\]
Obviously
$\|Z_\tau\|_\cB=1$. It is also clear that $Z_\tau$ is
invertible and $Z_\tau^{-1}=Z_{1/\tau}$. An operator $A\in\cB$ is
called \textit{homogeneous} (or \textit{dilation invariant}) if
$Z_\tau^{-1}AZ_\tau=A$ for each $\tau\in\R_+$. It is easy to see
that the operators $\chi_\pm I$ and $W^0(\chi_\pm)$ are homogeneous
operators.

Let $\cH$ be the smallest closed subalgebra of
$\cE$ that contains the sequence $(P_\tau)$ and the constant
sequences $(H)$, where $H$ is a homogeneous operator.
\begin{lemma}\label{le:homogeneous}
Let $\bA=(A_\tau)\in\cH\cap\cL$. Then the coset $\bA+\cG$ is
invertible in the algebra $\cL/\cG$ if and only if the operator
$A_1$ is invertible.
\end{lemma}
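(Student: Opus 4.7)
The plan is to exploit the dilation operators $Z_\tau$ to reduce stability to the invertibility of the single operator $A_1$. The cornerstone is the identity
\[
Z_\tau^{-1} A_\tau Z_\tau = A_1 \quad (\tau > 0)
\]
for every $\bA=(A_\tau)\in\cH$. I would first verify this on the generators: a direct computation gives $Z_\tau^{-1} P_\tau Z_\tau = P_1$ (multiplication by $\chi_{(-\tau,\tau)}$ rescales to multiplication by $\chi_{(-1,1)}$), and $Z_\tau^{-1} H Z_\tau = H$ is precisely the definition of a homogeneous operator. Because conjugation by the invertible $Z_\tau$ is an algebra homomorphism, the identity propagates to every polynomial in the generators of $\cH$, and since $\|Z_\tau\|_\cB = \|Z_\tau^{-1}\|_\cB = 1$, it extends to the norm-closure $\cH$ by continuity.

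For the sufficiency direction, assume $A_1$ is invertible in $\cB$. Then $A_\tau = Z_\tau A_1 Z_\tau^{-1}$ is invertible with
\[
A_\tau^{-1} = Z_\tau A_1^{-1} Z_\tau^{-1}, \qquad \|A_\tau^{-1}\|_\cB \le \|A_1^{-1}\|_\cB,
\]
so $\bB := (A_\tau^{-1})$ lies in $\cE$ and is a genuine two-sided inverse of $\bA$ in $\cE$. Invoking inverse closedness of $\cF$ in $\cE$ from Lemma~\ref{le:alg-F}(e) gives $\bB \in \cF$, and Lemma~\ref{le:alg-L}(d) then gives $\bB \in \cL$. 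Hence $\bA$ is invertible within $\cL$, and so $\bA + \cG$ is invertible in $\cL/\cG$ with inverse $\bB + \cG$. Here the assumption $\bA \in \cL$ is used twice: to make sense of the coset $\bA+\cG \in \cL/\cG$, and to apply the inverse-closedness of $\cL$ in $\cF$.

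For the necessity direction, if $\bA+\cG$ is invertible in $\cL/\cG$ then a fortiori it is invertible in $\cE/\cG$; Kozak's theorem (Theorem~\ref{th:Kozak}) then yields stability of $\bA$, so $A_\tau$ is invertible in $\cB$ for every $\tau>\tau_0$. Reading the dilation identity the other way, $A_1 = Z_\tau^{-1} A_\tau Z_\tau$ for any such $\tau$, which gives invertibility of $A_1$. The only non-routine step is the dilation identity for arbitrary $\bA \in \cH$; the verification on the generators is immediate, and the key technical point is that the extension through the closure goes through because $Z_\tau$ is an isometry, so conjugation by $Z_\tau$ is norm-continuous uniformly in $\tau$.
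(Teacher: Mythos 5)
Your proposal is correct and follows essentially the same route as the paper: establish the dilation identity $Z_\tau^{-1}A_\tau Z_\tau=A_1$ for all $\bA\in\cH$, then for sufficiency conjugate $A_1^{-1}$ and for necessity combine Kozak's theorem with the same identity. You supply more detail than the paper in justifying the dilation identity on the generators and in its passage to the closure of $\cH$; the paper simply records the identity as equation \eqref{eq:homogeneous}. One small but welcome improvement in your sufficiency argument: you observe that $\bB:=(A_\tau^{-1})$ is already a genuine two-sided inverse of $\bA$ in $\cE$ (since $A_\tau$ is invertible for \emph{every} $\tau>0$, not merely for large $\tau$), and then climb the inclusion chain $\cE\supset\cF\supset\cL$ via inverse closedness at the \emph{algebra} level (Lemma~\ref{le:alg-F}(e) and Lemma~\ref{le:alg-L}(d)). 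The paper instead passes to quotients and cites Lemma~\ref{le:alg-L}(c), which is actually stated for $\cL/\cJ$ in $\cF/\cJ$ rather than for $\cL/\cG$ in $\cF/\cG$; your algebra-level route sidesteps that wrinkle entirely.
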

\begin{proof}
The idea of the proof is borrowed from
\cite[Proposition~1]{Roch88}.

\textit{Necessity.} If $\bA+\cG$ is invertible in $\cL/\cG$, then
obviously it is invertible in $\cE/\cG$. Therefore, by
Theorem~\ref{th:Kozak}, $\bA$ is stable. Hence the operators
$A_\tau$ are invertible for all sufficiently large $\tau$. Since
\begin{equation}\label{eq:homogeneous}
Z_\tau^{-1}A_\tau Z_\tau=A_1
\quad
(\tau\in\R_+),
\end{equation}
we conclude that the operator $A_1$ is invertible. The necessity
portion is proved.

\textit{Sufficiency.} If $A_1$ is invertible, then
\eqref{eq:homogeneous} implies that $A_\tau^{-1}=Z_\tau
A_1^{-1}Z_\tau^{-1}$. Since $\|Z_\tau\|_\cB\|Z_\tau^{-1}\|_\cB=1$,
the norms of $A_\tau^{-1}$ are uniformly bounded. That is
$\bA=(A_\tau)$ is stable. By Theorem~\ref{th:Kozak}, $\bA+\cG$ is
invertible in $\cE/\cG$. But $\cL/\cG$ is inverse closed in
$\cF/\cG$ (Lemma~\ref{le:alg-L}(c)) and $\cF/\cG$ is inverse
closed in $\cE/\cG$ (Lemma~\ref{le:alg-F}(e)). Thus $\bA+\cG$
is invertible in $\cL/\cG$.
\end{proof}
\subsection{Algebras \boldmath{$\cH_\eta$} and their ideal \boldmath{$\cG$}}
For $\eta\in\R$, put
\[
U_\eta:L^p(\R)\to L^p(\R),\quad (U_\eta f)(x)=e^{i\eta x}f(x).
\]
It is clear that $U_\eta^{-1}=U_{-\eta}$ and $\|U_\eta^{\pm 1}\|_\cB=1$.

Let $\cH_\eta$ denote the set of all sequences $\bA=(A_\tau)\in\cE$ such
that the sequence $(Z_\tau^{-1}U_\eta A_\tau U_\eta^{-1}Z_\tau)$ is
*-strongly convergent as $\tau\to+\infty$.
\begin{lemma}\label{le:alg-Heta}
Suppose $\eta\in\R$.
\begin{enumerate}
\item[{\rm(a)}]
The set $\cH_\eta$ is a closed unital subalgebra of the algebra $\cE$.

\item[{\rm (b)}]
The mapping $\fH_\eta:\cH_\eta\to\cB$ given by
\[
\fH_\eta(\bA):=\operatornamewithlimits{s-lim}_{\tau\to+\infty}
Z_\tau^{-1}U_\eta A_\tau U_\eta^{-1}Z_\tau
\]
for $\bA=(A_\tau)\in\cH_\eta$ is a bounded unital homomorphism with the
norm
\[
\|\fH_\eta\|=1.
\]

\item[{\rm(c)}]
The set $\cG$ is a closed two-sided ideal of the algebra $\cH_\eta$.

\item[{\rm (d)}]
The ideal $\cG$ lies in the kernel of the homomorphism
$\fH_\eta$.

\item[{\rm (e)}]
The algebra $\cH_\eta/\cG$ is inverse closed in
the algebra $\cE/\cG$.
\end{enumerate}
\end{lemma}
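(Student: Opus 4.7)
The proof should run in exact parallel to Lemma~\ref{le:alg-F}, with the similarity transformation $A\mapsto Z_\tau^{-1}U_\eta A U_\eta^{-1} Z_\tau$ playing the role of $A\mapsto V_{\mp\tau} A V_{\pm\tau}$. The crucial structural fact is that $Z_\tau$ and $U_\eta$ are invertible isometries of $L^p(\R)$ with inverses $Z_{1/\tau}$, $U_{-\eta}$ also isometric, and that their (Banach space) adjoints on $L^q(\R)$, $1/p+1/q=1$, enjoy the same property; consequently, conjugation by $Z_\tau^{-1}U_\eta$ is an isometric automorphism of $\cB$ that is compatible with passage to adjoints, so uniform norm bounds and $*$-strong convergence are both preserved. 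Throughout the plan, write $\Psi_\tau(A):=Z_\tau^{-1}U_\eta A U_\eta^{-1} Z_\tau$.

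Parts (a)--(d) are routine and mirror the corresponding statements of Lemma~\ref{le:alg-F}. For (a), the multiplicativity $\Psi_\tau(A_\tau B_\tau)=\Psi_\tau(A_\tau)\Psi_\tau(B_\tau)$ and the fact that the $*$-strong limit of a product of uniformly bounded $*$-strongly convergent sequences is the product of the limits give $\bA\bB\in\cH_\eta$; sums, scalar multiples, and the identity are immediate. Closedness follows by a standard $3\eps$-argument: if $\bA_n\to\bA$ in the norm of $\cE$ with each $\bA_n\in\cH_\eta$, then $\Psi_\tau(A_\tau)=\lim_n \Psi_\tau(A_\tau^{(n)})$ uniformly in $\tau$ by the isometry $\|\Psi_\tau(A)\|_\cB=\|A\|_\cB$, which is enough to transport $*$-strong convergence to the limit sequence. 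For (b), the homomorphism property is contained in the product formula just used, and the norm bound comes from $\|\fH_\eta(\bA)\|_\cB\le\liminf_\tau\|\Psi_\tau(A_\tau)\|_\cB=\liminf_\tau\|A_\tau\|_\cB\le\|\bA\|_\cE$, with equality on the identity. Parts (c) and (d) follow at once from the estimate $\|\Psi_\tau(G_\tau)\|_\cB=\|G_\tau\|_\cB\to 0$ for $(G_\tau)\in\cG$.

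The substantive step is (e). Assume $\bA\in\cH_\eta$ and $\bA+\cG$ is invertible in $\cE/\cG$. By Kozak's theorem the sequence $\bA=(A_\tau)$ is stable, so there exist $\tau_0$ and $M$ with $\|A_\tau^{-1}\|_\cB\le M$ for every $\tau>\tau_0$. Isometric conjugation gives $\|\Psi_\tau(A_\tau)^{-1}\|_\cB\le M$ and hence $\|\Psi_\tau(A_\tau)x\|\ge M^{-1}\|x\|$ for every $x\in L^p(\R)$; passing to the strong limit yields $\|\fH_\eta(\bA)x\|\ge M^{-1}\|x\|$, and the analogous estimate on $L^q(\R)$ obtained from the strong convergence of the adjoints gives $\|\fH_\eta(\bA)^* y\|\ge M^{-1}\|y\|$, so $\fH_\eta(\bA)$ is invertible. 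Writing $\Psi_\tau(A_\tau)^{-1}-\fH_\eta(\bA)^{-1}=\Psi_\tau(A_\tau)^{-1}\bigl(\fH_\eta(\bA)-\Psi_\tau(A_\tau)\bigr)\fH_\eta(\bA)^{-1}$ together with the corresponding identity for the adjoints now shows $\Psi_\tau(A_\tau)^{-1}\to\fH_\eta(\bA)^{-1}$ $*$-strongly. Defining $B_\tau:=A_\tau^{-1}$ for $\tau>\tau_0$ and $B_\tau:=I$ otherwise produces a sequence $\bB=(B_\tau)\in\cH_\eta$ for which $\bA\bB-\bI$ and $\bB\bA-\bI$ lie in $\cG$, proving that $\bA+\cG$ is invertible in $\cH_\eta/\cG$.

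The only delicate point in the whole argument is the invertibility of $\fH_\eta(\bA)$ in part (e): the one-sided estimate $\|\fH_\eta(\bA)x\|\ge M^{-1}\|x\|$ alone gives injectivity with closed range, and surjectivity is obtained only by combining it with the same estimate on $\fH_\eta(\bA)^*$. This is precisely the reason the definition of $\cH_\eta$ demands $*$-strong rather than merely strong convergence of $(\Psi_\tau(A_\tau))$.
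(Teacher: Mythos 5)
Your proof is correct, and it follows the same route that the paper intends: the paper simply cites its own Lemma~\ref{le:alg-F} (via the reference to Proposition~4.1 of \cite{RSS09}) and you explicitly run the parallel argument with the similarity $A\mapsto Z_\tau^{-1}U_\eta A U_\eta^{-1}Z_\tau$ in place of $A\mapsto V_{\mp\tau}AV_{\pm\tau}$. In particular your treatment of part~(e) -- Kozak's theorem to get uniform invertibility, the two lower bounds for $\fH_\eta(\bA)$ and its adjoint to obtain invertibility of the limit, the resolvent identity to transfer $*$-strong convergence to the inverses, and the patched-together sequence $\bB$ -- is exactly what is needed and matches the lifting-theorem template used throughout the paper.
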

The proof is analogous to the proof of \cite[Proposition~4.1]{RSS09}.
\begin{remark}\label{rem:algebras-F}
Let $\widetilde{\cF}$ be the algebra denoted by $\cF$ in the paper
\cite{RSS09}. Then
\[
\widetilde{\cF}\subset\cF\cap\left(\cap_{\eta\in\R}\cH_\eta\right).
\]
We conjecture that this inclusion is proper.
\end{remark}
\subsection{The algebra \boldmath{$\cA$} is contained in the algebras
\boldmath{$\cH_\eta$}}
\begin{proposition}\label{pr:A-in-Heta}
Suppose $\eta\in\R$.
\begin{enumerate}
\item[{\rm(a)}]
If $\bP=(P_\tau)$, then $\bP\in\cH_\eta$ and $\fH_\eta(\bP)=P_1$.

\item[{\rm(b)}]
If $\bA=(aI)$ with $a\in PC$, then $\bA\in\cH_\eta$ and
\[
\fH_\eta(\bA)=a(-\infty)\chi_-I+a(+\infty)\chi_+I.
\]

\item[{\rm(c)}]
If $\bB=(W^0(b))$ with $b\in[PC_p,SO_p]$, then $\bB\in\cH_\eta$ and
\[
\fH_\eta(\bB)=b(\eta^-)W^0(\chi_-)+b(\eta^+)W^0(\chi_+).
\]
\end{enumerate}
\end{proposition}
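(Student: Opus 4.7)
The plan is to compute $Z_\tau^{-1} U_\eta A_\tau U_\eta^{-1} Z_\tau$ explicitly in each case and extract its *-strong limit. Three intertwining identities drive the calculation. First, since $U_\eta$ and $P_\tau$ are both multiplication operators they commute, as do $U_\eta$ and $aI$. Second, using $F U_\eta = V_{-\eta} F$ (immediate from \eqref{eq:Fourier} and the paper's convention $(V_\tau g)(y) = g(y-\tau)$), one obtains $U_\eta W^0(b) U_\eta^{-1} = W^0(b(\cdot + \eta))$. Third, a direct change of variables gives $Z_\tau^{-1}(cI)Z_\tau = c(\tau\cdot)I$ for any multiplier, and the analogous computation on the Fourier side gives $Z_\tau^{-1} W^0(b) Z_\tau = W^0(b(\cdot/\tau))$; in particular $Z_\tau^{-1} P_\tau Z_\tau = \chi_{(-1,1)} I = P_1$.

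For part (a), these identities immediately yield $Z_\tau^{-1} U_\eta P_\tau U_\eta^{-1} Z_\tau = P_1$, a sequence constant in $\tau$, so the *-strong limit is trivially $P_1$. For part (b), the conjugated sequence becomes $(a(\tau\cdot)I)$. Since $a \in PC$ has one-sided limits at infinity, $a(\tau x) \to a(\pm\infty)$ as $\tau \to +\infty$ for every $x \in \R_\pm$, and the uniform bound $\|a(\tau\cdot)\|_\infty \le \|a\|_\infty$ combined with Lebesgue's dominated convergence theorem yields strong convergence in $L^p(\R)$ to $a(-\infty)\chi_- I + a(+\infty)\chi_+ I$. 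The adjoint sequence $(\overline{a}(\tau\cdot)I)$ on $L^q(\R)$ converges by the same argument (noting $\overline a \in PC$), giving *-strong convergence.

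Part (c) is the substantive case. Combining the identities, the conjugated sequence is $(W^0(b_\tau))$ with $b_\tau(y) := b(y/\tau+\eta)$, whose pointwise limit for $y \ne 0$ is $b_\infty(y) := b(\eta^-)\chi_-(y)+b(\eta^+)\chi_+(y)$. Since translation and dilation of the symbol preserve the $\cM_p$-norm (this follows from the intertwining formulas above together with $\|Z_\tau^{\pm 1}\|_\cB = \|U_\eta^{\pm 1}\|_\cB = 1$), one has $\|b_\tau\|_{\cM_p} = \|b\|_{\cM_p}$, so the operator sequence is uniformly bounded in $\cB$. The plan is to bootstrap pointwise convergence of the symbols to strong operator convergence by exploiting the multiplicative structure $W^0(c_1 c_2) = W^0(c_1) W^0(c_2)$ valid for $c_1, c_2 \in \cM_p$, which reduces the claim to the two generating subalgebras $PC_p$ and $SO_p$. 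For $b \in SO_p$, continuity of $b$ at $\eta \in \R$ forces $b_\infty = b(\eta) I$; the strong convergence $W^0(b_\tau) \to b(\eta)I$ is verified first on the dense subset $SO^1$ (where one can estimate $\|b(\cdot/\tau+\eta) - b(\eta)\|_{\cM_p} \to 0$ via the Mikhlin multiplier theorem applied to the $C^1$ function $b(\cdot/\tau+\eta)-b(\eta)$, using $xb'(x)\to 0$) and then extended to all of $SO_p$ by a $3\varepsilon$-argument using the uniform multiplier bound. For $b \in PC_p$ the claim reduces by density to piecewise constant symbols with finitely many jumps, where explicit scaling shows that each jump at a point different from $\eta$ escapes to $\pm\infty$ while a jump at $\eta$ produces $\chi_\pm$ with the correct one-sided values; direct computation on Schwartz test functions (using the representation $W^0(\chi_{[c,\infty)}) = U_{-c}Q_\R U_c$) yields the strong limit, which extends to all of $L^p$ by uniform boundedness. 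The *-strong assertion follows by running the whole argument on $L^q$ with $\overline b \in [PC_q,SO_q]$, invoking Lemma~\ref{le:SOq}.

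The main obstacle is precisely the passage from pointwise a.e.\ convergence of the symbols $b_\tau$ to strong convergence of the operators $W^0(b_\tau)$ in $\cB$ when $p \ne 2$, where no analogue of Plancherel is available to convert $L^2$-convergence of dilated symbols directly into operator convergence. The reduction through the generating subalgebras $PC_p$ and $SO_p$, combined with density in $\cM_p$ and the translation- and dilation-invariance of the multiplier norm, is what makes the argument go through.
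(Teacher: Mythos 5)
Your reduction $Z_\tau^{-1}U_\eta W^0(b)U_\eta^{-1}Z_\tau = W^0(b(\cdot/\tau+\eta))$ is correct, as are parts (a) and (b); the paper in fact disposes of (b) and (c) by citation ([RS90, Prop.\ 13.1(b)] and [BBK04-MN, Thm.\ 4.2(i)]), so you are supplying the argument those references carry out. The general architecture for (c) --- uniform bound $\|b_\tau\|_{\cM_p}=\|b\|_{\cM_p}$ from dilation/translation invariance, reduction to the generating subalgebras, strong convergence on a dense set of symbols and a dense set of test functions --- is the right one.

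There is, however, a genuine gap in the $SO_p$ branch. You claim that for $b\in SO^1$ one has $\|b(\cdot/\tau+\eta)-b(\eta)\|_{\cM_p}\to 0$ by Mikhlin, and then bootstrap to $SO_p$ by a $3\varepsilon$ argument. The norm claim is false. Writing $c_\tau(x):=b(x/\tau+\eta)-b(\eta)$, one has $\|c_\tau\|_\infty=\|b-b(\eta)\|_\infty$ for every $\tau>0$ (affine reparametrizations of $\R$ do not change the sup-norm), so $\|c_\tau\|_{\cM_p}\ge\|c_\tau\|_\infty$ is bounded away from zero whenever $b$ is not constant. The Mikhlin quantity fares no better: with $u=x/\tau+\eta$, $xc_\tau'(x)=(u-\eta)b'(u)$, so $\sup_x|xc_\tau'(x)|=\sup_u|(u-\eta)b'(u)|$, which is $\tau$-independent. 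Nothing in Mikhlin's theorem forces this to vanish. In fact no norm convergence is available here; only \emph{strong} convergence holds, which is exactly why this step is nontrivial for $p\ne 2$.

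The repair is to verify strong convergence of $W^0(c_\tau)\to 0$ on a dense set of test functions $\varphi$ whose Fourier transform has compact support, say $\operatorname{supp}\hat\varphi\subset[-R,R]$. Then $W^0(c_\tau)\varphi=W^0(c_\tau\psi)\varphi$ for any fixed smooth cutoff $\psi$ equal to $1$ on $[-R,R]$, and it is the \emph{localized} symbol $c_\tau\psi$ whose $\cM_p$-norm tends to zero: $\|c_\tau\psi\|_\infty\to 0$ because $b$ is continuous at $\eta$ and $x/\tau+\eta\to\eta$ uniformly for $|x|\le R'$ ($R'=$ the radius of $\operatorname{supp}\psi$), and $\sup_x|x(c_\tau\psi)'(x)|\le (R'/\tau)\|b'\|_\infty\|\psi\|_\infty + R'\|c_\tau\|_{L^\infty(\operatorname{supp}\psi)}\|\psi'\|_\infty\to 0$. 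The uniform bound $\|W^0(c_\tau)\|_\cB\le 2\|b\|_{\cM_p}$ then gives strong convergence on all of $L^p(\R)$, and your $3\varepsilon$ argument correctly transfers this from $SO^1$ to $SO_p$. Without the cutoff, the argument as written does not go through.
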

\begin{proof}
(a) It is easy to see that for every $\tau\in\R_+$ one has
$U_\eta P_\tau U_\eta^{-1}=P_\tau$ and $Z_\tau^{-1}P_\tau Z_\tau=P_1$. From these
equalities it follows immediately that $\fH_\eta(\bP)=P_1$.
Taking into account that
\[
Z_\tau^*=Z_{1/\tau}, \quad U_\eta^*=U_{-\eta}, \quad P_\tau^*=P_\tau,
\]
from above we conclude that $(Z_\tau^{-1}U_\eta P_\tau U_\eta^{-1}Z_\tau)^*$
converges strongly to $P_1^*$. Thus $\bP\in\cH_\eta$.
Part (a) is proved.

Part (b) is proved in \cite[Proposition~13.1(b)]{RS90} and the
proof of part (c) can be developed by analogy with
\cite[Theorem~4.2(i)]{BBK04-MN}.
\end{proof}
\begin{remark}
For $\bA\in\cA$, each operator $\fH_\eta(\bA)$ is homogeneous. So
the passage from a sequence $\bA\in\cA$ to its image under the
homomorphism $\fH_\eta$ can be naturally called
\textit{homogenization}.
\end{remark}
\begin{corollary}
For every $\eta\in\R$, the algebra $\cA$ is a closed unital subalgebra of
the algebra $\cH_\eta$.
\end{corollary}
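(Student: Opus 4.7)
The plan is to reduce the statement to a direct application of Proposition~\ref{pr:A-in-Heta} together with Lemma~\ref{le:alg-Heta}(a). Recall that by Definition~\ref{def:alg-A}, $\cA$ is the smallest closed Banach subalgebra of $\cE$ containing the three families of generators $(aI)$ with $a\in PC$, $(W^0(b))$ with $b\in[PC_p,SO_p]$, and the sequence $(P_\tau)$. So the content of the corollary is precisely the inclusion $\cA\subset\cH_\eta$ (together with the observation that the unit is shared), and for that it suffices to show that every generator of $\cA$ lies in $\cH_\eta$.

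First, I would invoke the three parts of Proposition~\ref{pr:A-in-Heta}, which say exactly that $(P_\tau)\in\cH_\eta$, that $(aI)\in\cH_\eta$ for every $a\in PC$, and that $(W^0(b))\in\cH_\eta$ for every $b\in[PC_p,SO_p]$. This places the entire generating set of $\cA$ inside $\cH_\eta$; the explicit formulas for $\fH_\eta$ on these generators given in the proposition are not needed at this point but will be used later.

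Next, by Lemma~\ref{le:alg-Heta}(a), $\cH_\eta$ is itself a closed unital subalgebra of $\cE$. Hence the closed Banach subalgebra of $\cE$ generated by the family above is contained in $\cH_\eta$; by the minimality clause in Definition~\ref{def:alg-A}, this closed subalgebra is $\cA$, and we conclude $\cA\subset\cH_\eta$. Finally, the unit $(I)$ of $\cE$ can be written as $(aI)$ with $a\equiv 1\in PC$, so it belongs to $\cA$, and it is also the unit of $\cH_\eta$; thus the inclusion is as unital subalgebras.

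There is no real obstacle here: the genuine work — verifying the $*$-strong convergence of $(Z_\tau^{-1}U_\eta A_\tau U_\eta^{-1}Z_\tau)$ for the three types of generating sequences — was already carried out in Proposition~\ref{pr:A-in-Heta}, and the corollary is a formal closure argument on top of it.
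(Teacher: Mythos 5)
Your proof is correct and is exactly the argument the paper intends: the paper states this as an unproved corollary of Proposition~\ref{pr:A-in-Heta}, and the deduction you spell out — the generators of $\cA$ lie in $\cH_\eta$ by the proposition, $\cH_\eta$ is a closed unital subalgebra of $\cE$ by Lemma~\ref{le:alg-Heta}(a), so the minimality of $\cA$ in Definition~\ref{def:alg-A} gives $\cA\subset\cH_\eta$ — is the standard closure argument that the authors left implicit.
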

\subsection{Necessary condition for the stability of \boldmath{$\bA\in\cA$}}
Now we are in a position to prove the main result of this section.
\begin{theorem}\label{th:nec-stability-Heta}
If a sequence $\bA\in\cA$ is stable, then the operators $\fH_\eta(\bA)$ are
invertible in the algebra $\cB$ for all $\eta\in\R$.
\end{theorem}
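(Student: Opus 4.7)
The plan is to chase the hypothesis of stability through the chain of algebras $\cE \supset \cH_\eta \supset \cA$ modulo the ideal $\cG$, and then apply the homomorphism $\fH_\eta$ to produce an inverse in $\cB$. All the pieces are already available from the preceding lemmas.

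First, since $\bA \in \cA$ is stable, Kozak's theorem (Theorem~\ref{th:Kozak}) tells us that the coset $\bA+\cG$ is invertible in the quotient algebra $\cE/\cG$. Next, fix $\eta \in \R$. By the corollary at the end of Section~\ref{section:homogenization}, $\cA$ is contained in $\cH_\eta$, so $\bA \in \cH_\eta$. By Lemma~\ref{le:alg-Heta}(e), the subalgebra $\cH_\eta/\cG$ is inverse closed in $\cE/\cG$, so the invertibility of $\bA+\cG$ in $\cE/\cG$ forces invertibility of $\bA+\cG$ already in $\cH_\eta/\cG$.

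Finally, Lemma~\ref{le:alg-Heta}(b),(d) says that $\fH_\eta:\cH_\eta \to \cB$ is a bounded unital homomorphism whose kernel contains $\cG$. Hence $\fH_\eta$ descends to a well-defined bounded unital homomorphism
\[
\widetilde{\fH}_\eta : \cH_\eta/\cG \to \cB, \qquad \bA + \cG \mapsto \fH_\eta(\bA).
\]
Since unital homomorphisms map invertible elements to invertible elements, the invertibility of $\bA+\cG$ in $\cH_\eta/\cG$ yields the invertibility of $\fH_\eta(\bA)$ in $\cB$, as desired. Because $\eta\in\R$ was arbitrary, this proves the theorem.

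There is no real obstacle here: the content of the statement lies entirely in the earlier structural results, in particular the inverse closedness of $\cH_\eta/\cG$ in $\cE/\cG$ (which is the substantive ingredient) and the fact that $\cG$ lies in $\ker\fH_\eta$. The only thing to verify, if one wanted to be pedantic, is that these earlier lemmas indeed apply to every $\bA \in \cA$ for each $\eta$, but this is exactly the content of the corollary stating $\cA\subset\cH_\eta$.
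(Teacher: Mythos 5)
Your proof is correct and follows essentially the same route as the paper: stability plus Kozak's theorem gives invertibility of $\bA+\cG$ in $\cE/\cG$, inverse closedness of $\cH_\eta/\cG$ (Lemma~\ref{le:alg-Heta}(e)) transfers this to $\cH_\eta/\cG$, and then applying $\fH_\eta$, which annihilates $\cG$, produces an inverse of $\fH_\eta(\bA)$ in $\cB$. The only cosmetic difference is that the paper writes out the two-sided inverse equations explicitly rather than invoking the induced quotient homomorphism, but the content is identical.
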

\begin{proof}
If $\bA\in\cA$ is stable, then by Theorem~\ref{th:Kozak}, the coset
$\bA+\cG$ is invertible in the quotient algebra $\cE/\cG$. Then from
Lemma~\ref{le:alg-Heta}(e) it follows that $\bA+\cG$ is invertible
in the quotient algebra $\cH_\eta/\cG$ for every $\eta\in\R$. This means
that there exist $\bB\in\cH_\eta$ and $\bG_1,\bG_2\in\cG$ such that
\begin{equation}\label{eq:nec-stability-Heta}
\bA\bB=\bI+\bG_1,\quad \bB\bA=\bI+\bG_2.
\end{equation}
From Lemma~\ref{le:alg-Heta}(b),(d) we know that $\fH_\eta:\cH_\eta\to\cB$
is a unital homomorphism and $\fH_\eta(\bG_1)=\fH_\eta(\bG_2)=0$.
Applying this homomorphism to the equalities in \eqref{eq:nec-stability-Heta},
we obtain that $\fH_\eta(\bA)$ is invertible in $\cB$ and its inverse is equal
to $\fH_\eta(\bB)$.
\end{proof}
\begin{remark}
The proof of Theorem~\ref{th:nec-stability-Heta} given above does not use
the results of Sections~\ref{section:essentialization} and \ref{section:localization}.
\end{remark}
\subsection{Strong convergence of families of sequences associated to the
fiber \boldmath{$M_\infty(SO)$}} The following statement can be
considered as a counterpart of Proposition~\ref{pr:A-in-Heta}(c)
for the fiber $M_\infty(SO)$. It will be used in
Section~\ref{section:identification-3} and can be proved by
analogy with \cite[Theorem~4.2(ii)]{BBK04-MN} with the aid of
Lemma~\ref{le:SO-partial-limits}.
\begin{lemma}\label{le:family-convergence}
Suppose $\eta\in M_\infty(SO)$ and $g_1,\dots,g_m\in SO_p$ is a
finite family such that $\eta(g_1)=\dots =\eta(g_m)=0$. Then there
exists a sequence $(\tau_n)_{n=1}^\infty\subset\R_+$ such that
$\tau_n\to +\infty$ as $n\to\infty$ and
\[
\operatornamewithlimits{s-lim}_{n\to\infty}
Z_{\tau_n}W^0(g_j)Z_{\tau_n}^{-1}=0
\quad\mbox{for all}\quad j=1,\dots,m.
\]
\end{lemma}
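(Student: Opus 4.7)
The plan is to exploit the intertwining identity
$Z_\tau W^0(g)Z_\tau^{-1}=W^0(g_\tau)$ with $g_\tau(y):=g(\tau y)$,
which follows directly from the commutation relation
$FD_\sigma F^{-1}=\sigma^{-1}D_{1/\sigma}$
of the Fourier transform with the unnormalized dilation $D_\sigma f(x):=f(\sigma x)$.
This identity preserves operator norms (since $\|Z_\tau\|_\cB=1$), so by
Banach-Steinhaus it suffices to establish strong convergence on a dense subclass
of $L^p(\R)$. Moreover, using that $SO^1$ is dense in $SO_p$ by the very
definition of $SO_p$, together with the estimate $|\eta(h)|\le\|h\|_\infty\le\|h\|_{\cM_p}$
and the fact that constants lie in $SO^1$, we can approximate each $g_j$
in $\cM_p$ by $h_j^{(k)}\in SO^1$ with $\eta(h_j^{(k)})=0$ (first approximate,
then subtract the small constant $\eta(h_j^{(k)})$). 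A three-epsilon
argument then reduces the statement to the following: for every $h\in SO^1$ with
$\eta(h)=0$, one has $W^0(h(\tau_n\cdot))f\to 0$ in $L^p(\R)$ for $f$ in a dense
subclass, where $\tau_n$ is a sequence common to all functions appearing in the reduction.

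Next, apply Lemma~\ref{le:SO-partial-limits} to the countable family
$\{g_j\}_{j=1}^m\cup\{h_j^{(k)}:1\le j\le m,\ k\in\N\}\subset SO$ to obtain a single
sequence $\tau_n>1$, $\tau_n\to+\infty$, along which each of these functions converges
pointwise at every $x\neq 0$ to its $\eta$-value, all of which are zero. The slowly
oscillating property (oscillation on $[x,2x]$ tending to $0$) upgrades this pointwise
convergence to uniform convergence on every compact subset of $\R\setminus\{0\}$,
and the defining condition $xh'(x)\to 0$ of $SO^1$ gives the analogous uniform
decay of $\tau_n h'(\tau_n\cdot)$ on the same sets. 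Now take test functions $f$ with
$Ff\in C_c^\infty(\R\setminus\{0\})$; then the product $h(\tau_n\cdot)Ff$ is
$C^1$ with a fixed compact support in $\R\setminus\{0\}$, and both its sup-norm
and the sup-norm of its derivative tend to zero. A single integration by parts
in $F^{-1}(h(\tau_n\cdot)Ff)=W^0(h(\tau_n\cdot))f$ yields the pointwise bound
$|W^0(h(\tau_n\cdot))f(x)|\le C_n/(1+|x|)$ with $C_n\to 0$, and since
$(1+|x|)^{-1}\in L^p(\R)$ for every $p>1$, the desired $L^p$-convergence follows.

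The main obstacle is the transfer from pointwise convergence of Fourier symbols
to strong operator convergence on $L^p(\R)$, which fails in general for $p\neq 2$.
What salvages the argument here is the combination of (i) the uniform $\cM_p$-bound
from dilation invariance and (ii) the $C^1$ regularity coming from the reduction
to $SO^1$, which enables the integration-by-parts estimate above. A subsidiary
task is verifying the density of $\{f\in L^p(\R):Ff\in C_c^\infty(\R\setminus\{0\})\}$
in $L^p(\R)$; this is handled by an approximate-identity argument in $\cM_p$,
using smooth cutoff multipliers $\psi_R$ supported in $\{1/R<|y|<R\}$, equal to $1$
on $\{2/R<|y|<R/2\}$, with $\cM_p$-norms uniformly bounded via Stechkin's inequality,
so that $W^0(\psi_R)\to I$ strongly on $L^p(\R)$.
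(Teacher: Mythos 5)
The proposal is correct, and since the paper provides no proof of this lemma---only a citation to BBK04-MN, Theorem~4.2(ii)---your argument serves as a self-contained substitute. The chain you set up is the natural one: the dilation identity $Z_\tau W^0(g)Z_\tau^{-1}=W^0(g(\tau\cdot))$ with the uniform bound $\|W^0(g(\tau\cdot))\|_\cB=\|W^0(g)\|_\cB$, reduction to the dense subclass $SO^1$ of $SO_p$ (correcting by the small constant $\eta(h)$, which is controlled by $\|h\|_\infty\le\|h\|_{\cM_p}$), a single diagonal sequence via Lemma~\ref{le:SO-partial-limits}, upgrade of pointwise to locally uniform convergence on $\R\setminus\{0\}$ via the $\operatorname{osc}$-condition applied to a chain of dyadic annuli, decay of $\tau_n h'(\tau_n\cdot)$ on such compacts from the $\widetilde{SO}^1$ condition $xh'(x)\to 0$, and then the integration-by-parts estimate $|W^0(h(\tau_n\cdot))f(x)|\le C_n\min(1,|x|^{-1})$ with $C_n\to0$, which is $o(1)$ in $L^p$ precisely because $p>1$. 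All of this is sound.

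One step deserves a more careful statement. You assert the density of $D:=\{f\in L^p(\R):Ff\in C_c^\infty(\R\setminus\{0\})\}$ via an approximate-identity argument with smooth cutoffs $\psi_R$. The high-frequency cutoff part is routine, but the low-frequency part (removing a neighborhood of $y=0$) is \emph{not} handled by the same integration-by-parts scheme you use in the main estimate: the derivative of a cutoff supported in $\{|y|\le 2/R\}$ has size of order $R$, so the $L^1$-bound on $\frac{d}{dy}(a_R\cdot Ff)$ does not go to zero. To justify $W^0(\psi_R)\to I$ strongly you should either (i) factor the low-frequency cutoff as $a_R=a_R\,\chi_{(-2/R,2/R)}$, note $\|W^0(a_R)\|_\cB$ is uniformly bounded by Stechkin, and use $W^0(\chi_{(-\eps,\eps)})\to 0$ strongly (which follows from $W^0(\chi_\eps^-)=U_{-\eps}W^0(\chi_-)U_\eps$ and the strong convergence $U_{\pm\eps}\to I$ as $\eps\to0$); or (ii) bypass the approximate identity entirely and prove density of $D$ by duality: if $g\in L^q(\R)$ annihilates $D$, then the Fourier transform of $g$ in the sense of tempered distributions is supported in $\{0\}$, so $g$ is a polynomial, hence $g=0$. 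With either fix the proof is complete.
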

\section{Invertibility in the local algebra \boldmath{$\cL_{\xi,\eta}^\cJ$}
with \boldmath{$(\xi,\eta)\in\R\times M_\infty(SO)$}}
\label{section:identification-1}
\subsection{Local images of elements of \boldmath{$\cA$}}
\begin{lemma}\label{le:case-1}
Suppose $\bA\in\cA$ and $(\xi,\eta)\in\R\times M_\infty(SO)$. Then
the constant sequence $\bA_0:=(\fW_0(\bA))$ belongs to $\cA$ and
\begin{equation}\label{eq:case1-1}
\Phi_{\xi,\eta}^\cJ(\bA)=\Phi_{\xi,\eta}^\cJ(\bA_0).
\end{equation}
\end{lemma}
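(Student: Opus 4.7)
The plan is to prove both assertions by verifying them on the three types of generators of $\cA$ and extending by continuity and multiplicativity. I would first introduce the map $\Psi:\cA\to\cE$ defined by $\Psi(\bA):=(\fW_0(\bA))$, namely the composition of the bounded unital homomorphism $\fW_0:\cF\to\cB$ from Lemma~\ref{le:alg-F}(b) (restricted via Corollary~\ref{co:A-in-F}) with the constant-sequence embedding $\cB\hookrightarrow\cE$. Proposition~\ref{pr:A-in-F} shows that on the generators of $\cA$ the map $\Psi$ takes values back in $\cA$: it fixes $(aI)$ and $(W^0(b))$, and sends $(P_\tau)$ to the identity sequence $(I)$. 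Since $\Psi$ is a bounded unital homomorphism and $\cA$ is closed, it follows that $\Psi(\cA)\subset\cA$; in particular, $\bA_0=\Psi(\bA)\in\cA$, which settles the first assertion.

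For the equality $\Phi_{\xi,\eta}^\cJ(\bA)=\Phi_{\xi,\eta}^\cJ(\bA_0)$, both $\Phi_{\xi,\eta}^\cJ$ and $\Phi_{\xi,\eta}^\cJ\circ\Psi$ are bounded unital homomorphisms from $\cA$ to $\cL_{\xi,\eta}^\cJ$, so it suffices to match them on the three generators. For $(aI)$ and $(W^0(b))$ the two sides are literally equal, since $\Psi$ fixes these sequences. The only nontrivial case is $(P_\tau)$, where $\bA_0=(I)$ and $\bA-\bA_0=-(Q_\tau)$; the task therefore reduces to showing that $(Q_\tau)+\cJ\in\cI_{\xi,\eta}^\cJ$.

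This last step is the main obstacle, and the hypothesis $\xi\in\R$ enters crucially here (the parameter $\eta$ is not needed). The idea is that $Q_\tau$ is supported outside any fixed bounded interval once $\tau$ is large, while $\xi$ is a finite point. Concretely, I would pick $f\in C(\dot\R)$ with $f(\xi)=0$, $f(\infty)=1$, and $f\equiv 1$ outside a bounded interval $[-N,N]\ni\xi$; such $f$ exists precisely because $\xi\in\R$. Then for every $\tau>N$ the function $f$ equals $1$ on the support of $\chi_{\{|t|>\tau\}}$, so $(fI)(Q_\tau)=(Q_\tau)$, which means that $(Q_\tau)-(fI)(Q_\tau)$ vanishes for all sufficiently large $\tau$ and therefore lies in $\cG\subset\cJ$. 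Consequently,
\[
(Q_\tau)+\cJ=\bigl((fI)+\cJ\bigr)\bigl((Q_\tau)+\cJ\bigr).
\]
Since $f(\xi)=0$, the coset $(fI)+\cJ$ is one of the generators of $\cN_{\xi,\eta}^\cJ$, so the product on the right lies in $\cI_{\xi,\eta}^\cJ$ by the very construction of this ideal. This establishes~\eqref{eq:case1-1} on generators, and the homomorphism/continuity argument from the previous paragraph propagates it to all of $\cA$.
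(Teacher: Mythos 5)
Your proof is correct and takes essentially the same approach as the paper's: both verify the claim on generators, and both establish $\Phi_{\xi,\eta}^\cJ(\bP)=\Phi_{\xi,\eta}^\cJ(\bI)$ by exploiting a function in $C(\dot\R)$ that separates the finite point $\xi$ from $\infty$ together with the fact that $Q_\tau$ kills compactly supported functions for large $\tau$. The only cosmetic difference is that the paper chooses $f_\xi$ with $f_\xi(\xi)=1$ and compact support and shows $(f_\xi Q_\tau)\in\cG$, whereas you choose the complementary cutoff with $f(\xi)=0$, $f\equiv 1$ near $\infty$, and show $(Q_\tau)+\cJ$ lies in the ideal generated by $(fI)+\cJ$ — the same idea dualized.
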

\begin{proof}
This statement is proved by analogy with \cite[Proposition~2.14]{RSS97}.
If $\bA=(aI)$ with $a\in PC$ or $\bA=(W^0(b))$ with
$b\in[PC_p,SO_p]$, then obviously $\bA_0=\bA\in\cA$. For
$\bP=(P_\tau)\in\cA$, we have $\fW_0(\bP)=I$ and
$\bP_0=(I)=:\bI\in\cA$. These facts imply that $\bA_0\in\cA$.

For all constant sequences $\bA\in\cA$, we have $\bA=\bA_0\in\cA$. This
implies \eqref{eq:case1-1} for such sequences. It remains to show that
$\Phi_{\xi,\eta}^\cJ(\bP)=\Phi_{\xi,\eta}^\cJ(\bI)$.

Assume that $y>|\xi|$ and consider a function $f_\xi\in
C(\dot{\R})$ such that $f_\xi(\xi)=1$ and
$\operatorname{supp}f_\xi\subset(-y,y)$. From the definition of
the ideal $\cI_{\xi,\eta}^\cJ$ it follows that
\[
(f_\xi I)-(I)+\cJ=((f_\xi -1)I)+\cJ\in\cI_{\xi,\eta}^\cJ.
\]
Hence $\Phi_{\xi,\eta}^\cJ[(f_\xi I)]=\Phi_{\xi,\eta}^\cJ(\bI)$ is
the identity in the local algebra $\cL_{\xi,\eta}^\cJ$. Put
$\bQ:=(Q_\tau)$. Then
\[
\Phi_{\xi,\eta}^\cJ(\bQ) = \Phi_{\xi,\eta}^\cJ[(f_\xi
I)]\Phi_{\xi,\eta}^\cJ(\bQ) = \Phi_{\xi,\eta}^\cJ[(f_\xi Q_\tau)].
\]
Since $\operatorname{supp}f_\xi\subset(-y,y)$, we have $\|f_\xi
Q_\tau\|_\cB\to 0$ as $\tau\to\infty$. Then $(f_\xi
Q_\tau)\in\cG\subset\cJ$. This means that
$\Phi_{\xi,\eta}^\cJ(\bQ)$ is the zero in $\cL_{\xi,\eta}^\cJ$.
Hence $\Phi_{\xi,\eta}^\cJ(\bP)=\Phi_{\xi,\eta}^\cJ(\bI)$.
\end{proof}
\subsection{Sufficient conditions for the invertibility in \boldmath{$\cL_{\xi,\eta}^\cJ$}}
\begin{theorem}\label{th:case-1}
Let $(\xi,\eta)\in\R\times M_\infty(SO)$. If $\bA\in\cA$ and the
operator $\fW_0(\bA)$ is invertible, then the coset
$\Phi_{\xi,\eta}^\cJ(\bA)$ is invertible in the local algebra
$\cL_{\xi,\eta}^\cJ$.
\end{theorem}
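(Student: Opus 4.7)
The plan is to reduce, via Lemma~\ref{le:case-1}, the question to invertibility of the constant sequence $\bA_0 := (A)$, where $A := \fW_0(\bA)$. Since $A$ is invertible in $\cB$ by hypothesis, the constant sequence $\bB_0 := (A^{-1})$ is the obvious candidate for an inverse, and $\bB_0\bA_0 = \bA_0\bB_0 = \bI$ holds already in $\cE$. It therefore suffices to show that $\bB_0 \in \cL$, for then $\Phi_{\xi,\eta}^\cJ(\bB_0)$ provides a two-sided inverse of $\Phi_{\xi,\eta}^\cJ(\bA) = \Phi_{\xi,\eta}^\cJ(\bA_0)$ in $\cL_{\xi,\eta}^\cJ$.

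To certify $\bB_0 \in \cL$, I would first record that a constant sequence $(C)\in\cE$ belongs to $\cJ$ if and only if $C\in\cK$: substituting the defining decomposition of $\cJ$ and letting $\tau\to +\infty$ strongly annihilates the two ``shifted'' terms (using the weak convergence $V_{\pm\tau}\to 0$ together with compactness of $K_{\pm 1}$) and the $\cG$-term, leaving $C=K_0\in\cK$. Consequently, the membership $\bB_0\in\cL$ is equivalent to the compactness of $A^{-1}fI - fI A^{-1}$ and of $A^{-1}W^0(g)-W^0(g)A^{-1}$ for every $f\in C(\dot{\R})$ and every $g\in SO_p$.

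I would establish these commutator properties first for $A$ and then transfer them to $A^{-1}$ by inverse closedness of a commutant. Let $\cD$ denote the set of $T\in\cB$ whose commutators with every $fI$ ($f\in C(\dot{\R})$) and every $W^0(g)$ ($g\in SO_p$) are compact. Then $\cD$ is a norm-closed subalgebra of $\cB$ containing $\cK$. Using Lemma~\ref{le:compactness-commutator} on the mixed commutators $[aI,W^0(g)]$ and $[fI,W^0(b)]$, and the trivial commutation of functions with each other and of Fourier multipliers with each other, every operator of the form $aI$ ($a\in PC$), $W^0(b)$ ($b\in[PC_p,SO_p]$), or $I$ lies in $\cD$. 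Since $A=\fW_0(\bA)$ sits in the closed subalgebra of $\cB$ generated by these operators (as $\fW_0$ is a bounded homomorphism and $\fW_0((P_\tau))=I$, $\fW_0((aI))=aI$, $\fW_0((W^0(b)))=W^0(b)$), we obtain $A\in\cD$. To pass to $A^{-1}$ I would invoke the standard fact that the commutant of any subset of a unital Banach algebra is inverse closed: the image of $\cD$ in $\cB/\cK$ is precisely the commutant of the set $\{fI+\cK,\,W^0(g)+\cK : f\in C(\dot{\R}),\,g\in SO_p\}$, so invertibility of $A$ in $\cB$, which yields invertibility of $A+\cK$ in $\cB/\cK$ with inverse $A^{-1}+\cK$, forces $A^{-1}+\cK$ to remain in that commutant; since $\cK\subset\cD$, this gives $A^{-1}\in\cD$.

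I expect the inverse-closedness step for the commutant to be the conceptual heart of the argument; the reduction via Lemma~\ref{le:case-1} and the characterization of constant sequences in $\cJ$ are essentially bookkeeping. Once $\bB_0=(A^{-1})\in\cL$ is in hand, the identity $\bB_0\bA_0 = \bA_0\bB_0 = \bI$ descends from $\cE$ through $\cL/\cJ$ to $\cL_{\xi,\eta}^\cJ$, completing the proof.
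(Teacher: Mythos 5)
There is a genuine gap. Your plan is to show $\bB_0 = (A^{-1}) \in \cL$ and then use $\bB_0\bA_0 = \bA_0\bB_0 = \bI$, reducing everything to the commutant argument for $A^{-1} \in \cD$. But membership in $\cL$ is not equivalent to the compactness of the two families of commutators: by the definition in Section~\ref{section:localization}, $\cL$ consists of sequences $(A_\tau)$ that are \emph{already in $\cF$} and satisfy the commutator conditions. So you must also verify that $\bB_0 \in \cF$, i.e.\ that $(V_{-\tau}A^{-1}V_\tau)$ and $(V_\tau A^{-1}V_{-\tau})$ converge *-strongly. Your commutant argument says nothing about this; it lives entirely in the Calkin algebra and sees no shift structure. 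This omission is precisely the content that the paper absorbs through the inverse-closedness lemmas: Kozak's theorem gives invertibility of $\bA_0 + \cG$ in $\cE/\cG$, Lemma~\ref{le:alg-F}(e) pushes it into $\cF/\cG$, $\cG\subset\cJ$ passes to $\cF/\cJ$, Lemma~\ref{le:alg-L}(c) passes to $\cL/\cJ$, and then the homomorphism $\Phi_{\xi,\eta}^\cJ$ finishes.

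The gap is repairable. One route: since the operators $V_{-\tau}AV_\tau$ are all conjugates of $A$ by surjective isometries, they are invertible with $\|(V_{-\tau}AV_\tau)^{-1}\|_{\cB} = \|A^{-1}\|_{\cB}$; the *-strong limit $\fW_1(\bA_0)$ is then injective with closed range and has injective adjoint, hence is invertible, and the identity $(V_{-\tau}AV_\tau)^{-1} - \fW_1(\bA_0)^{-1} = (V_{-\tau}AV_\tau)^{-1}\big(\fW_1(\bA_0) - V_{-\tau}AV_\tau\big)\fW_1(\bA_0)^{-1}$ gives strong convergence of $(V_{-\tau}A^{-1}V_\tau)$ (similarly for the adjoints and for the other shift). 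Alternatively, you could simply invoke Lemma~\ref{le:alg-F}(e); but once you do, Lemma~\ref{le:alg-L}(d) gives $\bB_0\in\cL$ outright and renders the entire commutant computation superfluous. Modulo this missing $\cF$-membership step, your commutant argument itself is sound: $\cD$ is a closed subalgebra of $\cB$ containing $\cK$, its image in $\cB/\cK$ is a commutant and hence inverse closed, and $\fW_0(\cA)$ lands in $\cD$. This is a legitimately different and more hands-on route than the paper's chain of quotient-algebra inverse-closedness lemmas.
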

\begin{proof}
If $\fW_0(\bA)$ is invertible, then the constant sequence
$\bA_0=(\fW_0(\bA))\in\cA$ is stable. From Theorem~\ref{th:Kozak}
and Lemma~\ref{le:alg-F}(e) we obtain that $\bA_0+\cG$ is
invertible in $\cF/\cG$. Therefore, $\bA_0+\cJ$ is invertible in
the quotient algebra $\cF/\cJ$ because $\cG\subset\cJ$. From
Lemma~\ref{le:alg-L}(c) it follows that the latter fact is
equivalent to the invertibility of $\bA_0+\cJ$ in the quotient
algebra $\cL/\cJ$. By Theorem~\ref{th:AllanDouglas}, this implies
that $\Phi_{\xi,\eta}^\cJ(\bA_0)$ is invertible in the local
algebra $\cL_{\xi,\eta}^\cJ$. It remains to recall that
$\Phi_{\xi,\eta}^\cJ(\bA)=\Phi_{\xi,\eta}^\cJ(\bA_0)$ by
Lemma~\ref{le:case-1}(b).
\end{proof}
\section{Invertibility in the local algebra \boldmath{$\cL_{\infty,\eta}^\cJ$}
with \boldmath{$\eta\in\R$}}
\label{section:identification-2}
\subsection{Two auxiliary lemmas}
Given $\eta\in\R$, let $\chi_\eta^-$ and $\chi_\eta^+$ denote the
characteristic functions of $(-\infty,\eta)$ and $(\eta,+\infty)$,
respectively. Clearly, $\chi_0^-=\chi_-$ and $\chi_0^+=\chi_+$. It is
easy to see that
\[
W^0(\chi_\eta^-)=U_\eta^{-1} W^0(\chi_-)U_\eta,\quad
W^0(\chi_\eta^+)=U_\eta^{-1} W^0(\chi_+)U_\eta.
\]
Consider the constant sequences
\[
\bX_-:=(\chi_-I),\quad
\bX_+:=(\chi_+I),\quad
\bW_-^\eta:=(W^0(\chi_\eta^-)),\quad
\bW_+^\eta:=(W^0(\chi_\eta^+)).
\]
\begin{lemma}\label{le:case-2-repr2}
Let $\eta\in\R$.
\begin{enumerate}
\item[{\rm(a)}]
If $a\in PC$ and $\bA=(aI)$, then
\[
\Phi_{\infty,\eta}^\cJ(\bA)=
a(-\infty)\Phi_{\infty,\eta}^\cJ(\bX_-)+
a(+\infty)\Phi_{\infty,\eta}^\cJ(\bX_+).
\]

\item[{\rm (b)}]
If $b\in [PC_p,SO_p]$ and $\bB=(W^0(b))$, then
\[
\Phi_{\infty,\eta}^\cJ(\bB)=
b(\eta^-)\Phi_{\infty,\eta}^\cJ(\bW_-^\eta)+
b(\eta^+)\Phi_{\infty,\eta}^\cJ(\bW_+^\eta).
\]
\end{enumerate}
\end{lemma}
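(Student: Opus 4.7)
My plan is to prove both parts in parallel by showing that the relevant discrepancy is a sequence whose coset lies in the local ideal $\cI_{\infty,\eta}^\cJ$. Set $h := a - a(-\infty)\chi_- - a(+\infty)\chi_+ \in PC$ (so that $h(\pm\infty) = 0$) and $c := b - b(\eta^-)\chi_\eta^- - b(\eta^+)\chi_\eta^+ \in [PC_p,SO_p]$ (so that $c(\eta^-) = c(\eta^+) = 0$). Since $\Phi_{\infty,\eta}^\cJ$ is a linear homomorphism, parts (a) and (b) reduce, respectively, to establishing $(hI) + \cJ \in \cI_{\infty,\eta}^\cJ$ and $(W^0(c)) + \cJ \in \cI_{\infty,\eta}^\cJ$.

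For part (a), I choose cutoffs $\varphi_n \in C(\dot\R)$ with $\varphi_n(\infty) = 0$, $\varphi_n \equiv 1$ on $[-n,n]$, and $0 \le \varphi_n \le 1$. Then $(\varphi_n I) + \cJ$ is one of the generators of $\cN_{\infty,\eta}^\cJ$, so it belongs to $\cI_{\infty,\eta}^\cJ$. Because $(hI) \in \cA \subset \cL$ by Theorem~\ref{th:A-local-type} and $\cI_{\infty,\eta}^\cJ$ is a two-sided ideal of $\cL^\cJ$, the product $(\varphi_n h I) + \cJ = [(\varphi_n I) + \cJ]\cdot[(hI) + \cJ]$ lies in $\cI_{\infty,\eta}^\cJ$. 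The discrepancy $((1-\varphi_n)hI)$ has $\cE$-norm $\|(1-\varphi_n)h\|_\infty \le \sup_{|x|>n}|h(x)|$, which tends to $0$ because $h(\pm\infty) = 0$. Closedness of $\cI_{\infty,\eta}^\cJ$ then delivers the claim.

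For part (b), I run the same scheme on the Fourier side. Let $\beta \in C_c^\infty((-1,1))$ with $\beta(0) = 1$ and $0 \le \beta \le 1$, and set $g_n(x) := 1 - \beta(n(x-\eta))$. Then $g_n(\eta) = 0$, $g_n \equiv 1$ outside $(\eta - 1/n, \eta + 1/n)$, and $g_n \in C(\dot\R) \subset SO$; since $g_n'$ has compact support, $xg_n'(x) \to 0$ as $x \to \pm\infty$, so by Lemmas~\ref{le:tilde-SO1}--\ref{le:SO1} one obtains $g_n \in \widetilde{SO}^1 \cap SO = SO^1 \subset SO_p$. Consequently $(W^0(g_n)) + \cJ \in \cN_{\infty,\eta}^\cJ \subset \cI_{\infty,\eta}^\cJ$, and the identity $W^0(g_n)W^0(c) = W^0(g_n c)$ shows $(W^0(g_n c)) + \cJ \in \cI_{\infty,\eta}^\cJ$. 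The remainder is $(W^0((1-g_n)c))$, with $\cE$-norm equal to $\|(1-g_n)c\|_{\cM_p}$.

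The main obstacle is proving this last quantity tends to $0$ as $n\to\infty$: mere uniform smallness of $(1-g_n)c$ on $(\eta - 1/n, \eta + 1/n)$, which follows from $c(\eta^\pm) = 0$, does not control the multiplier norm for $p \ne 2$. My approach is to $\cM_p$-approximate $c$ by finite sums $c^{(m)}$ of products of generators drawn from $PC$-functions of finite total variation and from $SO^1$; after subtracting off the $\eta^\pm$-values of each approximant (a correction whose $\cM_p$-norm tends to $0$ in $m$ since $c^{(m)}(\eta^\pm)\to c(\eta^\pm)=0$), one may assume $c^{(m)}(\eta^\pm) = 0$. For such structured approximants, Stechkin's inequality combined with the controlled total variation of the smooth cutoff $1-g_n$ dominates $\|(1-g_n)c^{(m)}\|_{\cM_p}$, and a standard $\varepsilon/2$-argument produces the required convergence. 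Closedness of $\cI_{\infty,\eta}^\cJ$ then completes the proof of part (b).
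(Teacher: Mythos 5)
Your proof is correct and follows the same strategy as the paper: reduce everything to showing that the coset of the discrepancy sequence lies in the local ideal $\cI_{\infty,\eta}^\cJ$. The paper merely asserts this membership (noting that $a_0\in PC$ vanishes at $\pm\infty$, and that $b_0\in[PC_p,SO_p]$ is continuous and vanishes at $\eta$); you supply the missing justification by multiplying by cutoff generators of $\cN_{\infty,\eta}^\cJ$, invoking the two-sided ideal property, and appealing to closedness of $\cI_{\infty,\eta}^\cJ$. For part (a) this is routine, since the remainder has $\cE$-norm $\|(1-\varphi_n)h\|_\infty\to 0$. For part (b) you correctly flag that the analogous $\cM_p$-norm convergence is not automatic and resolve it by a density argument over structured approximants $c^{(m)}$, corrected to vanish at $\eta$, for which Stechkin's inequality applies. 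This is precisely the nontrivial point the paper glosses over, so filling it in is valuable. One refinement worth stating explicitly: it is not enough that $V_1(1-g_n)$ be bounded; what makes the argument close is that the corrected approximants are continuous at $\eta$ with locally finite total variation, so that both $\|(1-g_n)c^{(m)}\|_\infty$ and $V_1\big((1-g_n)c^{(m)}\big)$ tend to $0$ as $n\to\infty$, and $\sup_n\|1-g_n\|_{\cM_p}<\infty$ then handles the $\eps/2$-tail.
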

\begin{proof}
(a)
Consider $a_0:=a-[a(-\infty)\chi_-+a(+\infty)\chi_+]$. Clearly $a_0\in PC$
and $a_0(-\infty)=a_0(+\infty)=0$. Therefore $(a_0I)+\cJ\in\cI_{\infty,\eta}^\cJ$.
Hence
\[
0=\Phi_{\infty,\eta}^\cJ[(a_0I)]=
\Phi_{\infty,\eta}^\cJ(\bA)-[a(-\infty)\Phi_{\infty,\eta}^\cJ(\bX_-)+a(+\infty)\Phi_{\infty,\eta}^\cJ(\bX_+)].
\]
Part (a) is proved.

(b) The function
$b_0:=b-[b(\eta^-)\chi_\eta^-+b(\eta^+)\chi_\eta^+]$ belongs to
$[PC_p,SO_p]$ and, moreover, it is continuous and vanishing at the
point $\eta\in\R$. Hence the coset $(W^0(b_0))+\cJ$ belongs to the
ideal $\cI_{\infty,\eta}^\cJ$. Therefore
\[
0=\Phi_{\infty,\eta}^\cJ[(W^0(b_0I))]=
\Phi_{\infty,\eta}^\cJ(\bB)-
[b(\eta^-)\Phi_{\infty,\eta}^\cJ(\bW_-^\eta)+b(\eta^+)\Phi_{\infty,\eta}^\cJ(\bW_+^\eta)].
\]
This completes the proof of part (b).
\end{proof}
\begin{lemma}\label{le:identity}
Suppose $\bA\in\cA$ and $\eta\in\R$. Then the sequence $\bA_\eta$ given by
\[
\bA_\eta:=(U_\eta^{-1} Z_\tau\fH_\eta(\bA)Z_\tau^{-1}U_\eta)
\]
belongs to $\cA$ and
\begin{equation}\label{eq:identity-1}
\Phi_{\infty,\eta}^\cJ(\bA)=
\Phi_{\infty,\eta}^\cJ(\bA_\eta).
\end{equation}
\end{lemma}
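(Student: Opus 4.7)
The plan is to verify both claims on the three generators of $\cA$ and then extend to all of $\cA$ by using that both $\bA\mapsto\bA_\eta$ and $\Phi_{\infty,\eta}^\cJ$ are bounded unital homomorphisms. Multiplicativity of $\bA\mapsto\bA_\eta$ follows from $\fH_\eta$ being a homomorphism (Lemma~\ref{le:alg-Heta}(b)): writing $\fH_\eta(\bA\bB)=\fH_\eta(\bA)\fH_\eta(\bB)$ and inserting the cancelling factors $Z_\tau^{-1}Z_\tau$ and $U_\eta U_\eta^{-1}$ between the two factors yields $(\bA\bB)_\eta=\bA_\eta\bB_\eta$. Boundedness (with norm $1$) follows from $\|\fH_\eta\|=1$ together with the isometry of $U_\eta^{\pm1}$ and $Z_\tau^{\pm1}$.

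Next I would compute $\bA_\eta$ explicitly for each generator using Proposition~\ref{pr:A-in-Heta}. A direct calculation gives $Z_\tau(gI)Z_\tau^{-1}=g(\cdot/\tau)I$ for any bounded measurable $g$, hence $Z_\tau P_1Z_\tau^{-1}=P_\tau$; combined with the commutation of $U_\eta$ with multiplication operators, this produces $\bA_\eta=(P_\tau)=\bA$ for $\bA=(P_\tau)$. This is the crucial observation of the argument: naively one might expect $\bA_\eta=(P_1)$, but the operator $P_1$ is \emph{not} dilation-invariant, so the dilation conjugation in the definition of $\bA_\eta$ unwinds to reproduce $\bA$ itself. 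For $\bA=(aI)$ with $a\in PC$, on the other hand, $\fH_\eta(\bA)=a(-\infty)\chi_-I+a(+\infty)\chi_+I$ is genuinely dilation-invariant (since $\chi_\pm(x/\tau)=\chi_\pm(x)$ for $\tau>0$), so $\bA_\eta$ collapses to the constant sequence $(a(-\infty)\chi_-I+a(+\infty)\chi_+I)\in\cA$. For $\bA=(W^0(b))$ with $b\in[PC_p,SO_p]$, Fourier--dilation duality together with the scale-invariance of $\chi_\pm$ gives $Z_\tau^{-1}W^0(\chi_\pm)Z_\tau=W^0(\chi_\pm)$, while a direct Fourier computation yields $U_\eta^{-1}W^0(\chi_\pm)U_\eta=W^0(\chi_\eta^\pm)$; thus $\bA_\eta$ reduces to the constant sequence $(b(\eta^-)W^0(\chi_\eta^-)+b(\eta^+)W^0(\chi_\eta^+))$, which lies in $\cA$ because $\chi_\eta^\pm\in PC_p\subset[PC_p,SO_p]$. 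By the homomorphism property above, $\bA_\eta\in\cA$ for every $\bA\in\cA$.

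Finally, the equality $\Phi_{\infty,\eta}^\cJ(\bA)=\Phi_{\infty,\eta}^\cJ(\bA_\eta)$ is either trivial (for $\bA=(P_\tau)$, where $\bA_\eta=\bA$) or precisely the content of Lemma~\ref{le:case-2-repr2}(a) and~(b) (for $(aI)$ and $(W^0(b))$, respectively). Extension to all of $\cA$ is then automatic: the difference $\bA\mapsto\Phi_{\infty,\eta}^\cJ(\bA)-\Phi_{\infty,\eta}^\cJ(\bA_\eta)$ is a bounded linear map on $\cA$ that vanishes on the generating set and is compatible with products, hence vanishes on the closed algebra generated by these. The only conceptual obstacle is the $(P_\tau)$ case: if one carelessly takes $\bA_\eta=(P_1)$ one is forced to prove the much subtler statement $\Phi_{\infty,\eta}^\cJ((P_\tau))=0$ directly, whereas correctly unwinding the definition to obtain $\bA_\eta=\bA$ collapses this case to a tautology.
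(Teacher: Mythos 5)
Your proposal is correct and follows essentially the same route as the paper: compute $\bA_\eta$ explicitly on the three generators (getting $\bP$, $(a(-\infty)\chi_-I+a(+\infty)\chi_+I)$, and $(b(\eta^-)W^0(\chi_\eta^-)+b(\eta^+)W^0(\chi_\eta^+))$ respectively), conclude $\bA_\eta\in\cA$, check \eqref{eq:identity-1} on generators via Lemma~\ref{le:case-2-repr2}, and extend by the bounded-homomorphism property. Your explicit remark that $Z_\tau P_1Z_\tau^{-1}=P_\tau$ (so $\bA_\eta=\bA$ for $\bA=\bP$, rather than $(P_1)$) is exactly the calculation behind the paper's equation~\eqref{eq:identity-2}, and your justification that $\bA\mapsto\bA_\eta$ is multiplicative merely spells out what the paper treats as obvious.
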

\begin{proof}
From Proposition~\ref{pr:A-in-Heta}(a) it follows that
\begin{equation}\label{eq:identity-2}
U_\eta^{-1} Z_\tau\fH_\eta(\bP)Z_\tau^{-1}U_\eta =
U_\eta^{-1} Z_\tau P_1 Z_\tau^{-1}U_\eta=P_\tau.
\end{equation}
By Proposition~\ref{pr:A-in-Heta}(b), for $\bA=(aI)$ with $a\in PC$ we have
\begin{eqnarray}
U_\eta^{-1} Z_\tau\fH_\eta(\bA)Z_\tau^{-1}U_\eta
&=&
U_\eta^{-1} Z_\tau[a(-\infty)\chi_-I+a(+\infty)\chi_+I]Z_\tau^{-1} U_\eta
\nonumber
\\
&=&
a(-\infty)Z_\tau\chi_-Z_\tau^{-1}+a(+\infty)Z_\tau\chi_+Z_\tau^{-1}
\nonumber
\\
&=&
a(-\infty)\chi_-I+a(+\infty)\chi_+I.
\label{eq:identity-3}
\end{eqnarray}
Assume that $b\in[PC_p,SO_p]$ and $\bA=(W^0(b))$. By Proposition~\ref{pr:A-in-Heta}(c),
\begin{eqnarray}
&& U_\eta^{-1} Z_\tau\fH_\eta(\bA)Z_\tau^{-1}U_\eta
=
U_\eta^{-1} Z_\tau[b(\eta^-)W^0(\chi_-)+b(\eta^+)W^0(\chi_+)]Z_\tau^{-1} U_\eta
\nonumber
\\
&&=
b(\eta^-)U_\eta^{-1} Z_\tau W^0(\chi_-)Z_\tau^{-1}U_\eta+
b(\eta^+)U_\eta^{-1} Z_\tau W^0(\chi_+)Z_\tau^{-1}U_\eta
\nonumber
\\
&&=
b(\eta^-)U_\eta^{-1} W^0(\chi_-)U_\eta+
b(\eta^+)U_\eta^{-1} W^0(\chi_+)U_\eta
\nonumber
\\
&&=b(\eta^-)W^0(\chi_\eta^-)+b(\eta^+)W^0(\chi_\eta^+).
\label{eq:identity-4}
\end{eqnarray}
Equalities \eqref{eq:identity-2}--\eqref{eq:identity-4} imply that
$\bA_\eta\in\cA$ for every $\bA\in\cA$.

Obviously, it is sufficient to check identity \eqref{eq:identity-1}
on the generators of the algebra $\cA$. From \eqref{eq:identity-2}
we get
\[
\Phi_{\infty,\eta}^\cJ(\bA_\eta)=\Phi_{\infty,\eta}^\cJ(\bA)
\]
for $\bA=\bP$. From Lemma~\ref{le:case-2-repr2}(a) and equality
\eqref{eq:identity-3} it follows that
\[
\Phi_{\infty,\eta}^\cJ(\bA_\eta)
=
a(-\infty)\Phi_{\infty,\eta}^\cJ(\bX_-)+a(+\infty)\Phi_{\infty,\eta}^\cJ(\bX_+)
=
\Phi_{\infty,\eta}^\cJ(\bA)
\]
for $\bA=(aI)$ with $a\in PC$.
Equality \eqref{eq:identity-4} and Lemma~\ref{le:case-2-repr2}(b) yield
\[
\Phi_{\infty,\eta}^\cJ(\bA_\eta) =
b(\eta^-)\Phi_{\infty,\eta}^\cJ(\bW_-^\eta)+b(\eta^+)\Phi_{\infty,\eta}^\cJ(\bW_+^\eta)
= \Phi_{\infty,\eta}^\cJ(\bA)
\]
with $\bA=(W^0(b))$ and $b\in[PC_p,SO_p]$.
\end{proof}
\subsection{Sufficient condition for the invertibility in \boldmath{$\cL_{\infty,\eta}^\cJ$}}
\begin{theorem}\label{th:case-2}
Let $\eta\in\R$ and $\bA\in\cA$. If the operator
$\fH_\eta(\bA)$ is invertible in the algebra $\cB$, then the coset
$\Phi_{\infty,\eta}^\cJ(\bA)$ is invertible in the local algebra $\cL_{\infty,\eta}^\cJ$.
\end{theorem}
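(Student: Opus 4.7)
The plan is to reduce the problem to Lemma~\ref{le:identity} and then exploit the fact that each entry of the resulting sequence is simultaneously similar to the invertible operator $\fH_\eta(\bA)$ via a uniform isometric conjugation.

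First I would invoke Lemma~\ref{le:identity} to replace $\bA$ by the sequence
\[
\bA_\eta=(U_\eta^{-1}Z_\tau\fH_\eta(\bA)Z_\tau^{-1}U_\eta)\in\cA,
\]
which satisfies $\Phi_{\infty,\eta}^\cJ(\bA)=\Phi_{\infty,\eta}^\cJ(\bA_\eta)$. So it suffices to prove that $\Phi_{\infty,\eta}^\cJ(\bA_\eta)$ is invertible. Observe that for every $\tau\in\R_+$ the operator $U_\eta^{-1}Z_\tau$ is an invertible isometry on $L^p(\R)$, so each term of $\bA_\eta$ is similar to $\fH_\eta(\bA)$. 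When $\fH_\eta(\bA)$ is assumed invertible in $\cB$, each $A_{\eta,\tau}$ is therefore invertible with
\[
A_{\eta,\tau}^{-1}=U_\eta^{-1}Z_\tau\,\fH_\eta(\bA)^{-1}Z_\tau^{-1}U_\eta,
\qquad
\|A_{\eta,\tau}^{-1}\|_\cB\le\|\fH_\eta(\bA)^{-1}\|_\cB,
\]
uniformly in $\tau$. Consequently the sequence $\bA_\eta$ is stable.

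From stability, by Kozak's theorem (Theorem~\ref{th:Kozak}) the coset $\bA_\eta+\cG$ is invertible in $\cE/\cG$. Walking up the inverse-closedness chain, Lemma~\ref{le:alg-F}(e) promotes this to invertibility in $\cF/\cG$; the inclusion $\cG\subset\cJ$ then yields invertibility of $\bA_\eta+\cJ$ in $\cF/\cJ$; and Lemma~\ref{le:alg-L}(c) passes this down to invertibility in $\cL/\cJ$. Since the canonical projection onto the local algebra sends invertible cosets to invertible cosets, $\Phi_{\infty,\eta}^\cJ(\bA_\eta)$ is invertible in $\cL_{\infty,\eta}^\cJ$, and the equality from Lemma~\ref{le:identity} transfers this conclusion to $\Phi_{\infty,\eta}^\cJ(\bA)$.

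There is no serious obstacle: the heart of the argument (and the only nontrivial step) is Lemma~\ref{le:identity}, which is already in hand; everything else is bookkeeping in the inverse-closedness chain $\cL/\cG\subset\cF/\cG\subset\cE/\cG$. The mild point to verify carefully is that the stability of $\bA_\eta$ is honest stability, not just eventual stability, which follows from the simultaneous uniform bound on the inverses that the isometric conjugation provides.
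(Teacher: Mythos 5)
Your proof is correct and follows essentially the same route as the paper: both reduce to $\bA_\eta$ via Lemma~\ref{le:identity}, establish stability of $\bA_\eta$ from the uniform isometric similarity to $\fH_\eta(\bA)$ under $U_\eta^{-1}Z_\tau$, and then pass from stability to invertibility of the local coset. The only cosmetic difference is that the paper invokes Theorem~\ref{th:localization} to make that final passage in one step, whereas you manually unroll the chain (Kozak, inverse closedness of $\cF/\cG$ in $\cE/\cG$, $\cG\subset\cJ$, inverse closedness of $\cL/\cJ$ in $\cF/\cJ$, and the canonical homomorphism $\Phi_{\infty,\eta}^\cJ$); you should just note explicitly that $\bA_\eta\in\cA\subset\cL$ (via Lemma~\ref{le:identity} and Theorem~\ref{th:A-local-type}) so that Lemma~\ref{le:alg-L}(c) and $\Phi_{\infty,\eta}^\cJ$ are applicable.
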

\begin{proof}
Let $B\in\cB$ be the inverse of $\fH_\eta(\bA)$. Multiplying the equalities
\[
B\fH_\eta(\bA)=I=\fH_\eta(\bA)B
\]
from the left by $U_\eta^{-1} Z_\tau$ and from the right by $Z_\tau^{-1}U_\eta$,
we obtain
\[
\bB_{(\eta)}\bA_\eta=\bI=\bA_\eta\bB_{(\eta)},
\]
where $\bB_{(\eta)}:=(U_\eta^{-1} Z_\tau BZ_\tau^{-1}U_\eta)\in\cE$.
It is easy to see that $\|\bB_{(\eta)}\|_\cE\le\|B\|_\cB$. Hence all
operators $U_\eta^{-1} Z_\tau\fH_\eta(\bA)Z_\tau^{-1}U_\eta$ are
invertible and the norms of their inverses are uniformly bounded.
Thus the sequence $\bA_\eta$ is stable. From Lemma~\ref{le:identity}
and Theorem~\ref{th:A-local-type} we conclude that $\bA_\eta\in\cL$.
Then the stability of $\bA_\eta$ implies the invertibility of
the coset $\Phi_{\infty,\eta}^\cJ(\bA_\eta)$ in the local algebra $\cL_{\infty,\eta}^\cJ$
due to Theorem~\ref{th:localization}. It remains to recall that
$\Phi_{\infty,\eta}^\cJ(\bA_\eta)=\Phi_{\infty,\eta}^\cJ(\bA)$
by Lemma~\ref{le:identity}.
\end{proof}
\section{Invertibility in the local algebra \boldmath{$\cL_{\infty,\eta}^\cJ$}
with \boldmath{$\eta\in M_\infty(SO)$}}
\label{section:identification-3}
\subsection{On the center of the algebra \boldmath{$\cA_{\infty,\eta}^\cJ$}}
Let $\bP:=(P_\tau)$ and $\bQ:=(Q_\tau)$. From
Lemma~\ref{le:Simonenko} it follows that the functions
\[
u_\pm(x):=(1\pm\tanh x)/2\quad(x\in\overline{\R})
\]
belong to $C_p(\overline{\R})$. Hence the convolution operators
$W^0(u_\pm)$ are well defined and bounded on $L^p(\R)$. Consider
the following constant sequences
\[
\begin{array}{llll}
\bU_-:=(u_-I), &
\bU_+:=(u_+I), &
\bV_-:=(W^0(u_-)), &
\bV_+:=(W^0(u_+)),
\\[2mm]
\bX_-:=(\chi_-I), &
\bX_+:=(\chi_+I), &
\bW_-:=(W^0(\chi_-)), &
\bW_+:=(W^0(\chi_+)).
\end{array}
\]
\begin{lemma}\label{le:case-3-repr}
Let $\eta\in M_\infty(SO)$.
\begin{enumerate}
\item[{\rm (a)}]
We have
\[
\begin{array}{ll}
\Phi_{\infty,\eta}^\cJ(\bX_-)=\Phi_{\infty,\eta}^\cJ(\bU_-), &
\Phi_{\infty,\eta}^\cJ(\bX_+)=\Phi_{\infty,\eta}^\cJ(\bU_+),\\[2mm]
\Phi_{\infty,\eta}^\cJ(\bW_-)=\Phi_{\infty,\eta}^\cJ(\bV_-), &
\Phi_{\infty,\eta}^\cJ(\bW_+)=\Phi_{\infty,\eta}^\cJ(\bV_+).
\end{array}
\]

\item[{\rm (b)}]
If $a\in PC$ and $\bA=(aI)$, then
\[
\Phi_{\infty,\eta}^\cJ(\bA)=
a(-\infty)\Phi_{\infty,\eta}^\cJ(\bX_-)+
a(+\infty)\Phi_{\infty,\eta}^\cJ(\bX_+).
\]

\item[{\rm (c)}]
If $b\in [PC_p,SO_p]$ and $\bB=(W^0(b))$, then
\[
\Phi_{\infty,\eta}^\cJ(\bB)=
b_\eta(-\infty)\Phi_{\infty,\eta}^\cJ(\bW_-)+
b_\eta(+\infty)\Phi_{\infty,\eta}^\cJ(\bW_+).
\]
\end{enumerate}
\end{lemma}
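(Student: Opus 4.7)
The plan is to establish the three parts in order, with part (a) supplying a factorization trick that drives parts (b) and (c). The unifying principle is that a coset of a sequence from $\cA$ lies in $\cI_{\infty,\eta}^\cJ$ whenever the underlying operator factors as a product of two elements of $\cA\subset\cL$ (the inclusion coming from Theorem~\ref{th:A-local-type}), one of which is a generator of $\cN_{\infty,\eta}^\cJ$, namely $(fI)$ with $f\in C(\dot\R)$, $f(\infty)=0$, or $(W^0(g))$ with $g\in SO_p$, $\eta(g)=0$. Closedness of $\cI_{\infty,\eta}^\cJ$ then extends the conclusion to norm limits.

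For part (a), a direct computation gives $\chi_--u_-=(1/\cosh)\,g_-$, where $g_-(x)=e^x/2$ for $x<0$ and $g_-(x)=-e^{-x}/2$ for $x>0$. Then $g_-\in PC$ has finite total variation, hence $g_-\in PC_p$ by Stechkin's inequality, while $1/\cosh$ is even, smooth, and satisfies $x(1/\cosh)'(x)\to 0$ at infinity, so by Lemma~\ref{le:tilde-SO1} it lies in $SO^1\subset SO_p$. Since $1/\cosh\in C(\dot\R)$ vanishes at infinity, $\eta(1/\cosh)=0$, so both $((1/\cosh)I)+\cJ$ and $(W^0(1/\cosh))+\cJ$ lie in $\cN_{\infty,\eta}^\cJ$. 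The factorizations $((\chi_--u_-)I)=((1/\cosh)I)(g_-I)$ and $(W^0(\chi_--u_-))=(W^0(1/\cosh))(W^0(g_-))$ (the second valid because $\cM_p$ is multiplicative) then yield (a); the $+$ cases are symmetric. Part (b) reduces to showing $(a_0 I)+\cJ\in\cI_{\infty,\eta}^\cJ$ for $a_0:=a-a(-\infty)\chi_--a(+\infty)\chi_+$, which vanishes at $\pm\infty$. The truncations $a_0^{(n)}:=a_0\chi_{[-n,n]}$ converge uniformly to $a_0$, and each is supported in $[-n,n]$, so $(a_0^{(n)}I)=(\sigma_n I)(a_0^{(n)}I)$ for any continuous cutoff $\sigma_n\in C(\dot\R)$ with $\sigma_n\equiv 1$ on $[-n,n]$ and $\sigma_n(\infty)=0$; passing to the $\cE$-limit completes (b).

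For part (c), since $\gamma_\eta$ is a continuous homomorphism by \eqref{eq:functional-gamma-PC-SO}, approximating $b$ in $\cM_p$-norm by finite sums $\sum_i f_i g_i$ with $f_i\in PC_p$ and $g_i\in SO_p$ reduces the claim to a single product $b=fg$. The algebraic identity
\[
fg-\gamma_\eta(fg)=f\bigl(g-\eta(g)\bigr)+\eta(g)\bigl(f-f(-\infty)\chi_--f(+\infty)\chi_+\bigr)
\]
splits $b-\gamma_\eta b$ into two terms. The first produces $(W^0(f))(W^0(g-\eta(g)))$, whose right factor is in $\cN_{\infty,\eta}^\cJ$ because $g-\eta(g)\in SO_p$ has $\eta$-value zero. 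The second is the Fourier-multiplier analogue of part (b): one needs $(W^0(f_0))+\cJ\in\cI_{\infty,\eta}^\cJ$ for $f_0\in PC_p$ with $f_0(\pm\infty)=0$.

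This last step is where I expect the main technical obstacle, because the $L^\infty$-truncation used for part (b) does not converge in the $\cM_p$-norm. My fix is to approximate $f_0$ in $\cM_p$ by piecewise constant functions $\tilde f_n$ with finitely many jumps, and then pass to $f_n:=\tilde f_n-\tilde f_n(-\infty)\chi_--\tilde f_n(+\infty)\chi_+$, which is compactly supported and whose $\cM_p$-distance to $\tilde f_n$ vanishes as $\tilde f_n(\pm\infty)\to 0$. Each such $f_n$ factors as $f_n=(1/\cosh)(f_n\cosh)$, where $f_n\cosh$ is bounded, compactly supported, and has finite total variation, hence lies in $PC_p$; the factorization from part (a) then places $(W^0(f_n))+\cJ$ in $\cI_{\infty,\eta}^\cJ$, and closedness of the ideal finishes the proof.
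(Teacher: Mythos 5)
Your proof is correct, and it supplies the factorization and approximation arguments that the paper leaves implicit. The paper's own proof of each part is essentially a one‑sentence assertion: because the relevant function vanishes at $\infty$ (part (a)), or because $a_0(\pm\infty)=0$ (part (b)), or because $(\alpha_\eta\widetilde{b})(\pm\infty)=0$ for $\widetilde{b}:=b-b_\eta(-\infty)\chi_--b_\eta(+\infty)\chi_+$ (part (c)), the corresponding coset belongs to $\cI_{\infty,\eta}^\cJ$ — with no factorization exhibited. Your explicit factorization $\chi_--u_-=(1/\cosh)\,g_-$ is the right way to make that assertion rigorous: $1/\cosh$ serves simultaneously as a generator $(fI)+\cJ$ with $f\in C(\dot{\R})$, $f(\infty)=0$, and as a generator $(W^0(g))+\cJ$ with $g\in SO_p$, $g(\eta)=0$, so a single factorization covers both the $\bX_\pm/\bU_\pm$ and the $\bW_\pm/\bV_\pm$ identifications. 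You also correctly spot that the $L^\infty$-truncation used in part (b) does \emph{not} transfer to part (c) because it fails to converge in $\cM_p$-norm — a point the paper passes over silently — and your fix (approximate $f_0$ in $\cM_p$ by piecewise constant functions, subtract their limits at $\pm\infty$ to get compactly supported approximants, then factor through $1/\cosh$) is sound. Your decomposition in part (c) via products $b=fg$ plus the identity $fg-\gamma_\eta(fg)=f(g-\eta(g))+\eta(g)\bigl(f-f(-\infty)\chi_--f(+\infty)\chi_+\bigr)$ differs in form from the paper's direct use of $\widetilde{b}$, but it reaches the same ideal membership and makes transparent exactly which generators of $\cN_{\infty,\eta}^\cJ$ are invoked. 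The only unproven step you lean on is the standard fact that piecewise continuous functions with finitely many jumps and finite variation belong to $PC_p$; the paper relies on the same fact tacitly (e.g., when it writes ``$\chi_--u_-\in PC_p$''), so this is consistent with the paper's own level of rigour.
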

\begin{proof}
(a) Since $\chi_-(-\infty)=u_-(-\infty)$ and $\chi_-(+\infty)=u_-(+\infty)$,
the function $\chi_--u_-\in PC_p$ is continuous and vanishing at the point
$\infty$ of $\dot{\R}$. Hence the cosets $\bX_--\bU_-+\cJ$ and $\bW_--\bV_-+\cJ$
belong to the ideal $\cI_{\infty,\eta}^\cJ$. Therefore
$\Phi_{\infty,\eta}^\cJ(\bX_-)=\Phi_{\infty,\eta}^\cJ(\bU_-)$ and
$\Phi_{\infty,\eta}^\cJ(\bW_-)=\Phi_{\infty,\eta}^\cJ(\bV_-)$. Analogously one
can show that
$\Phi_{\infty,\eta}^\cJ(\bX_+)=\Phi_{\infty,\eta}^\cJ(\bU_+)$ and
$\Phi_{\infty,\eta}^\cJ(\bW_+)=\Phi_{\infty,\eta}^\cJ(\bV_+)$.

(b) The proof coincides with the proof of Lemma~\ref{le:case-2-repr2}(a).

(c) Consider the function
$\widetilde{b}:=b-b_\eta(-\infty)\chi_--b_\eta(+\infty)\chi_+\in
[PC_p,SO_p]$. Then
$(\alpha_\eta\widetilde{b})(-\infty)=(\alpha_\eta\widetilde{b})(+\infty)=0$.
In particular, if $b\in SO_p$, then $\widetilde{b}=b-b(\eta)$ and $\widetilde{b}(\eta)=0$.
This implies that
$(W^0(\widetilde{b}))+\cJ\in\cI_{\infty,\eta}^\cJ$ for $b\in[PC_p,SO_p]$. Thus
\[
0=\Phi_{\infty,\eta}^\cJ[(W^0(\widetilde{b}))]=\Phi_{\infty,\eta}^\cJ(\bB)-
b_\eta(-\infty)\Phi_{\infty,\eta}^\cJ(\bX_-)-b_\eta(+\infty)\Phi_{\infty,\eta}^\cJ(\bX_+),
\]
which finishes the proof of part (c).
\end{proof}
\begin{lemma}\label{le:commuting-idempotent}
Let $\eta\in M_\infty(SO)$. Then the cosets
$\Phi_{\infty,\eta}^\cJ(\bX_-)$ and $\Phi_{\infty,\eta}^\cJ(\bX_+)$
commute with any element of the local algebra
$\cA_{\infty,\eta}^\cJ$.
\end{lemma}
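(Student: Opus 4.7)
The plan is to exploit Lemma~\ref{le:case-3-repr}(a) to replace the discontinuous functions $\chi_\pm$ by the smooth functions $u_\pm=(1\pm\tanh)/2$ at the level of the local algebra, and then to verify commutativity only against the generators of $\cA_{\infty,\eta}^\cJ$. Since $\cA_{\infty,\eta}^\cJ=\Phi_{\infty,\eta}^\cJ(\cA^\cJ)$ and the set of elements commuting with a fixed coset is automatically closed in the Banach algebra $\cL_{\infty,\eta}^\cJ$, it will suffice to show that $\Phi_{\infty,\eta}^\cJ(\bX_\pm)=\Phi_{\infty,\eta}^\cJ(\bU_\pm)$ commutes with the cosets $\Phi_{\infty,\eta}^\cJ[(aI)]$ for $a\in PC$, $\Phi_{\infty,\eta}^\cJ(\bP)$, and $\Phi_{\infty,\eta}^\cJ[(W^0(b))]$ for $b\in[PC_p,SO_p]$.

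The first two cases are immediate, because $u_\pm I$, $aI$ and $P_\tau$ are all multiplication operators by bounded real functions on $\R$ and hence commute pointwise. For the remaining case I would first invoke Lemma~\ref{le:case-3-repr}(c) together with part~(a) of the same lemma to rewrite
\[
\Phi_{\infty,\eta}^\cJ[(W^0(b))]
=b_\eta(-\infty)\Phi_{\infty,\eta}^\cJ(\bV_-)
+b_\eta(+\infty)\Phi_{\infty,\eta}^\cJ(\bV_+),
\]
where $\bV_\pm=(W^0(u_\pm))$. Consequently the commutator of $\Phi_{\infty,\eta}^\cJ(\bU_\pm)$ with $\Phi_{\infty,\eta}^\cJ[(W^0(b))]$ reduces to a linear combination of cosets of the constant sequences $\bigl(u_\pm W^0(u_\mp)-W^0(u_\mp)u_\pm I\bigr)$ and $\bigl(u_\pm W^0(u_\pm)-W^0(u_\pm)u_\pm I\bigr)$. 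Because $u_\pm\in C(\overline{\R})$ and, by Lemma~\ref{le:Simonenko}, also $u_\pm\in C_p(\overline{\R})$, Lemma~\ref{le:Duduchava} applies and each such operator is compact. The corresponding constant sequence therefore belongs to $\cJ_0\subset\cJ\subset\cI_{\infty,\eta}^\cJ$, and its image under $\Phi_{\infty,\eta}^\cJ$ is zero.

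The main subtlety—and the reason the proof is not completely trivial—is that one cannot attempt to commute $\bX_\pm$ with $(W^0(b))$ directly in $\cF$: the function $\chi_\pm$ has a jump at the origin and so does not lie in $C(\overline{\R})$, while for arbitrary $b\in[PC_p,SO_p]$ there is no reason for $b$ to belong to $C_p(\overline{\R})$, so Lemma~\ref{le:Duduchava} is not applicable as stated, nor does Lemma~\ref{le:compactness-commutator} cover this case. The role of localizing at $(\infty,\eta)$ with $\eta\in M_\infty(SO)$ is precisely to allow the simultaneous smoothing of both objects—$\chi_\pm$ is replaced by $u_\pm$ via Lemma~\ref{le:case-3-repr}(a), while $b$ is collapsed onto its two boundary values accompanied by $W^0(u_\pm)$ via parts (a) and (c)—after which the only commutators left to estimate involve the smooth generators $u_\pm$, for which compactness is classical.
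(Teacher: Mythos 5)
Your proof is correct and follows essentially the same route as the paper: reduce to generators of $\cA_{\infty,\eta}^\cJ$, use Lemma~\ref{le:case-3-repr}(a) to replace $\chi_\pm$ by $u_\pm=(1\pm\tanh)/2$, use Lemma~\ref{le:case-3-repr}(c) to collapse $\Phi_{\infty,\eta}^\cJ[(W^0(b))]$ onto a combination of $\Phi_{\infty,\eta}^\cJ(\bV_\pm)$, and finish with the compactness of $u_\pm W^0(u_\mp)-W^0(u_\mp)u_\pm I$ from Lemma~\ref{le:Duduchava}. The only cosmetic difference is that the paper carries out the argument for $\bX_+$ and then derives $\bX_-$ via $\Phi_{\infty,\eta}^\cJ(\bX_-)=\Phi_{\infty,\eta}^\cJ(\bI)-\Phi_{\infty,\eta}^\cJ(\bX_+)$, while you treat both signs simultaneously; and your abbreviation $\cJ\subset\cI_{\infty,\eta}^\cJ$ is a slight type mismatch (one is an ideal of $\cL$, the other of $\cL/\cJ$), though the intent — that a sequence in $\cJ$ has zero image under $\Phi_{\infty,\eta}^\cJ$ — is clearly correct.
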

\begin{proof}
We will prove that $\Phi_{\infty,\eta}^\cJ(\bX_+)$ commutes with the
elements of $\cA_{\infty,\eta}^\cJ$. It is sufficient to prove this
statement for the generators of $\cA_{\infty,\eta}^\cJ$. It is
obvious that $\chi_+P_\tau=P_\tau\chi_+I$ and $\chi_+a=a\chi_+$
for all $a\in PC$. As usual, denote $\bP=(P_\tau)$ and $\bA=(aI)$.
Then
\[
\begin{split}
&
\Phi_{\infty,\eta}^\cJ(\bX_+)\Phi_{\infty,\eta}^\cJ(\bP)=
\Phi_{\infty,\eta}^\cJ(\bP)\Phi_{\infty,\eta}^\cJ(\bX_+),
\\
&
\Phi_{\infty,\eta}^\cJ(\bX_+)\Phi_{\infty,\eta}^\cJ(\bA)=
\Phi_{\infty,\eta}^\cJ(\bA)\Phi_{\infty,\eta}^\cJ(\bX_+).
\end{split}
\]
By Lemma~\ref{le:case-3-repr}(a),(c),
\begin{eqnarray}
&&
\Phi_{\infty,\eta}^\cJ(\bX_+)
=
\Phi_{\infty,\eta}^\cJ(\bU_+),
\label{eq:commuting-idempotent-1}
\\
&&
\Phi_{\infty,\eta}^\cJ(\bB)
=
b_\eta(-\infty)\Phi_{\infty,\eta}^\cJ(\bV_-)+
b_\eta(+\infty)\Phi_{\infty,\eta}^\cJ(\bV_+).
\label{eq:commuting-idempotent-2}
\end{eqnarray}
In view of Lemma~\ref{le:Duduchava},
$u_+W^0(u_-)-W^0(u_-)u_+I\in\cK$. Hence
$\bU_+\bV_--\bV_-\bU_+\in\cJ$ and therefore
\begin{equation}\label{eq:commuting-idempotent-3}
\Phi_{\infty,\eta}^\cJ(\bU_+\bV_-)=\Phi_{\infty,\eta}^\cJ(\bV_-\bU_+).
\end{equation}
Analogously,
\begin{equation}\label{eq:commuting-idempotent-4}
\Phi_{\infty,\eta}^\cJ(\bU_+\bV_+)=\Phi_{\infty,\eta}^\cJ(\bV_+\bU_+).
\end{equation}
Combining
\eqref{eq:commuting-idempotent-1}--\eqref{eq:commuting-idempotent-4},
we get
\[
\begin{split}
\Phi_{\infty,\eta}^\cJ(\bX_+)\Phi_{\infty,\eta}^\cJ(\bB)
&=
\Phi_{\infty,\eta}^\cJ(\bU_+)
\big[b_\eta(-\infty)\Phi_{\infty,\eta}^\cJ(\bV_-)+
b_\eta(+\infty)\Phi_{\infty,\eta}^\cJ(\bV_+)\big]
\\
&=
b_\eta(-\infty)\Phi_{\infty,\eta}^\cJ(\bU_+\bV_-)+
b_\eta(+\infty)\Phi_{\infty,\eta}^\cJ(\bU_+\bV_+)
\\
&=
b_\eta(-\infty)\Phi_{\infty,\eta}^\cJ(\bV_-\bU_+)+
b_\eta(+\infty)\Phi_{\infty,\eta}^\cJ(\bV_+\bU_+)
\\
&=
\big[b_\eta(-\infty)\Phi_{\infty,\eta}^\cJ(\bV_-)+
b_\eta(+\infty)\Phi_{\infty,\eta}^\cJ(\bV_+)\big]
\Phi_{\infty,\eta}^\cJ(\bU_+)
\\
&=
\Phi_{\infty,\eta}^\cJ(\bB)\Phi_{\infty,\eta}^\cJ(\bX_+),
\end{split}
\]
which finishes the proof of the statement for
$\Phi_{\infty,\eta}^\cJ(\bX_+)$. This immediately implies that
$\Phi_{\infty,\eta}^\cJ(\bX_-)$ commutes with the elements of
$\cA_{\infty,\eta}^\cJ$ because
$\Phi_{\infty,\eta}^\cJ(\bX_-)=\Phi_{\infty,\eta}^\cJ(\bI)-\Phi_{\infty,\eta}^\cJ(\bX_+)$
and $\Phi_{\infty,\eta}^\cJ(\bI)$ is the identity of
$\cA_{\infty,\eta}^\cJ$.
\end{proof}
\subsection{Reduction to algebras generated by two idempotents}
An element $p$ of a Banach algebra $B$ is said to be
\textit{idempotent} if $p^2=p$. If $B$ is a unital Banach algebra
with identity $e$, then $e-p$ is also an idempotent and 
\[
pBp:=\big\{pap:a\in B\big\}, \quad
(e-p)B(e-p):=\big\{(e-p)a(e-p):a\in B\big\}
\]
are unital Banach algebras with the identities $p$ and $e-p$,
respectively.
\begin{lemma}\label{le:corners}
Let $B$ be a Banach algebra with identity $e$ and let $p\ne e$ be
an idempotent element of $B$. Suppose $A$ is a closed subalgebra
of $B$ that contains $e$ and $p$. If $p$ commutes with any element
of $A$, then
\begin{enumerate}
\item[{\rm (a)}]
an element $a\in A$ is invertible in the algebra $B$ if and only
if the element $pap$ is invertible in the algebra $pBp$ and the
element $(e-p)a(e-p)$ is invertible in the algebra $(e-p)B(e-p)$;

\item[{\rm (b)}]
the algebra $A$ is inverse closed in the algebra $B$ if and only
if the algebra $pAp$ is inverse closed in the algebra $pBp$ and the algebra
$(e-p)A(e-p)$ is inverse closed in the algebra $(e-p)B(e-p)$.
\end{enumerate}
\end{lemma}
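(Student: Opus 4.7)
The plan is to reduce both parts to a single block decomposition induced by the central idempotent $p$. The key preliminary observation is that since $pa = ap$ for every $a \in A$ and $p^2 = p$, any $a \in A$ admits the decomposition
\[
a = pap + (e-p)a(e-p),
\]
with the cross terms $pa(e-p)$ and $(e-p)ap$ vanishing identically. If in addition $a$ is invertible in $B$, then multiplying $pa = ap$ by $a^{-1}$ on both sides yields $a^{-1}p = pa^{-1}$, so $a^{-1}$ admits the same decomposition (even though a priori $a^{-1}\notin A$). I will also use the elementary fact that every $b_1 \in pBp$ satisfies $pb_1 = b_1 p = b_1$, and hence $b_1(e-p) = (e-p)b_1 = 0$, with the analogous statement for $(e-p)B(e-p)$.

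For part (a), the forward direction is immediate: decompose $a a^{-1} = a^{-1} a = e$ as above and use orthogonality of the corners to read off $(pap)(pa^{-1}p) = (pa^{-1}p)(pap) = p$ in $pBp$, and the symmetric identity in $(e-p)B(e-p)$. For the reverse direction, given inverses $b_1 \in pBp$ of $pap$ and $b_2 \in (e-p)B(e-p)$ of $(e-p)a(e-p)$, I set $b := b_1 + b_2 \in B$ and verify $ab = ba = e$ directly, the cross terms being killed by $b_1(e-p) = b_2 p = 0$ together with the decomposition of $a$.

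Part (b) then follows from (a) by the standard corner trick. Assume $A$ is inverse closed in $B$ and $pap \in pAp$ is invertible in $pBp$; then $a' := pap + (e-p) \in A$ has $p$-corner $pap$ and complementary corner equal to the identity of $(e-p)B(e-p)$, hence by (a) is invertible in $B$. Inverse-closedness of $A$ forces $(a')^{-1} \in A$, and applying (a) to $a'$ identifies $p(a')^{-1}p \in pAp$ as the inverse of $pap$ in $pBp$; the argument for $(e-p)A(e-p)$ is identical. Conversely, if both corner algebras are inverse closed, then for any $a \in A$ invertible in $B$, part (a) produces inverses of the two corners lying in $pAp$ and $(e-p)A(e-p)$, and their sum coincides with $a^{-1}$ by the uniqueness of the construction in the sufficiency part of (a); hence $a^{-1} \in A$. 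The main obstacle is only bookkeeping: one must keep track of the fact that $p$ is assumed to commute with elements of $A$ alone, not with all of $B$, and check that the inverses arising in the argument automatically inherit the commutation relation with $p$, which they do directly from $pa = ap$.
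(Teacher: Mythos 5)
Your proof is correct. The paper gives no argument at all for this lemma, dismissing it with ``The proof of this lemma is straightforward and therefore it is omitted,'' so there is no authorial proof to compare against; your corner-decomposition argument is the natural one the authors evidently had in mind. The key ingredients you use --- the vanishing of the cross terms giving $a = pap + (e-p)a(e-p)$ for $a\in A$, the observation that $a^{-1}$ inherits the commutation $pa^{-1}=a^{-1}p$ from $pa=ap$ even when $a^{-1}\notin A$, and the trick of forming $a':=pap+(e-p)\in A$ to transfer inverse-closedness to the corners --- all check out, and you correctly flag the one point that could trip someone up, namely that $p$ is central only in $A$, not in $B$, but that elements of $pBp$ and $(e-p)B(e-p)$ automatically commute with $p$. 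A small wording nit: your opening sentence calls $p$ a ``central idempotent,'' which is misleading since $p$ need not be central in $B$; you correct this yourself in the closing paragraph, but it would be cleaner to say ``idempotent central in $A$'' from the start.
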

The proof of this lemma is straightforward and therefore it is omitted.

The above results allow us to split the algebras
$\cA_{\infty,\eta}^\cJ$ and $\cL_{\infty,\eta}^\cJ$ into pairs of
simpler algebras $\cA_\eta^\pm$ and $\cL_\eta^\pm$. The algebras
$\cA_\eta^-$ and $\cA_\eta^+$ have a nice algebraic structure:
they are generated by two idempotents and the identity.
\begin{lemma}\label{le:2-proj}
Let $\eta\in M_\infty(SO)$ and
\[
\begin{array}{ll}
\cA_\eta^-:=\Phi_{\infty,\eta}^\cJ(\bX_-)\cA_{\infty,\eta}^\cJ\Phi_{\infty,\eta}^\cJ(\bX_-),
&
\cA_\eta^+:=\Phi_{\infty,\eta}^\cJ(\bX_+)\cA_{\infty,\eta}^\cJ\Phi_{\infty,\eta}^\cJ(\bX_+),
\\[2mm]
\cL_\eta^-:=\Phi_{\infty,\eta}^\cJ(\bX_-)\cL_{\infty,\eta}^\cJ\Phi_{\infty,\eta}^\cJ(\bX_-),
&
\cL_\eta^+:=\Phi_{\infty,\eta}^\cJ(\bX_+)\cL_{\infty,\eta}^\cJ\Phi_{\infty,\eta}^\cJ(\bX_+).
\end{array}
\]
\begin{enumerate}
\item[{\rm (a)}]
The elements
\[
p_-:=\Phi_{\infty,\eta}^\cJ(\bP\bX_-), \quad
r_-:=\Phi_{\infty,\eta}^\cJ(\bW_-\bX_-)
\]
are idempotents in the algebra $\cL_\eta^-$ and the algebra
$\cA_\eta^-$ is the smallest closed subalgebra of $\cL_\eta^-$
that contains $p_-,r_-$, and the identity
$e_-:=\Phi_{\infty,\eta}^\cJ(\bX_-)$.

\item[{\rm (b)}]
The elements
\[
p_+:=\Phi_{\infty,\eta}^\cJ(\bP\bX_+), \quad
r_+:=\Phi_{\infty,\eta}^\cJ(\bW_+\bX_+)
\]
are idempotents in the algebra $\cL_\eta^+$ and the algebra
$\cA_\eta^+$ is the smallest closed subalgebra of $\cL_\eta^+$
that contains $p_+,r_+$, and the identity
$e_+:=\Phi_{\infty,\eta}^\cJ(\bX_+)$.
\end{enumerate}
\end{lemma}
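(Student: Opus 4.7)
The plan is to verify the two assertions in a parallel way, handling only part (a) in detail since part (b) follows by the symmetric argument with $+$ in place of $-$. The key leverage is Lemma~\ref{le:commuting-idempotent}, which tells us that $e_-=\Phi_{\infty,\eta}^\cJ(\bX_-)$ lies in the center of $\cA_{\infty,\eta}^\cJ$, so that $\cA_\eta^-$ equals the ``corner'' $e_-\,\cA_{\infty,\eta}^\cJ\,e_-$ and compression by $e_-$ turns into a multiplicative operation on generators.

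First I would check idempotency. At the level of sequences $\chi_-^2=\chi_-$, $P_\tau^2=P_\tau$, and $\chi_- P_\tau=P_\tau\chi_-$ (both are multiplication operators), so $(\bP\bX_-)^2=\bP\bX_-$ already in $\cE$, giving $p_-^2=p_-$ in $\cL_\eta^-$. For $r_-$ one does not have sequence-level commutation, but Lemma~\ref{le:commuting-idempotent} gives $e_-\,\Phi_{\infty,\eta}^\cJ(\bW_-)=\Phi_{\infty,\eta}^\cJ(\bW_-)\,e_-$, and together with $W^0(\chi_-)^2=W^0(\chi_-)$ and $\chi_-^2=\chi_-$ this yields
\[
r_-^2=e_-\,\Phi_{\infty,\eta}^\cJ(\bW_-)\,e_-\,\Phi_{\infty,\eta}^\cJ(\bW_-)
=e_-^2\,\Phi_{\infty,\eta}^\cJ(\bW_-)^2=e_-\,\Phi_{\infty,\eta}^\cJ(\bW_-)=r_-.
\]

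Next I would identify a generating set for $\cA_\eta^-$. By definition $\cA_{\infty,\eta}^\cJ$ is generated by the cosets of $(aI)$, $(W^0(b))$, and $(P_\tau)$, and Lemma~\ref{le:case-3-repr}(b),(c) rewrites each such coset as a linear combination of $e_\pm$, $W_\pm:=\Phi_{\infty,\eta}^\cJ(\bW_\pm)$, and $P:=\Phi_{\infty,\eta}^\cJ(\bP)$. Since $\bX_-+\bX_+=\bI$ and $\bW_-+\bW_+=\bI$, it suffices to take $\{e_-,W_-,P\}$ and the identity as generators. Using centrality of $e_-$ (Lemma~\ref{le:commuting-idempotent}) and $e_-^2=e_-$, any monomial $x_1\cdots x_n$ in these generators satisfies
\[
e_-\,x_1x_2\cdots x_n\,e_-=(e_-x_1e_-)(e_-x_2e_-)\cdots(e_-x_ne_-),
\]
so $\cA_\eta^-=e_-\cA_{\infty,\eta}^\cJ e_-$ is generated as a closed subalgebra by the compressions $e_-e_-e_-=e_-$, $e_-W_-e_-=e_-W_-=r_-$, and $e_-Pe_-=e_-P=p_-$; the element $e_-W_+e_-=e_--r_-$ that would come from including $W_+$ is already in the linear span of $e_-$ and $r_-$. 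The reverse inclusion $\langle e_-,p_-,r_-\rangle\subset\cA_\eta^-$ is immediate, so the two algebras coincide.

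The main technical obstacle is not computational but conceptual: the compression identity used in the previous paragraph depends crucially on the centrality of $e_-$ in $\cA_{\infty,\eta}^\cJ$. This is not obvious at the sequence level (the operators $\chi_-I$ and $W^0(b)$ do not commute in general), and is precisely what Lemma~\ref{le:commuting-idempotent} supplies via the compactness of Hankel-type operators $\chi_\pm W^0(g)\chi_\mp I$ for $g\in SO_p$. Once that is in hand, every remaining step reduces to elementary manipulations with idempotents, and the proof of part (b) is obtained by replacing $\bX_-,\bW_-$ by $\bX_+,\bW_+$ throughout (noting again that $\chi_+P_\tau=P_\tau\chi_+$ and $W^0(\chi_+)^2=W^0(\chi_+)$ so that $p_+,r_+$ are idempotents by the same reasoning).
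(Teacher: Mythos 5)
Your proof is correct and follows essentially the same route as the paper: both rely on Lemma~\ref{le:commuting-idempotent} for the centrality of $e_\pm$ in $\cA_{\infty,\eta}^\cJ$ and on Lemma~\ref{le:case-3-repr} to rewrite the generators of $\cA_{\infty,\eta}^\cJ$ in terms of $e_\pm$, $\Phi_{\infty,\eta}^\cJ(\bW_\pm)$, and $\Phi_{\infty,\eta}^\cJ(\bP)$. The only cosmetic difference is that the paper computes the compressions $\Phi_{\infty,\eta}^\cJ(\bX_-\bA\bX_-)$ and $\Phi_{\infty,\eta}^\cJ(\bX_-\bB\bX_-)$ explicitly, whereas you package the same computation as the observation that compression by a central idempotent is multiplicative and then evaluate it on a reduced generating set.
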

\begin{proof}
(a) Since $\Phi_{\infty,\eta}^\cJ(\bX_-)$ commutes with any element
of $\cA_{\infty,\eta}^\cJ$ by Lemma~\ref{le:commuting-idempotent} and
$\bX_-^2=\bX_-$, we have
\begin{eqnarray}
p_- &=&
\Phi_{\infty,\eta}^\cJ(\bP\bX_-)
=
\Phi_{\infty,\eta}^\cJ(\bX_-\bP\bX_-)
\nonumber
\\
&=&
\Phi_{\infty,\eta}^\cJ(\bX_-)
\Phi_{\infty,\eta}^\cJ(\bP)
\Phi_{\infty,\eta}^\cJ(\bX_-)
\in\cA_\eta^-
\label{eq:2-proj-1}
\end{eqnarray}
and taking into account that $\bP^2=\bP$, we have
\[
\begin{split}
p_-^2
&=
\Phi_{\infty,\eta}^\cJ(\bP\bX_-)\Phi_{\infty,\eta}^\cJ(\bP\bX_-)
=
\Phi_{\infty,\eta}^\cJ(\bP^2)\Phi_{\infty,\eta}^\cJ(\bX_-^2)
\\
&=
\Phi_{\infty,\eta}^\cJ(\bP)\Phi_{\infty,\eta}^\cJ(\bX_-)
=p_-.
\end{split}
\]
Analogously, taking into account that $\bW_-^2=\bW_-$, one can
verify that $r_-$ belongs to $\cA_\eta^-$ and $r_-^2=r_-$.

It remains to show that $\cA_\eta^-$ is generated by
$p_-$, $r_-$, and $e_-$. In view of \eqref{eq:2-proj-1} it is
sufficient to show that $\Phi_{\infty,\eta}^\cJ(\bX_-\bA\bX_-)$ and
$\Phi_{\infty,\eta}^\cJ(\bX_-\bB\bX_-)$, where $\bA=(aI)$ with $a\in
PC$ and $\bB=(W^0(b))$ with $b\in[PC_p,SO_p]$, are represented as
linear combinations of $p_-$, $r_-$, and $e_-$.

From Lemmas~\ref{le:case-3-repr} and \ref{le:commuting-idempotent} it follows that
\begin{eqnarray}
&&
\Phi_{\infty,\eta}^\cJ(\bX_-\bA\bX_-)
\nonumber
\\
&&=
\Phi_{\infty,\eta}^\cJ(\bX_-)
\big[a(-\infty)\Phi_{\infty,\eta}^\cJ(\bX_-)
+a(+\infty)\Phi_{\infty,\eta}^\cJ(\bX_+)\big]
\Phi_{\infty,\eta}^\cJ(\bX_-)
\nonumber
\\
&&=
a(-\infty)
\Phi_{\infty,\eta}^\cJ(\bX_-)\Phi_{\infty,\eta}^\cJ(\bX_-)\Phi_{\infty,\eta}^\cJ(\bX_-)
\nonumber
\\
&&\quad
+a(+\infty)
\Phi_{\infty,\eta}^\cJ(\bX_-)\Phi_{\infty,\eta}^\cJ(\bX_+)\Phi_{\infty,\eta}^\cJ(\bX_-)
\nonumber
\\
&&=
a(-\infty)e_-
\label{eq:2-proj-2}
\end{eqnarray}
and
\begin{eqnarray}
&&
\Phi_{\infty,\eta}^\cJ(\bX_-\bB\bX_-)
\nonumber
\\
&&=
\Phi_{\infty,\eta}^\cJ(\bX_-)
\big[b_\eta(-\infty)\Phi_{\infty,\eta}^\cJ(\bW_-)
+b_\eta(+\infty)\Phi_{\infty,\eta}^\cJ(\bW_+)\big]
\Phi_{\infty,\eta}^\cJ(\bX_-)
\nonumber
\\
&&=
b_\eta(-\infty)\Phi_{\infty,\eta}^\cJ(\bX_-)\Phi_{\infty,\eta}^\cJ(\bW_-)\Phi_{\infty,\eta}^\cJ(\bX_-)
\nonumber
\\
&&\quad+
b_\eta(+\infty)\Phi_{\infty,\eta}^\cJ(\bX_-)
[\Phi_{\infty,\eta}^\cJ(\bI)-\Phi_{\infty,\eta}^\cJ(\bW_-)]
\Phi_{\infty,\eta}^\cJ(\bX_-)
\nonumber
\\
&&=
b_\eta(-\infty)r_-+b_\eta(+\infty)(e_--r_-),
\label{eq:2-proj-3}
\end{eqnarray}
which finishes the proof of part (a).

\medskip
(b) The proof of part (b) is similar.  We only note that
\begin{eqnarray}
\Phi_{\infty,\eta}^\cJ(\bX_+\bP\bX_+) &=& p_+,
\label{eq:2-proj-4}
\\
\Phi_{\infty,\eta}^\cJ(\bX_+\bA\bX_+) &=& a(+\infty)e_+,
\label{eq:2-proj-5}
\\
\Phi_{\infty,\eta}^\cJ(\bX_+\bB\bX_+) &=&
b_\eta(-\infty)(e_+-r_+)+b_\eta(+\infty)r_+
\label{eq:2-proj-6}
\end{eqnarray}
for further references.
\end{proof}
\subsection{The two idempotents theorem}
The algebras $\cA_\eta^-$ and $\cA_\eta^+$ introduced in the
previous subsection are generated by two idempotent elements and
the identity. Invertibility of elements of such algebras can be
described with the aid of the so-called two idempotents theorem
presented below.
\begin{theorem}[{\cite[Theorem~8.7]{BK97}}]\label{th:two-idempotents}
Let $B$ be a Banach algebra with identity $e$ and let $p$ and $r$
be idempotents in $B$. Let further $A$ stand for the smallest
closed subalgebra of $B$ containing $e$, $p$, and $r$. Put
\begin{equation}\label{eq:element-X}
X:=prp+(e-p)(e-r)(e-p)
\end{equation}
and suppose the points $0$ and $1$ are cluster points of
$\operatorname{sp}_B(X)$. For $x\in\C$, define the map
$\sigma_x:\{e,p,r\}\to\C^{2\times 2}$ by
\begin{eqnarray}
&&
\sigma_x(e)=\left[\begin{array}{cc}
1 & 0\\ 0 & 1
\end{array}\right],
\quad
\sigma_x(p)=\left[\begin{array}{cc}
1 & 0\\ 0 & 0
\end{array}\right],
\label{eq:idempotents-maps}
\\
&&
\sigma_x(r)=\left[
\begin{array}{cc} x & \sqrt{x(1-x)} \\ \sqrt{x(1-x)} & 1-x\end{array}
\right],
\label{eq:idempotents-maps*}
\end{eqnarray}
where $\sqrt{x(1-x)}$ denotes any complex number such that its
square is equal to $x(1-x)$.
\begin{enumerate}
\item[{\rm(a)}]
For each $x\in\operatorname{sp}_B(X)$ the map $\sigma_x$ extends to
a Banach algebra homomorphism $\sigma_x$ of $A$ onto $\C^{2\times 2}$.

\item[{\rm(b)}]
An element $a\in A$ is invertible in $B$ if and only if
$\sigma_x(a)$ is invertible in $\C^{2\times 2}$ for every
$x\in\operatorname{sp}_B(X)$.

\item[{\rm(c)}]
An element $a\in A$ is invertible in $A$ if and only if
$\sigma_x(a)$ is invertible in $\C^{2\times 2}$ for every
$x\in\operatorname{sp}_A(X)$.
\end{enumerate}
\end{theorem}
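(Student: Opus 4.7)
The plan is a three-stage argument: establish that $X$ is central in $A$, construct the matrix-valued symbol maps $\sigma_x$, and then reduce invertibility to a pointwise matrix condition via localization.

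\textbf{Centrality of $X$.} A short algebraic manipulation, expanding $(e-p)(e-r)(e-p)$ and using $p^{2}=p$, puts $X$ in the symmetric form
\[
X = e - p - r + pr + rp.
\]
From this expression I would verify directly that $pX = Xp = prp$ and $rX = Xr = rpr$, so $X$ commutes with both generators $p,r$ and hence lies in the center of $A$. The closed subalgebra $C\subset A$ generated by $e$ and $X$ is therefore a commutative central subalgebra, and by Gelfand theory $C$ is isomorphic to a subalgebra of $C(\operatorname{sp}_A(X))$.

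\textbf{The symbol maps $\sigma_x$.} For each $x\in\C$, I would check by direct matrix computation that the assignments in \eqref{eq:idempotents-maps}--\eqref{eq:idempotents-maps*} produce idempotents, that is, $\sigma_x(p)^{2}=\sigma_x(p)$ and $\sigma_x(r)^{2}=\sigma_x(r)$ (the latter using $x^{2}+x(1-x)=x$), and that $\sigma_x(X)=xI$. By the universal property of the (non-closed) algebra generated by two idempotents subject to $p^{2}=p$, $r^{2}=r$, and $X=xe$, the map $\sigma_x$ extends uniquely to a unital algebra homomorphism from that algebra into $\C^{2\times 2}$. Continuity of $\sigma_x$ on the closed algebra $A$, needed to pass to norm limits, is then obtained from standard spectral estimates that exploit the centrality of $X$ and the identity $\sigma_x(X)=xI$.

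\textbf{Localization.} For part (c) I would invoke the Allan-Douglas local principle (Theorem~\ref{th:AllanDouglas}) with central subalgebra $C\subset A$. The maximal ideal space $M(C)$ coincides with $\operatorname{sp}_A(X)$, and the plan is to identify each local quotient $A/I_x$, where $I_x$ is the closed two-sided ideal of $A$ generated by $X-xe$, with $\C^{2\times 2}$, with the isomorphism realized by $\sigma_x$. This identification reduces invertibility of $a\in A$ in $A$ to invertibility of each $\sigma_x(a)$ for $x\in\operatorname{sp}_A(X)$. Part (b) follows from a parallel argument in which one localizes along $C$ viewed inside $B$ (using that $C$ is central in $A$ and the idempotents $p,r$ generate $A$ inside $B$), so that the relevant spectrum of $X$ becomes $\operatorname{sp}_B(X)$.

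The main obstacle lies in identifying the local algebras with $\C^{2\times 2}$ at the boundary values $x=0$ and $x=1$, where $\sqrt{x(1-x)}=0$ and $\sigma_x(r)$ becomes triangular, so that the range of $\sigma_x$ naively collapses to a proper commutative subalgebra. The hypothesis that $0$ and $1$ are cluster points of $\operatorname{sp}_B(X)$ is exactly what is needed here: it forces the local structure at these degenerate points to be determined by a limit of the generic matrix picture at nearby spectral points, and it provides the norm estimates that allow $\sigma_x$ to extend continuously in $x$ across the boundary. Without this hypothesis, extra relations could hold at isolated boundary points that are not captured by the $2\times 2$ symbol, and the invertibility criterion could fail.
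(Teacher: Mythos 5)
Note first that the paper does not prove this statement: it is quoted with attribution to \cite[Theorem~8.7]{BK97}, the two projections theorem of B\"ottcher and Karlovich, and no proof is supplied in the paper. So there is no in-paper argument to compare against; what follows assesses your sketch on its own terms.

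Your outline correctly identifies the standard architecture (centrality of $X$, matrix symbols, localization), and the computations you do perform are right: $X = e - p - r + pr + rp$, $pX = Xp = prp$, $rX = Xr = rpr$, $\sigma_x(p)$ and $\sigma_x(r)$ are idempotent, and $\sigma_x(X) = xI$. The difficulty is that the two places where the real content of the theorem lives are both waved away. Continuity of $\sigma_x$ on the dense non-closed subalgebra generated by $e,p,r$, i.e.\ a uniform bound $\|\sigma_x(a)\|\le C\|a\|$ allowing extension to the closure $A$, is declared to follow from ``standard spectral estimates'' without being produced; this bound, together with the ensuing identification of the local quotient $A/I_x$ with $\C^{2\times 2}$ (rather than merely the existence of a surjection onto it), is the technical core of the B\"ottcher--Karlovich proof. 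Moreover, for part (b) the plan is to ``localize along $C$ viewed inside $B$,'' but $C$ is central only in $A$, not in $B$, so the Allan-Douglas principle does not apply to $B$ directly; one must instead pass to the inverse closure of $A$ in $B$ (where $X$ remains central, since $Xa=aX$ forces $Xb=bX$ for any $B$-inverse $b$ of $a\in A$), identify those local quotients, and then invoke the cluster-point hypothesis to handle the degenerate symbols at $x=0,1$ --- a difficulty your final paragraph correctly flags but does not resolve.
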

\subsection{Spectra of two elements of the algebra \boldmath{$\cL_{\infty,\eta}^\cJ$}}
\label{subsection:spectra-pre-X}
To apply the two idempotents theorem to the algebras
$\cA_\eta^\pm$ and $\cL_\eta^\pm$, we need
to find the spectrum of the canonical element given by
\eqref{eq:element-X} in these algebras. This requires the following
auxiliary result.
\begin{lemma}\label{le:spectra-pre-X}
Let $\eta\in M_\infty(SO)$ and $\fL_p$ be
the lentiform domain defined by \eqref{eq:lens}. Then
\begin{eqnarray}
\operatorname{sp}_{\cL_{\infty,\eta}^\cJ} \left(\Phi_{\infty,\eta}^\cJ
(\bP\bX_-\bW_-\bX_-\bP+\bQ\bX_-\bW_+\bX_-\bQ)\right)
&=& \fL_p,
\label{eq:spectra-pre-X-1}
\\
\operatorname{sp}_{\cL_{\infty,\eta}^\cJ} \left(\Phi_{\infty,\eta}^\cJ
(\bP\bX_+\bW_+\bX_+\bP+\bQ\bX_+\bW_-\bX_+\bQ)\right)
&=& \fL_p.
\label{eq:spectra-pre-X-2}
\end{eqnarray}
\end{lemma}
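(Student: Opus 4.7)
The plan is to establish both spectral equalities; I focus on the first, since the second follows by swapping $\chi_-\leftrightarrow\chi_+$ and invoking Corollary~\ref{co:spectra-SIO}(b). Using $\chi_- P_\tau = P_\tau\chi_- = \chi_{(-\tau,0)} I$ and $\chi_- Q_\tau = Q_\tau\chi_- = \chi_{(-\infty,-\tau)} I$, the $\tau$-th operator of $\bA := \bP\bX_-\bW_-\bX_-\bP + \bQ\bX_-\bW_+\bX_-\bQ$ is
$$A_\tau = \chi_{(-\tau,0)} P_\R \chi_{(-\tau,0)} I + \chi_{(-\infty,-\tau)} Q_\R \chi_{(-\infty,-\tau)} I,$$
and the dilation invariance of $\chi_\pm I,\,P_\R,\,Q_\R$ gives $A_\tau = Z_\tau A_1 Z_\tau^{-1}$. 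Hence $\bA\in\cH$, and since $\bA\in\cA\subset\cL$ by Theorem~\ref{th:A-local-type}, one has $\bA\in\cH\cap\cL$. By Corollary~\ref{co:spectra-SIO}(a), $\operatorname{sp}_\cB(A_1)=\fL_p$.

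For the upper bound, apply Lemma~\ref{le:homogeneous} to $\bA-\lambda\bI\in\cH\cap\cL$ for each $\lambda\in\C$: the coset $\bA-\lambda\bI+\cG$ is invertible in $\cL/\cG$ iff $A_1-\lambda I$ is invertible in $\cB$, iff $\lambda\notin\fL_p$. The canonical homomorphisms $\cL/\cG\to\cL/\cJ\to\cL^\cJ_{\infty,\eta}$ (first because $\cG\subset\cJ$, then by Allan--Douglas localisation) preserve invertibility, so $\operatorname{sp}_{\cL^\cJ_{\infty,\eta}}(\Phi^\cJ_{\infty,\eta}(\bA))\subseteq\fL_p$.

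For the lower bound, I fix $\lambda\in\fL_p$ and aim to show that $\Phi^\cJ_{\infty,\eta}(\bA-\lambda\bI)$ is non-invertible in $\cL^\cJ_{\infty,\eta}$. My strategy is to produce a bounded unital homomorphism $\rho:\cL^\cJ_{\infty,\eta}\to\cB$ with $\rho(\Phi^\cJ_{\infty,\eta}(\bA))=A_1$; invertibility of $\Phi^\cJ_{\infty,\eta}(\bA-\lambda\bI)$ would then force invertibility of $A_1-\lambda I$, contradicting $\lambda\in\fL_p$. The natural candidate is
$$\rho(\bB+\cJ) := \operatorname{s-lim}_{n\to\infty} Z_{\tau_n}^{-1} B_{\tau_n} Z_{\tau_n},$$
where $(\tau_n)\subset\R_+$ is extracted via Lemma~\ref{le:SO-partial-limits} so that $\eta$ is realised as $\eta(a)=\lim_n a(\tau_n\cdot)$ on a sufficiently rich countable family of $SO$-symbols arising in $\bB$; by the dilation identity, $\rho(\bA+\cJ)=A_1$, and a direct computation using $Z_\tau^{-1}V_{\pm\tau}=V_{\pm 1}Z_\tau^{-1}$ together with compactness shows $\rho(\cJ)=0$.

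The main obstacle is verifying that $\rho$ annihilates the ideal $\cI^\cJ_{\infty,\eta}$. For $(fI)$ with $f\in C(\dot{\R})$ and $f(\infty)=0$, $Z_{\tau_n}^{-1}fIZ_{\tau_n}=f(\tau_n\cdot)I$ tends strongly to $0$ by dominated convergence. The delicate case is $(W^0(g))$ with $g\in SO_p$ and $g(\eta)=0$: here $Z_{\tau_n}^{-1}W^0(g)Z_{\tau_n}=W^0(g(\cdot/\tau_n))$, whose symbol converges pointwise only to $g(0)$, generally not to $0$. The fix is to recast the construction via the opposite dilation direction $Z_{\tau_n}(\cdot)Z_{\tau_n}^{-1}$, for which Lemma~\ref{le:family-convergence} directly yields $W^0(g(\tau_n\cdot))\to 0$ strongly; the dilation identity for $\bA$, which a priori requires the first direction, is preserved by inserting an auxiliary reflection $(Rf)(x):=f(-x)$ and exploiting that every $\eta\in M_\infty(SO)$ encodes symmetric asymptotic behaviour along $\pm\infty$. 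Making this matching precise is the technical heart of the argument.
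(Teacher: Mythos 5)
Your upper-bound argument is correct and matches the paper's: both use Lemma~\ref{le:homogeneous} together with Corollary~\ref{co:spectra-SIO}(a), followed by the quotient maps $\cL/\cG\to\cL/\cJ\to\cL_{\infty,\eta}^\cJ$.

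The lower-bound argument has a genuine gap which you yourself half-notice but do not repair. Your plan is to build a single homomorphism $\rho$ by conjugating with $Z_{\tau_n}$ (in one or the other direction) and taking a strong limit, and to show $\rho$ kills $\cI_{\infty,\eta}^\cJ$. That ideal is generated (over $\cL^\cJ$) by two kinds of cosets: $(fI)+\cJ$ with $f\in C(\dot{\R})$, $f(\infty)=0$, and $(W^0(g))+\cJ$ with $g\in SO_p$, $g(\eta)=0$. These require \emph{opposite} dilation directions: $Z_\tau^{-1}(fI)Z_\tau = f(\tau\cdot)I\to 0$ (good), but $Z_\tau^{-1}W^0(g)Z_\tau = W^0(g(\cdot/\tau))\to g(0)I$ (bad); conversely $Z_\tau W^0(g)Z_\tau^{-1}=W^0(g(\tau\cdot))\to 0$ by Lemma~\ref{le:family-convergence} (good), but $Z_\tau(fI)Z_\tau^{-1}=f(\cdot/\tau)I\to f(0)I$ (bad). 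The reflection $R$ cannot reconcile this: since $R$ commutes with $Z_\tau$ and $RfR=\tilde f I$ with $\tilde f(0)=f(0)$, inserting $R$ leaves $\rho((fI))=f(0)I\ne 0$ unchanged. The ``symmetry of $\eta$ along $\pm\infty$'' is real (indeed $\eta(g)=\eta(\tilde g)$ for $g\in SO$), but the conflict is not between $+\infty$ and $-\infty$; it is between the two generating ideals, and symmetry of $\eta$ does nothing to resolve it. Separately, your $\rho$ is only defined through a strong limit whose existence along a universal sequence $(\tau_n)$ for \emph{all} of $\cL$ is not justified.

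The paper resolves this by \emph{not} seeking a single homomorphism. It assumes $\Phi_{\infty,\eta}^\cJ(\bA-\lambda\bI)$ invertible, writes an inverse $\bB$ modulo $\cI_{\infty,\eta}^\cJ$ as $\bJ$ plus a finite sum of terms $\bC_i(f_iI)$ and $\bD_j(W^0(g_j))$, and then applies the two eliminations \emph{sequentially}. First it applies the homomorphism $\fW_{-1}$: since $f_i(-\infty)=0$ for $f_i\in C(\dot\R)$ with $f_i(\infty)=0$, all $(f_iI)$-terms vanish, while $\fW_{-1}(\bA-\lambda\bI)=H_\lambda$ becomes a \emph{homogeneous} singular integral operator. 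Only then does it conjugate by $Z_{\tau_n}$ (in the direction $Z_\tau\cdot Z_\tau^{-1}$): this leaves $H_\lambda$ fixed, kills the compact errors because $Z_{\tau_n}^{\pm 1}\to 0$ weakly, and kills the $W^0(g_j)$-terms by Lemma~\ref{le:family-convergence}. The outcome is a norm estimate $\|u\|_p\le c\|H_\lambda u\|_p$, and a dual estimate via $\fW_{-1}(\cdot)^*$ on $L^q$, which together give invertibility of $H_\lambda$, whence $\lambda\notin\fL_p$ by Corollary~\ref{co:spectra-SIO}(c). The two-step ``$\fW_{-1}$ then dilate'' order is precisely what breaks the tension your one-step $\rho$ cannot escape, and it avoids having to define $\rho$ on all of $\cL_{\infty,\eta}^\cJ$.
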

\begin{proof}
Let us prove equality \eqref{eq:spectra-pre-X-1}. It is easy to
see that
\[
\bP\bX_-\bW_-\bX_-\bP+\bQ\bX_-\bW_+\bX_-\bQ-\lambda \bI\in\cH\cap\cL,
\]
where the algebra $\cH$ is defined in Subsection~\ref{subsection:homo}.
By Lemma~\ref{le:homogeneous}, the coset
\[
\bP\bX_-\bW_-\bX_-\bP+\bQ\bX_-\bW_+\bX_-\bQ-\lambda\bI+\cG
\]
is invertible in the algebra $\cL/\cG$ if and only if the operator
\[
\begin{split}
&
P_1\chi_-W^0(\chi_-)\chi_-P_1+Q_1\chi_-W^0(\chi_+)\chi_-Q_1-\lambda I
\\
&=
\chi_{(-1,0)}P_\R\chi_{(-1,0)}I+\chi_{(-\infty,-1)}Q_\R\chi_{(-\infty,-1)}I-\lambda I
\end{split}
\]
is invertible on $L^p(\R)$. Hence, taking into account Corollary~\ref{co:spectra-SIO}(a), we obtain
\[
\begin{split}
&
\operatorname{sp}_{\cL/\cG}
(\bP\bX_-\bW_-\bX_-\bP+\bQ\bX_-\bW_+\bX_-\bQ+\cG)
\\
&=
\operatorname{sp}_{\cB}
\left(\chi_{(-1,0)}P_\R\chi_{(-1,0)}I+\chi_{(-\infty,-1)}Q_\R\chi_{(-\infty,-1)}I\right)
=\fL_p.
\end{split}
\]
If $\lambda\notin\fL_p$, then there exists a sequence $\bA\in\cL$ such that
\[
\begin{split}
&
(\bP\bX_-\bW_-\bX_-\bP+\bQ\bX_-\bW_+\bX_-\bQ-\lambda\bI+\cG)(\bA+\cG)=\bI+\cG,
\\
&
(\bA+\cG)(\bP\bX_-\bW_-\bX_-\bP+\bQ\bX_-\bW_+\bX_-\bQ-\lambda\bI+\cG)=\bI+\cG.
\end{split}
\]
Obviously, the same equalities are true with $\cG$ replaced by the
larger ideal $\cJ$. Applying the homomorphism
$\Phi_{\infty,\eta}^\cJ:\cL^\cJ\to\cL_{\infty,\eta}^\cJ$ to those
equalities, we obtain
\[
\begin{split}
&
\Phi_{\infty,\eta}^\cJ(\bP\bX_-\bW_-\bX_-\bP+\bQ\bX_-\bW_+\bX_-\bQ-\lambda\bI)
\Phi_{\infty,\eta}^\cJ(\bA)
=\Phi_{\infty,\eta}^\cJ(\bI),
\\
&
\Phi_{\infty,\eta}^\cJ(\bA)
\Phi_{\infty,\eta}^\cJ(\bP\bX_-\bW_-\bX_-\bP+\bQ\bX_-\bW_+\bX_-\bQ-\lambda\bI)
=\Phi_{\infty,\eta}^\cJ(\bI).
\end{split}
\]
Then
$\lambda\notin\operatorname{sp}_{\cL_{\infty,\eta}^\cJ}
\left(\Phi_{\infty,\eta}^\cJ(\bP\bX_-\bW_-\bX_-\bP+\bQ\bX_-\bW_+\bX_-\bQ)\right)$. Thus
\begin{equation}\label{eq:spectra-pre-X-3}
\operatorname{sp}_{\cL_{\infty,\eta}^\cJ}
\left(\Phi_{\infty,\eta}^\cJ(\bP\bX_-\bW_-\bX_-\bP+\bQ\bX_-\bW_+\bX_-\bQ)\right)
\subset\fL_p.
\end{equation}

Assume now that
\[
\lambda\notin\operatorname{sp}_{\cL_{\infty,\eta}^\cJ}
\left(\Phi_{\infty,\eta}^\cJ(\bP\bX_-\bW_-\bX_-\bP+\bQ\bX_-\bW_+\bX_-\bQ)\right).
\]
Then there exists a sequence $\bB\in\cL$ such that
\[
\begin{split}
&
\bB(\bP\bX_-\bW_-\bX_-\bP+\bQ\bX_-\bW_+\bX_-\bQ-\lambda\bI)-\bI\in\cI_{\infty,\eta}^\cJ,
\\
&
(\bP\bX_-\bW_-\bX_-\bP+\bQ\bX_-\bW_+\bX_-\bQ-\lambda\bI)\bB-\bI\in\cI_{\infty,\eta}^\cJ,
\end{split}
\]
where $\cI_{\infty,\eta}^\cJ$ was defined in Subsection~\ref{subsec:localization-our}. From that definition
and the above relations it follows that without loss of generality we
may assume that there exist finite sums
\[
\begin{split}
&
\bJ_1+\sum_{i=1}^n\bC_i(f_iI)+\sum_{j=1}^m\bD_j(W^0(g_j))\in\cI_{\infty,\eta}^\cJ,
\\
&
\bJ_2+\sum_{i=1}^k(h_iI)\bE_i+\sum_{j=1}^l(W^0(v_j))\bF_j\in\cI_{\infty,\eta}^\cJ
\end{split}
\]
with $\bC_i,\bD_j,\bE_i,\bF_j\in\cL$ and
\[
f_i,h_i\in C(\dot{\R}),\quad f_i(\infty)=h_i(\infty)=0,
\quad
g_j,v_j\in SO_p,\quad g_j(\eta)=v_j(\eta)=0
\]
for all $i,j$ and elements $\bJ_1,\bJ_2\in\cJ$ such that
\[
\bB(\bP\bX_-\bW_-\bX_-\bP+\bQ\bX_-\bW_+\bX_-\bQ-\lambda\bI)-\bI
= \bJ_1+\sum_{i=1}^n\bC_i(f_iI)+\sum_{j=1}^m\bD_j(W^0(g_j)),
\]
\[
(\bP\bX_-\bW_-\bX_-\bP+\bQ\bX_-\bW_+\bX_-\bQ-\lambda\bI)\bB-\bI
=\bJ_2+\sum_{i=1}^k(h_iI)\bE_i+\sum_{j=1}^l(W^0(v_j))\bF_j.
\]
Applying the homomorphism $\fW_{-1}$ to these equalities, taking into
account Proposition~\ref{pr:A-in-F} and that
$\fW_{-1}(\bJ_1)=K_1\in\cK$, $\fW_{-1}(\bJ_2)=K_2\in\cK$, we obtain
\begin{eqnarray}
\fW_{-1}(\bB)H_\lambda-I &=& K_1+\sum_{j=1}^m\fW_{-1}(\bD_j)W^0(g_j),
\label{eq:spectra-pre-X-4}
\\
H_\lambda\fW_{-1}(\bB)-I &=& K_2+\sum_{j=1}^l W^0(v_j)\fW_{-1}(\bF_j),
\label{eq:spectra-pre-X-5}
\end{eqnarray}
where
\[
H_\lambda
:=
\chi_+W^0(\chi_-)\chi_+I+\chi_-W^0(\chi_+)\chi_-I-\lambda I
=
\chi_+P_\R\chi_+I+\chi_-Q_\R\chi_-I-\lambda I.
\]

By Lemma~\ref{le:family-convergence}, for $g_1,\dots,g_m$ there exists a
sequence $(\tau_n)_{n=1}^\infty\subset\R_+$ such that $\tau_n\to+\infty$ and
\begin{equation}\label{eq:spectra-pre-X-6}
\operatornamewithlimits{s-lim}_{n\to\infty}
Z_{\tau_n}W^0(g_j)Z_{\tau_n}^{-1}=0
\quad\mbox{on}\quad L^p(\R)
\end{equation}
for all $j=1,\dots,m$. From Lemma~\ref{le:SOq} it follows that
$\overline{v_1},\dots,\overline{v_l}\in SO_q$. So, applying Lemma~\ref{le:family-convergence}
to the space $L^q(\R)$, we can guarantee that there exists a sequence
$(\tau_n^*)_{n=1}^\infty\subset\R_+$ such that $\tau_n^*\to+\infty$ and
\begin{equation}\label{eq:spectra-pre-X-7}
\operatornamewithlimits{s-lim}_{n\to\infty}
Z_{\tau_n^*}W^0(\overline{v_j})Z_{\tau_n^*}^{-1}=0
\quad\mbox{on}\quad L^q(\R)
\end{equation}
for all $j=1,\dots,l$.

Since $K_1,K_2$ are compact and $(Z_{\tau_n}^{\pm 1})_{n=1}^\infty$ converge
weakly to zero on $L^p(\R)$ and $(Z_{\tau_n^*}^{\pm 1})_{n=1}^\infty$ converge
weakly to zero on $L^q(\R)$ as $n\to\infty$, we
conclude that
\begin{eqnarray}
&&
\operatornamewithlimits{s-lim}_{n\to\infty}
Z_{\tau_n}K_1Z_{\tau_n}^{-1}=0
\quad\mbox{on}\quad L^p(\R),
\label{eq:spectra-pre-X-8}
\\
&&
\operatornamewithlimits{s-lim}_{n\to\infty}
Z_{\tau_n^*}K_2^*Z_{\tau_n^*}^{-1}=0
\quad\mbox{on}\quad L^q(\R).
\label{eq:spectra-pre-X-9}
\end{eqnarray}
Multiplying \eqref{eq:spectra-pre-X-4} from the left by
$Z_{\tau_n}$ and from the right by $Z_{\tau_n}^{-1}$ and taking into account that
the operator $H_\lambda$ is homogeneous, we obtain
\begin{eqnarray}
&&
(Z_{\tau_n}\fW_{-1}(\bB)Z_{\tau_n}^{-1})H_\lambda-I
\nonumber
\\
&&=
Z_{\tau_n}K_1Z_{\tau_n}^{-1}
+
\sum_{j=1}^m
(Z_{\tau_n}\fW_{-1}(\bD_j)Z_{\tau_n}^{-1})
(Z_{\tau_n}W^0(g_j)Z_{\tau_n}^{-1})=:A_n.
\label{eq:spectra-pre-X-10}
\end{eqnarray}
Since
$\|Z_{\tau_n}\fW_{-1}(\bD_j)Z_{\tau_n}^{-1}\|_\cB\le\|\fW_{-1}(\bD_j)\|_\cB$,
from \eqref{eq:spectra-pre-X-6} and \eqref{eq:spectra-pre-X-8} we
conclude that the sequence $(A_n)_{n=1}^\infty$ converges strongly
to zero as $n\to\infty$. For $u\in L^p(\R)$ from
\eqref{eq:spectra-pre-X-10} it follows that
\[
\|u\|_p \le
\|Z_{\tau_n}\fW_{-1}(\bB)Z_{\tau_n}^{-1}\|_\cB \|H_\lambda u\|_p+\|A_nu\|_p
\le
\|\fW_{-1}(\bB)\|_\cB\|H_\lambda u\|_p+\|A_nu\|_p.
\]
Passing to the limit as $n\to\infty$, we obtain
\[
\|u\|_p\le \|\fW_{-1}(\bB)\|_\cB\|H_\lambda u\|_p
\]
for all $u\in L^p(\R)$. This means that the kernel of $H_\lambda$ is trivial
and the range of $H_\lambda$ is closed.

Analogously, from \eqref{eq:spectra-pre-X-5} it follows that
\begin{eqnarray*}
&&
(Z_{\tau_n^*}W_{-1}(\bB)^*Z_{\tau_n^*}^{-1})H_\lambda^*-I
\\
&&=
Z_{\tau_n^*}K_2^*Z_{\tau_n^*}^{-1}+
\sum_{j=1}^l(Z_{\tau_n^*}\fW_{-1}(\bF_j)Z_{\tau_n^*}^{-1})(Z_{\tau_n^*}W^0(\overline{v_j})Z_{\tau_n^*}^{-1})=:B_n.
\end{eqnarray*}
As above, taking into account \eqref{eq:spectra-pre-X-7}
and \eqref{eq:spectra-pre-X-9}, we see that $(B_n)_{n=1}^\infty$
converges strongly to zero on $L^q(\R)$ and therefore
\[
\|w\|_q\le\|\fW_{-1}(\bB)\|_\cB\|H_\lambda^*w\|_q
\]
for every $w\in L^q(\R)$.
Hence the kernel of $H_\lambda^*$ is trivial, too. Thus $H_\lambda$ is invertible
on $L^p(\R)$. From Corollary~\ref{co:spectra-SIO}(c) we conclude that
$\lambda\notin\fL_p$. Thus
\begin{equation}\label{eq:spectra-pre-X-11}
\fL_p\subset \operatorname{sp}_{\cL_{\infty,\eta}^\cJ}
\left(\Phi_{\infty,\eta}^\cJ(\bP\bX_-\bW_-\bX_-\bP+\bQ\bX_-\bW_+\bX_-\bQ)\right).
\end{equation}
Combining \eqref{eq:spectra-pre-X-3} and
\eqref{eq:spectra-pre-X-11}, we arrive at
\eqref{eq:spectra-pre-X-1}.

To prove equality \eqref{eq:spectra-pre-X-2}, we argue similarly,
applying Corollary~\ref{co:spectra-SIO}(b) instead of Corollary~\ref{co:spectra-SIO}(a)
and the homomorphism $\fW_1$ instead of $\fW_{-1}$.
\end{proof}
\subsection{Spectra of the canonical elements in the algebras
\boldmath{$\cA_\eta^\pm$} and \boldmath{$\cL_\eta^\pm$}}
Now we are ready to calculate the spectra of the canonical elements
required in the two idempotents theorem.
\begin{lemma}\label{le:spectra-X}
Let $\eta\in M_\infty(SO)$ and $\fL_p$ be the lentiform domain defined by \eqref{eq:lens}.
\begin{enumerate}
\item[{\rm (a)}]
If
\[
X_-:=p_-r_-p_-+(e_--p_-)(e_--r_-)(e_--p_-),
\]
then $\operatorname{sp}_{\cA_\eta^-}(X_-) =
\operatorname{sp}_{\cL_\eta^-}(X_-) =\fL_p$.

\item[{\rm (b)}]
If
\[
X_+:=p_+r_+p_++(e_+-p_+)(e_+-r_+)(e_+-p_+),
\]
then $\operatorname{sp}_{\cA_\eta^+}(X_+) =
\operatorname{sp}_{\cL_\eta^+}(X_+) =\fL_p$.
\end{enumerate}
\end{lemma}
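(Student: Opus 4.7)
The plan is to compute $X_\pm$ explicitly as the image under $\Phi_{\infty,\eta}^\cJ$ of a specific sequence in $\cA$, reduce the spectrum calculation to the one already performed in Lemma~\ref{le:spectra-pre-X} via a corner decomposition, and finally pass from $\cL_\eta^\pm$ to $\cA_\eta^\pm$ using the standard Banach subalgebra hole-filling theorem. I treat part~(a); part~(b) is analogous.

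First, since $e_- = \Phi_{\infty,\eta}^\cJ(\bX_-)$ is central in $\cA_{\infty,\eta}^\cJ$ by Lemma~\ref{le:commuting-idempotent}, $\bX_-^2 = \bX_-$, and the multiplication operators $\chi_-I$ and $P_\tau$ commute, one checks by routine manipulation that
\[
X_- = \Phi_{\infty,\eta}^\cJ\bigl(\bP\bX_-\bW_-\bX_-\bP + \bQ\bX_-\bW_+\bX_-\bQ\bigr) =: Y_-,
\]
and that $Y_-$ lies in the corner $e_- \cL_{\infty,\eta}^\cJ e_-$, since the cross corners vanish because $\bX_+\bX_- = 0$. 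Consequently, for every $\lambda \in \C$ the element $Y_- - \lambda(\bI)$ of $\cA_{\infty,\eta}^\cJ$ decomposes with respect to the central idempotent $e_-$ as
\[
e_-(Y_- - \lambda(\bI))e_- = X_- - \lambda e_-, \qquad (e - e_-)(Y_- - \lambda(\bI))(e - e_-) = -\lambda e_+,
\]
where $e$ denotes the identity $\Phi_{\infty,\eta}^\cJ(\bI)$ of $\cL_{\infty,\eta}^\cJ$.

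Applying Lemma~\ref{le:corners}(a) with $B = \cL_{\infty,\eta}^\cJ$, $A = \cA_{\infty,\eta}^\cJ$, and $p = e_-$ yields
\[
\operatorname{sp}_{\cL_{\infty,\eta}^\cJ}(Y_-) = \operatorname{sp}_{\cL_\eta^-}(X_-) \cup \{0\}.
\]
By Lemma~\ref{le:spectra-pre-X}, the left-hand side equals $\fL_p$. Because $\fA_2(0,1) = [0,1] \subset \fL_p$, the point $0$ is a non-isolated point of $\fL_p$; since $\operatorname{sp}_{\cL_\eta^-}(X_-)$ is closed, the only possibility is $\operatorname{sp}_{\cL_\eta^-}(X_-) = \fL_p$.

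For the passage from $\cL_\eta^-$ to $\cA_\eta^-$: both are unital Banach algebras with the common identity $e_-$, and $\cA_\eta^- \subset \cL_\eta^-$. By the standard spectrum-containment theorem for Banach subalgebras, the set $\operatorname{sp}_{\cA_\eta^-}(X_-) \setminus \operatorname{sp}_{\cL_\eta^-}(X_-)$ is contained in the union of the bounded connected components of $\C \setminus \operatorname{sp}_{\cL_\eta^-}(X_-) = \C \setminus \fL_p$. Since $\fL_p$ is a simply-connected lens-shaped region, $\C \setminus \fL_p$ has no bounded components, so $\operatorname{sp}_{\cA_\eta^-}(X_-) = \operatorname{sp}_{\cL_\eta^-}(X_-) = \fL_p$. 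Part~(b) follows by the same scheme applied to the element $Y_+ := \Phi_{\infty,\eta}^\cJ(\bP\bX_+\bW_+\bX_+\bP + \bQ\bX_+\bW_-\bX_+\bQ)$ with the idempotent $p = e_+$, using the second equality in Lemma~\ref{le:spectra-pre-X}. The main technical hurdle is the careful bookkeeping in the first step, verifying the identification $X_\pm = Y_\pm$ and that $Y_\pm$ lives entirely in the appropriate corner; after that, everything reduces to previously established spectra and general Banach algebra facts.
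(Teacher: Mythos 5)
Your proposal is correct and follows essentially the same route as the paper's proof: identify $X_\pm$ with $\Phi_{\infty,\eta}^\cJ(\bP\bX_\pm\bW_\pm\bX_\pm\bP+\bQ\bX_\pm\bW_\mp\bX_\pm\bQ)$, apply the corner decomposition (Lemma~\ref{le:corners}(a)) using the central idempotent $e_\mp$ to peel off a $-\lambda e_\mp$ term, invoke Lemma~\ref{le:spectra-pre-X} for the spectrum in $\cL_{\infty,\eta}^\cJ$, and then fill holes to pass from $\cL_\eta^\pm$ to $\cA_\eta^\pm$ (the paper cites the corollary of Theorem~10.18 in Rudin, which is exactly your ``standard Banach subalgebra hole-filling theorem''). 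The one minor presentational difference is that you observe the sequence-level identity $X_- = Y_-$ outright, whereas the paper only records the corner computations $e_- Y_- e_- = X_-$ and $e_+ Y_- e_+ = 0$; these are equivalent statements, and your version is clean.
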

\begin{proof}
(a) Let $\lambda\in\C$. From Lemmas~\ref{le:corners}(a) and
\ref{le:commuting-idempotent} it follows that the element
$\Phi_{\infty,\eta}^\cJ(\bP\bX_-\bW_-\bX_-\bP+\bQ\bX_-\bW_+\bX_-\bQ-\lambda\bI)$
is invertible in $\cL_{\infty,\eta}^\cJ$ if and only if
\[
\Phi_{\infty,\eta}^\cJ(\bX_-)
\Phi_{\infty,\eta}^\cJ(\bP\bX_-\bW_-\bX_-\bP+\bQ\bX_-\bW_+\bX_-\bQ-\lambda\bI)
\Phi_{\infty,\eta}^\cJ(\bX_-)
=X_--\lambda e_-
\]
is invertible in $\cL_\eta^-$ and
\[
\Phi_{\infty,\eta}^\cJ(\bX_+)
\Phi_{\infty,\eta}^\cJ(\bP\bX_-\bW_-\bX_-\bP+\bQ\bX_-\bW_+\bX_-\bQ-\lambda\bI)
\Phi_{\infty,\eta}^\cJ(\bX_+)
=-\lambda e_+
\]
is invertible in $\cL_\eta^+$. Therefore
\[
\operatorname{sp}_{\cL_{\infty,\eta}^\cJ} \left(\Phi_{\infty,\eta}^\cJ
(\bP\bX_-\bW_-\bX_-\bP+\bQ\bX_-\bW_+\bX_-\bQ)\right)
=\operatorname{sp}_{\cL_\eta^-}(X_-)\cup\{0\}.
\]
In view of \eqref{eq:spectra-pre-X-1}, the above equality means that
$\operatorname{sp}_{\cL_\eta^-}(X_-)\cup\{0\}=\fL_p$.
But $0$ is not an isolated point of $\fL_p$. Due to the
compactness of the spectrum
$\operatorname{sp}_{\cL_\eta^-}(X_-)$ we conclude that
$0\in\operatorname{sp}_{\cL_\eta^-}(X_-)$. Hence
\begin{equation}\label{eq:spectra-X-1}
\operatorname{sp}_{\cL_\eta^-}(X_-)=\fL_p.
\end{equation}

Recall that the identity element in the unital algebras
$\cA_\eta^-\subset\cL_\eta^-$ is the same.
Since the lentiform domain $\fL_p$ does not separate the complex
plane, that is, $\fL_p$ and $\C\setminus\fL_p$ are connected sets,
from \cite[Corollary of Theorem 10.18]{Rudin91} and equality
\eqref{eq:spectra-X-1} it follows that
$\operatorname{sp}_{\cA_\eta^-}(X_-)=\fL_p$. Part (a) is
proved.

The proof of part (b) is analogous.
\end{proof}
\subsection{Invertibility in the local algebra \boldmath{$\cL_{\infty,\eta}^\cJ$}}
Now we are in a position to describe the local algebras $\cA_{\infty,\eta}^\cJ$
with the aid of the two idempotents theorem.
\begin{theorem}\label{th:case-3}
Let $\eta\in M_\infty(SO)$ and $\fL_p$ be
the lentiform domain defined by \eqref{eq:lens}. Suppose $x\in\fL_p$.
\begin{enumerate}
\item[{\rm (a)}]
The maps
\[
\begin{split}
\Sigma_x^-,\Sigma_x^+: & \big\{\Phi_{\infty,\eta}^\cJ(\bP)\big\} \cup
\big\{\Phi_{\infty,\eta}^\cJ\big[(aI)\big]:a\in PC\big\}
\\
&\cup
\big\{\Phi_{\infty,\eta}^\cJ\big[(W^0(b))\big]:b\in[PC_p,SO_p]\big\}
\to\C^{2\times 2}
\end{split}
\]
given by
\begin{eqnarray}
\hspace{-15mm}
&&
\Sigma_x^-(\Phi_{\infty,\eta}^\cJ(\bP)):=
\left[\begin{array}{cc} 1 & 0 \\ 0 & 0\end{array}\right],
\
\Sigma_x^-(\Phi_{\infty,\eta}^\cJ[(aI)]) :=
\left[\begin{array}{cc}
a(-\infty) & 0 \\ 0 & a(-\infty)\end{array}\right],
\label{eq:case-3-1}
\\[3mm]
\hspace{-15mm}
&&
\Sigma_x^-(\Phi_{\infty,\eta}^\cJ[(W^0(b)]):=
\nonumber
\\[2mm]
\hspace{-15mm}
&&
\left[\begin{array}{cc}
b_\eta(-\infty)x+b_\eta(+\infty)(1-x) &
[b_\eta(-\infty)-b_\eta(+\infty)]\sqrt{x(1-x)}
\\[2mm]
[b_\eta(-\infty)-b_\eta(+\infty)]\sqrt{x(1-x)} &
b_\eta(-\infty)(1-x)+b_\eta(+\infty)x
\end{array}\right]
\label{eq:case-3-2}
\end{eqnarray}
and
\begin{eqnarray}
\hspace{-15mm}
&&
\Sigma_x^+(\Phi_{\infty,\eta}^\cJ(\bP)):=
\left[\begin{array}{cc} 1 & 0 \\ 0 & 0\end{array}\right],
\
\Sigma_x^+(\Phi_{\infty,\eta}^\cJ[(aI)]) :=
\left[\begin{array}{cc} a(+\infty) & 0 \\ 0 & a(+\infty)\end{array}\right],
\label{eq:case-3-3}
\\[3mm]
\hspace{-15mm}
&&
\Sigma_x^+(\Phi_{\infty,\eta}^\cJ[(W^0(b)]):=
\nonumber
\\[2mm]
\hspace{-15mm}
&&
\left[\begin{array}{cc}
b_\eta(+\infty)x+b_\eta(-\infty)(1-x) &
[b_\eta(+\infty)-b_\eta(-\infty)]\sqrt{x(1-x)}
\\[2mm]
[b_\eta(+\infty)-b_\eta(-\infty)]\sqrt{x(1-x)} &
b_\eta(+\infty)(1-x)+b_\eta(-\infty)x
\end{array}\right],
\label{eq:case-3-4}
\end{eqnarray}
where $\sqrt{x(1-x)}$ denotes any complex number such that its
square is equal to $x(1-x)$, extend to Banach algebra
homomorphisms
\[
\Sigma_x^-:\cA_{\infty,\eta}^\cJ\to\C^{2\times 2},
\quad
\Sigma_x^+:\cA_{\infty,\eta}^\cJ\to\C^{2\times 2}.
\]

\item[{\rm (b)}]
Let $\bA\in\cA$. The coset $\Phi_{\infty,\eta}^\cJ(\bA)$ is
invertible in the local algebra $\cL_{\infty,\eta}^\cJ$ if and only
if
\[
\det\Sigma_x^-(\Phi_{\infty,\eta}^\cJ(\bA))\ne 0, \quad
\det\Sigma_x^+(\Phi_{\infty,\eta}^\cJ(\bA))\ne 0
\]
for all $x\in\fL_p$.

\item[{\rm (c)}]
The algebra $\cA_{\infty,\eta}^\cJ$ is inverse closed in the algebra
$\cL_{\infty,\eta}^\cJ$.
\end{enumerate}
\end{theorem}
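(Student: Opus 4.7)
The plan is to apply the two idempotents theorem (Theorem~\ref{th:two-idempotents}) separately to the pairs $(\cA_\eta^-,\cL_\eta^-)$ and $(\cA_\eta^+,\cL_\eta^+)$ provided by Lemma~\ref{le:2-proj}, and then reassemble the results via the central idempotent decomposition of Lemma~\ref{le:corners}, which is available because $\Phi_{\infty,\eta}^\cJ(\bX_\pm)$ commute with every element of $\cA_{\infty,\eta}^\cJ$ by Lemma~\ref{le:commuting-idempotent}. By Lemma~\ref{le:spectra-X} the canonical element $X_\pm$ has spectrum $\fL_p$ in both $\cA_\eta^\pm$ and $\cL_\eta^\pm$, and since $0$ and $1$ are non-isolated points of $\fL_p$, Theorem~\ref{th:two-idempotents}(a) produces, for each $x\in\fL_p$, a Banach algebra homomorphism $\sigma_x^\pm:\cA_\eta^\pm\to\C^{2\times 2}$ determined on $e_\pm,p_\pm,r_\pm$ by formulas \eqref{eq:idempotents-maps}--\eqref{eq:idempotents-maps*}.

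For part (a), I set $\Sigma_x^\pm(a):=\sigma_x^\pm(e_\pm a e_\pm)$ for $a\in\cA_{\infty,\eta}^\cJ$. Since $e_\pm$ is a central idempotent in $\cA_{\infty,\eta}^\cJ$, the corner map $a\mapsto e_\pm a e_\pm$ is a unital algebra homomorphism onto $\cA_\eta^\pm$, so $\Sigma_x^\pm$ is a Banach algebra homomorphism. To recover the explicit formulas \eqref{eq:case-3-1}--\eqref{eq:case-3-4}, I substitute the compressed representations \eqref{eq:2-proj-1}--\eqref{eq:2-proj-3} (and their $\bX_+$-analogues \eqref{eq:2-proj-4}--\eqref{eq:2-proj-6}) and then apply $\sigma_x^\pm$. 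For instance, for $\bB=(W^0(b))$ the compression to $\cA_\eta^-$ equals $b_\eta(-\infty)r_-+b_\eta(+\infty)(e_--r_-)$, whose image under $\sigma_x^-$ is obtained by replacing $r_-$ by the matrix in \eqref{eq:idempotents-maps*}; a direct matrix computation yields exactly \eqref{eq:case-3-2}.

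For part (b), Lemma~\ref{le:corners}(a) together with the centrality of $e_\pm$ says that $\Phi_{\infty,\eta}^\cJ(\bA)$ is invertible in $\cL_{\infty,\eta}^\cJ$ if and only if $e_-\Phi_{\infty,\eta}^\cJ(\bA)e_-$ is invertible in $\cL_\eta^-$ and $e_+\Phi_{\infty,\eta}^\cJ(\bA)e_+$ is invertible in $\cL_\eta^+$. Applying Theorem~\ref{th:two-idempotents}(b) with $\operatorname{sp}_{\cL_\eta^\pm}(X_\pm)=\fL_p$ (Lemma~\ref{le:spectra-X}) reformulates each of these conditions as invertibility in $\C^{2\times 2}$ of $\sigma_x^\pm(e_\pm\Phi_{\infty,\eta}^\cJ(\bA)e_\pm)=\Sigma_x^\pm(\Phi_{\infty,\eta}^\cJ(\bA))$ for every $x\in\fL_p$, which is precisely the nonvanishing of the two determinants.

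For part (c), Theorem~\ref{th:two-idempotents}(c) characterizes invertibility of an element of $\cA_\eta^\pm$ inside $\cA_\eta^\pm$ by invertibility of $\sigma_x^\pm$ for $x\in\operatorname{sp}_{\cA_\eta^\pm}(X_\pm)$, whereas invertibility inside $\cL_\eta^\pm$ uses $x\in\operatorname{sp}_{\cL_\eta^\pm}(X_\pm)$; Lemma~\ref{le:spectra-X} equates these two sets, so $\cA_\eta^\pm$ is inverse closed in $\cL_\eta^\pm$. Applying Lemma~\ref{le:corners}(b) then lifts this inverse closedness from the corners to the conclusion $\cA_{\infty,\eta}^\cJ$ is inverse closed in $\cL_{\infty,\eta}^\cJ$. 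The main obstacle is essentially the bookkeeping in part (a): one must correctly identify the central idempotents $e_\pm$, the compressed generators from Lemma~\ref{le:2-proj}, and the two-by-two matrices prescribed by the two idempotents theorem, but once the compressions are matched, formulas \eqref{eq:case-3-1}--\eqref{eq:case-3-4} follow by direct matrix computation.
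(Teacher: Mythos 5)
Your proposal is correct and follows essentially the same route as the paper: apply the two-idempotents theorem to the corner algebras $\cA_\eta^\pm\subset\cL_\eta^\pm$ using the spectrum computation of Lemma~\ref{le:spectra-X}, define $\Sigma_x^\pm$ as $\sigma_x^\pm$ composed with the compression $a\mapsto e_\pm a e_\pm$, read off the explicit matrices from the compressed generator formulas of Lemma~\ref{le:2-proj}, and reassemble via Lemma~\ref{le:corners}. The only cosmetic difference is that you explicitly flag that $0$ and $1$ are cluster points of $\fL_p$ (a hypothesis of Theorem~\ref{th:two-idempotents} that the paper leaves implicit), which is a small but welcome clarification.
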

\begin{proof}
From Lemma~\ref{le:2-proj} we obtain that the algebras
$\cA_\eta^-$ and $\cA_\eta^+$ are subject
to the two idempotents theorem. Let $x\in\C$. Define the maps
\[
\sigma_x^\pm:\{e_\pm,p_\pm,r_\pm\}\to\C^{2\times 2}
\]
by equalities
\eqref{eq:idempotents-maps}--\eqref{eq:idempotents-maps*} with $e_\pm,p_\pm,r_\pm$ and
$\sigma_x^\pm$ in place of $e,p,r$, and $\sigma_x$, respectively.
By Lemma~\ref{le:spectra-X}, the spectra
\[
\operatorname{sp}_{\cA_\eta^-}(X_-),\quad
\operatorname{sp}_{\cL_\eta^-}(X_-),\quad
\operatorname{sp}_{\cA_\eta^+}(X_+),\quad
\operatorname{sp}_{\cL_\eta^+}(X_+)
\]
all coincide and
are equal to the lentiform domain $\fL_p$ given by
\eqref{eq:lens}. Then, by Theorem~\ref{th:two-idempotents}(a), for
$x\in\fL_p$ the maps $\sigma_x^-$ and $\sigma_x^+$  extend to
Banach algebra homomorphisms
\[
\sigma_x^-:\cA_\eta^-\to\C^{2\times 2},
\quad
\sigma_x^+:\cA_\eta^+\to\C^{2\times 2}.
\]
Let $\bA\in\cA$. From Theorem~\ref{th:two-idempotents}(b),(c)  we
know that the element $\Phi_{\infty,\eta}^\cJ(\bX_-\bA\bX_-)$ (respectively,
$\Phi_{\infty,\eta}^\cJ(\bX_+\bA\bX_+)$) is invertible in both algebras
$\cA_\eta^-$ and $\cL_\eta^-$ (respectively, in
both $\cA_\eta^+$ and $\cL_\eta^+$) if and
only if the element $\sigma_x^-[\Phi_{\infty,\eta}^\cJ(\bX_-\bA\bX_-)]$ (respectively,
$\sigma_x^+[\Phi_{\infty,\eta}^\cJ(\bX_+\bA\bX_+)]$) is invertible in
$\C^{2\times 2}$. In particular, the algebra
$\cA_\eta^-$ is inverse closed in the algebra
$\cL_\eta^-$ and the algebra $\cA_\eta^+$
is inverse closed in the algebra $\cL_\eta^+$.

Further, for $x\in\fL_p$ the maps
\[
\begin{split}
& \Sigma_x^-:\cA_{\infty,\eta}^\cJ\to\C^{2\times 2}, \quad
\Sigma_x^-[\Phi_{\infty,\eta}^\cJ(\bA)]=
\sigma_x^-[\Phi_{\infty,\eta}^\cJ(\bX_-\bA\bX_-)],
\\
&
\Sigma_x^+:\cA_{\infty,\eta}^\cJ\to\C^{2\times 2}, \quad
\Sigma_x^+[\Phi_{\infty,\eta}^\cJ(\bA)]=
\sigma_x^+[\Phi_{\infty,\eta}^\cJ(\bX_+\bA\bX_+)]
\end{split}
\]
are Banach algebra homomorphisms. From \eqref{eq:idempotents-maps}--\eqref{eq:idempotents-maps*}
and \eqref{eq:2-proj-1}--\eqref{eq:2-proj-6} it follows that
$\Sigma_x^-$ and $\Sigma_x^+$ are defined on the generators of
$\cA_{\infty,\eta}^\cJ$ by formulas
\eqref{eq:case-3-1}--\eqref{eq:case-3-4}. This finishes the proof of
parts (a) and (b).

Taking into account that $\cA_\eta^-$ is inverse
closed in $\cL_\eta^-$ and $\cA_\eta^+$ is
inverse closed in $\cL_\eta^+$, we obtain that
$\cA_{\infty,\eta}^\cJ$ is inverse closed in $\cL_{\infty,\eta}^\cJ$ in
view of Lemma~\ref{le:corners}(b). Part (c) is proved.
\end{proof}
Let $R:\C\setminus\big((-\infty,0)\cup(1,+\infty)\big)\to\C$ be a
continuous branch of the function $x\mapsto\sqrt{x(1-x)}$. Since
$\fL_p\cap\big((-\infty,0)\cup(1,+\infty)\big)=\emptyset$, the
function $R$ is continuous on $\fL_p$.
\begin{corollary}\label{co:case-3}
Let $\eta\in M_\infty(SO)$ and $\fL_p$ be
the lentiform domain defined by \eqref{eq:lens}. The maps
\[
\fN_\eta^-,\fN_\eta^+:\{\bP\}\cup \big\{(aI):a\in
PC\big\}\cup\big\{(W^0(b)):b\in[PC_p,SO_p]\big\}\to
C(\fL_p,\C^{2\times 2})
\]
given for $x\in\fL_p$ by
\begin{eqnarray}
\hspace{-15mm}
&& \big(\fN_\eta^-(\bP)\big)(x):=
\left[\begin{array}{cc} 1 & 0 \\ 0 & 0\end{array}\right], \quad
\big(\fN_\eta^-[(aI)]\big)(x) := \left[\begin{array}{cc}
a(-\infty) & 0 \\ 0 & a(-\infty)\end{array}\right],
\label{eq:case-3-5}
\\[3mm]
\hspace{-15mm}
&&
\big(\fN_\eta^-[(W^0(b)]\big)(x):=
\nonumber
\\[2mm]
\hspace{-15mm}
&&
\left[\begin{array}{cc}
b_\eta(-\infty)x+b_\eta(+\infty)(1-x) &
[b_\eta(-\infty)-b_\eta(+\infty)]R(x)
\\[2mm]
[b_\eta(-\infty)-b_\eta(+\infty)]R(x) &
b_\eta(-\infty)(1-x)+b_\eta(+\infty)x
\end{array}\right]
\label{eq:case-3-6}
\end{eqnarray}
and
\begin{eqnarray}
\hspace{-15mm}
&&
\big(\fN_\eta^+(\bP)\big)(x):=
\left[\begin{array}{cc} 1 & 0 \\ 0 & 0\end{array}\right], \quad
\big(\fN_\eta^+[(aI)]\big)(x) := \left[\begin{array}{cc}
a(+\infty) & 0 \\ 0 & a(+\infty)\end{array}\right],
\label{eq:case-3-7}
\\[3mm]
\hspace{-15mm}
&&
\big(\fN_\eta^+[(W^0(b)]\big)(x):=
\nonumber
\\[2mm]
\hspace{-15mm}
&&
\left[\begin{array}{cc}
b_\eta(+\infty)x+b_\eta(-\infty)(1-x) &
[b_\eta(+\infty)-b_\eta(-\infty)]R(x)
\\[2mm]
[b_\eta(+\infty)-b_\eta(-\infty)]R(x) &
b_\eta(+\infty)(1-x)+b_\eta(-\infty)x
\end{array}\right],
\label{eq:case-3-8}
\end{eqnarray}
extend to Banach algebra homomorphisms
\[
\fN_\eta^-:\cA\to C(\fL_p,\C^{2\times 2}), \quad \fN_\eta^+:\cA\to
C(\fL_p,\C^{2\times 2}).
\]
\end{corollary}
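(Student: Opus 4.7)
The plan is the following. For each $x\in\fL_p$, Theorem~\ref{th:case-3}(a) supplies Banach algebra homomorphisms $\Sigma_x^\pm:\cA_{\infty,\eta}^\cJ\to\C^{2\times 2}$ given on generators by \eqref{eq:case-3-1}--\eqref{eq:case-3-4}, in which $\sqrt{x(1-x)}$ stands for either of the two square roots of $x(1-x)$. Since $\fL_p\cap\big((-\infty,0)\cup(1,+\infty)\big)=\emptyset$, the continuous branch $R$ is defined on all of $\fL_p$, and I would fix the choice $\sqrt{x(1-x)}:=R(x)$ once and for all. Pre-composing with the homomorphism $\bA\mapsto\Phi_{\infty,\eta}^\cJ(\bA)$ from $\cA$ into $\cA_{\infty,\eta}^\cJ$ yields, for each $x\in\fL_p$, a Banach algebra homomorphism $\cA\to\C^{2\times 2}$; a direct comparison of \eqref{eq:case-3-1}--\eqref{eq:case-3-4} with \eqref{eq:case-3-5}--\eqref{eq:case-3-8} shows that on each generator this coincides with the prescription of $\fN_\eta^\pm$. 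Accordingly, for $\bA\in\cA$ I would set $\fN_\eta^\pm(\bA)(x):=\Sigma_x^\pm(\Phi_{\infty,\eta}^\cJ(\bA))$.

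Next I would check two properties: that for each $\bA\in\cA$ the map $x\mapsto\fN_\eta^\pm(\bA)(x)$ lies in $C(\fL_p,\C^{2\times 2})$, and that the resulting assignment $\fN_\eta^\pm:\cA\to C(\fL_p,\C^{2\times 2})$ is a bounded Banach algebra homomorphism with pointwise operations and sup norm on the codomain. Continuity in $x$ is transparent on the (non-closed) subalgebra $\cA_0$ of $\cA$ consisting of polynomials in the generators: there $\fN_\eta^\pm(\bA)(x)$ is a $2\times 2$ matrix whose entries are polynomials in $x$ and $R(x)$ with coefficients determined by finitely many data associated to the generators, and thus continuous on $\fL_p$ by continuity of $R$. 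Since $\Sigma_x^\pm\circ\Phi_{\infty,\eta}^\cJ$ is an algebra homomorphism pointwise in $x$, the restriction $\fN_\eta^\pm|_{\cA_0}$ is automatically an algebra homomorphism into $C(\fL_p,\C^{2\times 2})$.

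The main obstacle is extending from the dense subalgebra $\cA_0$ to its closure $\cA$, for which I would need the uniform bound
\[
\sup_{x\in\fL_p}\|\Sigma_x^\pm(\Phi_{\infty,\eta}^\cJ(\bA))\|_{\C^{2\times 2}}\le C\,\|\bA\|_\cA\qquad(\bA\in\cA),
\]
equivalently, that the assignment $a\mapsto(x\mapsto\sigma_x^\pm(a))$ is a bounded Banach algebra homomorphism of the two-idempotent algebra $\cA_\eta^\pm$ into $C(\fL_p,\C^{2\times 2})$. This is the structural content of the two idempotents theorem (Theorem~\ref{th:two-idempotents}): the parametrized homomorphisms $\sigma_x^\pm$ appearing in the proof of Theorem~\ref{th:case-3} are not constructed in isolation for each $x$ but arise as pointwise evaluations of a single bounded representation of $\cA_\eta^\pm$ on $C(\operatorname{sp}(X_\pm),\C^{2\times 2})=C(\fL_p,\C^{2\times 2})$, cf.~\cite[Section~8]{BK97}. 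Granting this, composing with the corner maps $\Phi_{\infty,\eta}^\cJ(\bA)\mapsto\Phi_{\infty,\eta}^\cJ(\bX_\pm\bA\bX_\pm)\in\cA_\eta^\pm$ from Lemma~\ref{le:2-proj} and with $\Phi_{\infty,\eta}^\cJ:\cA\to\cA_{\infty,\eta}^\cJ$ produces the required bounded homomorphism $\cA\to C(\fL_p,\C^{2\times 2})$, which by the first paragraph must agree on the generators of $\cA$ with the formulas \eqref{eq:case-3-5}--\eqref{eq:case-3-8}.
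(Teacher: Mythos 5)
Your proposal follows essentially the same route as the paper's: for each $x\in\fL_p$ compose $\Phi_{\infty,\eta}^\cJ|_\cA$ with the homomorphisms $\Sigma_x^\pm$ from Theorem~\ref{th:case-3}(a), and use the continuous branch $R$ to ensure $x\mapsto\Sigma_x^\pm(\Phi_{\infty,\eta}^\cJ(\bA))$ lands in $C(\fL_p,\C^{2\times 2})$. Where the paper disposes of the continuity in $x$ for general $\bA\in\cA$ in a single clause, you correctly observe that it is only transparent on the dense polynomial subalgebra, and that the passage to the closure requires $\sup_{x\in\fL_p}\|\Sigma_x^\pm\|<\infty$, which is indeed what the two idempotents theorem provides (the maps $\sigma_x$ are the pointwise evaluations of one bounded symbol homomorphism into $C(\operatorname{sp}(X),\C^{2\times 2})$, cf.\ \cite[Section~8]{BK97}). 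So your argument is the paper's argument, made explicit at the one point the paper leaves implicit.
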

\begin{proof}
It is clear that $\fN_\eta^-$ (resp. $\fN_\eta^+$) is the
composition of the canonical Banach algebra homomorphism
$\Phi_{\infty,\eta}^\cJ|_\cA:\cA\to\cA_{\infty,\eta}^\cJ$ and the
Banach algebra homomorphism $\Sigma_x^-$ (resp. $\Sigma_x^+$)
defined in Theorem~\ref{th:case-3}(a). Note that the function $R$
was chosen so that $\Sigma_x^\pm(\Phi_{\infty,\eta}^\cJ(\bA))$ are
continuous in $x\in\fL_p$ for $\bA\in\cA$.
\end{proof}
\section{Main result and some corollaries}\label{section:main}
\subsection{Main result}
Combining the results of
Sections~\ref{section:localization}--\ref{section:identification-3},
we arrive at the main result of the paper.
\begin{theorem}\label{th:main}
Let $\fL_p$ be the lentiform domain defined by \eqref{eq:lens} and
$\cA$ be the algebra from Definition~\ref{def:alg-A}. A sequence
$\bA\in\cA$ is stable if and only if the following three
conditions are fulfilled:
\begin{enumerate}
\item[{\rm(a)}]
the operators $\fW_{-1}(\bA)$, $\fW_0(\bA)$, and $\fW_1(\bA)$ are
invertible in $\cB$;

\item[{\rm(b)}]
the operators $\fH_\eta(\bA)$ are invertible in $\cB$ for all
$\eta\in\R$;

\item[{\rm(c)}] for every $\eta\in M_\infty(SO)$ and every
$x\in\fL_p$,
\[
\det\big(\fN_\eta^-(\bA)\big)(x)\ne 0, \quad
\det\big(\fN_\eta^+(\bA)\big)(x)\ne 0,
\]
where
$\fN_\eta^\pm(\bA)$
are images of the
homomorphisms defined in Corollary~{\rm\ref{co:case-3}.}
\end{enumerate}
\end{theorem}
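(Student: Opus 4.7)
\medskip\noindent
\textbf{Proof proposal.} The plan is to use Theorem~\ref{th:localization} as the backbone: since $\cA\subset\cL$ by Theorem~\ref{th:A-local-type}, a sequence $\bA\in\cA$ is stable if and only if condition (a) holds \emph{and} the coset $\Phi_{\xi,\eta}^\cJ(\bA)$ is invertible in $\cL_{\xi,\eta}^\cJ$ for every $(\xi,\eta)\in\Omega$. The three pieces of $\Omega$ given in \eqref{eq:def-Omega} are handled, respectively, by Theorem~\ref{th:case-1}, Theorem~\ref{th:case-2} and Theorem~\ref{th:case-3}. So the entire proof reduces to matching conditions (a)--(c) against the hypotheses of these three identification theorems.

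\medskip\noindent
\textit{Sufficiency.} Assume (a), (b), (c). Fix $(\xi,\eta)\in\Omega$ and split into cases. If $(\xi,\eta)\in\R\times M_\infty(SO)$, the invertibility of $\fW_0(\bA)$ from (a) and Theorem~\ref{th:case-1} give the invertibility of $\Phi_{\xi,\eta}^\cJ(\bA)$ in $\cL_{\xi,\eta}^\cJ$. If $(\xi,\eta)\in\{\infty\}\times\R$, Theorem~\ref{th:case-2} applied to the hypothesis (b) gives the same conclusion. Finally, if $(\xi,\eta)\in\{\infty\}\times M_\infty(SO)$, Theorem~\ref{th:case-3}(b) together with the defining formulas \eqref{eq:case-3-5}--\eqref{eq:case-3-8} shows that the determinant conditions (c) are precisely what is needed for the invertibility in $\cL_{\infty,\eta}^\cJ$. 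Combining these three cases with (a), Theorem~\ref{th:localization} yields the stability of $\bA$.

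\medskip\noindent
\textit{Necessity.} Conversely, suppose $\bA$ is stable. By Theorem~\ref{th:Kozak} the coset $\bA+\cG$ is invertible in $\cE/\cG$, hence in $\cF/\cG$ by the inverse closedness part of Lemma~\ref{le:alg-F}(e). The lifting theorem (Theorem~\ref{th:lifting-our}) then yields (a) directly, and also the invertibility of $\bA+\cJ$ in $\cF/\cJ$, which by Lemma~\ref{le:alg-L}(c) is the same as its invertibility in $\cL/\cJ$. The Allan-Douglas local principle (Theorem~\ref{th:AllanDouglas}) then gives invertibility of every local coset $\Phi_{\xi,\eta}^\cJ(\bA)$ in $\cL_{\xi,\eta}^\cJ$. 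Specializing to $(\xi,\eta)\in\{\infty\}\times M_\infty(SO)$ and invoking the ``only if'' half of Theorem~\ref{th:case-3}(b) produces condition (c) for every $\eta\in M_\infty(SO)$ and every $x\in\fL_p$, after identifying $(\fN_\eta^\pm(\bA))(x)$ with $\Sigma_x^\pm(\Phi_{\infty,\eta}^\cJ(\bA))$ via Corollary~\ref{co:case-3}. Condition (b) follows independently and more cleanly from Theorem~\ref{th:nec-stability-Heta} (whose proof, as noted in the remark following it, does not require the essentialization/localization machinery).

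\medskip\noindent
I do not anticipate a real obstacle here, as the entire paper has been structured precisely to make this final step an assembly argument. The only minor point requiring care is keeping the bookkeeping straight between the four algebras $\cE,\cF,\cL,\cL^\cJ$ and their respective inverse-closedness relations, so that the chain ``stable $\Leftrightarrow$ invertible in $\cE/\cG$ $\Leftrightarrow$ invertible in $\cF/\cG$ $\Leftrightarrow$ (a) plus invertible in $\cF/\cJ$ $\Leftrightarrow$ (a) plus invertible in $\cL/\cJ$ $\Leftrightarrow$ (a) plus local invertibility on $\Omega$'' is traversed only in directions that the earlier lemmas justify.
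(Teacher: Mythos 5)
Your proof is correct and follows essentially the same route as the paper: sufficiency assembles Theorems~\ref{th:case-1}, \ref{th:case-2}, \ref{th:case-3}(b) and feeds them into Theorem~\ref{th:localization}, while necessity gets (a) and the local invertibility from Theorem~\ref{th:localization}, (b) from Theorem~\ref{th:nec-stability-Heta}, and (c) from the only-if half of Theorem~\ref{th:case-3}(b) together with Corollary~\ref{co:case-3}. Your expansion of Theorem~\ref{th:localization} in the necessity direction into its constituent steps (Kozak, lifting, inverse-closedness, Allan--Douglas) is a mere unpacking of what the paper cites as a single theorem; the substance is identical.
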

\begin{proof}
\textit{Necessity.} If $\bA\in\cA$ is stable, then condition (b) holds
due to Theorem~\ref{th:nec-stability-Heta}. Further, in view of
Theorem~\ref{th:localization}, condition (a) is fulfilled and the
coset $\Phi_{\xi,\eta}^\cJ(\bA)$ is invertible in the local
algebra $\cL_{\xi,\eta}^\cJ$ for every pair $(\xi,\eta)\in\Omega$,
where $\Omega$ is given by \eqref{eq:def-Omega}. If
$(\xi,\eta)\in\{\infty\}\times M_\infty(SO)$, then applying
Theorem~\ref{th:case-3}(b) we obtain for all $x\in\fL_p$,
\[
\det\Sigma_x^-(\Phi_{\infty,\eta}^\cJ(\bA))\ne 0,
\quad
\det\Sigma_x^+(\Phi_{\infty,\eta}^\cJ(\bA))\ne 0.
\]
Since
$(\fN_\eta^-(\bA))(x)=\Sigma_x^-(\Phi_{\infty,\eta}^\cJ(\bA))$ and
$(\fN_\eta^+(\bA))(x)=\Sigma_x^+(\Phi_{\infty,\eta}^\cJ(\bA))$, this
immediately implies condition (c). The necessity portion is
proved.

\textit{Sufficiency.} If condition (a) is fulfilled, then
from Theorem~\ref{th:case-1} it follows that 
$\Phi_{\xi,\eta}^\cJ(\bA)$ is invertible in the local algebra
$\cL_{\xi,\eta}^\cJ$ with $(\xi,\eta)\in\R\times M_\infty(SO)$.
From condition (b) and Theorem~\ref{th:case-2} we infer that the coset
$\Phi_{\infty,\eta}^\cJ(\bA)$ is invertible in the local algebra
$\cL_{\infty,\eta}^\cJ$ with $\eta\in\R$.
From condition (c) in view of Theorem~\ref{th:case-3}(b) and
Corollary~\ref{co:case-3} it follows that the coset
$\Phi_{\infty,\eta}^\cJ(\bA)$ is invertible in the local algebra
$\cL_{\infty,\eta}^\cJ$ with $\eta\in M_\infty(SO)$. Thus the coset
$\Phi_{\xi,\eta}^\cJ(\bA)$ is
invertible in the local algebra $\cL_{\xi,\eta}^\cJ$ for every
$(\xi,\eta)\in\Omega$, where $\Omega$ is given by
\eqref{eq:def-Omega}. Combining this fact with condition (a) we
see that $\bA$ is stable by Theorem~\ref{th:localization}.
\end{proof}
\subsection{Applicability of the finite section method}
Theorem~\ref{th:main} can be specified for the finite section
method as follows.
\begin{theorem}\label{th:finite-section-method}
Suppose $\fL_p$ is a lentiform domain defined by \eqref{eq:lens}.
Let $A$ be an operator in the smallest closed subalgebra of the
algebra $\cB$ that contains the operators $aI$ with $a\in PC$ and
$W^0(b)$ with $b\in [PC_p,SO_p]$. Then the finite section method
\eqref{eq:FSM} applies to the operator $A$ if and only if the
following three conditions hold with $\bA=(A)$:
\begin{enumerate}
\item[{\rm(a)}] the operators
\[
\chi_+\fW_{-1}(\bA)\chi_+I+\chi_-I,
\quad
\chi_-\fW_1(\bA)\chi_-I+\chi_+I,
\]
and $A$ are invertible in $\cB$;

\item[{\rm (b)}] the operators $P_1\fH_\eta(\bA)P_1+Q_1$ are invertible in $\cB$ for
all $\eta\in\R$;

\item[{\rm (c)}] for every $\eta\in M_\infty(SO)$ and every
$x\in\fL_p$,
\[
[\fN_\eta^-(\bA)]_{11}(x)\ne 0, \quad [\fN_\eta^+(\bA)]_{11}(x)\ne
0,
\]
where $[f]_{11}$ denotes the $(1,1)$-entry of a $2\times 2$ matrix
function $f$.
\end{enumerate}
\end{theorem}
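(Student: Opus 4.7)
The plan is to apply Polski's theorem (Theorem~\ref{th:Polski}) to reduce the applicability of the finite section method to the invertibility of $A$ together with the stability of $\bA_{FSM}:=(P_\tau A P_\tau+Q_\tau)$, then apply the main stability criterion (Theorem~\ref{th:main}) to $\bA_{FSM}$, and finally translate the three conditions of Theorem~\ref{th:main} into the ones stated here using the homomorphism properties of $\fW_i$, $\fH_\eta$, and $\fN_\eta^\pm$. The preliminary check is that $\bA_{FSM}=\bP\bA\bP+\bQ$ lies in $\cA$; this is immediate since $\bA:=(A)\in\cA$ (because $A$ belongs to the subalgebra generated by $aI$ and $W^0(b)$), $\bP\in\cA$ by Definition~\ref{def:alg-A}, and $\bQ=\bI-\bP\in\cA$.

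For conditions (a) and (b) I use that $\fW_i$ and $\fH_\eta$ are homomorphisms together with $\fW_{-1}(\bP)=\chi_+I$, $\fW_0(\bP)=I$, $\fW_1(\bP)=\chi_-I$ from Proposition~\ref{pr:A-in-F} and $\fH_\eta(\bP)=P_1$ from Proposition~\ref{pr:A-in-Heta}(a) (together with the complementary values for $\bQ=\bI-\bP$). These yield
\[
\fW_{-1}(\bA_{FSM})=\chi_+\fW_{-1}(\bA)\chi_+I+\chi_-I,\quad \fW_0(\bA_{FSM})=A,\quad \fW_1(\bA_{FSM})=\chi_-\fW_1(\bA)\chi_-I+\chi_+I,
\]
and $\fH_\eta(\bA_{FSM})=P_1\fH_\eta(\bA)P_1+Q_1$. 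The separate invertibility of $A$ required by Polski's theorem is absorbed into (a) since $\fW_0(\bA_{FSM})=A$.

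The only nontrivial computation is for condition (c), which reduces to a $2\times 2$ matrix manipulation. By Corollary~\ref{co:case-3}, for every $x\in\fL_p$ we have
\[
(\fN_\eta^\pm(\bP))(x)=\left[\begin{array}{cc}1&0\\0&0\end{array}\right],\qquad (\fN_\eta^\pm(\bQ))(x)=\left[\begin{array}{cc}0&0\\0&1\end{array}\right].
\]
Writing $(\fN_\eta^\pm(\bA))(x)=\left[\begin{array}{cc}\alpha&\beta\\\gamma&\delta\end{array}\right]$ and using that $\fN_\eta^\pm$ is a Banach algebra homomorphism, the block-sandwich collapses to
\[
(\fN_\eta^\pm(\bA_{FSM}))(x)=\left[\begin{array}{cc}\alpha&0\\0&1\end{array}\right],
\]
whose determinant equals $\alpha=[\fN_\eta^\pm(\bA)]_{11}(x)$. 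Thus the non-vanishing of $\det(\fN_\eta^\pm(\bA_{FSM}))(x)$ demanded by Theorem~\ref{th:main}(c) is exactly the non-vanishing of $[\fN_\eta^\pm(\bA)]_{11}(x)$ asserted in~(c). No step presents a deep difficulty; the item requiring care is the matrix-algebraic collapse just described, after which both directions of the ``if and only if'' follow simultaneously from Theorem~\ref{th:main} applied to $\bA_{FSM}$.
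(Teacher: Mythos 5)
Your proof is correct and follows essentially the same route as the paper: reduce via Polski's theorem to stability of $\bP\bA\bP+\bQ$, apply Theorem~\ref{th:main} to this sequence, and evaluate the homomorphisms $\fW_i$, $\fH_\eta$, $\fN_\eta^\pm$ on it using the known images of $\bP$, $\bQ$. The $2\times 2$ matrix collapse you carry out is exactly the computation in the paper's proof, and your observation that the invertibility of $A$ demanded by Polski's theorem is subsumed in condition~(a) via $\fW_0(\bP\bA\bP+\bQ)=A$ is a correct point which the paper leaves implicit.
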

\begin{proof}
From Proposition~\ref{pr:A-in-F}(a) it follows that
\[
\begin{split}
\fW_{-1}(\bP\bA\bP+\bQ) &=\chi_+\fW_{-1}(\bA)\chi_+I+\chi_-I,
\\
\fW_0(\bP\bA\bP+\bQ) &=A,
\\
\fW_1(\bP\bA\bP+\bQ) &=\chi_-\fW_1(\bA)\chi_-I+\chi_+I.
\end{split}
\]
Hence, condition (a) of Theorem~\ref{th:main} for the sequence
$\bP\bA\bP+\bQ$ is just condition (a) of the present theorem.

From Proposition~\ref{pr:A-in-Heta}(a) we obtain for every $\eta\in\R$,
\[
\fH_\eta(\bP\bA\bP+\bQ)=P_1\fH_\eta(\bA)P_1+Q_1.
\]
This means that condition (b) of Theorem~\ref{th:main} specifies
to condition (b) of the present theorem for the sequence
$\bP\bA\bP+\bQ$.

Finally from \eqref{eq:case-3-5} and \eqref{eq:case-3-7} we get
for every $\eta\in M_\infty(SO)$ and $x\in\fL_p$,
\begin{eqnarray}
\det\big(\fN_\eta^-(\bP\bA\bP+\bQ)\big)(x)
&=&\det\left(
\left[\begin{array}{cc} 1 & 0\\ 0 & 0\end{array}\right]
\fN_\eta^-(\bA)(x)
\left[\begin{array}{cc} 1 & 0\\ 0 & 0\end{array}\right]
+
\left[\begin{array}{cc} 0 & 0\\ 0 & 1\end{array}\right]
\right)
\nonumber
\\[2mm]
&=&\det\left[\begin{array}{cc}
[\fN_\eta^-(\bA)]_{11}(x) & 0 \\ 0 & 1
\end{array}\right]
=[\fN_\eta^-(\bA)]_{11}(x)
\label{eq:finite-section-method-1}
\end{eqnarray}
and analogously
\begin{equation}\label{eq:finite-section-method-2}
\det\big(\fN_\eta^+(\bP\bA\bP+\bQ)\big)(x)=[\fN_\eta^+(\bA)]_{11}(x).
\end{equation}
Equalities \eqref{eq:finite-section-method-1} and
\eqref{eq:finite-section-method-2} imply that condition (c) of
Theorem~\ref{th:main} for the sequence $\bP\bA\bP+\bQ$ coincides
with condition (c) of the present theorem.
\end{proof}
\subsection{Applicability of the finite section method to paired convolution operators}
To illustrate Theorem~\ref{th:finite-section-method}, we apply it
to the so-called \textit{paired convolution operator}
\begin{equation}\label{eq:paired-convolution}
A=W^0(a)\chi_-I+W^0(b)\chi_+I\quad (a,b\in [PC_p,SO_p]).
\end{equation}
For $g\in\cM_p$, the Wiener-Hopf operator on $L^p(\R_+)$ with
symbol $g$ is the operator $W(g)$ defined by
\[
W(g):L^p(\R_+)\to L^p(\R_+), \quad
\varphi\mapsto\chi_+W^0(g)\chi_+\varphi.
\]
Let $J$ denote the flip operator on the space $L^p(\R)$ defined by
$(J\varphi)(x)=\varphi(-x)$. For $g\in\cM_p$, put
$\widetilde{g}(x):=g(-x)$.
\begin{corollary}\label{co:fsm-paired}
The finite section method \eqref{eq:FSM} applies to the paired
convolution operator given by \eqref{eq:paired-convolution} if and
only if the following three conditions are fulfilled:
\begin{enumerate}
\item[{\rm(a)}]
the operator \eqref{eq:paired-convolution} is invertible on the
space $L^p(\R)$ and the Wiener-Hopf operators $W(a)$ and
$W(\widetilde{b})$ are invertible on the space $L^p(\R_+)$;

\item[{\rm(b)}]
for every $\eta\in\R$,
\[
a(\eta^\pm)\ne 0,\quad b(\eta^\pm)\ne 0,\quad
\operatorname{wind}\fC_p\left(1,\frac{a(\eta^-)}{a(\eta^+)},\frac{b(\eta^-)}{b(\eta^+)}\right)=0,
\]
where $\operatorname{wind}\fC_p(\cdot,\cdot,\cdot)$ is defined in
Subsection~{\rm\ref{subsec:SIO-piecewise-constant}};

\item[{\rm(c)}]
for every $\eta\in M_\infty(SO)$ and every $x\in\fL_p$,
\[
a_\eta(-\infty)x+a_\eta(+\infty)(1-x)\ne 0,
\quad
b_\eta(+\infty)x+b_\eta(-\infty)(1-x)\ne 0.
\]
\end{enumerate}
\end{corollary}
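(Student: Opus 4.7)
The plan is to apply Theorem~\ref{th:finite-section-method} to the constant sequence $\bA=(A)$ and translate each of its three conditions into the corresponding item of the corollary. All calculations reduce, by the homomorphism property, to evaluating $\fW_{\pm 1}$, $\fH_\eta$, and $\fN_\eta^\pm$ on the generators $\chi_\pm I$, $W^0(a)$, and $W^0(b)$.

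For condition~(a) I first compute the $*$-strong limits $\fW_{\pm 1}(\bA)$. Translation invariance of convolution operators together with the direct check that $V_\tau\chi_-V_{-\tau}$ is multiplication by $\chi_{(-\infty,\tau)}$, which converges $*$-strongly to $I$ as $\tau\to+\infty$ (while $V_\tau\chi_+V_{-\tau}$ converges to $0$, and symmetrically for $V_{-\tau}(\,\cdot\,)V_\tau$), yields $\fW_{-1}(\bA)=W^0(a)$ and $\fW_1(\bA)=W^0(b)$. The first operator in Theorem~\ref{th:finite-section-method}(a) becomes $\chi_+W^0(a)\chi_+I+\chi_-I$, invertible on $L^p(\R)$ iff $W(a)$ is invertible on $L^p(\R_+)$; the second, $\chi_-W^0(b)\chi_-I+\chi_+I$, is conjugated by the flip $J$ (which satisfies $J\chi_\pm J=\chi_\mp$ and $JW^0(b)J=W^0(\widetilde b)$) into $\chi_+W^0(\widetilde b)\chi_+I+\chi_-I$, equivalent to invertibility of $W(\widetilde b)$. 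Together with $\fW_0(\bA)=A$, this produces condition~(a).

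For condition~(b), Proposition~\ref{pr:A-in-Heta} and the homomorphism property give
\[
\fH_\eta(\bA)=P_\R[a(\eta^-)\chi_-+b(\eta^-)\chi_+]I+Q_\R[a(\eta^+)\chi_-+b(\eta^+)\chi_+]I.
\]
In the decomposition $L^p(\R)=L^p(-\infty,-1)\stackrel{\cdot}{+}L^p(-1,1)\stackrel{\cdot}{+}L^p(1,\infty)$ the operator $P_1\fH_\eta(\bA)P_1+Q_1$ is block diagonal with identities on the outer blocks; on the middle block, the elementary kernel identity $P_1 P_\R P_1=P_{(-1,1)}$ on $L^p(-1,1)$ identifies it with precisely the operator
\[
P_{(-1,1)}[a(\eta^-)\chi_{(-1,0)}+b(\eta^-)\chi_{(0,1)}]I+Q_{(-1,1)}[a(\eta^+)\chi_{(-1,0)}+b(\eta^+)\chi_{(0,1)}]I,
\]
so Lemma~\ref{le:SIO-piecewise-constant} with $a_\pm=a(\eta^\pm)$, $b_\pm=b(\eta^\pm)$ yields condition~(b) verbatim.

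For condition~(c), applying $\fN_\eta^\pm$ to $\bA$ and inserting $\chi_-(-\infty)=\chi_+(+\infty)=1$, $\chi_-(+\infty)=\chi_+(-\infty)=0$ into \eqref{eq:case-3-5} and \eqref{eq:case-3-7}, I obtain $\fN_\eta^-[(\chi_+I)]=0$ and $\fN_\eta^+[(\chi_-I)]=0$, whence $\fN_\eta^-(\bA)=\fN_\eta^-[(W^0(a))]$ and $\fN_\eta^+(\bA)=\fN_\eta^+[(W^0(b))]$. The $(1,1)$-entries of these, read off from \eqref{eq:case-3-6} and \eqref{eq:case-3-8}, are exactly the two scalar expressions appearing in condition~(c) of the corollary. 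The main obstacle is the bookkeeping in the first step---pinning down the $*$-strong limits that define $\fW_{\pm 1}(\bA)$ and then using the flip to recognize the second half-line operator as a Wiener-Hopf operator for $\widetilde b$; the remaining two steps are essentially algebraic evaluations of the homomorphisms on generators.
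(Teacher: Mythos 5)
Your proposal is correct and follows essentially the same route as the paper's proof: apply Theorem~\ref{th:finite-section-method} to the constant sequence $\bA=(A)$ and evaluate the homomorphisms $\fW_{\pm 1}$, $\fH_\eta$, $\fN_\eta^\pm$ on generators. The only cosmetic difference is that you recompute $\fW_{\pm 1}$ from the $*$-strong limit $V_{\pm\tau}\chi_\mp V_{\mp\tau}\to I$ directly, whereas the paper simply invokes Proposition~\ref{pr:A-in-F}(b)--(c) (which encodes exactly this computation via $\chi_\pm(\mp\infty)$); the remaining steps, including the passage to $P_{(-1,1)}$ on the middle block and the readout of the $(1,1)$-entries in condition~(c), coincide with the paper's.
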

\begin{proof}
Let $\bA=(W^0(a)\chi_-I+W^0(b)\chi_+I)$. Taking into account
Proposition~\ref{pr:A-in-F}(b)--(c) we see that the operator
\[
\chi_+\fW_{-1}(\bA)\chi_+I+\chi_-I=\chi_+W^0(a)\chi_+I+\chi_-I
\]
is invertible on $L^p(\R)$ if and only if the Wiener-Hopf operator
$W(a)$ is invertible on $L^p(\R_+)$ and that the operator
\[
\chi_-\fW_1(\bA)\chi_-I+\chi_+I=\chi_-W^0(b)\chi_-I+\chi_+I
\]
is invertible on $L^p(\R)$ if and only if the operator
\[
J\chi_-W^0(b)\chi_-J=\chi_+JW^0(b)J\chi_+I= W(\widetilde{b})
\]
is invertible on $L^p(\R_+)$. This implies that condition (a) of
Theorem~\ref{th:finite-section-method} for the constant sequence
$\bA$ is nothing but condition (a) of the present corollary.

From Proposition~\ref{pr:A-in-Heta}(b)--(c) it follows that for every
$\eta\in\R$,
\[
\begin{split}
P_1\fH_\eta(\bA)P_1+Q_1 &=
P_1\big[(a(\eta^-)W^0(\chi_-)+a(\eta^+)W^0(\chi_+))\chi_-I
\\
&\quad+
(b(\eta^-)W^0(\chi_-)+b(\eta^+)W^0(\chi_+))\chi_+I\big]P_1+Q_1
\\
&= \chi_{(-1,1)}\big(a(\eta^-)P_\R+a(\eta^+)Q_\R\big)\chi_{(-1,0)}I
\\
&\quad +
\chi_{(-1,1)}\big(b(\eta^-)P_\R+b(\eta^+)Q_\R\big)\chi_{(0,1)}I+
\chi_{\R\setminus(-1,1)}I.
\end{split}
\]
Obviously this operator is invertible on $L^p(\R)$ if and only if
the operator
\[
P_{(-1,1)}(a(\eta^-)\chi_{(-1,0)}+b(\eta^-)\chi_{(0,1)})I+
Q_{(-1,1)}(a(\eta^+)\chi_{(-1,0)}+b(\eta^+)\chi_{(0,1)})I
\]
is invertible on $L^p(-1,1)$. By Lemma~\ref{le:SIO-piecewise-constant},
this operator is invertible on $L^p(-1,1)$ if and only if condition (b)
is fulfilled. This means that for $\bA$ condition (b) of
Theorem~\ref{th:finite-section-method} is equivalent to condition
(b) of the present corollary.

From formulas \eqref{eq:case-3-5}--\eqref{eq:case-3-8} it follows
that
\[
\begin{split}
[\fN_\eta^-(\bA)]_{11}(x) &=a_\eta(-\infty)x+a_\eta(+\infty)(1-x),
\\
[\fN_\eta^+(\bA)]_{11}(x) &=b_\eta(+\infty)x+b_\eta(-\infty)(1-x)
\end{split}
\]
for $x\in\fL_p$ and $\eta\in M_\infty(SO)$. This means that
condition (c) of Theorem~\ref{th:finite-section-method} specifies
to condition (c) of the present corollary.
\end{proof}


\begin{thebibliography}{99}
\bibitem{BBK04-OT}
M. A. Bastos, A. Bravo, and Yu. I. Karlovich,
\textit{Convolution type operators with symbols generated by slowly oscillating
and piecewise continuous matrix functions.}
In: ``Operator Theoretical Methods and Applications to Mathematical Physics.
The Erhard Meister Memorial Volume''.
Operator Theory: Advances and Applications \textbf{147} (2004), 151--174.

\bibitem{BBK04-MN}
M. A. Bastos, A. Bravo, and Yu. I. Karlovich,
\textit{Symbol calculus and Fredholmness for a Banach algebra of convolution
type operators with slowly oscillating and piecewise continuous data.}
Math. Nachr. \textbf{269--270} (2004), 11--38.

\bibitem{BK97}
A. B\"ottcher and Yu. I. Karlovich,
\textit{Carleson curves, Muckenhoupt weights, and Toeplitz operators.}
Birkh\"auser, Basel, 1987.

\bibitem{BKS02}
A. B\"ottcher,  Yu. I. Karlovich, and I. M. Spitkovsky,
\textit{Convolution Operators and Factorization of Almost Periodic Matrix Functions.}
Operator Theory: Advances and Applications \textbf{131}.
Birkh\"auser Verlag, Basel, 2002.

\bibitem{BS06} A. B\"ottcher and B. Silbermann, \textit{Analysis
of Toeplitz Operators.} 2nd Edition. Springer, Berlin, 2006.

\bibitem{Duduchava79}
R. Duduchava,
\textit{Integral Equations with Fixed Singularities.}
Teubner, Leipzig, 1979.

\bibitem{GF71}
I. Gohberg and I. A. Feldman,
\textit{Convolution Equations and Projection Methods for Their Solution}.
AMS, Providence, RI, 1974.

\bibitem{GK92}
I. Gohberg and N. Krupnik, \textit{One-Dimensional Linear Singular
Integral Equations.} Vols. 1 and 2.
Operator Theory: Advances and Applications \textbf{53--54}.
Birkh\"auser, Basel, 1992.

\bibitem{Grafakos08}
L. Grafakos,
\textit{Classical Fourier Analysis.}
Springer, New York, 2008.

\bibitem{HRS95}
R. Hagen, S. Roch, and B. Silbermann,
\textit{Spectral Theory of Approximation Methods for Convolution Equations.}
Operator Theory: Advances and Applications \textbf{74}.
Birkh\"{a}user, Basel, 1995.

\bibitem{HRS01}
R. Hagen, S. Roch, and B. Silbermann,
\textit{$C^*$-Algebras and Numerical Analysis.}
Monographs and Textbooks in Pure and Applied Mathematics \textbf{236}.
Marcel Dekker, Inc., New York, 2001.

\bibitem{Krasnoselskii60}
M. A. Krasnosel'skii, \textit{On a theorem of M. Riesz.} Soviet
Math. Doklady \textbf{1} (1960), 229--231.

\bibitem{KZPS76}
M. A. Krasnosel'skii, P. P. Zabreiko, E. I. Pustyl'nik, and P. E. Sobolevskii,
\textit{Integral Operators in Spaces of Summable Functions.}
Noordhoff International Publishing, Leyden, 1976.


\bibitem{MP86}
S. G. Mikhlin and S. Pr\"ossdorf,
\textit{Singular Integral Operators}.
Springer, Berlin, 1986.

\bibitem{Power80}
S. C. Power,
\textit{Fredholm Toeplitz operators and slow oscillation.}
Canad. J. Math. \textbf{32} (1980), 1058--1071.

\bibitem{PS91}
S. Pr\"ossdorf and B. Silbermann,
\textit{Numerical Analysis for Integral and Related Operator Equations}.
Operator Theory: Advances and Applications \textbf{52}.
Birkh\"auser, Basel, 1991.

\bibitem{Roch88}
S. Roch,
\textit{Finite sections of operators generated by convolutions.}
In: Seminar Analysis (Berlin, 1987/1988), 118--138,
Akademie-Verlag, Berlin, 1988.

\bibitem{RSS97}
S. Roch, P. A. Santos, and B. Silbermann,
\textit{Finite section method in some algebras of multiplication and convolution
operators and a flip.}
Z. Anal. Anwendungen \textbf{16} (1997), 575--606.

\bibitem{RSS08}
S. Roch, P. A. Santos, and B. Silbermann,
\textit{Non-commutative Gelfand Theories. A Tool-kit for Operator Theorists and Numerical Analysts}.
Book manuscript, submitted, 2008.

\bibitem{RSS09}
S. Roch, P. A. Santos, and B. Silbermann,
\textit{A sequence algebra of finite sections, convolution and multiplication operators
on $L^p(\R)$}.
Fachbereich Mathematik, Technische Universit\"at Darmstadt, Preprint 2577, March 2009.

\bibitem{RS90}
S. Roch and B. Silbermann,
\textit{Algebras of convolution operators and their image in the Calkin algebra.}
Report MATH, 90-05. Akademie der Wissenschaften der DDR,
Karl-Weierstrass-Institut f\"ur Mathematik, Berlin, 1990.

\bibitem{Rudin91}
W. Rudin, \textit{Functional Analysis.}
2nd Edition. McGraw-Hill, New York, 1991.

\bibitem{Silbermann81}
B. Silbermann,
\textit{Lokale Theorie des Reduktionsverfahrens f\"ur Toeplitzoperatoren.}
Math. Nachr. \textbf{104} (1981), 137--146.

\bibitem{SimMin86}
I. B. Simonenko and Chin Ngok Min,
\textit{Local Method in the Theory of One-Dimensional Singular Integral
Equations with Piecewsie Continuous Coefficients. Noetherity.}
University Press, Rostov on Don, 1986 (in Russian).

\end{thebibliography}
\end{document}